\newcommand{\field}[1]{\mathbb{#1}}
\newcommand{\FF}{\field{F}}
\newcommand{\NN}{\field{N}}
\newcommand{\RR}{\field{R}}
\newcommand{\ZZ}{\field{Z}}
\newcommand{\dd}{{\rm deg}}
\newcommand{\Ff}{\mathcal F}
\newcommand{\Gg}{\mathcal G}
\newcommand{\Ka}{\mathscr{K}_A}
\newcommand{\La}{\mathscr L_A}
\newcommand{\Laa}{\mathscr L_{A_{r(p)}}}
\newcommand{\Kaa}{\mathscr K_{A_{r(p)}}}
\newcommand{\Oo}{\mathcal O}
\newcommand{\Tt}{\mathcal T}
\newcommand{\Gz}{G^{(0)}}
\newcommand{\Gt}{G^{(2)}}
\newcommand{\I}{{\rm Ind}\mu}
\newcommand{\NO}[1]{\operatorname{\mathcal{NO}}_{\!#1}}
\newcommand{\fvcl}[1]{\mathcal{P}_{\rm fin}^{\vee}(#1)}
\newcommand{\reduced}{r}
\newcommand{\normal}{n}
\newcommand{\CNP}{\operatorname{CNP}}
\newcommand{\id}{\operatorname{id}}
\newcommand{\clsp}{\operatorname{\overline{span\!}\,\,}}
\newcommand{\op}{\operatorname{op}}
\newcommand{\CNPgaug}{\nu}
\newcommand{\redgaug}{\nu^{\normal}}
\newcommand{\Xle}[1]{\widetilde{X}_{#1}}
\newcommand{\iotale}{\tilde{\iota}\hskip1pt}
\newcommand{\phile}{\tilde{\phi}}
\newcommand{\intfrm}[1]{\Pi{#1}}
\newcommand{\Tc}{\Tt_{\rm cov}}
\newcommand{\supp}{\operatorname{supp}}
\newcommand{\Augl}{\widetilde{l}}
\providecommand{\abs}[1]{\lvert#1\rvert}
\providecommand{\norm}[1]{\lVert#1\rVert}
 \newcommand{\XtY}[1]{X \overset{#1}{\otimes} Y}
\newcommand{\YtX}[1]{Y \overset{#1}{\otimes} X}
\newcommand{\myTau}[5]{\iota_{(#3,#4)}^{(#1,#2)}(#5)}
\newcommand{\ordTau}[2]{\iota_{#1}^{#2}}
\newcommand{\ordRep}{(\pi_A , \pi_B , t_X , t_Y)}
\newcommand{\uniRepT}{(\overline{\pi}_A , \overline{\pi}_B , \overline{t}_X , \overline{t}_Y)}
\newcommand{\uniRepO}{(\hat{\pi}_A , \hat{\pi}_B , \hat{t}_X , \hat{t}_Y)}
\newcommand{\fockRep}{(\phi_{A}^{\infty} \, , \, \phi_{B}^{\infty} \, , \, \tau_{X}^{\infty} \, , \, \tau_{Y}^{\infty})}
\newcommand{\fockcovRep}{(\phi_{A} \, , \, \phi_{B} \, , \, \tau_{X} \, , \, \tau_{Y})}
\newcommand{\LL}{\mathscr{L}}
\newcommand{\F}{\mathscr{F}}
\newcommand{\K}{\mathscr{K}}
\newcommand{\la}{\langle}
\newcommand{\ra}{\rangle}
\newcommand{\upDownArrow}[1]{%
	{\left\downarrow\vbox to #1{}\right.\kern-\nulldelimiterspace}
}
\newcommand{\leftRightArrow}[1]{%
	{\left\rightarrow\vbox to #1{}\right.\kern-\nulldelimiterspace}
}
\def\mathcs{C^{*}}
\newcommand{\cs}{\ensuremath{\mathcs}}
\DeclareMathSymbol{\rtimes}{\mathbin}{AMSb}{"6F}
\def\NN{\mathbf{N}}
\def\K{\mathcal{K}}
\def\set#1{\{\,#1\,\}}
\newcommand\A{\mathcal A}
\newcommand\go{\Sigma^{(0)}}
\newcommand\cc{C_{c}}
\renewcommand\H{\mathcal{H}}
\newcommand\Sec{\Gamma}
\newcommand\auto{\vartheta}
\newcommand\gtw{\kappa}
\newcommand\CC{\mathcal{C}}
\newcommand\N{\mathcal{N}}
\newcommand\ccgsgtw{\cc(G,\Sigma,E,\gtw)}
\newcommand\csgsgtw{\cs(G,\Sigma,E,\gtw)}
\theoremstyle{plain}
\newtheorem{theorem}{Theorem}[section]
\newtheorem*{theorem*}{Theorem}
\newtheorem*{prop*}{Proposition}
\newtheorem{cor}[theorem]{Corollary}
\newtheorem{lemma}[theorem]{Lemma}
\newtheorem{prop}[theorem]{Proposition}
\theoremstyle{remark}
\theoremstyle{definition}
\newtheorem{dfn}[theorem]{Definition}
\newtheorem{example}[theorem]{Example}
\numberwithin{equation}{section}
\begin{document}

\title[Cuntz-Nica-Pimsner algebras]
{Cuntz-Nica-Pimsner algebras of product systems over groupoids}
\author[M. Amini, M. Moosazadeh]{Massoud Amini and Mahdi Moosazadeh}
\address{Department of Mathematics, Faculty of Mathematical Sciences, Tarbiat Modares University, Tehran 14115134, Iran}
\email{mamini@modares.ac.ir, mahdimoosazadeh7@gmail.com}

\thanks{}

\begin{abstract}
Let $X$ be a product system over a quasi-lattice ordered groupoid $(G,P)$.
Under mild hypotheses, we associate to $X$ a $C^*$-algebra
which is couniversal for injective Nica covariant
Toeplitz representations of $X$ which preserve the gauge coaction. When $(G,P)$ is a quasi-lattice ordered group this couniversal
$C^*$-algebra coincides with the Cuntz-Nica-Pimsner algebra
introduced by Carlsen-Larsen-Sims-Vittadello, and under some mild amenability conditions with that of Sims and Yeend. We prove related gauge invariant uniqueness
theorems in this general setup. 
\end{abstract}

\keywords{quasi-lattice ordered groupoid, product system, Cuntz-Pimsner algebra, Hilbert bimodule}
\subjclass{Primary 46L05}

\maketitle

\section{Introduction}
The Cuntz algebra $\mathcal {O}_{n}$, introduced by Joachim Cuntz in the late 70s, is the universal $C^*$-algebra generated by $n$ isometries on an infinite-dimensional Hilbert space whose range projections add up to 1.  These  were  the first concrete examples of a separable infinite simple $C^*$-algebra, and any
such algebra contains a subalgebra having some $\mathcal {O}_{n}$ as a quotient. Cuntz  algebras are generalized in the early 80s by  Cuntz and Krieger \cite{CK1980} and in the late 90s by Pimsner \cite{Pimsner1997}. In the Pimsner construction, one gets a $C^*$-algebra $\mathcal O_X$, now known as the Cuntz-Pimsner $C^*$-algebra, for any $C^*$-correspondence $X$ over a $C^*$-algebra $A$. Pimsner also defined a Toeplitz $C^*$-algebras $\Tt_X$, having $\mathcal O_X$ as a quotient \cite{Pimsner1997}. He showed that his construction of $\mathcal O_X$ has crossed
products by $\ZZ$ as an special case. In this case, the Toeplitz 
$\Tt_{X}$ is like a crossed product  by $\mathbb N$.

Katsura was able to relax the assumption of  isometric left
action in the Pimsner construction, and unify the construction by that of $C^*$-algebras of topological graphs \cite{ka1,ka2,ka3}. Katsura proved  that a Cuntz-Pimsner
representation of a $C^*$-correspondence $X$ generates an isomorphic copy
of $\Oo_X$, precisely when the representation is injective and
admits a gauge action. 

In Pimsner and Katsura construction, the representations are handled using the Fock space, built by considering $n$-fold tensors of $X$ for $n\geq 0$, with $A$ playing the role of the 0-fold tensor, which suggests that we are dealing with a system of modules indexed by semigroup of non-negative integers. Building on the notion of Fock space, Fowler introduced and studied
$C^*$-algebras associated to product systems of Hilbert bimodules indexed by a semigroup, sitting in a group \cite{Fowler1999}.
The idea of product systems go to William Arveson, who first studied product systems of Hilbert spaces over the semigroup of positive reals \cite{Arveson1989} (see also,  \cite{Dinh1991}). The Fowler's construction employs Nica's notion of quasi-lattice ordered groups \cite{N}. Fowler
was able to show that $\Tc(X)$ has much of the structure
of a twisted crossed product of the coefficient algebra $A$ by
the semigroup $P$, with a twist coming from $X$.

The drawback in Fowler's construction is that his generalized
Cuntz-Pimsner algebra $\Oo_X$ need not be a quotient of $\Tc(X)$, and  the canonical
homomorphism from $A$ to $\Oo_X$ is not necessarily injective,
leaving little hope for a gauge-invariant uniqueness
theorem.  Sims and Yeend \cite{SY} addressed these issues by introducing the Cuntz-Nica-Pimsner algebra $\mathcal{NO}_{X}$ of a product
system $X$ over a quasi-lattice ordered group $(G,P)$, in such a way that it is a quotient of $\Tc(X)$, and under relatively mild hypotheses, the canonical
representation of the product system $X$ on the
Cuntz-Nica-Pimsner algebra $\mathcal{NO}_{X}$ is isometric.
Finally, Carlsen, Larsen, Sims and Vittadello
establish a gauge-invariant uniqueness theorem for $\mathcal{NO}_{X}$, by an analysis of the fixed-point algebra (core) of the canonical coaction of $G$ on $\mathcal{NO}_{X}$ \cite{clsv}.

In  the above constructions, all correspondences are over a fixed $C^*$-algebra $A$. This restriction could be best understood via the Fock space. In order to tensor two bimodules over $A$ we need to have both the left and right actions to be by the same $C^*$-algebra. In the algebraic language, if the modules in a product system are indexed by elements of a semigroup sitting in a group, it is natural to expect only one $C^*$-algebra, which is then the module indexed by the identity element of the group. Now what if we allow the possibility of some tensors to remain undefined in the Fock space? Then, we could expect bimodules with left and right actions of different $C^*$-algebras, sitting beside each other in a tensor, exactly when the corresponding right and left module structures match, i.e., come from  actions of the same $C^*$-algebra. In the algebraic language, we then have a family of bimodules indexed by a semigroupoid sitting in a groupoid and containing its unit space, and the tensors of two bimodules are allowed exactly when their indices are multipliable in the groupoid, i.e., their source and range match appropriately. In this case, we deal with bimodules over a family of $C^*$-algebras indexed by the unit space of the groupoid.      

The construction of the universal $C^*$-algebra of a product system over a semigroupoid is the main objective of the present paper. We work with a compactly aligned product system $X$ of correspondences over a semigroupoid $P$ inside a quasi-lattice ordered groupoid $G$, and construct the Cuntz-Nica-Pimsner $C^*$-algebra $\NO{X}$ and Toeplitz $C^*$-algebra $\Tc{X}$. Our construction follows closely that of Carlsen et al in \cite{clsv}, but there are many technicalities we have to overcome, due to the fact that we are working with a Fell bundle over a groupoid, whose properties are much less known compared to the fairly well  studied notion of Fell bundle over a group. This being said, we should emphasize that our arguments here are adaptations of those already presented by Fowler \cite{Fowler1999, F99}, Yeend \cite{y}, Sims and Yeend \cite{SY}, and Carlsen et al \cite{clsv}. For Fell bundles over groupoids, we use the definitions and results of Kumjian \cite{kum}. Finally, for coactions, we adapt the ideas and notions used by Exel \cite{Exel} and Quigg \cite{qui:discrete coactions}, though we extend them from coactions of groups to that of groupoids. 

Our main result gives sufficient conditions for the existence and uniqueness of the reduced Cuntz-Nica-Pimsner $C^*$-algebra $\NO{X}^r$, which is couniversal for gauge-compatible injective Nica covariant representations of $X$  (Theorem \ref{thm:projective property}) and the gauge-invariant uniqueness theorem (Corollary~\ref{cor:guit}). Under a natural condition (which is automatic in the group case), we show that the full Cuntz-Nica-Pimsner $C^*$-algebra $\NO{X}$ has gauge-invariant uniqueness property when the canonical coaction of $G$ on the Toeplitz algebra is normal and $G$ is exact (Corollary \ref{cor:amenable}).  We give some of our notions and results for an \'{e}tale groupoid, but in most cases, including the above cited results, we restrict ourselves to discrete groupoids. We give concrete examples to illustrate our construction, and in particular show, in an appendix, that the crossed product of $C_0(X)$-algebras by discrete groupoids, and $C^*$-algebras of multilayered topological $k$-graphs are particular cases of our general construction. We also show how to extend the  Katsura construction of the $C^*$-algebra $\mathcal O_X$ of an $A$-$A$-correspondence $X$, to construct the $C^*$-algebra $\mathcal O_{X,Y}$ of a pair of $A$-$B$-correspondence $X$ and $B$-$A$-correspondence $Y$, and more importantly, show that it satisfies our gauge-invariant uniqueness property.

\section{Preliminaries}

For an \'{e}tale groupoid $G$ with Haar system $\lambda$, we write $g\mapsto i_G(g)$ for the
canonical inclusion of $G$ as partial isometries in the full groupoid $C^*$-algebra
$C^*(G)$. We let $\mu$ be a quasi-invariant measure on $\Gz$ and  
$\I$ denote the  regular representation of $G$.
All $C^*$-algebra tensor products are w.r.t. the minimal tensor product.

\subsection{Hilbert bimodules and correspondences}

Let $A$ and $B$ be $C^*$-algebras. A Hilbert $A$-$B$-bimodule is a
complex vector space $X$ endowed with a left $A$-module
structure and an $A$-valued $A$-sesquilinear form ${}_A\langle
\cdot\,,\,\cdot\rangle$ and a right $B$-module
structure and a $B$-valued $B$-sesquilinear form $\langle
\cdot\,,\,\cdot\rangle_B$, with compatibility condition ${}_A\langle
x\,,\,y\rangle\cdot z=x\cdot\langle
y\,,\,z\rangle_B$, for $x,y,z\in X$, 
such that $X$ is complete in the norm $\|x\| := \|{}_A\langle x,
x \rangle \|^{1/2}=\|\langle x,
x \rangle_B \|^{1/2}$. A map $T \colon X \to X$ is said to be
$A$-adjointable if there is a map $T^* \colon X \to X$ such
that ${}_A\langle Tx, y\rangle = {}_A\langle x, T^*y\rangle$, for
 $x,y \in X$. Every adjointable operator on $X$ is
norm-bounded and linear, and the adjoint $T^*$ is unique. The
collection $\La(X)$ of $A$-adjointable operators on $X$ endowed
with the operator norm is a $C^*$-algebra. The ideal of
 ``compact'' operators $\Ka(X)$ is the
closed span of the ``rank'' one operators $x \otimes y^* \colon z \mapsto x
\cdot{}_A\langle y, z \rangle$, with $x$ and $y$ range over $X$. The same notations are used for $A$ replaced by $B$.

An $A$-$B$-correspondence is a right Hilbert $B$-module $X$ endowed with a left action $\phi_A \colon A \to
\La(X)$ of $A$ by adjointable
operators. 
Let $X$ and $Y$ be an $A$-$B$-correspondence $B$-$C$-correspondence with left actions $\phi_A$ and $\phi_B$, respectively. The balanced tensor product $X \otimes_B Y$ is the completion of the
vector space spanned by
elements $x \otimes_B y$ with $x \in X$ and $y \in Y$, subject
to  $x \cdot b \otimes_B y = x \otimes_A \phi_B(b)y$, in the norm coming by
the inner-product
\[
\langle x_1 \otimes_B y_1, x_2 \otimes_B y_2 \rangle_B := \langle y_1,
\langle x_1, x_2\rangle_B \cdot y_2\rangle_B.
\]
With the right action of $C$ on $X\otimes_B Y$ given by
$(x\otimes_B y)\cdot c=x\otimes_B(y\cdot c)$ and left action of $A$ implemented by  the
homomorphism $a
\mapsto \phi_A(a) \otimes 1_{\mathcal{L}_A(Y)}$,   $X \otimes_B Y$ is an $A$-$C$-correspondence.  

\subsection{Semigroupoids and product systems}\label{subsec_prod_syst}

In this section, we define product systems over subemigroupoids of quasi-lattice ordered groupoids. These base structures are modeled after the notion of quasi-lattice
ordered groups due to Nica \cite{N}. An \'{e}tale groupoid $G$ and a subsemigroupoid $P$ of $G$ with $P \cap
P^{-1} = \Gz$, we say that $(G,P)$ is a quasi-lattice
 ordered groupoid if, under the partial order $g \le h \iff
g^{-1}h \in P$, for $g,h\in G$ with the same range, any pair $p, q$ in $G$ with a common
upper bound in $P$ have a least common upper bound $p \vee q$
in $P$ (compare with the definition in the group case  
\cite{N, F99, CL2002, CL}).
We write $p \vee q <\infty$ when $p\vee q$ exists and $p\vee q= \infty$ when $p,q \in G$ have
no common upper bound in $P$. If $p \vee q<\infty$,
for all $p,q \in P$, we say that $(G,P)$ is directed, or simply say that $P$ is directed. Finally, a quasi-lattice ordered groupoid $(G,P)$ is called totally ordered if for each $p,q\in P$ with the same range, either $p^{-1}q\in P$ or $q^{-1}p\in P$. In this case, we simply say that $P$ is totally ordered. 

A product system over $P$ consists
of a bundle $\{A_u\}$ of $C^*$-algebras over $\Gz$, a semigroupoid $X$, and a semigroupoid homomorphism $d
\colon X \to P$ such that $X_p := d^{-1}(p)$ is an 
$A_{r(p)}$-$A_{s(p)}$-correspondence for each $p\in P$,  $X_u = A_u$, for $u\in\Gz$, and the
multiplication on $X$ implements isomorphisms $X_p \otimes_{A_{r(q)}}
X_q \cong X_{pq}$, for $p,q \in P \setminus \{e\}$ with $(p,q)\in\Gt$, as well as the right $A_{s(p)}$-module structure  
and the left $A_{r(p)}$-actions on each $X_p$.
For $p \in P$, we denote the homomorphism 
 which implements the corresponding left action by $\phi_p: A_{r(p)}\to \mathscr{L}_{A_{r(p)}}(X_p)$. We denote the  $A_{s(p)}$-valued inner product on
 $X_p$ by  $\langle
 \cdot,\cdot\rangle_p$. For $(p,q)\in\Gt$, we 
 have $\phi_{pq}(a)(xy) =
(\phi_p(a)x)y$, for  $x \in X_p$, $y \in X_q$ and $a \in A_{r(p)}$.
Given $p, q \in P\backslash \Gz$, there is a homomorphism
$\iota^{pq}_p \colon \Laa(X_p) \to \Laa(X_{pq})$ given by
$\iota^{pq}_p(S)(xy) = (Sx)y,$ for  $x \in X_p$, $y \in
X_{q}$ and $S \in \Laa(X_p)$.
Identifying $\Kaa(X_{r(p)})$ with $A_{r(p)}$ as
above, we define $\iota^p_{r(p)} \colon \Kaa(X_{r(p)})\to
\Laa(X_{p})$  by $\iota^p_{r(p)}=\phi_p$, for $p\in P$ (compare to  \cite[\S 2.2]{SY}). Finally, under the identification $X_p\otimes_{A_{r(p)}}A_{r(p)}\cong X_p$, we could write 
$\iota_p^p(S)=S\otimes{\rm id}_{A_{r(p)}}.$

A product system $X$ over $P$ is compactly aligned  if $\iota^{p \vee q}_p(S) \iota^{p
\vee q}_q(T) \in \mathscr{K}_{A_{r(p\vee q)}}(X_{p\vee q})$, for each $S \in \Kaa(X_p)$
and $T \in \mathscr{K}_{A_{r(q)}}(X_q)$, whenever $p \vee q$ exists (cf., 
\cite[Definition 5.7]{F99}).
Let us observe that a product system $X$ over a totally ordered quasi-lattice ordered groupoid $(G,P)$ is automatically compactly aligned: 
given $p$ and $q$ with $p\vee q<\infty$, and $S \in \Kaa(X_p), T \in \mathscr{K}_{A_{r(q)}}(X_q)$, we have $p\leq p\vee q$ and $q\leq p\vee q$, thus $r(p)=r(p\vee q)=r(q)$. By total order assumption, $p\vee q$ is either $p$ or $q$, say $p\vee q=p$. Then $\iota_p^{p\vee q}(S)=\iota_p^p(S)=S\otimes{\rm id}_{A_{r(p)}}\in \mathscr{K}_{A_{r(p)}}(X_p)$, thus,
$$\iota_p^{p\vee q}(S)\iota_q^{p\vee q}(T)\in \mathscr{K}_{A_{r(p)}}(X_p)=\mathscr{K}_{A_{r(p\vee q)}}(X_{p\vee q}),$$
as required.

\subsection{Representations of product systems}\label{subsection:reps_prod_syst}
Given a  product system $X$ over $P$, a Toeplitz representation (or simply a representation)
$\psi$ of $X$ in a $C^*$-algebra $D$ is a map $\psi : X \to D$
satisfying,
\begin{enumerate}
\item   $\psi_p:=\psi\vert_{X_p} \colon X_p \to D$
    is linear and $\psi_u: A_u\to D$ is a homomorphism;
\item $\psi_{s(p)}(\langle x,y\rangle_p) = \psi_p(x)^*
\psi_p(y)$ and, $\psi(xz)=\psi(x)\psi(z)$, if $s(p)=r(q),$
\item $\psi(x)\psi(z)=0$, if $s(p)\neq r(q)$, 
\end{enumerate}
for each $p,q\in P$, $u\in\Gz$, and $x,y \in X_p$, $z\in X_q$.

\vspace{.2cm}
We would define the notion of a Nica covariant representation of a compactly aligned product system $X$, and later show that, for such representations, we also have,

\vspace{.2cm}
(4) $\psi(x)\psi(y)^{*}=0$, if $s(p)\neq s(q)$, and  $\psi(x)^*\psi(z)=0$, if $r(p)\neq r(r)$, 

\vspace{.2cm}
\noindent for each $p,q,r\in P$, and $x \in X_p$, $y\in X_q$, $z\in X_r$. 

\vspace{.2cm}
 A Toeplitz representation $\psi$
is injective if  homomorphisms
$\psi_u$ are all injective, for $u\in\Gz$. In this case, the linear maps $\psi_p$ are isometry, for $p\in P$. 
Given a representation $\psi$, we have $*$-homomorphisms $\psi^{(p)} \colon \Kaa(X_p)\to D$
defined by $\psi^{(p)}(x \otimes y)=\psi_p(x) \psi_p(y)^*$, for
 $x,y\in X_p$ (compare to \cite{Pimsner1997}). 
 When $D=\mathscr{L}(\mathcal H)$, for some Hilbert space $\mathcal H$, we say that $\psi$ is nondegenerate if $\psi(X)\mathcal H$ is a total set in $H$, and cyclic if $\psi(X)\xi$ is dense in $\mathcal H$, for some $\xi\in\mathcal H$, called a cyclic vector. Two representations $\psi_1: X\to \mathscr{L}(\mathcal H_1)$ and $\psi_2: X\to\mathscr{L}(\mathcal H_2)$ are unitarily equivalent if there is a unitary $U\in\mathscr{L}(\mathcal H_1,\mathcal H_2)$ such that $\psi_2(x)={\rm ad}_U(\psi_1(x))$, for each $x\in X$.    
 
 As in 
 \cite[Proposition~2.8]{F99}, we have the following result.
 
\begin{lemma} \label{univ}
There is a universal
$C^*$-algebra $\Tt_X$ generated by a universal Toeplitz
representation $i$ of $X$. 	
\end{lemma}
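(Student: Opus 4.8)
The plan is to construct $\Tt_X$ explicitly as the quotient of a free (or tensor) construction by the relations (1)--(3) defining a Toeplitz representation, mimicking the classical Fowler argument \cite[Proposition~2.8]{F99} but taking care of the groupoid bookkeeping encoded in the conditions $s(p) = r(q)$ versus $s(p) \neq r(q)$. First I would form the algebraic $*$-algebra $\mathcal{A}_0$ freely generated by symbols $\{s_x : x \in X\}$, linear in each fibre $X_p$, and impose the relations $s_{s(p)}(\langle x, y\rangle_p) = s_x^* s_y$, $s_{xz} = s_x s_z$ when $s(p) = r(q)$, $s_x s_z = 0$ when $s(p) \neq r(q)$, and multiplicativity/$*$-compatibility of the $s_u$ for $u \in \Gz$. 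The key point is that for any Toeplitz representation $\psi$ of $X$ in a $C^*$-algebra $D$ one has the operator-norm bound $\|\psi(x)\| \le \|x\|$ for $x \in X_p$: indeed $\psi_u$ is a $*$-homomorphism hence contractive, and $\|\psi_p(x)\|^2 = \|\psi_p(x)^* \psi_p(x)\| = \|\psi_{s(p)}(\langle x, x\rangle_p)\| \le \|\langle x, x\rangle_p\| = \|x\|^2$ using relation (2). So every element of $\mathcal{A}_0$ has a well-defined finite supremum of norms over all representations, giving a $C^*$-seminorm.

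Next I would check that this supremum seminorm is finite on each generator (immediate from the bound above) and hence on all of $\mathcal{A}_0$, that it is in fact attained (the Fock representation of $X$, built from the direct sum $\bigoplus_{p} X_p$ with the appropriate left and right module structures over the bundle $\{A_u\}$, provides a concrete nonzero Toeplitz representation, so the seminorm is not identically zero and the universal object is nontrivial), and then define $\Tt_X$ to be the Hausdorff completion of $\mathcal{A}_0$ with respect to this $C^*$-seminorm. Writing $i \colon X \to \Tt_X$ for $i(x) = $ the image of $s_x$, the relations (1)--(3) hold in $\Tt_X$ by construction, so $i$ is a Toeplitz representation; and given any Toeplitz representation $\psi$ of $X$ in $D$, the universal property of the free $*$-algebra yields a $*$-homomorphism $\mathcal{A}_0 \to D$ which is norm-decreasing for the seminorm and therefore descends to a $*$-homomorphism $\pi_\psi \colon \Tt_X \to D$ with $\pi_\psi \circ i = \psi$; uniqueness of $\pi_\psi$ follows since $i(X)$ generates $\Tt_X$ as a $C^*$-algebra.

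The main obstacle I anticipate is not the universal-algebra formalism, which is routine, but verifying that the relations are \emph{consistent}, i.e. that the seminorm does not collapse and that the Fock representation genuinely realizes the bundle of correspondences over the base $\Gz$ in a way compatible with the semigroupoid multiplication --- in particular that the isomorphisms $X_p \otimes_{A_{r(q)}} X_q \cong X_{pq}$ and the matching of source/range fibres interact correctly with the vanishing relation (3). Here I would lean on the product-system axioms from Subsection~\ref{subsec_prod_syst} (associativity of the balanced tensor products, $X_u = A_u$, and the compatibility of the $\phi_p$) and on Kumjian's Fell bundle framework \cite{kum} to organize the Fock module $\F(X) = \bigoplus_{p \in P} X_p$ as a Hilbert module over the $C^*$-algebra $\bigoplus_{u \in \Gz} A_u$ (or its $c_0$-direct sum), with $X$ acting by creation operators; checking that these creation operators satisfy (1)--(3) is then a direct computation on elementary tensors. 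Once the Fock representation is in hand, the remaining steps are purely formal and the lemma follows.
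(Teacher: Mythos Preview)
Your proposal is correct and contains the two essential ingredients—the uniform bound $\|\psi(x)\|\le\|x\|$ coming from relation~(2), and the Fock representation to witness non-triviality—both of which also appear in the paper's proof. The organizational frameworks differ, however: you build $\Tt_X$ as the Hausdorff completion of a free $*$-algebra in the universal $C^*$-seminorm, whereas the paper follows the Fowler--Raeburn template \cite[Proposition~1.3]{FR}, first reducing to nondegenerate representations, then decomposing those into cyclic pieces via Zorn's lemma, choosing a small set $\mathfrak{S}$ of cyclic representations containing one from each unitary equivalence class, and defining $i$ to be their direct sum with $\Tt_X:=C^*(i)$. Your approach is slightly more algebraic and avoids the set-theoretic bookkeeping about equivalence classes of cyclic representations; the paper's approach is closer to the existing literature on product systems and makes the link to \cite{FR} explicit. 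In either case the real work is the Fock construction, and here the paper spends most of its effort verifying that the creation operators $l_u(x)$ on $F(X)_u=\bigoplus_{r(p)=u}X_p$ satisfy both the multiplicativity $l(x_r)l(x_s)=l(x_rx_s)$ when $s(r)=r(s)$ and the vanishing $l(x_r)l(x_s)=0$ when $s(r)\neq r(s)$, exactly the compatibility check you flag as the main obstacle; so your anticipated difficulty is on target, and the details are as routine as you expect.
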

\begin{proof}
We adapt the proof of \cite[Proposition 1.3]{FR}. For a Toeplitz representation $\psi:X\to\mathscr{L}(\mathcal H)$, let $C^*(\psi)$ be the $C^*$-subalgebra of $\mathscr{L}(\mathcal H)$ generated by the range of $\psi$, $\mathcal H_\psi$ be the closed subspace of $\mathcal H$ generated by $C^*(\psi)\mathcal H$, and $P_\psi: \mathcal H\to \mathcal H_\psi$ be the corresponding orthogonal projection. Then it is easy to check that 
$$P\psi: X\to \mathscr{L}(\mathcal P_\psi H); \ \ x\mapsto P_\psi\psi(x)P_\psi,$$
is again a Toeplitz representation of $X$, and is nondegenerate. Since $(1-P_\psi)\psi$ is the zero representation, it follows that $\psi$ is a direct sum of a nondegenerate representation and a zero representation. 

Next let us show that nonzero cyclic representations exist. For this we need to construct the Fock representation. Consider the $\Gz$-bundle of right Hilbert modules, $
F(X)_u := \bigoplus_{r(p)=u} X_p,$ where the right hand side is
the subset of $\prod_{p\in P} X_p$
consisting of all elements $(x_p)$ for which
$\sum_{r(p)=u} \langle x_p, x_p \rangle_p$ is summable in $A_{r(p)}$.
We write $\oplus x_p$ for $(x_p)$ to indicate that the above series is summable.
Each $F(X)_u$ is a right Hilbert $A_u$-module with the right action  $(\oplus x_p)\cdot a := \oplus (x_p \cdot a)$, for $a\in A_u$, and inner product $\langle \oplus x_s, \oplus y_s \rangle_{A_u}
:= \sum_{r(p)=u} \langle x_p, y_p \rangle_p$.
Define the Fock representation of $X$ by $$l:=\oplus l_u: X\to \bigoplus_{u\in\Gz} \mathscr{L}_{A_u}(F(X)_u),$$ where $l_u: X\to \mathscr{L}_{A_u}(F(X)_u)$ is defined  by 
$l_u(x)(\oplus y_p):=\oplus z_q,$ with $z_q:=xy_{r^{-1}q},$ if $r\leq q$, and $z_q:=0,$ otherwise; when $x\in X_r$, for some $r\in P$, and the other two indices run over $p,q\in P$ with $r(p)=r(q)=u$.     
Each $l(x)$ is adjointable, and 
$l(x)^*(yz) = \langle x, y\rangle_p\cdot z$ for $x,y\in X_p$ and $z\in X_q$, with $r(q)=s(p)$. 

Let us observe that  $l$ 
is a Toeplitz representation. For $x_r\in X_r, x_s\in X_s$ with $r(s)=s(r)$, 
$$l_u(x_r)l_u(x_s)(\oplus y_p)=l_u(x_r)(\oplus z_q)=(\oplus w_t),$$ with $z_q:=x_sy_{s^{-1}q},$ if $s\leq q$, and $z_q:=0,$ otherwise, and $w_t:=x_rz_{r^{-1}t}$, if $r\leq t$, and $w_t:=0,$ otherwise, for $r(t)=r(p)=r(q)=u$. Similarly, 
$$l_u(x_rx_s)(\oplus y_p)=(\oplus w^{'}_t),$$ with $w^{'}_t:=x_rx_sy_{s^{-1}r^{-1}t},$ if $rs\leq t$, and $w^{'}_t:=0,$ otherwise,  for $r(t)=r(p)=r(q)=u$. Now since the inequalities $r\leq t$ and $s\leq r^{-1}t$ both hold iff $rs\leq t$, we have $w_t=x_rx_sy_{s^{-1}r^{-1}t}=w^{'}_t$, for each $t$, that is, $l(x_r)l(x_s)=l(x_rx_s)$, when $r(s)=s(r)$. On the other hand, if the inequalities $r\leq t$ and $s\leq r^{-1}t$ both hold, we have $r(s)=r(r^{-1}t)=s(r)$, which means that if $r(s)\neq s(r)$, then $w_t=0$, for each $t$, that is,  $l(x_r)l(x_s)=0$, in this case.

Next, the restriction of $l$ to $A_u$ is nothing but $\oplus_{r(p)=u} \phi_p$, which is known to be isometric. If we faithfully represent the  $C^*$-algebra $\bigoplus_{u\in\Gz} \mathscr{L}_{A_u}(F(X)_u)$ in a Hilbert space $\mathcal H$, the Fock representation $l$ would have a nonzero cyclic summand.

Now by Zorn lemma, every nondegenerate representation is a direct sum of cyclic representations. Let $\mathfrak S$ be a set of cyclic representations of $X$ such that every representation $\psi: X$ in a Hilbert space $H$ is unitarily equivalent to an element of $\mathfrak S$. Such a set $\mathfrak S$ exists an could be realized as the set of cyclic representations on subspaces of a Hilbert space of sufficiently large dimension. Now let $i$ be the direct sum of all the representations in $\mathfrak S$, which makes sense, since for each $\psi\in\mathfrak S$ and $x\in X$,
$$\|\psi(x)\|=\|\psi_{s(d(x))}(\langle x,x\rangle_{d(x)})\|^{\frac{1}{2}}\leq\|\langle x,x\rangle_{d(x)}\|^{\frac{1}{2}}=\|x\|.$$    
Put $\Tt_X:=C^*(i)$, then $\Tc(X)$ is universal among all $C^*$-algebras generated by range of a Toeplitz representation of $X$, by the definition of $\mathfrak S$.  
  \end{proof}
When
$X$ is a compactly aligned, 
a Toeplitz representation $\psi$ of $X$ is Nica-covariant if
\[
\displaystyle \psi^{(p)}(S)\psi^{(q)}(T) =
\begin{cases}
\psi^{(p\vee q)}\big(\iota^{p\vee q}_p(S)\iota^{p\vee q}_q(T)\big)
& \text{if $p\vee q$\ exists} \\
0 &\text{otherwise,}
\end{cases}
\]
for all $S \in \Kaa(X_p)$ and $T \in \mathscr{K}_{A_{r(q)}}(X_q)$ (compare to \cite[Definition 5.7]{F99}). Let $\Tc(X)$ be the quotient of
$\Tt_X$ by the ideal generated by the elements
$$
i^{(p)}(S)i^{(q)}(T) - i^{(p \vee q)}(\iota^{p \vee q}_p(S)
\iota^{p \vee q}_q(T)),
$$ where $p,q \in P$, $S \in \Kaa(X_p)$, $T
\in \mathscr{K}_{A_{r(q)}}(X_q)$, with convention, $\iota^{p \vee q}_p(S)
\iota^{p \vee q}_q(T) = 0$ if $p \vee q = \infty$. The
composition  of  the quotient map from $\Tt_X$ onto $\Tc(X)$
with the universal Toeplitz representation $i$ is a Nica covariant Toeplitz representation $i_X
\colon X\to \Tc(X)$ with the following universal property: if
$\psi$ is a Nica covariant Toeplitz representation of $X$ in
$D$ there is a $*$-homomorphism $\psi_* : \Tc(X) \to D$ such
that $\psi_*\circ i_X=\psi$. When $X$ is a compactly aligned
product system of essential Hilbert bimodules, $\Tc(X)$
resembles Fowler's algebra \cite{F99} for
product systems of essential Hilbert bimodules over quasi-lattice ordered groups. 

\begin{lemma}\label{closed2}
	Let $X$ be compactly-aligned product system over $P$,
	$\psi: X\to D$ be a Nica-covariant Toeplitz representation of $X$,
	$p,q\in P$, $y\in X_p$, and $z\in X_q$.
	Then  $\psi(y)^*\psi(z) = 0$, if $p\vee q = \infty$; and
	otherwise,
	\[
	\psi(y)^*\psi(z)
	\in\clsp\{\psi(u)\psi(v)^*: u\in X_{p^{-1}(p\vee q)}, v\in X_{q^{-1}(p\vee q)}\}.
	\]
\end{lemma}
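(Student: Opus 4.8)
The plan is to reduce the statement, via an approximate-identity manipulation, to a single application of Nica covariance followed by the product-system factorizations of $X_{p\vee q}$; this transcribes to the groupoid setting the arguments of Fowler and of Sims--Yeend in the group case. Two preliminary observations are needed. First, if $p\vee q$ exists then $r(p)=r(q)=r(p\vee q)$, since $p\le p\vee q$ and $q\le p\vee q$ force these ranges to agree; equivalently, $r(p)\ne r(q)$ implies $p$ and $q$ have no common upper bound in $P$, so $p\vee q=\infty$ in that case too. Secondly, if $(e^{p}_{i})$ is an approximate identity for $\mathscr{K}_{A_{r(p)}}(X_p)$ then $e^{p}_{i}\cdot x\to x$ for every $x\in X_p$ (a standard Hilbert-module fact), while a short computation from axioms (1)--(2) of a Toeplitz representation gives $\psi^{(p)}(S)\psi(x)=\psi(Sx)$ for $S\in\mathscr{K}_{A_{r(p)}}(X_p)$ and $x\in X_p$; hence $\psi^{(p)}(e^{p}_{i})\psi(x)=\psi(e^{p}_{i}x)\to\psi(x)$ and, taking adjoints, $\psi(y)^{*}=\lim_i\psi(y)^{*}\psi^{(p)}(e^{p}_{i})$, and likewise $\psi(z)=\lim_j\psi^{(q)}(e^{q}_{j})\psi(z)$ for an approximate identity $(e^{q}_{j})$ of $\mathscr{K}_{A_{r(q)}}(X_q)$.

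Combining these and using continuity of multiplication, I would write
\[
\psi(y)^{*}\psi(z)=\lim_i\lim_j\ \psi(y)^{*}\,\psi^{(p)}(e^{p}_{i})\,\psi^{(q)}(e^{q}_{j})\,\psi(z).
\]
Nica covariance then evaluates the central product: $\psi^{(p)}(e^{p}_{i})\psi^{(q)}(e^{q}_{j})=0$ when $p\vee q=\infty$ (which, by the first observation, also disposes of the case $r(p)\ne r(q)$), and this immediately gives $\psi(y)^{*}\psi(z)=0$, proving the first assertion. When $m:=p\vee q<\infty$ the central product equals $\psi^{(m)}\!\big(\iota^{m}_{p}(e^{p}_{i})\,\iota^{m}_{q}(e^{q}_{j})\big)$, and compact alignment forces $K_{ij}:=\iota^{m}_{p}(e^{p}_{i})\iota^{m}_{q}(e^{q}_{j})$ to lie in $\mathscr{K}_{A_{r(m)}}(X_m)$; since $\psi^{(m)}$ is norm-continuous and $\mathscr{K}_{A_{r(m)}}(X_m)$ is the closed span of rank-one operators, $\psi^{(m)}(K_{ij})$ is a norm-limit of finite sums $\sum\psi(a)\psi(b)^{*}$ with $a,b\in X_m$. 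As the span in the statement is closed, it then suffices to prove that $\psi(y)^{*}\psi(a)\psi(b)^{*}\psi(z)\in\clsp\{\psi(u)\psi(v)^{*}:u\in X_{p^{-1}m},\,v\in X_{q^{-1}m}\}$ for all $a,b\in X_m$.

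For this last point I would use the product-system isomorphisms $X_m\cong X_p\otimes_{A_{s(p)}}X_{p^{-1}m}$ and $X_m\cong X_q\otimes_{A_{s(q)}}X_{q^{-1}m}$ (valid because $m=p\cdot(p^{-1}m)=q\cdot(q^{-1}m)$ with $s(p)=r(p^{-1}m)$ and $s(q)=r(q^{-1}m)$), so that $a$ and $b$ are norm-limits of finite sums of products $a'a''$ with $a'\in X_p,\ a''\in X_{p^{-1}m}$, respectively $b'b''$ with $b'\in X_q,\ b''\in X_{q^{-1}m}$. Invoking axiom (2) of $\psi$ — in particular $\psi_{r(q')}(c)\psi(x)=\psi(\phi_{q'}(c)x)$ for $x\in X_{q'}$, $c\in A_{r(q')}$, which is just semigroupoid multiplicativity — one computes
\[
\psi(y)^{*}\psi(a'a'')=\psi\!\big(\phi_{p^{-1}m}(\langle y,a'\rangle_{p})\,a''\big)\in\psi(X_{p^{-1}m}),
\]
\[
\psi(b'b'')^{*}\psi(z)=\psi\!\big(\phi_{q^{-1}m}(\langle z,b'\rangle_{q})\,b''\big)^{*}\in\{\psi(v)^{*}:v\in X_{q^{-1}m}\},
\]
so that $\psi(y)^{*}\psi(a)\psi(b)^{*}\psi(z)=\big(\psi(y)^{*}\psi(a)\big)\big(\psi(b)^{*}\psi(z)\big)$ lies in the required closed span. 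Re-assembling the limits — over the factorizations of $a$ and $b$, then over $j$, then over $i$ — completes the argument.

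The step I expect to be the real work is not conceptual but the groupoid bookkeeping: at each stage one must check that the products occurring in $G$ are composable and that the coefficient algebras flanking every balanced tensor coincide (for instance $A_{s(p)}=A_{r(p^{-1}m)}$ and $A_{r(p)}=A_{r(q)}=A_{r(m)}$), and one must confirm that the approximate-identity manipulations only use the Toeplitz axioms together with the fibrewise Hilbert-module fact $e^{p}_{i}x\to x$. Once this is verified, the proof is a faithful adaptation of the group-case argument.
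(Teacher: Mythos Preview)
Your proof is correct and follows essentially the same approach as the paper's: insert compacts between $\psi(y)^*$ and $\psi(z)$, apply Nica covariance to obtain either zero or $\psi^{(p\vee q)}(K)$ with $K$ compact, then expand $K$ via rank-one operators and factor each leg through $X_{p\vee q}\cong X_p\otimes X_{p^{-1}(p\vee q)}\cong X_q\otimes X_{q^{-1}(p\vee q)}$. The only cosmetic difference is that the paper inserts the compacts by writing $y=Sy'$ and $z=Tz'$ via Cohen--Hewitt factorization (so $\psi(y)^*\psi(z)=\psi(y')^*\psi^{(p)}(S^*)\psi^{(q)}(T)\psi(z')$ directly), whereas you achieve the same insertion by an approximate-identity limit; both are standard and yield the identical reduction.
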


\begin{proof} Let us write $y = Sy'$ with $S\in\mathscr{K}_{A_{r(p)}}(X_p)$ and $y'\in X_p$, and $z = Tz'$ with $T\in\mathscr{K}_{A_{r(q)}}(X_q)$ and $z'\in X_q$.
	Since $\psi$ is Nica covariant,
	\[
	\psi(y)^*\psi(z) = \psi(y')^*\psi^{(p)}(S^*)\psi^{(q)}(T)\psi(z')
	\]
	which is zero if $p\vee q = \infty$, and is otherwise equal to
	$
	\psi(y)^*\psi(z) = \psi(y')^*\psi^{(p\vee q)}(K)\psi(z'),
	$
	for $K := (S^*\otimes_{A_u} 1)(T\otimes_{A_u} 1) \in\mathscr{K}_{A_u}(X_{p\vee q})$, $u:=r(p\vee q)$.
	Since  $K$ is compact, it can be approximated in norm by a finite sum of operators of the form
	$u\otimes v^*$,  with $u,v\in X_{p\vee q}$,
	thus $\psi^{(p\vee q)}(K)$ can be approximated by finite sums
	of elements in $D$ of the form $\psi(u_1)\psi(v_1)^*$.
	We may respectively approximate $u_1$ and $v_1$ by finite sums
	of products of the form $u_2u'$ and $v_2v'$, with $u_2\in X_p$, $v_2\in X_q$, and $u'\in X_{p^{-1}(p\vee q)}$, $v'\in X_{q^{-1}(p\vee q)}$.
	Hence $\psi(y')^*\psi^{(p\vee q)}(K)\psi(z')$ can be approximated in norm
	by finite sums of operators of the form
	\[
	\psi(y')^*\psi(u_2)\psi(u')\psi(v')^*\psi(v_2)^*\psi(z')
	= \psi(\langle y',u_2 \rangle_p u')\psi(\langle z',v_2 \rangle_q v')^*=:\psi(u)\psi(v)^*,
	\]
	as claimed. 
\end{proof}

\begin{lemma}\label{zero}
Let $X$ be compactly-aligned product system over $P$,
$\psi: X\to D$ be a Nica-covariant Toeplitz representation of $X$, then $\psi(x)\psi(y)^{*}=0$, if $s(p)\neq s(q)$, and  $\psi(x)^*\psi(z)=0$, if $r(p)\neq r(r)$, for each $p,q,r\in P$, and $x \in X_p$, $y\in X_q$, $z\in X_r$. 
\end{lemma}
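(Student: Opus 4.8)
The plan is to handle the two assertions separately; neither needs more than the defining properties~(1)--(3) of a Toeplitz representation together with Lemma~\ref{closed2}.

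For the claim $\psi(x)^*\psi(z)=0$ when $r(p)\neq r(r)$, I would first observe that $p$ and $r$ cannot have a common upper bound in $P$: any $m\in P$ with $p\le m$ and $r\le m$ would satisfy $p^{-1}m,\,r^{-1}m\in P$, which forces $r(p)=r(m)=r(r)$, contradicting $r(p)\neq r(r)$. Hence $p\vee r=\infty$, and Lemma~\ref{closed2} (applied with $y:=x\in X_p$, $z\in X_r$) gives $\psi(x)^*\psi(z)=0$ immediately.

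For the claim $\psi(x)\psi(y)^*=0$ when $s(p)\neq s(q)$, the key point is that $\psi(y)^*\psi(y)$ is supported on the unit $s(q)$: by property~(2), $\psi(y)^*\psi(y)=\psi_{s(q)}\bigl(\langle y,y\rangle_q\bigr)=\psi(b)$ with $b:=\langle y,y\rangle_q\in A_{s(q)}=X_{s(q)}$. Since $s(q)\in\Gz$ is a unit we have $r(s(q))=s(q)\neq s(p)$, so property~(3) yields $\psi(x)\psi(b)=0$. Therefore
\[
\bigl(\psi(x)\psi(y)^*\bigr)\bigl(\psi(x)\psi(y)^*\bigr)^{*}
=\psi(x)\,\bigl(\psi(y)^*\psi(y)\bigr)\,\psi(x)^{*}
=\psi(x)\psi(b)\psi(x)^{*}=0,
\]
and the $C^{*}$-identity forces $\psi(x)\psi(y)^*=0$.

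I do not expect a serious obstacle; the only care required is the bookkeeping with the groupoid conventions, namely that a common upper bound forces equality of ranges (so Lemma~\ref{closed2} applies), and that a unit $u\in\Gz$ satisfies $r(u)=s(u)=u$ (so that property~(3) applies to $b=\langle y,y\rangle_q$). It is worth noting that the first assertion uses only the Toeplitz axioms, not Nica covariance or compact alignment; alternatively one could insert an approximate identity of $A_{s(p)}$ between $\psi(x)$ and $\psi(y)^*$, but the $C^{*}$-identity computation above is cleaner.
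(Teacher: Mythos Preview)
Your proof is correct. For the second assertion you argue exactly as the paper does: $r(p)\neq r(r)$ forces $p\vee r=\infty$, and Lemma~\ref{closed2} finishes it.

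For the first assertion your route differs from the paper's. The paper factors $x=Sx'$ and $y=Ty'$ with $S\in\Kaa(X_p)$, $T\in\mathscr{K}_{A_{r(q)}}(X_q)$, writes $\psi(x)\psi(y)^*=\psi^{(p)}(S)\psi(x')\psi(y')^*\psi^{(q)}(T^*)$, then approximates $S$ and $T^*$ by rank-one operators and observes that the middle factor $\psi_{s(p)}(\langle v,x'\rangle_p)\,\psi_{s(q)}(\langle y',u'\rangle_q)$ vanishes by property~(3). Your argument via the $C^*$-identity is shorter and avoids the factorisation and approximation step altogether: you compute $(\psi(x)\psi(y)^*)(\psi(x)\psi(y)^*)^*=\psi(x)\psi_{s(q)}(\langle y,y\rangle_q)\psi(x)^*$ and kill the middle factor directly with property~(3). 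Both approaches use only the Toeplitz axioms for this half, as you note; your version simply gets there faster.
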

\begin{proof}
Let us write $x = Sx'$ with $S\in\mathscr{K}_{A_{r(p)}}(X_p)$ and $x'\in X_p$, and $y = Ty'$ with $T\in\mathscr{K}_{A_{r(q)}}(X_q)$ and $y'\in X_q$. Then,
\[
\psi(x)\psi(y)^* = \psi^{(p)}(S)\psi(x')\psi(y')^*\psi^{(q)}(T^*).
\]
As above, let us approximate compact operators $S$ and $T^*$ in norm, respectively  by  finite sums of the form
$u\otimes v^*$ and $u'\otimes v'^{*}$, with $u,v\in X_{p}$ and $u',v'\in X_q$, and observe that if $s(p)\neq s(q)$,
\begin{align*}
\psi^{(p)}(u\otimes v^*)\psi(x')\psi(y')^*\psi^{(q)}(u'\otimes v'^{*})&=\psi_p(u)\psi_p(v)^*\psi_p(x')\psi_q(y')^*\psi_q(u')\pi_q(v')^*\\&=\psi_p(u)\psi_{s(p)}(\langle v,x'\rangle_p)\psi_{s(q)}(\langle y',u'\rangle_q)\pi_q(v')^*=0,
\end{align*}
since the product of  two middle terms is zero by part (3) of the definition of a Toeplitz representation. This proves the first statement. The second statement, follows from Lemma \ref{closed2}, as $p\vee r=\infty$ if $r(p)\neq r(r)$. 
\end{proof}

As in  \cite[Theorem~6.3]{F99}, it follows that, 
\begin{equation*}\label{eq:Tc spanners}
\Tc(X)=\clsp\{\, i_X(x)i_X(y)^* \mid x, y \in X \,\}.
\end{equation*}
Indeed, the right hand side is a norm closed, selfadjoint subspace of $\Tc(X)$, which includes $i_X(X)$, as each $z\in X$ could be written as $z=z'\cdot a$, for some $z'\in X_p$ and $a\in A_{s(p)}$, that gives $i_X(z)=i_X(z')i_X(a^*)^*$. The equality now follows from the fact that the right hand side is also closed under multiplication, which in turn follows from Lemma \ref{closed2}.    
By a similar argument, we have the following more general result.

\begin{lemma}\label{image}
If the image of a Nica covariant
Toeplitz representation $\psi$ of $X$ in $D$ generates $D$ as a $C^*$-algebra, then
$D= \clsp\{\, \psi(x)\psi(y)^* \mid x,y \in X \,\}$.
\end{lemma}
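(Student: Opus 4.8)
The plan is to mimic the argument just given for $\Tc(X)=\clsp\{i_X(x)i_X(y)^*\}$, but now working inside $D$ with the representation $\psi$ in place of $i_X$. Set $E:=\clsp\{\psi(x)\psi(y)^*\mid x,y\in X\}$. First I would verify that $E$ is a norm-closed, self-adjoint subspace of $D$; self-adjointness is immediate from $(\psi(x)\psi(y)^*)^*=\psi(y)\psi(x)^*$, and closedness holds by definition. Next I would show $\psi(X)\subseteq E$: given $z\in X$, write $z=z'\cdot a$ with $z'\in X_p$ and $a\in A_{s(p)}$ (using that $X_p$ is an essential right $A_{s(p)}$-module, or simply a Cohen factorization / approximate-identity argument), so that $\psi(z)=\psi(z')\psi(a)=\psi(z')\psi(a^*)^*$, which lies in $E$. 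Likewise $\psi(z)^*\in E$ by self-adjointness.

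The crux is then to show that $E$ is closed under multiplication, so that $E$ is a $C^*$-subalgebra of $D$ containing $\psi(X)$; since $\psi(X)$ generates $D$, this forces $E=D$. For a typical product $\psi(x_1)\psi(y_1)^*\psi(x_2)\psi(y_2)^*$ with $x_i\in X_{p_i}$, $y_i\in X_{q_i}$, the middle factor is $\psi(y_1)^*\psi(x_2)$. If $r(q_1)\neq r(p_2)$ this is zero by Lemma~\ref{zero} (or by Lemma~\ref{closed2}, since then $q_1\vee p_2=\infty$); if $r(q_1)=r(p_2)$ but $q_1\vee p_2=\infty$ it is again zero by Lemma~\ref{closed2}; and otherwise Lemma~\ref{closed2} gives
\[
\psi(y_1)^*\psi(x_2)\in\clsp\{\psi(u)\psi(v)^*: u\in X_{q_1^{-1}(q_1\vee p_2)},\ v\in X_{p_2^{-1}(q_1\vee p_2)}\}.
\]
Substituting this expansion, each resulting term has the shape $\psi(x_1)\psi(u)\cdot\psi(v)^*\psi(y_2)^*$. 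Here $\psi(x_1)\psi(u)=\psi(x_1u)$ whenever $s(p_1)=r(d(u))$ — and when $s(p_1)\neq r(d(u))$ the product vanishes by part~(3) of the definition of a Toeplitz representation — and similarly $\psi(v)^*\psi(y_2)^*=(\psi(y_2)\psi(v))^*=\psi(y_2v)^*$ (or $0$). Hence each term lies in $\clsp\{\psi(w_1)\psi(w_2)^*\}\subseteq E$. Taking closed linear spans, $E\cdot E\subseteq E$.

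I expect the main obstacle to be purely bookkeeping rather than conceptual: carefully tracking the source/range conditions in the groupoid so that the products $x_1u$, $y_2v$ and the inner-product pairings are actually defined, and confirming that every multiplication or adjoint step is either legitimized by parts (2)--(3) of the Toeplitz definition or else produces $0$. One should also double-check the factorization $z=z'\cdot a$ is available without extra hypotheses (it is, by the Hewitt--Cohen factorization theorem, or by approximating $z$ by $z\cdot e_\lambda$ for an approximate identity $(e_\lambda)$ of $A_{s(d(z))}$ and using norm-density), so that $\psi(X)\subseteq E$ genuinely holds. Once closedness under multiplication and adjoints is in hand, the conclusion $E=D$ is immediate from the hypothesis that $\psi(X)$ generates $D$, so no further argument is needed.
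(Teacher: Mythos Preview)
Your proposal is correct and follows exactly the approach the paper indicates: the paper does not give a separate proof but simply states that the lemma follows ``by a similar argument'' to the one in the preceding paragraph for $\Tc(X)$, which is precisely the self-adjoint/closed-under-multiplication argument via Lemma~\ref{closed2} that you have written out in detail. Your additional care with the groupoid source/range bookkeeping and the factorization $z=z'\cdot a$ is appropriate and does not deviate from the paper's method.
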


\subsection{The Cuntz-Nica-Pimsner algebra $\NO{X}$}\label{subsection_def_NOX}
Next, let us define Cuntz-Pimsner covariance of representations, adapted from \cite[Section~3]{SY}. A predicate statement $\mathcal{P}(s)$, with $s \in
P$, is said to be true for large $s$ if for every $q \in P$ there exists
$r \ge q$ such that $\mathcal{P}(s)$ is true whenever $r \le
s$.

When $X$ is a compactly
aligned over $P$, we let $I_u = A_u$, for $u\in\Gz$, and put $I_q := \bigcap_{0<p \le q}
\ker(\phi_p)$, for 
$q \in P \backslash \Gz$, which is a closed two-sided ideal in $A_{r(q)}$, where the notation $0<p\leq q$ simply means that the intersections runs over those $p\in P\backslash \Gz$ satisfying $p\leq q$. We put,
$
\widetilde{X}_q := \textstyle{\bigoplus}_{p \le q} X_p \cdot I_{p^{-1}q}, 
$
regarded as an $A_{r(q)}$-$A_{s(q)}$-correspondence, with the homomorphism implementing the left action denoted by
$\tilde\phi_q$. When all the homomorphisms
$\tilde\phi_q$ are  injective, we say that $X$ is
$\tilde\phi$-injective. 

With the convention of  $\iota^q_p(T)$ being 0 in ${\mathscr{L}_{A_{r(q)}}(X_q)}$, for $T \in \Laa(X_p)$, when $p \not\le q$, we 
have homomorphisms $\tilde\iota^q_p \colon \Laa(X_p) \to
\mathscr{L}_{A_{r(q)}}(\widetilde{X}_q)$, for $p \in P\backslash\Gz$ and  $q \in P$, defined by $\tilde\iota^q_p(T) = \bigoplus_{r \le q}
\iota^r_p(T)$ for $T\in \Laa(X_p)$. Similarly, for $u\in\Gz$,
there is a homomorphism $\tilde\iota^q_u :
\mathscr{K}_{A_u}(X_u) \to \mathscr{L}_{A_{r(q)}}(\widetilde{X}_q)$. 

When  $X$ is moreover $\tilde\phi$-injective, a Nica covariant
Toeplitz representation $\psi$ of $X$ in a $C^*$-algebra $D$ is called Cuntz-Nica-Pimsner covariant (abbreviated as CNP-covariant)
if for each finite subset $F \subset P$ and any choice of $T_p\in \mathscr{K}_{A_{r(p)}}(X_p)$, for $p\in F$, the finite sum $\sum_{p \in F} \psi^{(p)}(T_p)$ is zero in $D$, whenever 
the finite sum $\sum_{p \in F} \tilde\iota^q_p(T_p)$ is zero in $\mathscr{L}_{A_{r(q)}}(\widetilde{X}_q)$, for large $q$.
We write $\NO{X}$ for the universal
$C^*$-algebra generated by a CNP-covariant representation $j_X$
of $X$, and call it the Cuntz-Nica-Pimsner algebra of $X$. 

Consider the closed two sided ideal $I_{\rm cov}$ of $\Tc(X)$
generated by
the set of finite sums $\sum_{p \in F} i_X^{(p)}(T_p)$, with  $T_p \in \Kaa(X_p)$, for $p\in F\subset P$, such that $\sum_{p \in E}
\iotale_p^s(T_p) = 0$, for large $s$, then it follows immediately from definition that $\NO{X}$ is isomorphic to the quotient $\Tc(X)/I_{\rm cov}$. Also, for each CNP-covariant representation $\psi: X\to D$, and corresponding homomorphism $\psi_*: \Tc(X)\to D$ satisfying $\psi=\psi_*\circ i_X$, by covariance property of $\psi$, we have $I_{cov}\subseteq \ker(\psi_*)$, therefore, $\psi_*$ descends to a homomorphism $\Pi\psi: \NO{X}\to D$, satisfying  $\psi=\Pi\psi\circ j_X$.   

Similar to \cite[Theorem~4.1]{SY}, let us show that  $j_X$
is an injective representation, whenever each representation  $\tilde\phi_p$ is injective, or each bounded subset of $P$ has a maximal element. This needs some preparation. 

Following \cite{SY}, we define the augmented Fock representation using the $\Gz$-bundle $
\tilde F(X)_u := \bigoplus_{r(q)=u} \tilde X_q=\bigoplus_{r(q)=u}\bigoplus_{p \le q} X_p\cdot I_{p^{-1}q}.$
Let $X$ be a compactly aligned product system over $P$ and $l$ be its Fock representation constructed as in Lemma \ref{univ}. For $p,q,r \in P$ with $p \le q$, and $r(p)=r(q)=s(r)=u$, let $x \in
		X_r$ and $z \in X_p \cdot I_{p^{-1} q}$. Write $z = z' \cdot a$ with  $z' \in X_p$ and $a
		\in I_{p^{-1}q} \subset A_u$, then $xz = x(z' \cdot a) = (xz')\cdot a \in
		X_{rp} \cdot I_{(rq)^{-1}rq}$. This observation shows that, if we regard each $\tilde X_q$ as a submodule of $F(X)_u$, and given $x\in X$, let $\tilde l_u(x)$ be the restriction of $l_u(x)$ to $\tilde F(X)_u$, then $\Augl(x) \in
		\mathscr{L}_{A_u}(\tilde F(X)_u)$. Let us observe that  The map $x \mapsto \Augl(x):=\oplus \tilde l_u(x)$ is a Nica covariant representation of $X$ in
		$\bigoplus_{u\in\Gz} \mathscr{L}_{A_u}(\tilde F(X)_u)$:
		if $x\in X_p$ and $y=\oplus y_s\in \tilde F(X)_u$
		with $y_s= 0$, for all $s \ge p$, we need to show that $\Augl(x)^*y =
		0$. By linearity and continuity of $\Augl(x)^*$, we
	may assume that $y$ has just one nonzero coordinate, say there exists $r \in P$ such that $r \not\ge
	p$ and $y_s= 0$, for $s \not= r$. For $z \in \tilde F(X)_u$, we 
	have,
	\[
	\langle \Augl(x)^* y, z \rangle_{A_u}
	= \langle y, \Augl(x) z \rangle_{A_u}
	= \sum_{s \in P} \langle y_s, (\Augl(x) z)_s \rangle_s
	= \langle y_r, (\Augl(x) z)_r \rangle_r
	= 0,
	\]
since $r \not\ge p$.

Let $B_X$ (resp., $B_{\tilde X}$) be the $C^*$-algebra generated by the range of the (resp., augmented) Fock representation $\Augl$ in 
$\bigoplus_{u\in\Gz}\mathscr{L}_{A_u}(F(X)_u)$ (resp., in $\bigoplus_{u\in\Gz}\mathscr{L}_{A_u}(\tilde F(X)_u)$), which is called the (resp., augmented) Fock $C^*$-algebra of $X$. It follows from Lemma \ref{closed2} that $B_{\tilde X} = \clsp\{\Augl(x)
\Augl(y)^* : x,y \in X, d(x)=d(y)\}$.

Consider the closed two sided ideal $I_{\tilde X}$ of $B_{\tilde X}$
generated by
the set $K_{\tilde X}$ consisting of finite sums $\sum_{p \in E} \Augl^{(p)}(T_p)$, with  $T_p \in \Kaa(X_p)$, for $p\in E\subset P$, such that $\sum_{p \in E}
		\iotale_p^s(T_p) = 0$, for large $s$, then,
		\[I_{\tilde X} = \clsp\{\Augl(x)\Augl(y)^* k \Augl(x') \Augl(y')^* : k\in
		\overline{K_{\tilde X}}, x,y,x',y' \in X, d(x)=d(y), d(x')=d(y')\}.
		\]
Let  $q_{\tilde X} : B_{\tilde X} \to B_{\tilde X}/I_{\tilde X}$ be the quotient
map. Then $\psi_{\tilde X} := q_{\tilde X} \circ \Augl$ is a CNP-covariant
representation of $X$ in $B_{\tilde X}/I_{\tilde X}$.

\begin{lemma} \label{faithful}
	Let $X$ be a compactly aligned product system over $P$, and $\phile_p$ is injective, for $p\in P$.
	Then the restriction of $\psi_{\tilde X}$ on each fibre $X_u=A_u$, $u\in\Gz$,  is faithful.
\end{lemma}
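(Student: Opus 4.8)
\emph{Plan.} The goal is to show that $\psi_{\tilde X}=q_{\tilde X}\circ\Augl$ is isometric on $A_u$; faithfulness follows at once. Concretely, I want to prove that $\|\Augl(a)+k\|\ge\|a\|$ for every $a\in A_u$ and every $k\in I_{\tilde X}$, the reverse inequality being automatic since $\psi_{\tilde X}$ is a representation. The main device is compression onto ``high--degree'' summands of the augmented Fock module: for $q\in P$ with $r(q)=u$ let $P^{\ge q}$ be the orthogonal projection of $\tilde F(X)_u=\bigoplus_{r(q')=u}\tilde X_{q'}$ onto $\bigoplus_{q'\ge q}\tilde X_{q'}$, extended by $0$ on the remaining fibres $\mathscr{L}_{A_w}(\tilde F(X)_w)$, $w\neq u$.

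Two facts drive the argument. First, $\Augl(a)=\bigoplus_{r(q')=u}\tilde\phi_{q'}(a)$ is block--diagonal on $\tilde F(X)_u$, so $P^{\ge q}\Augl(a)P^{\ge q}=\bigoplus_{q'\ge q}\tilde\phi_{q'}(a)$, whose norm is at least $\|\tilde\phi_q(a)\|$; since $\tilde\phi_q$ is injective it is isometric, hence $\|P^{\ge q}\Augl(a)P^{\ge q}\|\ge\|a\|$ for \emph{every} such $q$. Second, every element of $\overline{K_{\tilde X}}$ is block--diagonal in the decomposition $\bigoplus_{r(q')=u}\tilde X_{q'}$, with $q'$--block $\sum_{p\in E}\tilde\iota_p^{q'}(T_p)$, which vanishes for large $q'$ by the definition of $K_{\tilde X}$; thus for $k\in K_{\tilde X}$ one has $P^{\ge q}kP^{\ge q}=0$ for all sufficiently large $q$ with $r(q)=u$.

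Next I would propagate the second fact through the ideal. Using $I_{\tilde X}=\clsp\{\Augl(x)\Augl(y)^*\,k\,\Augl(x')\Augl(y')^*:k\in\overline{K_{\tilde X}},\ x,y,x',y'\in X\}$ and the fact that $\Augl(x)\Augl(y)^*$ carries $\tilde X_{q'}$ into $\tilde X_{d(x)d(y)^{-1}q'}$, a direct check shows that for $k\in K_{\tilde X}$ the operator $\Augl(x)\Augl(y)^*\,k\,\Augl(x')\Augl(y')^*$ is, as a block matrix in the $\tilde X_{q'}$--decomposition, supported on domain indices in a set $\{q':g\,q'\in S\}$, where $S$ is the (degree--)small support of $k$ and $g\in G$ is fixed; a short order--theoretic argument shows such a set is again degree--small, so its high--degree compressions vanish, i.e.\ $P^{\ge q}(\Augl(x)\Augl(y)^*\,k\,\Augl(x')\Augl(y')^*)P^{\ge q}=0$ for cofinally many $q$. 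Approximating an arbitrary element of $I_{\tilde X}$ by finite sums of such products (using Lemma \ref{image} for the density of finite sums $\sum\Augl(x)\Augl(y)^*$ in $B_{\tilde X}$) and passing to the limit yields: for each $k\in I_{\tilde X}$ and each $\varepsilon>0$ there are cofinally many $q$ with $\|P^{\ge q}kP^{\ge q}\|<\varepsilon$.

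Combining the two inputs, for $a\in A_u$, $k\in I_{\tilde X}$ and $\varepsilon>0$ pick $q$ with $r(q)=u$ and $\|P^{\ge q}kP^{\ge q}\|<\varepsilon$; then
\[
\|\Augl(a)+k\|\ \ge\ \|P^{\ge q}(\Augl(a)+k)P^{\ge q}\|\ \ge\ \|a\|-\|P^{\ge q}kP^{\ge q}\|\ >\ \|a\|-\varepsilon .
\]
Letting $\varepsilon\to0$ and then taking the infimum over $k\in I_{\tilde X}$ gives $\|\psi_{\tilde X}(a)\|\ge\|a\|$, so $\psi_{\tilde X}|_{A_u}$ is isometric, hence faithful. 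The step I expect to be the main obstacle is the propagation through $I_{\tilde X}$: one must verify carefully, in the groupoid setting and keeping track of the composability constraints $s(p)=r(q')$ that decide when a ``shift'' $\Augl(x)\Augl(y)^*$ is non-zero on a given $\tilde X_{q'}$, that these shifts send a degree--small support set to another degree--small set, so that conjugating a $K_{\tilde X}$--element by elements of $B_{\tilde X}$ preserves the vanishing of high--degree compressions. The density approximation and the order--theoretic lemma that translates of degree--small sets are degree--small are routine by comparison.
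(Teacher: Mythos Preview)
Your strategy is essentially the paper's own. Both arguments rest on the same two observations: (i) $\Augl(a)$ restricts to $\tilde\phi_q(a)$ on each $\tilde X_q$, which is isometric by hypothesis, and (ii) every element of $I_{\tilde X}$ can be made arbitrarily small ``far out'' in the $q$-direction. The only cosmetic difference is that you compress by $P^{\ge q}$ while the paper simply restricts to a single summand $\tilde X_s$; the latter is marginally cleaner, since it avoids keeping track of the full block matrix under shifts $\Augl(x)\Augl(y)^*$ and reduces the estimate to $\|\Augl(a)+k\|\ge\|(\Augl(a)+k)|_{\tilde X_s}\|\ge\|a\|-\|k|_{\tilde X_s}\|$.

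For the propagation step you flag as the main obstacle, the paper's resolution is exactly the ``shift of a degree-small set is degree-small'' lemma you anticipate, carried out by a short explicit case split rather than abstractly. For $k\in\overline{K_{\tilde X}}$ and $x,y\in X_p$, $x',y'\in X_t$, and a given $q$ with $r(q)=u$, one distinguishes two cases. If $t\vee q=\infty$ then $\Augl(y')^*|_{\tilde X_s}=0$ for every $s\ge q$ (since $s\ge t$ would force $t\vee q<\infty$), so the generator already vanishes on $\tilde X_s$ for $s\ge q$. If $t\vee q<\infty$, choose $r'\ge t^{-1}(t\vee q)$ so that $\|k|_{\tilde X_{s'}}\|<\varepsilon$ for all $s'\ge r'$, and set $r:=tr'\ge q$; for $s\ge r$ one has $t^{-1}s\ge r'$, and since $\Augl(y')^*$ maps $\tilde X_s$ into $\tilde X_{t^{-1}s}$, the composite has norm at most $\varepsilon$ on $\tilde X_s$. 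This is precisely the groupoid-aware ``translate of a small set is small'' verification you were looking for, and it finishes the argument without further order-theoretic input.
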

\begin{proof}
	We just need to show that $I_{\tilde X} \cap \Augl_u(A_u) = \{0\}$, for each $u\in \Gz$.
	For $u\in \Gz$ and $a \in A_u$, the restriction of
	$\Augl(a)$ to the summand $\tilde X_q$ is the same as $\phile_{q}(a)$. In particular, 	$\|\Augl(a)|_{\Xle{q}}\|=\|a\|$, for each $q$ with $r(q)=u$.  It therefore suffices to show that,
	for each $\psi \in I_{\tilde X}$ and  $\varepsilon > 0$,
			there is $s \in P$ with $\|\psi|_{\Xle{s}}\| <
			\varepsilon$.
	
	Let $K_{\tilde X}$ be the generating set of $I_{\tilde X}$, defined above. By a continuity argument, for each $\psi \in \overline{K}$, $\varepsilon
	> 0$, and $q \in P$, there exists $r \ge q$ such that
	$\|\psi|_{\Xle{s}}\| < \varepsilon$, for $s \ge r$.
For $k \in \overline{K_{\tilde X}}$, norm one elements $x,y \in
	X_p$, $x',y' \in X_t$, and  $q \in P$ with $r(q)=u$, let us observe that there exists
	$r \ge q$ such that $\|\Augl(x)\Augl(y)^* k \Augl(x')
	\Augl(y')^*|_{\Xle{s}}\| < \varepsilon$, for each $s \ge r$: if $t \vee q < \infty$, choose  $r' \ge s(t)(t \vee q)$ with
	$\|k|_{\Xle{s'}}\| < \varepsilon$, for $s' \ge
	r'$. Put $r =s(t)r' \ge q$, then for each $s \ge
	r$, we have $s(t)s \ge r'$, and 
	$
	\|\Augl(x)\Augl(y)^* k \Augl(x') \Augl(y')^*|_{\Xle{s}}\| < \varepsilon.
	$
	Now if $t \vee q = \infty$, for $s \ge q$, 
	$s \not\ge t$, thus $\Augl(y')^*|_{\Xle{s}} = 0$, and for $r := q$, we get $\Augl(x)\Augl(y)^* k \Augl(x')
	\Augl(y')^*|_{\Xle{s}} = 0$, for each $s \ge r$, as claimed.
	\end{proof}

A similar argument shows that, for an appropriate ideal $I_X$ of $B_X$ and quotient map $q_X: B_X\to I_X$, $\psi_X:=q_X\circ l$ is a CNP-covariant representation of $B_X/I_X$, which is injective on each fibre $X_u=A_u$, $u\in\Gz$,  is faithful, whenever all representations $\phile_p$ are injective.
  
\begin{prop} \label{faithful2}
	Let $X$ be a compactly aligned product system over $P$, such that each $\tilde\phi_p$ is injective, or each bounded subset of $P$ has a maximal element. Then $j_X$
is an injective representation of $X$.
\end{prop}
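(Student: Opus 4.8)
The plan is to produce, for each $u\in\Gz$, a CNP-covariant representation of $X$ whose restriction to the coefficient algebra $A_u$ is injective, and then to pull that injectivity back along the universal property of $\NO{X}$; since a Toeplitz representation is by definition injective exactly when each of its restrictions $\psi_u$ is injective, this is all that is needed. First I would take $\psi_{\tilde X}=q_{\tilde X}\circ\Augl\colon X\to B_{\tilde X}/I_{\tilde X}$, which is CNP-covariant, so that $I_{\rm cov}\subseteq\ker\bigl((\psi_{\tilde X})_*\bigr)$ and $\psi_{\tilde X}$ factors as $\psi_{\tilde X}=\Pi\psi_{\tilde X}\circ j_X$ for a $*$-homomorphism $\Pi\psi_{\tilde X}\colon\NO{X}\to B_{\tilde X}/I_{\tilde X}$. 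Restricting to $A_u$ gives $(\psi_{\tilde X})_u=\Pi\psi_{\tilde X}\circ(j_X)_u$, so $(j_X)_u$ is injective whenever $(\psi_{\tilde X})_u$ is; equivalently, it suffices to show $I_{\tilde X}\cap\Augl_u(A_u)=\{0\}$ for every $u$.

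For this I would extract the hypothesis-free part of the proof of Lemma~\ref{faithful}: the case analysis carried out there (on whether $t\vee q<\infty$) shows, with no assumption on $X$, that for every $\psi\in I_{\tilde X}$, every $\varepsilon>0$ and every $q\in P$ there is $r\ge q$ with $\|\psi|_{\Xle{s}}\|<\varepsilon$ for all $s\ge r$. Since the restriction of $\Augl(a)$ to the summand $\Xle{s}$ equals $\phile_s(a)$, this yields: if $a\in A_u$ and $\Augl(a)\in I_{\tilde X}$, then $\|\phile_s(a)\|\to 0$ as $s$ runs cofinally through $\{s\in P:r(s)=u\}$. In the first case of the hypothesis each $\phile_p$ is injective, hence isometric, so $\|\phile_s(a)\|=\|a\|$ for all such $s$ and therefore $a=0$; this recovers Lemma~\ref{faithful} and completes the argument in that case.

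The second case --- every bounded subset of $P$ has a maximal element, with $\phile$ possibly non-injective --- is the main obstacle, and I expect the combinatorial hypothesis to be indispensable there, as in the group-case treatment of Sims and Yeend \cite[Theorem~4.1]{SY}. The difficulty is precisely that, without $\phile$-injectivity, $\|\phile_s(a)\|$ can be strictly smaller than $\|a\|$, so the relation $\|\phile_s(a)\|\to 0$ no longer forces $a=0$. I would get around this by using the maximal-element hypothesis to control the quantifier ``for large $s$'' built into the definition of CNP-covariance, so as to arrange that $\|\phile_s(a)\|=\|a\|$ for the element $a$ at hand along a suitable cofinal family of $s$ --- intuitively, that each of the truncated modules $\Xle{s}$ in that family still carries a full copy of $A_u$ on which $\Augl(a)$ acts as left multiplication. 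Pinning down such $s$ --- or, alternatively, running the analogous estimate with the ordinary Fock version $\psi_X=q_X\circ l$ in place of $\psi_{\tilde X}$ --- is the technical heart of this case; since the verification is a routine, if notation-heavy, transcription of \cite[Theorem~4.1]{SY} (compare also \cite[\S4]{clsv}), I would only record the needed modifications rather than carry it out in detail.
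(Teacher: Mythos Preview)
Your treatment of the first hypothesis is exactly the paper's: produce the CNP-covariant representation $\psi_{\tilde X}=q_{\tilde X}\circ\Augl$, apply Lemma~\ref{faithful} to get injectivity of $(\psi_{\tilde X})_u$ for each $u\in\Gz$, and pull this back through the factorisation $\psi_{\tilde X}=\Pi\psi_{\tilde X}\circ j_X$ given by the universal property. The paper then adds the one-line observation that injectivity on each $A_u$ forces $j_X$ to be isometric on every fibre $X_p$ via $\|j_X(x)\|^2=\|j_X(\langle x,x\rangle_p)\|=\|\langle x,x\rangle_p\|=\|x\|^2$, which you implicitly subsume in the phrase ``a Toeplitz representation is by definition injective exactly when each $\psi_u$ is injective''.

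For the second hypothesis the comparison is less flattering to the paper than to you: the printed proof opens with ``Under the first assumption'' and never returns to the maximal-element case at all. So your intuition that this case needs a genuinely separate argument, to be adapted from \cite[Theorem~4.1]{SY}, is well founded --- indeed you are being more scrupulous than the paper, which simply omits it. That said, neither you nor the paper actually carries out the details; your sketch of ``arranging $\|\phile_s(a)\|=\|a\|$ along a cofinal family'' is the right shape of idea (in the Sims--Yeend argument the point is that when bounded sets have maximal elements one can work with the unaugmented Fock representation $l$, for which $l|_{A_u}$ is visibly isometric, and show directly that the relevant covariance ideal meets $l(A_u)$ trivially), but as written it remains a plan rather than a proof.
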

\begin{proof}
Under the first assumption, by Lemma 	\ref{faithful} and the universal
	property of $\NO{X}$,  implies that $j_X$ is injective, and so isometric, on each fibre $X_{u}
	= A_u$, for $u\in\Gz$. Now for $p\in P$ and $x\in X_p$, 
	$
	\|x\|^2 = \|\langle x,x \rangle_p \| = \|j_X(\langle x,x\rangle_{r(p)})\| =
	\|j_X(x)^* j_X(x)\| = \|j_X(x)\|^2,
	$
	as required.
\end{proof}

For the rest of paper, $(G,P)$ be a quasi-lattice ordered groupoid and $X$ is a
product system over $P$ consisting of $C^*$-correspondences.

\section{Analysis of the core}\label{sec:core}

In this section we discuss the fixed-point algebra of $\Tc(X)$ under a
canonical coaction $\delta$, and show that 
under certain conditions, $\NO{X}$ satisfies an analog of the criterion (B) of Seems and Yeend \cite[Section~1]{SY}. 

\subsection{The core}
Let us use $q_{\CNP} \colon \Tc(X) \to \NO{X}$ to denote the canonical surjection
arising from the universal property of $\Tc(X)$.

\begin{lemma}\label{lem:psi equal} 
Let $\psi \colon X \to D$ be a Toeplitz representation of $X$.

$(i)$  $\psi_t(\iota_{p}^{t}(T)(x)) = \psi^{(p)}(T) \psi_t(x)$, for  $p \le t \in P$, $T \in
\Kaa(X_p)$, and $x \in X_t$, and $\psi^{(r)} (T) \psi_t (x \cdot a) = 0$, for $t < r \le s \in P$, $T \in
\mathscr{K}_{A_{r(r)}}(X_r)$, and $x \cdot a \in X_t \cdot I_{t^{-1}s}$,

$(ii)$ when $X$ is compactly aligned and  $\psi$ is Nica
covariant, also  $\psi^{(p)} (T)
\psi_t (x \cdot a) = 0$, for  $p,t \le s
\in P$, $p \not\le t$, $T \in \Kaa(X_p)$, and $x
\cdot a \in X_t \cdot I_{t^{-1}s}$,

$(iii)$ when $X$ is
compactly aligned and either  the left action on each
fibre is injective $P$ is directed, and $\psi$ is injective and Nica covariant, for any finite subset $F \subset
P$, $p\in F$, and operators $T_p \in \Kaa(X_p)$, we have $\sum_{p
	\in F} \iotale^s_p(T_p) = 0$, for large $s$, whenever $\sum_{p \in F} \psi^{(p)}(T_p) = 0$. 
\end{lemma}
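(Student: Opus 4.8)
For (i) and (ii) the plan is to reduce $T$ to a rank-one operator on $X_p$ (so that $\psi^{(p)}(T)=\psi_p(y)\psi_p(z)^*$) and, whenever $p\le t$, to reduce the vector argument to an elementary tensor $wv$ with $w\in X_p$, $v\in X_{p^{-1}t}$ under the product-system identification $X_t\cong X_p\otimes_{A_{s(p)}}X_{p^{-1}t}$. The first identity in (i) then falls out by unwinding the definitions of $\iota^t_p$ and $\psi^{(p)}$ and applying the Toeplitz axioms, chiefly $\psi_{s(p)}(\langle z,w\rangle_p)=\psi_p(z)^*\psi_p(w)$ together with multiplicativity of $\psi$ across matched source/range. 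For the second identity in (i) I would, after the same reductions, expand $z\in X_r$ as $wv$ with $w\in X_t$, $v\in X_{t^{-1}r}$, compute $\psi_r(z)^*\psi_t(x\cdot a)=\psi_{t^{-1}r}(v)^*\psi_{s(t)}(\langle w,x\rangle_t\,a)$, and take adjoints to land on $\psi_{t^{-1}r}\big(\phi_{t^{-1}r}(a^*\langle x,w\rangle_t)v\big)^*$; since $t<r\le s$ forces $e\ne t^{-1}r\le t^{-1}s$, we get $\phi_{t^{-1}r}(a^*)=0$ from $a^*\in I_{t^{-1}s}$, and the term vanishes. Part (ii) runs the same way, except that now $p\not\le t$, so instead of factoring through $X_t$ I would invoke Lemma~\ref{closed2} (applicable since $p\vee t<\infty$) to write $\psi_p(z)^*\psi_t(x)$ as a norm limit of finite sums of $\psi(u_i)\psi(v_i)^*$ with $v_i\in X_{t^{-1}(p\vee t)}$, and then observe that $\psi(v_i)^*\psi_{s(t)}(a)=0$ because $e\ne t^{-1}(p\vee t)\le t^{-1}s$ gives $\phi_{t^{-1}(p\vee t)}(a^*)=0$; left-multiplying by $\psi_p(y)$ finishes it.

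For (iii) I would begin with two reductions. First, by Lemma~\ref{zero}, $\psi(y)^*\psi(z)=0$ when $y\in X_p$, $z\in X_{p'}$ with $r(p)\ne r(p')$; expanding in rank ones this yields $\psi^{(p)}(S)\psi^{(p')}(T)^*=0$ and $\psi^{(p)}(S)^*\psi^{(p')}(T)=0$ for $r(p)\ne r(p')$, so the partial sums $A_u:=\sum_{p\in F,\,r(p)=u}\psi^{(p)}(T_p)$ are pairwise orthogonal on both sides; hence $\sum_{p\in F}\psi^{(p)}(T_p)=0$ forces each $A_u=0$. Second, any $p\in F$ with $r(p)\ne r(q)$ has $p\not\le s$ for every $s\ge q$, so it contributes nothing to $\sum_p\tilde\iota^s_p(T_p)$ when $s$ is near $q$. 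Thus I may assume all $p\in F$ share the range $r(q)$, and treat the two hypotheses of (iii) separately.

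Suppose first that $P$ is directed. Then $m:=\bigvee F$ exists and $r_0:=q\vee m\ge q$; for $s\ge r_0$ every $p\in F$ lies below $s$. Fix $r\le s$, $\xi\in X_r$, and $\eta=\eta'\cdot a\in X_r\cdot I_{r^{-1}s}$. By (i), $\psi_r(\xi)^*\psi_r\!\big(\sum_{p\le r}\iota^r_p(T_p)\eta\big)=\psi_r(\xi)^*\sum_{p\le r}\psi^{(p)}(T_p)\psi_r(\eta)$, and subtracting the vanishing sum $\sum_{p\in F}\psi^{(p)}(T_p)\psi_r(\eta)$ leaves only the terms with $p\in F$, $p\not\le r$; each of these is $0$ by (ii) applied with $s':=p\vee r$, which exists and satisfies $p\vee r\le s$ (as $p,r\le s$), using $a\in I_{r^{-1}s}\subseteq I_{r^{-1}(p\vee r)}$. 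Hence $\psi_{s(r)}\big(\langle\xi,\sum_{p\le r}\iota^r_p(T_p)\eta\rangle_r\big)=0$; injectivity of $\psi$ makes $\psi_{s(r)}$ faithful, so $\langle\xi,\sum_{p\le r}\iota^r_p(T_p)\eta\rangle_r=0$ for all $\xi\in X_r$, whence $\sum_{p\le r}\iota^r_p(T_p)\eta=0$. Letting $\eta$ range over a total subset of $X_r\cdot I_{r^{-1}s}$ and $r$ over all $r\le s$ gives $\sum_{p\in F}\tilde\iota^s_p(T_p)=0$, which is the claim.

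Suppose instead that every $\phi_p$ is injective. Then $I_Q=0$ for $Q\in P\setminus\Gz$, so $\tilde X_s=X_s$ and $\tilde\iota^s_p(T_p)$ restricted to $\tilde X_s$ is just $\iota^s_p(T_p)$ (only the $r=s$ summand survives); the goal reduces to $\sum_{p\in F,\,p\le s}\iota^s_p(T_p)=0$ for large $s$. Running the previous computation at $r=s$ disposes of all $p\le s$ and, via Lemma~\ref{closed2} and Lemma~\ref{zero}, of all $p$ with $p\vee s=\infty$. The terms $\psi^{(p)}(T_p)\psi_s(\eta)$ with $p\not\le s$ but $p\vee s<\infty$ are \emph{not} killed by (ii) (here $I_{s^{-1}(p\vee s)}=0$), and dealing with them is the main obstacle. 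Following \cite{SY,clsv}, I would first use the orthogonality $\psi^{(p)}(T_p)\perp\psi^{(p')}(T_{p'})$ when $p\vee p'=\infty$ to reduce $F$ to a $\vee$-connected, $\vee$-closed subset, and then choose $r_0\ge q$ large enough that every $p\in F$ having a common upper bound with some $r\le s$ in fact lies below $s$; alternatively one inducts on $|F|$ by splitting off a maximal element and absorbing it with Nica covariance. The algebraic inputs are exactly those used in the directed case; I expect the combinatorial step — organizing $F$ and pinning down how large $s$ must be — to carry essentially all the difficulty.
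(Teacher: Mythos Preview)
Your treatment of parts (i), (ii), and the directed branch of (iii) is correct and close to the paper's. The only notable deviation is in (ii): the paper does not route through Lemma~\ref{closed2} but instead takes an approximate identity $(E_k)$ for $\mathscr{K}(X_t\cdot I_{t^{-1}s})$, writes $\psi^{(p)}(T)\psi_t(x\cdot a)=\lim_k\psi^{(p)}(T)\psi^{(t)}(E_k)\psi_t(x\cdot a)$, applies Nica covariance to get $\psi^{(p\vee t)}\big(\iota^{p\vee t}_p(T)\iota^{p\vee t}_t(E_k)\big)\psi_t(x\cdot a)$, and then invokes the second identity of (i) with $r=p\vee t$ (noting $t<p\vee t\le s$). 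Your approach via Lemma~\ref{closed2} reaches the same conclusion with the same ingredients arranged differently; the paper's version is a touch more direct in that it literally reduces (ii) to (i). Your preliminary splitting of $F$ by range unit is not in the paper but is harmless; in fact the paper's construction absorbs it automatically, since $p$ with $r(p)\neq r(q)$ trivially satisfy $p\vee s=\infty$.

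The genuine gap is in the injective-left-action branch of (iii). You correctly isolate the obstacle --- terms $\psi^{(p)}(T_p)\psi_s(\eta)$ with $p\not\le s$ but $p\vee s<\infty$ --- but your proposed remedies do not close it. The orthogonality $\psi^{(p)}(T_p)\psi^{(p')}(T_{p'})=0$ for $p\vee p'=\infty$ does not let you split the hypothesis $\sum_p\psi^{(p)}(T_p)=0$ along $\vee$-components, and the induction on $|F|$ you sketch has no evident inductive step. What the paper does (following \cite{SY}) is build an explicit threshold: set $r_0:=q$ and, listing $F=\{p_1,\dots,p_n\}$, put $r_i:=r_{i-1}\vee p_i$ if this exists, else $r_i:=r_{i-1}$. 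The key quasi-lattice fact you are missing is that if $p\vee r=\infty$ then $p\vee s=\infty$ for \emph{every} $s\ge r$, since any common upper bound of $p$ and $s$ would also bound $p$ and $r$. Consequently, for $s\ge r_n$ and $p=p_i\in F$ with $p\not\le s$, one must have had $p_i\vee r_{i-1}=\infty$ (else $p_i\le r_i\le r_n\le s$), hence $p\vee s=\infty$; Nica covariance then gives $\psi^{(p)}(T_p)\psi^{(s)}(E_k)=0$ for an approximate identity $(E_k)$ of $\mathscr{K}(X_s)$, so $\psi^{(p)}(T_p)\psi_s(\eta)=0$. This single observation dissolves your ``main obstacle'' in one line --- no $\vee$-connected reduction or induction on $|F|$ is needed. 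The same $r_n$ also serves in the directed case (where every join exists, so $r_n$ is an upper bound for $F\cup\{q\}$), unifying both branches under one construction.
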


\begin{proof}
$(i)$ For the first equality, if $u\in\Gz$, the first statement  follows from 
$\mathscr{K}_{A_u}(X_u) \cong A_u$ and $\iota_{u}^{t} = \phi_{t}$, and if $p \in P\backslash\Gz$, since given $t\in P$ with $r(t)=r(p)$, $\{\, xy \mid x \in X_p,\, y \in X_{p^{-1}t} \,\}$
is dense in $X_t$, and $\{\, w \otimes z^* \mid w,z \in X_p \,\}$ is
total in $\Kaa(X_p)$, we need only to observe that for
$x,w,z \in X_p$ and $y \in X_{p^{-1}t}$, 
\begin{align*}
\psi_t (\iota_{p}^{t} (w \otimes z^*) (xy))
&= \psi_p (w \cdot \langle z, x \rangle_{A}^{p}) \psi_{p^{-1}t} (y)
= \psi_p (w) \psi_e (\langle z, x \rangle_{A}^{p}) \psi_{p^{-1}t} (y)
\\&= \psi_p (w) \psi_{p}^* (z) \psi_p (x) \psi_{p^{-1}t} (y)
= \psi^{(p)} (w \otimes z^*) \psi_{t} (xy).
\end{align*}

For the second equality, if $u\in \Gz$, then $x \cdot a \in I_{s}\subseteq \ker (\phi_{r})$. The equality $\psi_u(y)\psi^{(r)}(S)=\psi^{(r)}(\phi_r(y)S)$ first follows for $y\in X_r$ and $S:=w\otimes z^*$, $w,z\in X_r$, and then for any 
$S\in\mathscr{K}_{A_r(r)}(X_r)$. Taking adjoint and letting $y:=(x\cdot a)^*$ and $S=T^*$, we get $
\psi^{(r)} (T) \psi_{u} (x \cdot a) = \psi^{(r)} (T \phi_r (x \cdot a)) = 0$, as required. When $t\in P\backslash\Gz$, for $y \in X_t$, $z \in X_{t^{-1}r}$,
and $v \in X_r$, with $r,s,t$ having the same range, since $a \in I_{t^{-1}s}$, $\phi_{t^{-1}r} (a) = 0$, 
\begin{align*}
\psi^{(r)} &(v \otimes (yz)^*) \psi_t (x \cdot a)
= \psi_r (v) \psi_{t^{-1}r} (z)^* \psi_t (y)^*  \psi_t (x \cdot a)
= \psi_r (v) \psi_{t^{-1}r} (z)^* \psi_{s(t)} (\langle y,x \cdot a \rangle_{t})
\\&= \psi_r (v) \psi_{t^{-1}r}(z)^* \psi_{s(t)} (\langle y ,x \rangle_{t}a)
= \psi_r (v) (\psi_{s(t)} (a^* \langle x,y \rangle_{t}) \psi_{t^{-1}r} (z))^*
\\&= \psi_r (v) \psi_{t^{-1}r} (\phi_{t^{-1}r} (a^* \langle x,y \rangle_{t})z)^*
= \psi_r (v) \psi_{t^{-1}r} (\phi_{t^{-1}r} (a)^* \phi_{t^{-1}r} (\langle x,y \rangle_{t})z)^*
=0.
\end{align*}

$(ii)$ Let $(E_k)$ be an approximate identity for $\mathscr{K}_{A_{r(s)}}(X_t
\cdot I_{t^{-1}s})$. Since $p,t \le s$, $p \vee t$ exists and by the Nica covariance of $\psi$, 
\begin{align*}
	\psi^{(p \vee t)}\big(\iota^{p \vee t}_p(T)\iota^{p \vee t}_t(E_k)\big)
	\psi_t(x \cdot a) &=\psi^{(p)}(T) \psi^{(t)}(E_k) \psi_t(x \cdot a) \psi^{(p)}(T)\psi_t (E_k(x \cdot a))\\&\to \psi^{(p)}(T)\psi_t (x \cdot a),
\end{align*}
as $k\to \infty$. Since $p \not\le t$, we have $t <  p \vee t\le
s$, and the statement follows from the second statement in $(i)$.

$(iii)$ 
Given $q \in P$, list the elements of $F$ as $p_1,
\dots, p_n$. Define $r_0 := q$, and inductively, for $1 \le
i \le n$, define $r_i := r_{i-1} \vee p_i$, if the right hand side exists, and  $r_i:=	r_{i-1}$, otherwise.
Then $r_n \ge p$ and $r_n \vee p$ exists, , for each $p \in F$, and so $r_n$ is an upper bound for $F$ if $P$ is directed. 

For $s \ge r_n$, and
$x \cdot a \in X_{t} \cdot I_{t^{-1}s}$, for $r(t)=r(s)$. By part $(i)$, 
\[
	\textstyle{\psi_t \big(\sum_{p \le t} \iota_{p}^{t} (T_p) (x \cdot a)\big)}
	= \textstyle{\sum_{p \le t} \psi_t (\iota_{p}^{t} (T_p) (x \cdot a))} \notag
= \textstyle{\sum_{p \le t} \psi^{(p)} (T_p) \psi_t (x \cdot a)},
\]
where the sums run over $p\in F$ with $p\le t$. 
If each $\phi_p$ is injective, then $I_{t^{-1}s}=0$
for $t<s$, and so $a=0$ unless $t=s$. Let $(E_k)$ be
an approximate identity for $\mathscr{K}_{A_{r(s)}}(X_s)$. By assumption $p\not\le s$ and $p\vee s=\infty$. By Nica covariance,
$\psi^{(p)}(T_p)\psi_s(x)=\lim_{k} \psi^{(p)}(T_p)
\psi^{(s)}(E_k) \psi_s(x)=0$, for $T_p\in\Kaa(X_p)$ and $x\in
X_s$. Therefore, $$\psi_t(\sum_{p\le
	t}\iota_p^t(T_p)(x\cdot a))=\sum_{p\in F}\psi^{(p)}(T_p)\psi_t(x\cdot
a)=0.$$
The same holds when $P$ is directed, since in this case, $p\le r\le s$ and part $(ii)$ applies. Since $\psi$ is injective,
it follows that  $\sum_{p \le t}
\iota_{p}^{t} (T_p) (x \cdot a) = 0$, as required. 
\end{proof}

By part $(i)$ in the above lemma, if $r \in
tP\setminus\{t\}$, then the element $\psi^{(r)}(T) \in D$ annihilates
$\psi_{t}(X_{t} \cdot I_{t^{-1}s})$, whenever $s \in rP$, and under the assumptions of part $(iii)$, we may replace the condition 
$r \in tP\setminus\{t\}$ with the weaker condition $t \not\in rP$, and $s=t\vee r$.

When $X$ is compactly aligned, by an argument as the one for $\Tc(X)$ (see the last two paragraphs of subsection \ref{subsection:reps_prod_syst}), it follows  from the Nica-covariance of $i_X$ that
\begin{equation*}\label{eq:F-def}
\Ff := \clsp\{\, i_X(x) i_X(y)^* \mid x,y\in X, d(x) = d(y) \,\}
\end{equation*}
is a $C^*$-subalgebra of $\Tc(X)$, called the core of $\Tc(X)$.

For $u\in \Gz$, let us observe that $$\Ff_u := \clsp\{\, i_X(x) i_X(y)^* \mid x,y\in X, d(x) = d(y)\in G^u \,\}$$
 is a $C^*$-subalgebra of $\Ff$.  Given $x_1, y_1\in X_p$ and $x_2,y_2\in X_q$ with $r(p)=r(q)=u$, the element $i_X(x_1) i_X(y_1)^*i_X(x_2) i_X(y_2)^*
$
is zero if $p\vee q=\infty$, and otherwise is a limit of linear combinations of elements of the form $$i_X(x_1) i_X(u)i_X(v)^* i_X(y_2)^*=i_X(x_1u) i_X(y_2v)^*
,$$ with $u\in X_{p^{-1}(p\vee q)}$ and $v\in X_{q^{-1}(p\vee q)}$, which are again in $\Ff_u$, as $d(x_1u)=pp^{-1}(p\vee q)=p\vee q\in G^u$ and $d(y_2v)=qq^{-1}(p\vee q)=p\vee q\in G^u$, showing that $\Ff_u$ is closed under multiplication, and so a $C^*$-subalgebra of $\Ff$. 

\subsection{Gauge coaction}

When $G$ is discrete groupoid with Haar system consisting of counting measures, there is a  $*$-homomorphism $\delta_G
\colon C^*(G)\to C^*(G) \otimes C^*(G)$, given by
$\delta_G(i_G(g))=i_G(g)\otimes i_G(g)$, for $g\in G$. This is a coaction in the following sense (compare to \cite{qui:discrete coactions}). We should warn the reader that our notion of groupoid coaction on $C^*$-algebras deviates from the one defined on $W^*$-algebras by Yamanouchi \cite{yama} based on Connes' notion of fibred products (cf., \cite{or}, \cite{tim}). 

Recall that a $*$-homomorphism $\phi: A\to D$ between $C^*$-algebras is called nondegenerate if $\phi(A)D$ is dense in $D$. This is automatic when $A$ and $D$ are unital and $\phi$ is a unital map. In general, if $\phi$ maps an approximate identity of $A$ to one for $D$, then $\phi$ is nondegenerate. A nondegenerate $*$-homomorphism $\phi: A\to D$ has a unique extension to an $*$-homomorphism $\tilde\phi: M(A)\to M(D)$ between the multiplier algebras.  Indeed, we have the following useful characterization, which would be needed later. This is stated and proved in \cite[Proposition 2.5]{Lan} for the case where $D=\mathscr{L}_B(E)$, for some $C^*$-algebra $B$ and right Hilbert $B$-module $E$, but the same proof essentially works in the general case as well. Recall that the strict topology on $M(A)$ is the locally convex topology given by the family of seminorms  $x\mapsto \|ax\|$ and $x\mapsto \|xa\|$ on $M(A)$, for $a$ running over $A$.     

\begin{lemma} \label{ndeg}
	For a $*$-homomorphism $\phi: A\to D$ between $C^*$-algebras, the following are equivalent:
	
	$(i)$ $\phi$ is nondegenerate,
	
	$(ii)$  $\phi$ has an extension to a unital $*$-homomorphism $\tilde\phi: M(A)\to M(D)$, which is strictly continuous on the closed unit ball of $M(A)$,
	
	$(iii)$ $\phi$ maps some approximate identity of $A$ to an approximate identity of $D$,
	
	$(iv)$ the net $(\phi(e_i))$ converges strictly to $1_{M(D)}$, for some approximate identity of $A$.
\end{lemma}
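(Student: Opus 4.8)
The plan is to run the cycle $(i)\Rightarrow(ii)\Rightarrow(iii)\Rightarrow(iv)\Rightarrow(i)$, with essentially all of the substance concentrated in the first implication.

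For $(i)\Rightarrow(ii)$ I would construct the extension by hand from a fixed approximate identity $(e_\lambda)$ of $A$, exactly as in \cite[Proposition~2.5]{Lan}: for $m\in M(A)$ set $\tilde\phi(m)d:=\lim_\lambda \phi(me_\lambda)d$ and $d\,\tilde\phi(m):=\lim_\lambda d\,\phi(e_\lambda m)$. The limits exist already on the dense subspace $\phi(A)D$ — if $d=\phi(a)d'$ then $\phi(me_\lambda)d=\phi(me_\lambda a)d'\to\phi(ma)d'$ because $me_\lambda a\to ma$ in $A$ — and an $\varepsilon/3$ estimate using the uniform bound $\|\phi(me_\lambda)\|\le\|m\|$ and completeness of $D$ pushes this to every $d\in D$; the pair $(d\mapsto\tilde\phi(m)d,\,d\mapsto d\,\tilde\phi(m))$ is a double centralizer, so $\tilde\phi(m)\in M(D)$ with $\|\tilde\phi(m)\|\le\|m\|$, and linearity, multiplicativity, $*$-preservation, $\tilde\phi|_A=\phi$ and $\tilde\phi(1_{M(A)})=1_{M(D)}$ follow by the same manipulations as in the Hilbert-module case (the only change being that the inner product of $\mathscr{L}_B(E)$ is replaced by multiplication in $D$, and one uses that $\phi(A)D$ and $D\phi(A)$ are both dense, the latter by taking adjoints). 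The genuinely new point is strict continuity on the closed unit ball: if $\|m_\mu\|\le 1$ and $m_\mu\to m$ strictly, then for $d=\phi(a)d'\in\phi(A)D$ we get $\tilde\phi(m_\mu)d=\tilde\phi(m_\mu a)d'=\phi(m_\mu a)d'\to\phi(ma)d'=\tilde\phi(m)d$ since $m_\mu a\to ma$ in norm, and another $\varepsilon/3$ argument — uniform in $\mu$ precisely because $\|m_\mu\|\le1$ — yields $\tilde\phi(m_\mu)d\to\tilde\phi(m)d$ for all $d\in D$, with the symmetric statement on the right.

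The remaining implications are formal. For $(ii)\Rightarrow(iii)$: any approximate identity $(e_\lambda)$ of $A$ sits in the closed unit ball of $M(A)$ and converges strictly to $1_{M(A)}$, so strict continuity of $\tilde\phi$ on that ball forces $\phi(e_\lambda)=\tilde\phi(e_\lambda)\to\tilde\phi(1_{M(A)})=1_{M(D)}$ strictly, i.e.\ $(\phi(e_\lambda))$ is an approximate identity of $D$. For $(iii)\Rightarrow(iv)$: this is just the definition of the strict topology on $M(D)$, since a net in $D$ is an approximate identity for $D$ exactly when it converges strictly to $1_{M(D)}$. And $(iv)\Rightarrow(i)$ is immediate: if $\phi(e_\lambda)\to 1_{M(D)}$ strictly then $d=\lim_\lambda\phi(e_\lambda)d\in\overline{\phi(A)D}$ for every $d\in D$, so $\phi(A)D$ is dense.

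The main obstacle — really the only one — is the bookkeeping in $(i)\Rightarrow(ii)$: verifying that the defining limits exist for all of $D$ rather than only on $\phi(A)D$, that one obtains a legitimate element of $M(D)$, and that products are respected. All of this is routine once one has the density of $\phi(A)D$ and $D\phi(A)$ and the uniform bound $\|\phi(me_\lambda)\|\le\|m\|$, and it is carried out in \cite[Proposition~2.5]{Lan} for $D=\mathscr{L}_B(E)$; I would reference that proof for the repetitive parts and write out in full only the strict-continuity estimate on the unit ball, which is the one ingredient not contained there.
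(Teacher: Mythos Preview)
Your proposal is correct and follows exactly the approach the paper intends: the paper does not actually write out a proof of this lemma but simply notes that it ``is stated and proved in \cite[Proposition 2.5]{Lan} for the case where $D=\mathscr{L}_B(E)$ \ldots\ but the same proof essentially works in the general case as well.'' Your cycle $(i)\Rightarrow(ii)\Rightarrow(iii)\Rightarrow(iv)\Rightarrow(i)$, with the extension $\tilde\phi$ constructed from an approximate identity and the strict-continuity estimate on the unit ball spelled out, is precisely the adaptation of Lance's argument that the paper is pointing to, so you have in fact supplied more detail than the paper itself does.
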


It turns out that such a notion of nondegeneracy is too strong for coactions of groupoids on $C^*$-algebras. Here we propose a weaker notion. A subset $D_0$ of a $C^*$-algebra $D$ is called selfadjoint if $x\in D_0$ iff $x^*\in D_0$, for each $x\in D$.   

\begin{dfn}
	Let $\phi: A\to D$ be a $*$-homomorphism  between $C^*$-algebras. Let $\mathfrak{Z}$ be a $C^*$-subalgebra of $M(D)$ whose set of nonzero projections is nonempty. We say that $\phi$ is nondegenerate modulo $\mathfrak{Z}$, or simply nondegenerate (mod $\mathfrak{Z}$), if there is a selfadjoint total subset $D_0\subseteq D$ such that for each $x\in D_0$ there is a nonzero projection $p\in\mathfrak{Z}$ such that $px$ is in the closed linear span of elements $\phi(a)b$, with $a\in A, b\in D$.   
\end{dfn}

The above definition looks nonsymmetric, but in fact if $p$ is the nonzero projection such that $px^*$ is within $\varepsilon$ of the span of $\phi(A)D$, then so is $xp$, and vice versa, so the above definition has an equivalent right version. Also, note that when $\mathfrak{Z}=\mathbb C1_{M(D)}$, the notion of nondegeneracy (mod $\mathfrak{Z}$) is the same as the classical notion of nondegeneracy.  

When $D=A\otimes C^*(G)$, for a discrete groupoid $G$, there is a natural candidate for $\mathfrak{Z}$, namely $\mathfrak{Z}=C_0(\Gz)$, identified with its copy  $1_{M(A)}\otimes C_0(\Gz)$  inside $M(A\otimes C^*(G))$, and if we choose  $D_0$ to be the set of all basic tensors $a\otimes f$ with $a\in A$ and $f\in C_c(G)$, then we say that a $*$-homomorphism $\phi: A\to A\otimes C^*(G)$ is nondegenerate (mod $\Gz$) if for each $\varepsilon>0$, $a\in A$ and $g\in G$, there is a unit element  $u \in\Gz$ such that $a\otimes (1_{u}*1_{g})$ is within $\varepsilon$ of the linear span of $\phi(A)(A\otimes C^*(G))$. It is easy to see that being nondegenerate (mod $\Gz$) is the same as being nondegenerate (mod $C_0(\Gz)$), in the above sense. 

\begin{dfn}
A (full) coaction
of a discrete groupoid $G$ on a $C^*$-algebra $A$ is a  $*$-homomorphism
$\delta \colon A\to A\otimes C^*(G)$,  satisfying the identity $(\delta\otimes
\id)\circ \delta= (\id_A \otimes \delta_G)\circ
\delta$, which is nondegenerate (mod $\Gz$). 
\end{dfn}

When $G$ is a discrete group, $\Gz$ is a singleton consisting of the identity of $G$, and being nondegenerate (mod $\Gz$) is the same as being nondegenerate in the usual sense. 

For a coaction
$\delta$ of a discrete groupoid $G$ on a $C^*$-algebra $A$, the fixed-point
algebra is by definition, the cross-sectional $C^*$-algebra of the $\Gz$-bundle $A_u^\delta:=\{a\in
A\mid \delta(a)=a\otimes i_G(u)\}$, $u\in \Gz$.
There is a coaction of $G$ on $\Tc(X)$
whose fixed-point algebra is the core
$\Ff$.

\begin{lemma}\label{rep}
	Let $G$ be discrete  and $X$ be compactly aligned. The map $\psi$ defined by $\psi: X\to \Tc(X)\otimes C^*(G);\ x\mapsto i_X(x)\otimes i_G(d(x))$, is a Nica covariant  representation
of $X$.
\end{lemma}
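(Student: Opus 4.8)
The plan is to verify that $\psi(x) := i_X(x) \otimes i_G(d(x))$ satisfies the three axioms of a Toeplitz representation and then the Nica covariance condition, using the fact that $i_X$ is itself a Nica covariant Toeplitz representation of $X$ in $\Tc(X)$ and that the $i_G(g)$ are partial isometries in $C^*(G)$ satisfying $i_G(g)^* i_G(g) = i_G(s(g))$, $i_G(g) i_G(h) = i_G(gh)$ when $(g,h)$ is composable, and $i_G(g)i_G(h) = 0$ otherwise. The key bookkeeping point throughout is that $d$ is a semigroupoid homomorphism, so $d(xy) = d(x)d(y)$ whenever $xy$ is defined, and $s(d(p)) = r(d(q))$ exactly when the product $X_p \otimes X_q$ makes sense; this dovetails with the composability condition for the $i_G$'s.

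First I would check linearity of $\psi_p = \psi|_{X_p}$ and that $\psi_u$ is a homomorphism on $A_u$: linearity is immediate since $i_X|_{X_p}$ is linear and $d$ is constant on $X_p$, and for $u \in \Gz$ the map $a \mapsto i_X(a) \otimes i_G(u)$ is multiplicative because $i_X|_{A_u}$ is a homomorphism and $i_G(u)$ is a projection (so $i_G(u)^2 = i_G(u)$). For axiom (2), given $x, y \in X_p$ I compute $\psi_p(x)^* \psi_p(y) = (i_X(x)^* i_X(y)) \otimes (i_G(d(x))^* i_G(d(y))) = i_X(\langle x,y\rangle_p) \otimes i_G(s(p)) = \psi_{s(p)}(\langle x,y\rangle_p)$, using that $d(x) = d(y) = p$ and $i_G(p)^* i_G(p) = i_G(s(p))$; and for $x \in X_p$, $z \in X_q$ with $s(p) = r(q)$, $\psi(x)\psi(z) = i_X(x)i_X(z) \otimes i_G(p)i_G(q) = i_X(xz) \otimes i_G(pq) = \psi(xz)$ since $pq = d(xz)$. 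Axiom (3): when $s(p) \neq r(q)$, the pair $(p,q)$ is not composable in $G$, so $i_G(p)i_G(q) = 0$, hence $\psi(x)\psi(z) = 0$. This gives that $\psi$ is a Toeplitz representation.

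For Nica covariance I need to understand $\psi^{(p)}$. From $\psi^{(p)}(x \otimes y^*) = \psi_p(x)\psi_p(y)^* = (i_X(x) i_X(y)^*) \otimes (i_G(p)i_G(p)^*) = i_X^{(p)}(x\otimes y^*) \otimes i_G(r(p))$, so by linearity and continuity $\psi^{(p)}(S) = i_X^{(p)}(S) \otimes i_G(r(p))$ for all $S \in \Kaa(X_p)$. Then for $S \in \Kaa(X_p)$, $T \in \mathscr{K}_{A_{r(q)}}(X_q)$, I compute $\psi^{(p)}(S)\psi^{(q)}(T) = \big(i_X^{(p)}(S) i_X^{(q)}(T)\big) \otimes \big(i_G(r(p)) i_G(r(q))\big)$. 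If $r(p) \neq r(q)$, then $i_G(r(p))i_G(r(q)) = 0$; but in that case $p \vee q$ does not exist (a common upper bound would force a common range), so the right-hand side of the Nica condition is also $0$ — consistent. If $r(p) = r(q) =: u$, then $i_G(r(p))i_G(r(q)) = i_G(u)$ and using Nica covariance of $i_X$ the product equals $i_X^{(p\vee q)}(\iota_p^{p\vee q}(S)\iota_q^{p\vee q}(T)) \otimes i_G(u)$ when $p\vee q < \infty$ (noting $r(p\vee q) = u$, so $i_G(r(p\vee q)) = i_G(u)$), which is exactly $\psi^{(p\vee q)}(\iota_p^{p\vee q}(S)\iota_q^{p\vee q}(T))$; and it equals $0$ when $p \vee q = \infty$, matching the second case of the Nica condition.

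I do not expect a serious obstacle here; the main thing to be careful about is the case analysis on whether $r(p) = r(q)$ and whether $p \vee q$ exists, making sure that the vanishing of $i_G(r(p))i_G(r(q))$ in the non-composable case is correctly matched against the vanishing clause of the Nica covariance condition — in other words, that "$p\vee q = \infty$" is genuinely forced whenever $r(p) \neq r(q)$, which is immediate from the definition of quasi-lattice ordered groupoid since any upper bound of $p$ and $q$ in $P$ has range equal to both $r(p)$ and $r(q)$. One should also take a moment to confirm the elementary relations in $C^*(G)$ for a discrete groupoid with counting-measure Haar system ($i_G(g)^*i_G(g) = i_G(s(g))$, $i_G(g)i_G(g)^* = i_G(r(g))$, multiplicativity and orthogonality), which are standard for the inclusion of $G$ as partial isometries in $C^*(G)$ and were already invoked in the setup.
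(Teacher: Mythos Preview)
Your proof is correct and follows essentially the same approach as the paper's: both compute $\psi^{(p)}(S)=i_X^{(p)}(S)\otimes i_G(r(p))$ from the rank-one case and then reduce Nica covariance of $\psi$ to that of $i_X$. You are in fact more explicit than the paper in verifying the Toeplitz axioms and in handling the case $r(p)\neq r(q)$ (where $i_G(r(p))i_G(r(q))=0$ and $p\vee q=\infty$), which the paper leaves implicit.
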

\begin{proof}
The fact that $\psi$ is a representation follows from Lemma \ref{univ} and the fact that $\delta_p*\delta_q=\delta_{pq}$ when $s(p)=r(q)$, and zero otherwise, in $C^*(G)$. To see that $\psi$ is Nica covariant, first observe that,
\begin{align*}
\psi^{(p)}(x\otimes y^*)&=\psi_p(x)\psi_p(y)^*=\big(i_X(x)\otimes i_G(p)\big)\big(i_X(y)^*\otimes i_G(p^{-1})\big)\\&=i_X(x)i_X(y)^*\otimes i_G(r(p))=i_X^{(p)}(x\otimes y^*)\otimes i_G(r(p)),
\end{align*}
for each $p\in P, x,y\in X_p$. Thus, $$\psi^{(p)}(S)=i_X^{(p)}(S)\otimes i_G(r(p)),\ \ (p\in P, S\in\mathscr{K}_{A_{r(p)}}(X_p)),$$
Now, $\psi$ is Nica covariant, as $i_X$ is so, by the paragraph after Lemma \ref{univ}. 
\end{proof}

\begin{lemma}\label{prop:existence_coaction}
  Let $G$ be discrete  and  $X$ be compactly aligned. There is a coaction $\delta$ of $G$ on
  $\Tc(X)$, satisfying $\delta(i_X(x))=i_X(x)\otimes i_G(d(x))$, for  $x\in X$.
\end{lemma}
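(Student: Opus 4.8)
The plan is to construct $\delta$ from the universal property of $\Tc(X)$ and then verify the two conditions in the definition of a (full) coaction of a discrete groupoid. By Lemma~\ref{rep}, the map $\psi\colon X\to\Tc(X)\otimes C^*(G)$, $\psi(x)=i_X(x)\otimes i_G(d(x))$, is a Nica covariant Toeplitz representation of $X$, so the universal property of $\Tc(X)$ yields a $*$-homomorphism $\delta:=\psi_*\colon\Tc(X)\to\Tc(X)\otimes C^*(G)$ with $\delta\circ i_X=\psi$, i.e.\ $\delta(i_X(x))=i_X(x)\otimes i_G(d(x))$ for all $x\in X$, as required. It then remains to check that $\delta$ is coassociative, $(\delta\otimes\id)\circ\delta=(\id_{\Tc(X)}\otimes\delta_G)\circ\delta$, and that $\delta$ is nondegenerate $(\mathrm{mod}\ \Gz)$.

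For coassociativity I would note that both composites are $*$-homomorphisms of $\Tc(X)$ into $\Tc(X)\otimes C^*(G)\otimes C^*(G)$, so it suffices to check that they agree on the generating set $i_X(X)$ (the $C^*$-algebra $\Tc(X)$ is generated by $i_X(X)$, since $\Tc(X)=\clsp\{i_X(x)i_X(y)^*\}$). For $x\in X$, using $\delta_G(i_G(g))=i_G(g)\otimes i_G(g)$, both composites send $i_X(x)$ to $i_X(x)\otimes i_G(d(x))\otimes i_G(d(x))$, hence they coincide on all of $\Tc(X)$.

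The substantive part is nondegeneracy $(\mathrm{mod}\ \Gz)$, equivalently $(\mathrm{mod}\ C_0(\Gz))$. I would use the general form of the definition with the total subset $D_0:=\{\,i_X(z)i_X(w)^*\otimes i_G(g): z,w\in X,\ g\in G\,\}$ of $\Tc(X)\otimes C^*(G)$ (it is total because $\Tc(X)=\clsp\{i_X(z)i_X(w)^*\}$ and $C^*(G)=\clsp\{i_G(g)\}$, and selfadjoint because the adjoint of such an element is $i_X(w)i_X(z)^*\otimes i_G(g^{-1})$), together with the $C^*$-subalgebra $C_0(\Gz)\subseteq M(\Tc(X)\otimes C^*(G))$, whose nonzero projections include all $i_G(u)$, $u\in\Gz$. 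Fix $x_0=i_X(z)i_X(w)^*\otimes i_G(g)\in D_0$ with $z\in X_p$, $w\in X_q$; by Lemma~\ref{zero} we may assume $s(p)=s(q)$ (otherwise $i_X(z)i_X(w)^*=0$ and any projection works), so that $\delta(i_X(z)i_X(w)^*)=i_X(z)i_X(w)^*\otimes i_G(pq^{-1})$, the product $pq^{-1}\in G$ being defined since $s(p)=s(q)$. If $r(g)\neq r(p)$, put $\pi:=i_G(r(p))$: then $i_G(r(p))i_G(g)=0$, so $\pi x_0=0$ lies trivially in $\clsp\,\delta(\Tc(X))(\Tc(X)\otimes C^*(G))$. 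If $r(g)=r(p)$, put $\pi:=i_G(r(g))$, so that $\pi x_0=i_X(z)i_X(w)^*\otimes i_G(g)$; set $h:=qp^{-1}g\in G$ (well defined, with $r(h)=r(q)$ and $pq^{-1}h=g$, using $s(p)=s(q)$ and $r(g)=r(p)$), and let $(e_\lambda)$ be an approximate identity for $\Tc(X)$. Then
\[
\delta\bigl(i_X(z)i_X(w)^*\bigr)\bigl(e_\lambda\otimes i_G(h)\bigr)
= i_X(z)i_X(w)^*e_\lambda\otimes i_G(pq^{-1})i_G(h)
= i_X(z)i_X(w)^*e_\lambda\otimes i_G(g)\ \longrightarrow\ \pi x_0,
\]
so $\pi x_0$ again lies in $\clsp\,\delta(\Tc(X))(\Tc(X)\otimes C^*(G))$; hence $\delta$ is nondegenerate $(\mathrm{mod}\ \Gz)$ and is a coaction of $G$ on $\Tc(X)$. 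I expect the main obstacle to lie in this last step: one must track carefully when the groupoid products ($pq^{-1}$, $p^{-1}g$, $qp^{-1}g$, $pq^{-1}h$) are composable — this is precisely where $s(p)=s(q)$ from Lemma~\ref{zero} is needed — so that the relevant convolutions in $C^*(G)$ are the expected point masses rather than $0$, and one must check that a single projection $\pi\in C_0(\Gz)$ serves each $x_0\in D_0$, with the argument splitting on whether or not $r(g)=r(p)$.
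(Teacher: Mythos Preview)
Your proof is correct and follows essentially the same route as the paper: construct $\delta$ from the universal property via Lemma~\ref{rep}, verify coassociativity on the generators $i_X(x)$, and establish nondegeneracy $(\mathrm{mod}\ \Gz)$ by multiplying $\delta(i_X(z)i_X(w)^*)$ against $e_\lambda\otimes i_G(qp^{-1}g)$ for a suitable approximate identity. The only notable difference is that the paper takes $(e_\lambda)$ to be an approximate identity for the core $\Ff$ and first argues that this is already an approximate identity for $\Tc(X)$ (a fact reused later), whereas you take an approximate identity for $\Tc(X)$ directly; for the present lemma your shortcut is perfectly adequate, and your explicit invocation of Lemma~\ref{zero} to reduce to $s(p)=s(q)$ is cleaner than the paper's somewhat informal handling of that point.
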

\begin{proof}
  Since the representation $\psi: X\to \Tc(X)\otimes C^*(G);\ x\mapsto i_X(x)\otimes i_G(d(x))$ is Nica covariant by Lemma \ref{rep}, by the universal property of $\Tc(X)$,  one gets a
  $*$-homomorphism $\delta:\Tc(X)\to \Tc(X)\otimes C^*(G)$, satisfying
  $\delta(i_X(x))=\psi(x)$, for $x\in X$. The coaction identity is immediate on generators, 
  and so holds everywhere. 
  
  To see that $\delta$ is nondegenerate (mod $\Gz$), let $(\theta_\lambda)$ be an approximate identity for $\Ff$, which is then also an approximate identity for $\Tc(X)$, and observe that for  $x,y\in X$ with $d(x)=d(y)$, $\theta_\lambda i_X(x)i_X(y)^*\to i_X(x)i_X(y)^*,$ and $ i_X(x)i_X(y)^*\theta_\lambda\to i_X(x)i_X(y)^*,$ as $\lambda\to\infty$. For $p:=d(x)\in P$, 
  $x=z\cdot\langle z,z\rangle_p=(z\otimes z^*)(z)$, for some $z\in X$, and 
  $i_X(x)i_X(y)^*=i_X^{(p)}(z\otimes z^*)i_X(z)i_X(y)^*$. Since $i_X^{(p)}(z\otimes z^*)$ is in $\Ff$,
   $\theta_\lambda i_X^{(p)}(z\otimes z^*)\to i_X^{(p)}(z\otimes z^*)$, that is, 
  $\theta_\lambda i_X(x)i_X(y)^*\to i_X(x)i_X(y)^*$, as claimed. The same holds, by a similar argument,  when $\theta_\lambda$ is multiplied from right. Next, observe that for basic tensors of the form 
  $i_X(x)i_X(y)^*\otimes i_G(g)$, for $x\in X_p, y\in X_q$ and $g
 \in G$, let  $u:=r(p)=r(q)$, then, $1_{u}*1_{g}=1_{g}$, when $r(g)=u$, and zero otherwise, and in the former case,
    \begin{align*} \delta\big(i_X(x)i_X(y)^*\big)(\theta_\lambda\otimes 1_{qp^{-1}g})&= (i_X(x)i_X(y)^*\otimes 1_{pq^{-1}})(\theta_\lambda\otimes 1_{qp^{-1}g})\\&= i_X(x)i_X(y)^*\theta_\lambda\otimes 1_{ug}
    	\\&=i_X(x)i_X(y)^*\theta_\lambda\otimes 1_{g}\\&\to i_X(x)i_X(y)^*\otimes 1_{g}
    	\\&= (1_{M(\Tc(X))}\otimes 1_u)(i_X(x)i_X(y)^*\otimes 1_{g}),
    \end{align*}
when $\lambda\to\infty$, as required.
\end{proof}

The above coaction $\delta$ of $G$ on  $\Tc(X)$ is called the gauge coaction.  Note that unlike the group case \cite[Proposition 3.5]{clsv}, $\delta$ is not injective in general. We would comment on the kernel of $\delta$ soon (see Lemmas \ref{ker}, \ref{ker2}, \ref{inj2}). 
Also, note that $\delta$ is not nondegenerate in general. However, we have some sort of nondegeneracy for the restriction of $\delta$ to the core (see Lemma \ref{nondeg}). This restricted version of nondegeneracy later appears in the definition of induced ideals (see Definition \ref{ind})).  

\begin{lemma} \label{core}
Let $G$ be discrete  and let $X$ be compactly aligned.

$(i)$ The core $\Ff$ is the same as
the cross-sectional $C^*$-algebra $\Tc(X)_0^\delta$ of the fixed-point $\Gz$-bundle with fibres $\Tc(X)_u^\delta = \{ a \in
\Tc(X) \mid \delta(a) = a \otimes i_G(u)\}$, $u\in \Gz$.

$(ii)$ $	\Ff = \clsp\{i^{(p)}_X(T) \mid p \in P \text{ and } T \in \Kaa(X_p)\}.$

\end{lemma}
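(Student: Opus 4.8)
The plan is to prove the two parts of Lemma~\ref{core} in sequence, using the explicit formula for $\delta$ from Lemma~\ref{prop:existence_coaction} together with the spanning description of $\Ff$ already established.

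For part $(i)$, I would first show $\Ff \subseteq \Tc(X)_0^\delta$ by a direct computation on the dense spanning set: for $x,y \in X$ with $d(x) = d(y) = p$, we have $\delta(i_X(x) i_X(y)^*) = (i_X(x) \otimes i_G(p))(i_X(y)^* \otimes i_G(p^{-1})) = i_X(x) i_X(y)^* \otimes i_G(r(p))$, so each generator of $\Ff$ lies in the fibre $\Tc(X)_{r(p)}^\delta$; taking closed spans shows $\Ff$ is contained in the cross-sectional algebra of the fixed-point bundle. The reverse inclusion is the substantive direction. Here I would use the conditional expectation: the coaction $\delta$ together with the regular representation of $C^*(G)$ (or, concretely, the slice maps/Fourier coefficients associated to the unit space $\Gz \subseteq G$) yields a faithful projection $\Phi = (\id \otimes \sum_{u} E_u)\circ \delta$ of $\Tc(X)$ onto the fixed-point bundle algebra. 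Since $\Tc(X) = \clsp\{i_X(x) i_X(y)^* : x \in X_p, y \in X_q\}$ and $\Phi(i_X(x) i_X(y)^*) = i_X(x) i_X(y)^*$ when $d(x) = d(y)$ and $=0$ otherwise (because $i_G(p) i_G(q)^* = i_G(pq^{-1})$ has no $\Gz$-component unless $p = q$), the range of $\Phi$ is exactly $\Ff$. As $\Phi$ is a contractive idempotent onto the fixed-point algebra and onto $\Ff$, the two coincide.

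For part $(ii)$, the inclusion $\clsp\{i_X^{(p)}(T) : p \in P, T \in \Kaa(X_p)\} \subseteq \Ff$ is clear since $i_X^{(p)}(x \otimes y^*) = i_X(x) i_X(y)^*$ with $d(x) = d(y) = p$. For the reverse inclusion, it suffices to write each generator $i_X(x) i_X(y)^*$ with $d(x) = d(y) = p$ as an element of the right-hand side: factoring $x = x' \cdot a$ and $y = y' \cdot b$ with $x', y' \in X_p$ and $a, b \in A_{s(p)}$ does not immediately help, so instead I would use the fact that $x = (x \otimes z^*)(z)$ for a suitable $z$, or more simply observe that $i_X(x) i_X(y)^*$ is a norm limit of sums $i_X^{(p)}(x \otimes e_i)(\text{terms})$ using an approximate identity $(e_i)$ of $\Kaa(X_p)$; in fact $i_X^{(p)}(x \otimes y^*)$ is already of the required form, so the spanning set on the right contains all generators of $\Ff$ and hence equals $\Ff$.

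The main obstacle is the reverse inclusion in part $(i)$: establishing that the averaging/slice map $\Phi$ is a well-defined faithful conditional expectation onto the fixed-point bundle algebra in the \emph{groupoid} setting. Unlike the group case, $\delta$ need not be injective or nondegenerate (as the paper explicitly notes), so one must be careful that the composition of $\delta$ with the unit-space slices still lands in $\Tc(X)$ and restricts to the identity on $\Ff$; the key point is that the $\Gz$-graded structure of $C^*(G)$ furnishes the required bounded projections $E_u$ onto the $i_G(u)$-components, and that these interact correctly with $\delta$ on the spanning monomials $i_X(x) i_X(y)^*$. Once the behaviour of $\Phi$ on this spanning set is pinned down, both parts follow by taking closures.
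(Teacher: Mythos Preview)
Your proposal is correct and, for part $(ii)$, identical to the paper's argument (both simply note that $i_X^{(p)}(x\otimes y^*)=i_X(x)i_X(y)^*$ and that rank-one operators span $\Kaa(X_p)$). For part $(i)$, the paper gives the forward inclusion by the same direct computation you do, but dismisses the reverse inclusion with the single phrase ``the reverse inclusion follows similarly.'' Your slice-map argument is exactly what is needed to justify that phrase: the bounded functionals $1_u$ on $C^*(G)$ (constructed later in the paper, in the proof of Lemma~\ref{rep2}) give contractive maps $\delta_u=(\id\otimes 1_u)\circ\delta$ that fix each $\Tc(X)^\delta_u$ pointwise while sending every spanning element $i_X(x)i_X(y)^*$ either to itself (when $d(x)=d(y)$, $r(d(x))=u$) or to zero; hence $\Tc(X)^\delta_u\subseteq\Ff_u\subseteq\Ff$.

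One small refinement: you phrase the argument in terms of a single map $\Phi=(\id\otimes\sum_u E_u)\circ\delta$, but the sum $\sum_u 1_u$ need not converge as a bounded functional on $C^*(G)$ when $\Gz$ is infinite. This is harmless, since you only need the fibrewise maps $\delta_u$ one at a time to get each inclusion $\Tc(X)^\delta_u\subseteq\Ff$, and the cross-sectional algebra is the $c_0$-direct sum of the fibres. Your acknowledged concern about faithfulness of $\Phi$ is therefore a non-issue --- faithfulness is never used, only that each $\delta_u$ is bounded and acts correctly on the spanning monomials.
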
 
\begin{proof} 
$(i)$  Since $\Gz$ is discrete, $\Tc(X)_0^\delta$ is nothing but the direct sum of the fibres \cite[Example 3.5(i)]{kum}. 
By the above lemma, for $x,y\in X_p$, 
$$\delta(i_X(x)i_X(y)^*)=\delta(i_X(x))\delta(i_X(y))^*=i_X(x)i_X(y)^*\otimes i_G(s(p)),$$
thus, $\Ff\subseteq \Tc(X)_0^\delta$. The reverse inclusion follows similarly.

$(ii)$ This is immediate, since $i^{(p)}_X \colon \Kaa(X_p)\to \Tt_X$ satisfies $i^{(p)}_X(x \otimes
y)=i_X(x) i_X(y)^*$, for $x,y\in X_p$, and $\Kaa(X_p)$ is the closed linear span of the set of elements $x \otimes y$, with $x,y \in X_p$.
\end{proof}  

A
subset $F$ of $P$ is said to be closed if for $p,q \in
F$,  $p \vee q \in F$, whenever it exists. Let
$\fvcl{P}$ denote the set of finite closed subsets of
$P$, which is directed under inclusion, by an argument as in \cite[p. 367]{F99}. Also, each bounded $F\in \fvcl{P}$ has a maximal element.

For $p \in P$, let $B_p$ be 
the $C^*$-subalgebra $i_X^{(p)}(\Kaa(X_p)) $ of  $\Tc(X)$. For
 finite closed subset $F$ of $P$, let $
B_F := \textstyle{\sum}_{p \in F} B_p$, which is a linear subspace of $ \Tc(X)$, then $\bigcup_{F \in \fvcl{P}} B_{F}$ is dense in $\Ff$. Let us observe that when $X$  is
compactly aligned, $B_F$ is indeed a $C^*$-subalgebra of $\Ff$:
$B_F$ is closed under adjoint and multiplication, by the Nica covariance of the universal
representation $i_X$ of $X$ in $\Tc(X)$. To show that it is norm closed, we proceed by induction on $n:=|F|$. If $n = 1$, say $F = \{p\}$, then $B_F = B_p = i_X^{(p)}(\Kaa(X_p))$ is the
range of a $C^*$-homomorphism and hence closed. Assuming this for $|F| \le k$, take $F \subset P$ with $n=|F| = k+1$, and take a minimal element $m$, in
the sense that, for $p \in F\setminus\{m\}$, $p \not\le
m$. By the induction hypothesis, $B_m$
and $B_{F\setminus\{m\}}$ are $C^*$-subalgebras of $\mathcal{F}$.
Choose $p \in F$, $p\neq m$, then $p
\not\le m$ by minimality of $m$, and  $p \vee m\neq m$. Thus, for $S
\in \Kaa(X_p)$ and $T \in \mathscr{K}_{A_{r(m)}}(X_m)$, 
$
i_X^{(p)}(S) i_X^{(m)}(T)
 = i_X^{(p \vee m)}(\iota^{p \vee m}_p(S) \iota^{p \vee m}_m(T))$ is in $i_X^{(p \vee m)}(\mathscr{K}_{A_{r(p\vee m)}}(X_{p \vee m})) \subseteq B_{F \setminus \{m\}}$.
Similarly, $i_X^{(m)}(T) i_X^{(p)}(S) \in B_{F \setminus
\{m\}}$, therefore $B_F =
B_m + B_{F \setminus \{m\}}$ is norm closed
 \cite[Corollary~1.8.4]{Dix:C*-algebras}.

\begin{lemma} \label{lemma:inj}
Let $X$ be compactly aligned. Suppose that either  the left action on each
fibre is injective, or  $P$ is directed.
Let $\psi \colon X \to D$ be an injective Nica covariant
representation of $X$, and let $\psi_* : \Tc(X) \to D$
be homomorphism satisfying $\psi = \psi_* \circ i_X$.
Then $\ker(\psi_*)\cap\Ff\subset \ker (q_{\CNP})$.
\end{lemma}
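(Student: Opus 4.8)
Since $\NO{X}\cong\Tc(X)/I_{\rm cov}$ with $q_{\CNP}$ the corresponding quotient map, we have $\ker(q_{\CNP})=I_{\rm cov}$, so the plan is to establish the equivalent inclusion $\ker(\psi_*)\cap\Ff\subseteq I_{\rm cov}$. I would split this into a purely algebraic statement about the building blocks $B_F$, $F\in\fvcl{P}$, followed by an approximation argument.

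First I would handle each $B_F$. Write an arbitrary element $b\in B_F$ as $b=\sum_{p\in F}i_X^{(p)}(T_p)$ with $T_p\in\Kaa(X_p)$. Applying $\psi_*$ and using $\psi_*\circ i_X=\psi$, hence $\psi_*\circ i_X^{(p)}=\psi^{(p)}$, the hypothesis $b\in\ker(\psi_*)$ becomes $\sum_{p\in F}\psi^{(p)}(T_p)=0$. The standing assumptions are exactly those of Lemma~\ref{lem:psi equal}$(iii)$: $X$ is compactly aligned, $\psi$ is injective and Nica covariant, and either the left action on each fibre is injective or $P$ is directed. That lemma then yields $\sum_{p\in F}\iotale^s_p(T_p)=0$ for large $s$, which is precisely the relation defining the generators of $I_{\rm cov}$; hence $b\in I_{\rm cov}$. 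This shows $\ker(\psi_*)\cap B_F\subseteq I_{\rm cov}$ for every $F\in\fvcl{P}$.

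Next I would pass from $\bigcup_{F}B_F$ to $\Ff$. Since each $B_F$ is a $C^*$-subalgebra of $\Ff$ (proven before the statement, using compact alignment), $\psi_*(B_F)$ and $q_{\CNP}(B_F)$ are $C^*$-subalgebras, and by the previous step $\psi_*(b)\mapsto q_{\CNP}(b)$ is a well-defined surjective $*$-homomorphism $\psi_*(B_F)\to q_{\CNP}(B_F)$, hence norm-decreasing: $\|q_{\CNP}(b)\|\le\|\psi_*(b)\|$ for $b\in B_F$. Given $a\in\ker(\psi_*)\cap\Ff$ and $\varepsilon>0$, using density of $\bigcup_{F}B_F$ in $\Ff$ choose $F$ and $b\in B_F$ with $\|a-b\|<\varepsilon$. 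Then $\|\psi_*(b)\|=\|\psi_*(b-a)\|<\varepsilon$, so $\|q_{\CNP}(b)\|<\varepsilon$, and $\|q_{\CNP}(a)\|\le\|q_{\CNP}(a-b)\|+\|q_{\CNP}(b)\|<2\varepsilon$. As $\varepsilon$ is arbitrary, $q_{\CNP}(a)=0$, i.e.\ $a\in\ker(q_{\CNP})$, which completes the argument.

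I do not expect a serious obstacle here: the real content is entirely in Lemma~\ref{lem:psi equal}$(iii)$, where the injectivity of $\psi$ and the dichotomy on the left actions / directedness are used to convert a relation among the $\psi^{(p)}(T_p)$ into one among the $\iotale^s_p(T_p)$. The only delicate point is that one cannot approximate $a$ directly and then apply $\psi_*$, since the approximants need not lie in $\ker(\psi_*)$; the contractivity of the induced map $\psi_*(B_F)\to q_{\CNP}(B_F)$ is exactly the device that repairs this, which is why I separate the algebraic claim on $B_F$ from the limiting argument.
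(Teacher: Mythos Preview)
Your proof is correct and follows essentially the same approach as the paper: both reduce to showing $\ker(\psi_*)\cap B_F\subseteq\ker(q_{\CNP})$ via Lemma~\ref{lem:psi equal}$(iii)$, and then pass to the closure $\Ff$. The only difference is that the paper cites \cite[Lemma~1.3]{ALNR} for the density/approximation step, whereas you spell out that argument explicitly via the contractivity of the induced map $\psi_*(B_F)\to q_{\CNP}(B_F)$.
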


\begin{proof}
By an argument similar to that of the proof of  
\cite[Lemma~1.3]{ALNR}, and above observations, it is enough to show that $\ker(\psi_*)
\cap B_F \subset \ker(q_{\CNP})$, for each $F \in \fvcl{P}$. For
a typical element $c := \sum_{p \in F}
i_X^{(p)}(T_p)$ of $B_F$, if $c
\in \ker(\psi_*)$, since $\psi$ is injective, by part $(iii)$ of Lemma~\ref{lem:psi equal}, $\sum_{p \in F} \iotale^s_p(T_p) = 0$, for large
$s$. Since $j_X$ is
CNP-covariant, $
\textstyle{q_{\CNP}(c) = \sum_{p \in F} j_X^{(p)}(T_p) = 0}$, that is, $c \in \ker(q_{\CNP})$.
\end{proof}

The above lemma gives a way to control the kernel of canonical coaction $\delta$ of $G$ on $\Tc(X)$ defined in Lemma \ref{prop:existence_coaction}.

\begin{lemma} \label{ker}
Let $X$ be compactly aligned. Suppose that either  the left action on each
fibre is injective, or  $P$ is directed. Then for the canonical coaction 
$$\delta: \Tc(X)\to \Tc(X)\otimes C^*(G); \ i_X(x)\mapsto i_X(x)\otimes i_G(d(x)), \ \ (x\in X),$$
we have $\ker(\delta)\cap \Ff\subset \ker (q_{\CNP})$, where $q_{\CNP}: \Tc(x)\to \NO{X}$ is the quotient map.  
\end{lemma}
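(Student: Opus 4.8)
The plan is to exhibit the gauge coaction $\delta$ as the integrated form $\psi_*$ of an \emph{injective} Nica covariant representation of $X$, so that Lemma~\ref{lemma:inj} applies verbatim. The representation in question is the one already built in Lemma~\ref{rep}, namely
\[
\psi\colon X\to \Tc(X)\otimes C^*(G),\qquad x\mapsto i_X(x)\otimes i_G(d(x)).
\]
By construction in Lemma~\ref{prop:existence_coaction}, the homomorphism $\delta$ is exactly the one produced from $\psi$ by the universal property of $\Tc(X)$, so $\delta\circ i_X=\psi$; that is, $\psi_*=\delta$ in the notation of Lemma~\ref{lemma:inj}. Thus, once injectivity of $\psi$ is in hand, Lemma~\ref{lemma:inj} gives $\ker(\delta)\cap\Ff=\ker(\psi_*)\cap\Ff\subset\ker(q_{\CNP})$, which is the assertion.

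So the one point to verify is that $\psi$ is an injective representation, i.e.\ that each $\psi_u\colon A_u\to\Tc(X)\otimes C^*(G)$ is injective. First, $i_X$ itself is injective on every fibre $X_u=A_u$: the Fock representation $l$ of Lemma~\ref{univ} is Nica covariant, hence factors through $\Tc(X)$, and its restriction to $A_u$ is $\bigoplus_{r(p)=u}\phi_p$, which is isometric since $\phi_u=\id_{A_u}$. Second, since $G$ is discrete, $i_G(u)$ is a nonzero projection in $C^*(G)$, and in the minimal tensor product $b\otimes i_G(u)=0$ forces $b=0$; hence $\psi_u(a)=i_X(a)\otimes i_G(u)=0$ implies $i_X(a)=0$, i.e.\ $a=0$. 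Therefore $\psi$ is an injective Nica covariant representation of $X$, and the hypotheses of Lemma~\ref{lemma:inj} (compact alignment, together with either injective left actions or directedness of $P$) are precisely those assumed here.

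The argument is essentially formal, the only non-bookkeeping step being the injectivity of $\psi$, and within that the two sub-points I expect to need a word of care: the factorization of the Fock representation $l$ through $\Tc(X)$ (so that $i_X$ is genuinely faithful on the coefficient fibres, not merely the universal $i$ on $\Tt_X$), and the faithfulness of tensoring with the nonzero projection $i_G(u)$ in the minimal tensor product. Both are standard, so I anticipate no real obstacle; the content of the lemma lies entirely in Lemma~\ref{lemma:inj}, which in turn rests on part~$(iii)$ of Lemma~\ref{lem:psi equal} and the induction on $\fvcl{P}$.
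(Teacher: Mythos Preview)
Your proof is correct and follows the same route as the paper: identify $\delta$ as $\psi_*$ for the Nica covariant representation $\psi$ of Lemma~\ref{rep}, and invoke Lemma~\ref{lemma:inj}. The paper's proof is a two-line sketch that omits the verification that $\psi$ is injective on each $A_u$; you supply this missing step explicitly (via injectivity of $i_X$ on fibres, witnessed by the Fock representation, together with the fact that $i_G(u)$ is a nonzero projection in the minimal tensor product), which is exactly what is needed for Lemma~\ref{lemma:inj} to apply.
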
 
\begin{proof}
Let $\psi: X\to \Tc(X)$ be the Nica covariant representation defined in Lemma \ref{rep}. Then $\psi_{*}=\delta$, and the result follows from Lemma \ref{prop:existence_coaction}. 
\end{proof} 
We are now ready to show that $\NO{X}$ satisfies analog of criterion (B) of Sims and Yeend \cite[Section 1]{SY}, whenever the left
actions on fibres of $X$ are all injective, or $P$ is
directed. This in particular gives 
a gauge-invariant uniqueness theorem for $\NO{X}$, when $G$ is a discrete amenable groupoid.

\begin{lemma}\label{thm:inj on core}
Let $X$ be compactly aligned. Assume that either  the left actions on the
fibres of $X$ are all injective, or  $P$ is directed and
$X$ is $\tilde\phi$-injective. Let $\psi \colon X \to D$ be a
CNP-covariant representation of $X$ in a $C^*$-algebra $D$.
Then the induced homomorphism  $\intfrm{\psi} : \NO{X} \to D$
is injective on $q_{\CNP}(\Ff)$ if and only if $\psi$ is
injective as a Toeplitz representation.
\end{lemma}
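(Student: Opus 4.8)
The plan is to prove both directions. The "only if" direction is the easy one: if $\intfrm{\psi}$ is injective on $q_{\CNP}(\Ff)$, then since $j_X = q_{\CNP} \circ i_X$ maps each fibre $X_u = A_u$ into $\Ff$ (indeed $j_X(a) = j_X^{(u)}(\text{something}) \in q_{\CNP}(\Ff)$), and since $j_X$ is injective on fibres by Proposition~\ref{faithful2} (whose hypotheses are implied by ours), we get that $\psi = \intfrm{\psi} \circ j_X$ is injective on each fibre $X_u = A_u$, hence $\psi$ is an injective Toeplitz representation.

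For the "if" direction, the plan is as follows. Suppose $\psi$ is injective as a Toeplitz representation, and let $\psi_* : \Tc(X) \to D$ be the homomorphism with $\psi = \psi_* \circ i_X$; it factors through $q_{\CNP}$ as $\psi_* = \intfrm{\psi} \circ q_{\CNP}$ since $\psi$ is CNP-covariant. We want $\intfrm{\psi}$ injective on $q_{\CNP}(\Ff)$, i.e.\ $\ker(\intfrm\psi) \cap q_{\CNP}(\Ff) = 0$. First I would reduce to showing $\ker(\psi_*) \cap \Ff \subseteq \ker(q_{\CNP})$: given $b \in \Ff$ with $q_{\CNP}(b) \in \ker(\intfrm\psi)$, pick a lift, i.e.\ note $q_{\CNP}(\Ff) = q_{\CNP}(\Ff)$ and by Lemma~\ref{core}(i)--(ii) one can realize any element of $q_{\CNP}(\Ff)$ as $q_{\CNP}(c)$ with $c \in \Ff$; then $\psi_*(c) = \intfrm\psi(q_{\CNP}(c)) = 0$, so $c \in \ker(\psi_*) \cap \Ff$, and the claimed inclusion gives $q_{\CNP}(c) = 0$. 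But $\ker(\psi_*) \cap \Ff \subseteq \ker(q_{\CNP})$ is exactly Lemma~\ref{lemma:inj}, whose hypotheses (compactly aligned; left actions injective on fibres, or $P$ directed; $\psi$ injective Nica covariant) are all satisfied here --- a CNP-covariant representation is in particular Nica covariant, and injectivity as a Toeplitz representation is the required injectivity. Thus $\intfrm\psi$ is injective on $q_{\CNP}(\Ff)$.

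One subtlety I would be careful about: in the $P$-directed case the statement also assumes $X$ is $\tilde\phi$-injective, which is what makes $\NO{X}$ and $j_X$ well-behaved (e.g.\ so that $j_X$ makes sense and CNP-covariance is the right notion); in the "left actions injective" case, $\tilde\phi$-injectivity is automatic. I should make sure that in invoking Lemma~\ref{lemma:inj} and Proposition~\ref{faithful2} the dichotomy of hypotheses lines up: Lemma~\ref{lemma:inj} asks for "left action injective on each fibre OR $P$ directed" (no extra $\tilde\phi$ hypothesis), so it applies verbatim; Proposition~\ref{faithful2} asks for "$\tilde\phi_p$ injective for all $p$ OR each bounded subset has a maximal element", and since a $\tilde\phi$-injective directed $P$ still needs the maximal-element property --- which indeed holds for $\fvcl{P}$ but the statement here is about $P$ itself --- I would instead get fibrewise injectivity of $j_X$ directly from the $\tilde\phi$-injective branch, noting that injectivity of the left actions $\phi_p$ on fibres implies $\tilde\phi$-injectivity, and in the directed branch we have assumed $X$ is $\tilde\phi$-injective outright. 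So in either branch the first hypothesis of Proposition~\ref{faithful2} holds and $j_X$ is fibrewise injective.

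The main obstacle is essentially bookkeeping rather than a deep difficulty: one must check that the hypotheses of this lemma correctly feed into both Lemma~\ref{lemma:inj} (for the inclusion $\ker(\psi_*)\cap\Ff \subseteq \ker q_{\CNP}$) and Proposition~\ref{faithful2} (for fibrewise injectivity of $j_X$), and that "CNP-covariant" indeed entails "Nica covariant" and that "injective as a Toeplitz representation" is literally the fibrewise-injectivity condition these earlier results require. Once that alignment is in place, the proof is a two-line deduction in each direction.
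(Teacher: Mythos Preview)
Your proposal is correct and follows essentially the same approach as the paper: the ``only if'' direction combines Proposition~\ref{faithful2} (injectivity of $j_X$) with $\psi = \intfrm{\psi}\circ j_X$ and $j_X(A_u)\subseteq q_{\CNP}(\Ff)$, and the ``if'' direction reduces to the inclusion $\ker(\psi_*)\cap\Ff\subseteq\ker(q_{\CNP})$ supplied by Lemma~\ref{lemma:inj}. Your careful verification that the hypotheses feed correctly into both auxiliary results (in particular, that injectivity of the left actions $\phi_p$ forces $\tilde\phi$-injectivity via the summand $X_q\subseteq\widetilde{X}_q$) is more explicit than the paper's own terse argument but is exactly the intended reasoning.
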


\begin{proof}
Suppose that $\intfrm{\psi}$ is injective on $q_{\CNP}(\Ff)$.
By Proposition \ref{faithful2}, $j_X$ is injective, and   $\psi = \intfrm{\psi}\circ(j_X)$. 
Conversely, if $\psi$ is injective, by Lemma ~\ref{lemma:inj}, 
$$\ker(\psi_*)=\ker(\Pi\psi \circ q_{\CNP}) \cap \Ff \subset \ker(q_{\CNP}),$$ that is, 
$\intfrm{\psi}$ is injective on $q_{\CNP}(\Ff)$. 
\end{proof}

\begin{lemma}\label{rep2}
	Let $G$ be discrete  and $X$ be compactly aligned. For the canonical Nica covariant  representation $$\psi: X\to \Tc(X)\otimes C^*(G);\ x\mapsto i_X(x)\otimes i_G(d(x)),$$ 
	the induced representation $$\tilde\psi:=(q_{CNP}\otimes {\rm id})\circ\psi: X\to \NO{X}\otimes C^*(G);\ x\mapsto j_X(x)\otimes i_G(d(x)),$$
	is a CNP-covariant  representation
	of $X$. When each $\tilde\phi_p$ is injective, or each bounded subset of $P$ has a maximal element, $\tilde\psi$ is also injective as a representation. 
\end{lemma}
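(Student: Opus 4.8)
The plan is to verify the three defining properties of a CNP-covariant representation in turn and then handle injectivity separately. Since $q_{\CNP}\otimes\id$ is a $*$-homomorphism and $\psi\colon x\mapsto i_X(x)\otimes i_G(d(x))$ is already a Nica covariant Toeplitz representation by Lemma~\ref{rep}, I would first observe that $\tilde\psi=(q_{\CNP}\otimes\id)\circ\psi$ is automatically a Toeplitz representation, and that it is Nica covariant because composing a Nica covariant representation with any $*$-homomorphism $\rho$ preserves Nica covariance: one has $(\rho\circ\psi)^{(p)}(S)=\rho\big(\psi^{(p)}(S)\big)$ on rank-one operators, hence on all of $\Kaa(X_p)$, so the Nica relations carry over. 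I would also record, exactly as in the computation of $\psi^{(p)}$ inside the proof of Lemma~\ref{rep} and using $q_{\CNP}\circ i_X=j_X$, that $\tilde\psi^{(p)}(S)=j_X^{(p)}(S)\otimes i_G(r(p))$ for $p\in P$ and $S\in\Kaa(X_p)$.

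For CNP covariance, fix a finite $F\subset P$ and operators $T_p\in\Kaa(X_p)$ with $\sum_{p\in F}\tilde\iota^{q}_{p}(T_p)=0$ in $\mathscr{L}_{A_{r(q)}}(\Xle{q})$ for large $q$; the goal is $\sum_{p\in F}\tilde\psi^{(p)}(T_p)=0$. I would group $F$ by range, setting $F_u:=\{\,p\in F:r(p)=u\,\}$ for $u\in\Gz$, so that $\sum_{p\in F}\tilde\psi^{(p)}(T_p)=\sum_{u\in\Gz}\big(\sum_{p\in F_u}j_X^{(p)}(T_p)\big)\otimes i_G(u)$. Since the $i_G(u)$, $u\in\Gz$, are mutually orthogonal nonzero projections in $C^*(G)$, this vanishes if and only if $\sum_{p\in F_u}j_X^{(p)}(T_p)=0$ for every $u$ with $F_u\neq\emptyset$, and I would obtain the latter from the CNP covariance of $j_X$ applied to the block $\big(F_u,(T_p)_{p\in F_u}\big)$, provided I can show $\sum_{p\in F_u}\tilde\iota^{s}_{p}(T_p)=0$ for large $s$. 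The mechanism here is that $\tilde\iota^{s}_{p}(T_p)=0$ whenever $r(p)\neq r(s)$, because any contributing summand $\iota^r_p(T_p)$ with $p\le r\le s$ forces $r(p)=r(r)=r(s)$. Hence, given $s_0\in P$: if $r(s_0)\neq u$ then $\sum_{p\in F_u}\tilde\iota^{s}_{p}(T_p)=0$ already for all $s\ge s_0$; while if $r(s_0)=u$, applying the hypothesis with $q_0=s_0$ gives some $q_1\ge s_0$ with $\sum_{p\in F}\tilde\iota^{s}_{p}(T_p)=0$ for all $s\ge q_1$, and for such $s$ (which have $r(s)=u$) the summands with $p\in F\setminus F_u$ drop out by the same range argument, leaving $\sum_{p\in F_u}\tilde\iota^{s}_{p}(T_p)=0$. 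Thus the predicate holds for large $s$, and the conclusion follows.

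For injectivity under either of the two stated hypotheses, I would note that $\tilde\psi_u(a)=j_X(a)\otimes i_G(u)$ for $a\in A_u=X_u$, so $\|\tilde\psi_u(a)\|=\|j_X(a)\|\,\|i_G(u)\|=\|j_X(a)\|$; by Proposition~\ref{faithful2} the representation $j_X$ is injective, hence $j_X|_{A_u}$ is isometric, and therefore so is $\tilde\psi_u$ for every $u\in\Gz$, giving injectivity of $\tilde\psi$. I expect the CNP step to be the main obstacle: the delicate point is checking that the ``large $q$'' condition for the whole set $F$ localizes to each block $F_u$, which is precisely where the groupoid (rather than group) structure has to be handled with care. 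A slightly cleaner but essentially equivalent packaging would be to first verify that $(q_{\CNP}\otimes\id)\circ\delta$ annihilates the generators of $I_{\rm cov}$ --- the same computation as above --- so that it descends to a $*$-homomorphism $\bar\delta\colon\NO{X}\to\NO{X}\otimes C^*(G)$ with $\tilde\psi=\bar\delta\circ j_X$, whence $\tilde\psi$ is CNP covariant because $j_X$ is.
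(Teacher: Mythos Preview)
Your argument is correct. The CNP-covariance portion follows the same line as the paper, though you spell out explicitly the localization of the ``large $q$'' condition to each range-block $F_u$, while the paper simply asserts that $\tilde\psi$ is CNP-covariant ``as $j_X$ is so by the definition'' after recording $\tilde\psi^{(p)}(S)=j_X^{(p)}(S)\otimes i_G(r(p))$; your extra care here is warranted in the groupoid setting.

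For injectivity you take a different route from the paper. The paper shows that for each $u\in\Gz$ the characteristic function $1_u$ is positive definite on $G$, hence extends to a bounded linear functional on $C^*(G)$ with $1_u(i_G(u))=1$, and then applies the slice map $\id\otimes 1_u$ to $\tilde\psi(a)=j_X(a)\otimes i_G(u)$ to recover $j_X(a)$; Proposition~\ref{faithful2} then finishes. You instead use that the minimal tensor norm is multiplicative on elementary tensors, giving $\|\tilde\psi_u(a)\|=\|j_X(a)\|\,\|i_G(u)\|=\|j_X(a)\|$. Your approach is more direct but tacitly uses that $i_G(u)$ is a \emph{nonzero} projection in $C^*(G)$; this is true (e.g.\ the regular representation, or the very functional $1_u$ the paper constructs, witnesses it), but you should say so. The paper's slice-map argument has the advantage that the functional $1_u$ is reused later (Lemmas~\ref{tg} and~\ref{sepfam}), so the groundwork pays dividends elsewhere.
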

\begin{proof}
	First observe that,
	\begin{align*}
		\tilde\psi^{(p)}(x\otimes y^*)&=\tilde\psi_p(x)\tilde\psi_p(y)^*=\big(j_X(x)\otimes i_G(p)\big)\big(j_X(y)^*\otimes i_G(p^{-1})\big)\\&=j_X(x)j_X(y)^*\otimes i_G(r(p))=j_X^{(p)}(x\otimes y^*)\otimes i_G(r(p)),
	\end{align*}
	for each $p\in P, x,y\in X_p$, thus, $$\psi^{(p)}(S)=j_X^{(p)}(S)\otimes i_G(r(p)),\ \ (p\in P, S\in\mathscr{K}_{A_{r(p)}}(X_p)),$$
	Now, $\psi$ is CNP-covariant, as $j_X$ is so by the definition. 
	
	Next, assume that each $\tilde\phi_p$ is injective, or each bounded subset of $P$ has a maximal element. Given $u\in \Gz$, let $1_u$ be the characteristic function of singleton $\{u\}$, then for each $n\geq 1$, scalars $c_1,\cdots c_n$, and elements $g_1,\cdots, g_n\in G$ with the same range, 
	$$\sum_{i,j=1}^{n}c_i\bar c_j 1_u(g^{-1}_ig_j)=\sum_{(i,j)\in I}c_i\bar c_j=\sum_{i=1}^n |c_i|^2+2{\rm Re}\big(\sum_{(i,j)\in J}c_i\bar c_j\big) \geq 0,$$ 
	where $I:=\{(i,j): g_i=g_j, s(g_i)=u\}$ and $J:=\{(i,j)\in I: i<j\}$; that is, $1_u$ is positive definite on $G$ \cite[Proposition 3]{pat}, and so $1_u: C_c(G)\to \mathbb C$ extends to a bounded linear functional on $C^*(G)$ \cite[Proposition 4]{pat}, mapping $i_G(u)$ to 1. Now for $a\in A_u$, if
	$\tilde\psi(a)=j_X(a)\otimes i_G(u)=0$, then,
	$0=({\rm id}\otimes 1_u)\tilde\psi(a)=j_X(a),$ which in turn implies that $a=0$, by Proposition \ref{faithful2}.  
\end{proof}

\begin{lemma} \label{ker2}
	Let $X$ be compactly aligned. Suppose that either  the left action on each
	fibre is injective, or  $P$ is directed and $X$ is $\tilde\phi$-injective. Then for the canonical coaction 
	$$\delta: \Tc(X)\to \Tc(X)\otimes C^*(G); \ i_X(x)\mapsto i_X(x)\otimes i_G(d(x)), \ \ (x\in X),$$
	induces a coaction of $G$ on $\NO{X}$, again denoted by $\delta$, given by,
	$$\delta: \NO{X}\to \NO{X}\otimes \ C^*(G); \ j_X(x)\mapsto j_X(x)\otimes i_G(d(x)), \ \ (x\in X),$$
	which  
	is injective on $q_{\CNP}(\Ff)$.  
\end{lemma}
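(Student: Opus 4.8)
The plan is to produce $\delta$ on $\NO{X}$ from the universal property of $\NO{X}$, using the CNP-covariant representation already built in Lemma~\ref{rep2}; then to read off the coaction identity on generators; to obtain nondegeneracy (mod $\Gz$) by transporting the corresponding property of the gauge coaction $\delta_{\Tc}$ on $\Tc(X)$ through the quotient map; and finally to deduce injectivity on the core as a direct instance of Lemma~\ref{thm:inj on core}. Throughout, $G$ is discrete, so that a coaction is available. Concretely, I would first invoke Lemma~\ref{rep2}: the map $\tilde\psi\colon X\to\NO{X}\otimes C^*(G)$, $x\mapsto j_X(x)\otimes i_G(d(x))$, is a CNP-covariant representation, so the universal property of $\NO{X}$ yields a $*$-homomorphism $\delta:=\Pi\tilde\psi\colon\NO{X}\to\NO{X}\otimes C^*(G)$ with $\delta(j_X(x))=j_X(x)\otimes i_G(d(x))$. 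Comparing on the generating set $i_X(X)$, one gets $\delta\circ q_{\CNP}=(q_{\CNP}\otimes\id)\circ\delta_{\Tc}$, where $\delta_{\Tc}$ is the coaction of Lemma~\ref{prop:existence_coaction}; this identifies $\delta$ as the coaction induced by $\delta_{\Tc}$ on the quotient, which justifies reusing the symbol $\delta$.

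Next I would check the coaction axioms. The identity $(\delta\otimes\id)\circ\delta=(\id\otimes\delta_G)\circ\delta$ is verified on the generators $j_X(x)$, where both sides equal $j_X(x)\otimes i_G(d(x))\otimes i_G(d(x))$ (using $\delta_G(i_G(g))=i_G(g)\otimes i_G(g)$), hence it holds everywhere. For nondegeneracy (mod $\Gz$) I would avoid redoing the approximate-identity computation of Lemma~\ref{prop:existence_coaction} and instead transfer it along the quotient: given $\xi\in\NO{X}$ and $g\in G$, lift $\xi$ to some $\hat\xi\in\Tc(X)$; nondegeneracy (mod $\Gz$) of $\delta_{\Tc}$ provides $u\in\Gz$ with $\hat\xi\otimes(1_u*1_g)$ arbitrarily close to $\lsp\{\delta_{\Tc}(a)b : a\in\Tc(X),\, b\in\Tc(X)\otimes C^*(G)\}$, and applying the contractive surjection $q_{\CNP}\otimes\id$ together with the commuting square above puts $\xi\otimes(1_u*1_g)$ arbitrarily close to $\lsp\{\delta(c)d : c\in\NO{X},\, d\in\NO{X}\otimes C^*(G)\}$. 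Thus $\delta$ is a coaction of $G$ on $\NO{X}$.

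For the last assertion I would feed $\tilde\psi$ into Lemma~\ref{thm:inj on core}. That lemma needs $\tilde\psi$ injective as a Toeplitz representation, and by Lemma~\ref{rep2} this holds once every $\tilde\phi_p$ is injective. In the ``$P$ directed'' alternative this is part of the hypothesis; in the ``injective left actions'' alternative I would observe that $\widetilde{X}_q$ contains the summand $X_q\cdot I_{q^{-1}q}=X_q\cdot A_{s(q)}=X_q$, on which $\tilde\phi_q$ acts precisely as $\phi_q$, so injectivity of all $\phi_p$ forces injectivity of all $\tilde\phi_q$, i.e.\ $X$ is $\tilde\phi$-injective. Since the standing hypotheses of the lemma are exactly those of Lemma~\ref{thm:inj on core}, and $\delta=\Pi\tilde\psi$ is by construction the homomorphism induced by the CNP-covariant $\tilde\psi$, Lemma~\ref{thm:inj on core} then gives that $\delta$ is injective on $q_{\CNP}(\Ff)$.

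The heavy lifting is all in Lemmas~\ref{rep2} and~\ref{thm:inj on core}, so the only real friction is the hypothesis bookkeeping — in particular upgrading ``$\phi_p$ injective for all $p$'' to ``$\tilde\phi_q$ injective for all $q$'' so that Lemma~\ref{rep2} yields injectivity of $\tilde\psi$ — together with the routine descent of nondegeneracy (mod $\Gz$) along $q_{\CNP}\otimes\id$; everything else reduces to checking identities on the generators $j_X(X)$.
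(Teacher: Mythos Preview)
Your argument is correct and follows the same route as the paper: construct $\delta$ as $\Pi\tilde\psi$ from Lemma~\ref{rep2}, then invoke Lemma~\ref{thm:inj on core} with the injectivity of $\tilde\psi$ supplied by Lemma~\ref{rep2}. The paper's own proof is the two-line version of exactly this, and you have simply filled in details it leaves implicit --- in particular the coaction axioms via descent along $q_{\CNP}\otimes\id$, and the observation that injectivity of each $\phi_p$ forces injectivity of each $\tilde\phi_q$ (via the $X_q$-summand of $\widetilde X_q$), which is needed to match the hypotheses of Lemma~\ref{rep2}.
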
 
\begin{proof}
	Let $\tilde\psi: X\to \NO{X}$ be the CNP-covariant representation defined in Lemma \ref{rep2}. Then $\Pi\tilde\psi=\delta$ on $\NO{X}$, and the result follows from Lemmas   \ref{thm:inj on core} and \ref{rep2}.  
\end{proof}

Karla Oty introduced the notion of  groupoids with uniformly bounded fibres in her study of the Fourier-Stieltjes algebras of groupoids \cite{o}, which are those with a common upper bound on the cardinality of the sets $G^u$ of elements with a given range $u$. We give examples of such groupoids in Section \ref{example}. In the proof of the next lemma, we use notations of \cite{pat}.

\begin{lemma}\label{at last}
	If $G$ has uniformly bounded fibres with uniform bound $N$, then for each  finite subset $F\subseteq G$, there is a linear functional $1_F$ on $C^*(G)$, of norm at most $N$, such that $1_F(i_G(g))=1$, for each $g\in F$.
\end{lemma}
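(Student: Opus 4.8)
The plan is to realise $1_F$ as a vector state of one explicitly constructed representation of $C^*(G)$ and then to bound its norm using the uniform bound on the fibres. The representation I have in mind is the analogue for groupoids of the trivial representation of a group: on the canonical generators set $\pi_0(i_G(g))\delta_u := \delta_{r(g)}$ if $u=s(g)$ and $\pi_0(i_G(g))\delta_u:=0$ otherwise, and extend by linearity to $C_c(G)$ acting on $\ell^2(\Gz)$. A direct check (of the same flavour as the verification that the Fock representation $l$ is a Toeplitz representation in Lemma~\ref{univ}) shows that $f\mapsto\pi_0(f)$ is multiplicative and $*$-preserving on $C_c(G)$; since each $\pi_0(f)$ is bounded, $\pi_0$ extends to a $*$-representation of the full $C^*(G)$. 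The point of this representation is the formula it yields for matrix coefficients: for $\xi,\eta\in\ell^2(\Gz)$ the bounded functional $T\mapsto\langle\pi_0(T)\xi,\eta\rangle$ sends $i_G(g)$ to $\xi(s(g))\,\overline{\eta(r(g))}$.

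With this in hand I would let $V:=r(F)\cup s(F)$, a finite subset of $\Gz$, take $\xi=\eta$ to be the indicator function $\mathbbm{1}_V\in\ell^2(\Gz)$, and define $1_F(T):=\langle\pi_0(T)\mathbbm{1}_V,\mathbbm{1}_V\rangle$. Then for $g\in F$ we have $s(g),r(g)\in V$, so $1_F(i_G(g))=\mathbbm{1}_V(s(g))\,\overline{\mathbbm{1}_V(r(g))}=1$, as required, while $\|1_F\|\le\|\mathbbm{1}_V\|^2=|V|$ because $\pi_0$ is a $*$-representation and hence contractive. To get the sharper estimate $\|1_F\|\le N$ one would refine the choice of vectors: grouping $F$ by range, for each $u\in r(F)$ the set $F\cap G^u$ has at most $N$ elements, and the corresponding ``range $u$'' piece of $1_F$ can be taken of norm at most $|F\cap G^u|\le N$ (using that the range projections $i_G(u)$, $u\in\Gz$, are mutually orthogonal so that the pieces can be assembled along an orthogonal decomposition of the underlying Hilbert space rather than by naive summation). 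Alternatively one may pass to the corner $i_G(\mathbbm 1_V)\,C^*(G)\,i_G(\mathbbm 1_V)$, which is a $C^*$-algebra of the finite groupoid $G|_V=\{g: r(g),s(g)\in V\}$ (finite because $|G|_V|=\sum_{u\in V}|(G|_V)^u|\le N|V|$), and produce the functional there from the constant positive-type function $\mathbf 1$ on $G|_V$, afterwards composing with the conditional expectation $T\mapsto i_G(\mathbbm 1_V) T i_G(\mathbbm 1_V)$ of $C^*(G)$ onto that corner.

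The main obstacle is precisely the norm estimate: the naive construction above only gives $\|1_F\|\le|r(F)\cup s(F)|$, and bringing this down to the uniform fibre bound $N$ is the delicate step. It forces one to exploit the combinatorial structure of the part of $G$ that $F$ meets — in effect, that on each connected (hence transitive) component the bound $|G^u|\le N$ also bounds the size of the unit space of that component — and to arrange the pieces of $1_F$ attached to the various components/ranges so that their norms combine as a supremum rather than as a sum. I expect this bookkeeping, rather than the construction of the representation $\pi_0$ or the verification of the evaluation property, to be where the real work lies, and it is the step I would check most carefully, as the argument is sensitive to exactly how the ranges and sources occurring in $F$ are distributed among the units of $G$.
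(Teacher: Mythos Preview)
Your representation $\pi_0$ and the vector functional $T\mapsto\langle\pi_0(T)\mathbbm{1}_V,\mathbbm{1}_V\rangle$ are set up correctly and do give $1_F(i_G(g))=1$ for $g\in F$, but neither of your refinements brings the norm down from $|V|$ to $N$. The obstacle is structural: when $C^*(G)$ decomposes as a $c_0$-direct sum over orbits, a functional $\phi=(\phi_O)$ has $\|\phi\|=\sum_O\|\phi_O\|$, not $\sup_O\|\phi_O\|$; so even though each transitive piece contributes at most $|O^{(0)}|\le N$ (your observation here is correct), the total grows with the number of orbits $F$ meets. The corner reduction to $G|_V$ fails for the same reason --- the constant positive-type function $\mathbf 1$ on $G|_V$, as a functional on $C^*(G|_V)=\bigoplus_j C^*(O_j)$, has norm $\sum_j|O_j^{(0)}|=|V|$. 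Underlying this is the fact that, unlike for groups, a positive definite function $\phi$ on a groupoid with $\|\phi\|_\infty=1$ need not extend to a functional of norm $1$ on $C^*(G)$: the sup norm over units and the $C^*(G)^*$-norm decouple.

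The paper avoids $\ell^2(\Gz)$ entirely and works through Paterson's Fourier--Stieltjes framework for groupoids. One realises $1_{s(F)}$ as the coefficient $(\xi,\xi)$ of a representation $L$ on a $G$-Hilbert bundle $E$, where the relevant norm is $\|\xi\|^2=\|1_{s(F)}\|_\infty=1$ (a supremum over units, supplied by the bundle picture); then $1_F=1_F*1_{s(F)}$ becomes the coefficient $(\xi,1_F*\xi)$, and $\|1_F*\xi\|$ is bounded by the operator norm of convolution by $1_F$ on $E^2$, which by \cite[Proposition~10]{pat} is at most $\|1_F\|_{I,r}=\sup_u\sum_{g\in G^u}1_F(g)\le N$. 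The replacement of a sum over units by a supremum over units is precisely what upgrades $|V|$ to $N$, and it comes from the Hilbert-bundle machinery rather than from any rearrangement inside a single ordinary Hilbert-space representation.
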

\begin{proof}
	For $u\in \Gz$,  $1_u$ is a positive definite function on $G$ by the proof of Lemma \ref{rep2}. For a finite subset $F\subseteq  G$, by \cite[Theorem 1]{pat}, the positive definite function $1_{s(F)}=\sum_{u\in s(F)} 1_u$,    the  coefficient function $(\xi,\xi)$ of some  representation $L$ of $G$ on a $G$-Hilbert bundle $E$. Moreover, $\|\xi\|^2=\|1_{s(F)}\|_\infty=1$, by \cite[Proposition 4]{pat}. Next,  it follows from \cite[Proposition 10]{pat} that the left convolution by $1_F$ extends to a bounded linear operator on $E^2$ of norm at most $\|1_F\|_{I,r}=\sup_{u\in \Gz}\sum_{g\in G^u}|1_F(g)|=\sup_u |G^u|\leq N$. For $g\in F$,  we have $1_g*1_u=1_g$, if $u=s(g)$, and zero otherwise, thus  $1_F=1_F*1_{s(F)}$, which is then the coefficient function $(\xi, 1_F*\xi)$ of $L$. Now $L$ integrates to a representation  of $C^*(G)$, again denoted by $L$, and we have,
	$$\langle 1_F,a\rangle=\langle L(a)\xi, 1_F*\xi\rangle,\ \ (a\in C^*(G)),$$  
	that is, $1_F$ could be regarded as a bounded linear functional on $C^*(G)$ of norm at most $\|\xi\|\|1_F*\xi\|\leq N$.  Moreover,
	$$\langle 1_F,f\rangle=\sum_{g\in G} f(g)1_F(g)=\sum_{g\in F}f(g),$$
	for $f\in C_c(G)$. In particular, $1_F(i_G(g))=\langle 1_F, 1_g\rangle=1$, for each $g\in F$, as required.
\end{proof}

\begin{lemma} \label{inj2}
	If $G$ has uniformly bounded fibres, the canonical coactions of $G$ on $\Tc(X)$ and $\NO{X}$ are injective. 
\end{lemma}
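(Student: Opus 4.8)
The plan is to exhibit, on a dense $*$-subalgebra, a family of bounded left inverses to $\delta$ whose norms are uniformly controlled by the fibre bound $N$ of Lemma~\ref{at last}, and then pass to the limit. Concretely, recall from the discussion following Lemma~\ref{image} that $\Tc(X)=\clsp\{i_X(x)i_X(y)^*:x,y\in X\}$, and let $\mathcal D$ be the (dense) linear span of these products. For a bounded linear functional $\omega$ on $C^*(G)$ write $R_\omega\colon \Tc(X)\otimes C^*(G)\to\Tc(X)$ for the slice map determined by $R_\omega(a\otimes c)=\omega(c)\,a$; it is bounded with $\|R_\omega\|\le\|\omega\|$. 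Since $\delta$ is a $*$-homomorphism it is contractive, so $\|R_\omega\circ\delta\|\le\|\omega\|$.

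First I would compute $\delta$ on $\mathcal D$. For $x\in X_p$, $y\in X_q$ one has, using $\delta(i_X(x))=i_X(x)\otimes i_G(d(x))$ and $i_G(q)^*=i_G(q^{-1})$,
\[
\delta\big(i_X(x)i_X(y)^*\big)=i_X(x)i_X(y)^*\otimes i_G(p)i_G(q^{-1});
\]
this term is $0$ unless $s(p)=s(q)$ (by Lemma~\ref{zero} for the left factor, and because $p,q^{-1}$ are then composable in $G$), in which case it equals $i_X(x)i_X(y)^*\otimes i_G(pq^{-1})$. Hence for a general $a=\sum_{k=1}^m i_X(x_k)i_X(y_k)^*\in\mathcal D$ --- after discarding the vanishing terms we may assume $s(p_k)=s(q_k)$ and set $g_k:=p_kq_k^{-1}$ --- we get $\delta(a)=\sum_{k}i_X(x_k)i_X(y_k)^*\otimes i_G(g_k)$, so that for any finite $F\subseteq G$ containing $F_a:=\{g_1,\dots,g_m\}$ the functional $1_F$ of Lemma~\ref{at last} (with $\|1_F\|\le N$ and $1_F(i_G(g_k))=1$) satisfies $R_{1_F}(\delta(a))=a$.

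Now let $a\in\ker\delta$ and $\varepsilon>0$. Choose $a'\in\mathcal D$ with $\|a-a'\|<\varepsilon$ and a finite $F\supseteq F_{a'}$; then
\[
\|a'\|=\|R_{1_F}(\delta(a'))\|=\|R_{1_F}(\delta(a')-\delta(a))\|\le N\,\|\delta(a'-a)\|\le N\,\|a'-a\|<N\varepsilon,
\]
whence $\|a\|<(N+1)\varepsilon$; letting $\varepsilon\to0$ gives $a=0$. For $\NO{X}$ one argues identically: the generating elements $\sum_{p\in F}i_X^{(p)}(T_p)$ of $I_{\rm cov}$ have a common range $u$ (implicit in the ``for large $s$'' relation) and are sent by $\delta$ to $\big(\sum_p i_X^{(p)}(T_p)\big)\otimes i_G(u)\in I_{\rm cov}\otimes C^*(G)$, so $\delta$ descends to the canonical coaction $j_X(x)\mapsto j_X(x)\otimes i_G(d(x))$ on $\NO{X}=\clsp\{j_X(x)j_X(y)^*\}$ (Lemma~\ref{image}), and the same slice-map/limit argument with $j_X$ in place of $i_X$ shows it is injective.

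The main obstacle is the absence of a single bounded retraction $C^*(G)\to\mathbb C$: the ``trivial representation'' $i_G(g)\mapsto1$ need not be bounded on $C^*(G)$ for a groupoid, which is precisely why $\delta$ fails to be injective in general (cf.\ the remark after Lemma~\ref{prop:existence_coaction}). What rescues the argument is that the finite-support functionals $1_F$ have norm at most $N$ \emph{independently of $F$} --- the content of Lemma~\ref{at last} --- so that although the set $F$ one needs enlarges as $a'$ approximates $a$ more closely, the constant in the displayed inequality does not deteriorate. Verifying the slice-map estimate $\|R_{1_F}\|\le\|1_F\|$ on the minimal tensor product and the $\delta$-invariance of $I_{\rm cov}$ are the only other (routine) points.
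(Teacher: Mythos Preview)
Your proof is correct and follows essentially the same approach as the paper's: approximate $a\in\ker\delta$ by a finite sum $b\in\mathcal D$, apply the slice map $\id\otimes 1_F$ (your $R_{1_F}$) with $F$ the finite set of ``grades'' $p_kq_k^{-1}$ appearing in $b$, use Lemma~\ref{at last} to bound $\|1_F\|\le N$ uniformly, and conclude $\|a\|\le(N+1)\varepsilon$. Your handling of the vanishing terms with $s(p_k)\neq s(q_k)$ via Lemma~\ref{zero} is a welcome clarification the paper leaves implicit, and your closing remark on why the uniform bound $N$ is the crux is exactly the point. One small comment: for the $\NO{X}$ case you need not argue that $I_{\rm cov}$ is $\delta$-invariant---the existence of the induced coaction on $\NO{X}$ is already secured by Lemma~\ref{rep2} (the map $x\mapsto j_X(x)\otimes i_G(d(x))$ is CNP-covariant, hence factors through $\NO{X}$), after which your slice-map argument goes through verbatim with $j_X$ in place of $i_X$, as the paper also notes.
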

\begin{proof}
	Let $\delta$ be the canonical coaction of $G$ on $\Tc(X)$, defined by 
	$$\delta(i_X(x)i_X(y)^*)=i_X(x)i_X(y)^*\otimes i_G(pq^{-1}), \ \ \big(x,y\in X, r(d(x))=r(d(y))\big).$$
	Let $a\in \ker(\delta)$, and given $\varepsilon>0$, choose a finite sum $b=\sum_{i=1}^{n} i_X(x_i)i_X(y_i)^*\in \Tc(X)$, 
	within $\varepsilon$ of $a$. Let  $x_i\in X_{p_i}, y_i\in X_{q_i}$, for $p_i, q_i\in P$, and put $g_i:=p_iq_i^{-1}$, for $1\leq i\leq n$. Since $\delta$ is norm decreasing, $\delta(b)$ is of norm at most $\varepsilon$. Put $F:=\{g_i; 1\leq i\leq n\}$ and let $1_F$ be the bounded functional on $C^*(G)$ constructed in Lemma \ref{at last}, then  
	\begin{align*}
		(\id\otimes 1_F)\delta(b)&=(\id\otimes 1_F)\sum_{i} i_X(x_i)i_X(y_i)^*\otimes i_G(g_i)\\&=\sum_{i} i_X(x_i)i_X(y_i)^*=b, 
	\end{align*}
	is of norm at most $N\varepsilon$. Therefore, $a$ is of norm at most $(N+1)\varepsilon$. Since $\varepsilon$ was arbitrary, it follows that $a=0$, as required. The argument for the coaction of $G$ on $\NO{X}$ is similar, with $i_X$ replaced by $j_X$.
\end{proof}

Finally let us prove a restricted version of nondegeneracy for the canonical coactions of $G$ on $\Tc(X)$ and $\NO{X}$, as promised. 

\begin{lemma} \label{nondeg}
	If $G$ has uniformly bounded fibres, the restriction of the gauge coaction $\delta$ to the core $\Ff$ is a  coaction of $\Gz$ on $\Ff$. Moreover, the restricted $*$-homomorphism  $\delta: \Ff\to(\Ff\otimes C^*(G))\cap \delta(\Tc(X))$ is surjective. Similarly, the restriction of the canonical coaction of $G$ on $\NO{X}$ (again denoted by $\delta$) to $q_{\rm CNP}(\Ff)$ is a  coaction of $\Gz$ on $q_{\rm CNP}(\Ff)$ and $\delta: q_{\rm CNP}(\Ff)\to\big(q_{\rm CNP}(\Ff)\otimes C^*(G)\big)\cap \delta(\Tc(X))$ is a $*$-epimomorphism.
\end{lemma}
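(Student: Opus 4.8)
The plan is to check the coaction axioms for the restriction of $\delta$ to the core, then to derive surjectivity from the injectivity of $\delta$ (Lemma~\ref{inj2}) together with the canonical conditional expectation of $C^*(G)$ onto $C^*(\Gz)$, and finally to transport both assertions to $\NO{X}$ via $q_{\CNP}$.

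For the coaction axioms: by Lemma~\ref{prop:existence_coaction} and Lemma~\ref{core}$(i)$, whenever $x,y\in X$ satisfy $d(x)=d(y)=p$ we have $\delta(i_X(x)i_X(y)^*)=i_X(x)i_X(y)^*\otimes i_G(r(p))$; since such elements span $\Ff$ and $r(p)\in\Gz$, the map $\delta$ restricts to a $*$-homomorphism $\delta\colon\Ff\to\Ff\otimes C^*(\Gz)$, where $C^*(\Gz)$ denotes the $C^*$-subalgebra $\clsp\{i_G(u):u\in\Gz\}$ of $C^*(G)$. The coaction identity $(\delta\otimes\id)\circ\delta=(\id\otimes\delta_{\Gz})\circ\delta$ on $\Ff$ is obtained by restricting the coaction identity for $\delta$ on $\Tc(X)$, using that $\delta_G$ sends $i_G(u)$ to $i_G(u)\otimes i_G(u)\in C^*(\Gz)\otimes C^*(\Gz)$ for $u\in\Gz$. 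For nondegeneracy (mod $\Gz$) I take the selfadjoint total subset $D_0:=\{i_X(x)i_X(y)^*\otimes i_G(v):d(x)=d(y),\ v\in\Gz\}$ of $\Ff\otimes C^*(\Gz)$: given such an element, set $u:=r(d(x))$ and $c:=i_X(x)i_X(y)^*$; if $v=u$, then for an approximate identity $(\theta_\lambda)$ of $\Ff$ we have $(1_{M(\Ff)}\otimes i_G(u))(c\otimes i_G(u))=c\otimes i_G(u)=\lim_\lambda\delta(c)(\theta_\lambda\otimes i_G(u))$, which lies in $\clsp\{\delta(a)b:a\in\Ff,\ b\in\Ff\otimes C^*(\Gz)\}$; if $v\ne u$, pick $w\in\Gz\setminus\{v\}$, so that $(1_{M(\Ff)}\otimes i_G(w))(c\otimes i_G(v))=0$ is trivially in that span. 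Hence $\delta|_{\Ff}$ is a coaction of $\Gz$ on $\Ff$.

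For surjectivity, note first that $\delta$ is injective by Lemma~\ref{inj2}, hence isometric, so $\delta(\Ff)$ is closed and $\delta^{-1}$ is defined on $\delta(\Tc(X))$. Let $\Phi_G\colon C^*(G)\to C^*(\Gz)$ be the canonical conditional expectation, given on $C_c(G)$ by restriction to $\Gz$, so that $\Phi_G(i_G(g))=i_G(g)$ for $g\in\Gz$ and $\Phi_G(i_G(g))=0$ for $g\notin\Gz$; in particular $\Phi_G^2=\Phi_G$, $\Phi_G$ is the identity on $C^*(\Gz)$, and $(\Phi_G\otimes\id)\circ\delta_G=\delta_G\circ\Phi_G$. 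Put $E:=\id_{\Tc(X)}\otimes\Phi_G$. Using that $d(x)d(y)^{-1}\in\Gz$ exactly when $d(x)=d(y)$, one computes on spanning elements that $E(\delta(i_X(x)i_X(y)^*))$ equals $\delta(i_X(x)i_X(y)^*)$ when $d(x)=d(y)$ and $0$ otherwise; therefore $E(\delta(\Tc(X)))\subseteq\delta(\Ff)$, and $\Phi:=\delta^{-1}\circ E\circ\delta$ is a contractive idempotent of $\Tc(X)$ with range $\Ff$. Now let $a\in\Tc(X)$ with $\delta(a)\in\Ff\otimes C^*(G)$ and set $S:=\delta(a)-E(\delta(a))$; then $S=\delta(b)$ with $b:=a-\Phi(a)$, and $S\in\Ff\otimes C^*(G)$ with $E(S)=0$. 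Applying the coaction identity to $b$ gives $(\delta\otimes\id)(S)=(\id\otimes\delta_G)(S)$; applying $\id\otimes\Phi_G\otimes\id$ to both sides, the left side is unchanged (its middle leg lies in $C^*(\Gz)$, where $\Phi_G$ acts as the identity, because $\delta(\Ff)\subseteq\Ff\otimes C^*(\Gz)$), while the right side becomes $(\id\otimes(\delta_G\circ\Phi_G))(S)=(\id\otimes\delta_G)(E(S))=0$. Thus $(\delta\otimes\id)(S)=0$, and since the minimal tensor product of injective maps is injective, $S=0$; hence $\delta(a)=E(\delta(a))=\delta(\Phi(a))$, so $a=\Phi(a)\in\Ff$. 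As $\delta(\Ff)\subseteq(\Ff\otimes C^*(G))\cap\delta(\Tc(X))$ is evident, this shows $\delta\colon\Ff\to(\Ff\otimes C^*(G))\cap\delta(\Tc(X))$ is onto.

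The assertions for $\NO{X}$ follow by the same arguments with $q_{\CNP}(\Ff)=\clsp\{j_X(x)j_X(y)^*:d(x)=d(y)\}$ in place of $\Ff$, the coaction of $G$ on $\NO{X}$ of Lemma~\ref{ker2} (which is injective by Lemma~\ref{inj2}) in place of $\delta$, and $j_X$ in place of $i_X$; the set ``$\delta(\Tc(X))$'' in the target is then to be read as $(q_{\CNP}\otimes\id)(\delta(\Tc(X)))$, the image of the coaction on $\NO{X}$. The main obstacle is precisely the surjectivity step, i.e.\ the identity $\delta^{-1}(\Ff\otimes C^*(G))=\Ff$: it depends essentially on the injectivity of $\delta$, which is exactly what the uniformly-bounded-fibres hypothesis buys us (via Lemma~\ref{inj2}) and which fails for general $G$ (cf.\ the remark following Lemma~\ref{prop:existence_coaction}); without it the projection $\Phi$ is not defined and $a$ cannot be recovered from $\delta(a)$ by slicing the comultiplication.
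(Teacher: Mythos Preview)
Your proof is correct and takes a genuinely different route from the paper for the surjectivity step.

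The paper argues by approximation: given $b\in(\Ff\otimes C^*(G))\cap\delta(\Tc(X))$, it approximates $b$ both by a finite sum $\sum_i i_X(x_i)i_X(y_i)^*\otimes i_G(g_i)$ in $\Ff\odot C_c(G)$ and by $\delta(c)$ for a finite sum $c=\sum_j i_X(z_j)i_X(w_j)^*$ in $\Tc(X)$; it then applies the bounded functional $1_F$ of Lemma~\ref{at last} (which is where the uniformly-bounded-fibres hypothesis enters directly) to the difference, and concludes that $b$ is close to $\sum_i i_X(x_i)i_X(y_i)^*\otimes i_G(r(p_i))\in\delta(\Ff)$.

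Your argument is more structural: you introduce the conditional expectation $\Phi_G\colon C^*(G)\to C^*(\Gz)$, note that $E=\id\otimes\Phi_G$ satisfies $E\circ\delta=\delta\circ\Phi$ for a well-defined projection $\Phi$ onto $\Ff$ (this is where you use injectivity of $\delta$ from Lemma~\ref{inj2}), and then use the coaction identity together with the commutation $(\Phi_G\otimes\id)\circ\delta_G=\delta_G\circ\Phi_G$ to kill the ``off-diagonal'' part $S=\delta(a)-E(\delta(a))$ via the injectivity of $\delta\otimes\id$. This avoids the explicit $\varepsilon$-bookkeeping and the direct invocation of Lemma~\ref{at last}; the uniformly-bounded-fibres hypothesis is used only through Lemma~\ref{inj2}. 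In effect your argument shows the stronger implication ``$\delta$ injective $\Rightarrow$ $\delta\colon\Ff\to(\Ff\otimes C^*(G))\cap\delta(\Tc(X))$ surjective,'' while the paper's approach trades that generality for a more hands-on estimate that makes the role of the bound $N$ visible. Your treatment of the coaction-of-$\Gz$ axioms is also more explicit than the paper's, which simply records $\delta(\Ff)\subseteq\Ff\otimes C^*(G)$.
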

\begin{proof}
	It is clear that $\delta(\Ff)\subseteq \Ff\otimes C^*(G)$, so $\delta$ restricts to a coaction of $G$ on $\Ff$. Put $D:= (\Ff\otimes C^*(G))\cap \delta(\Tc(X))$. To prove surjectivity of the restricted map $\delta: \Ff\to D$, it is enough to show that the set of basic tensors $\delta(i_X(x)i_X(y)^*)=i_X(x)i_X(y)^*\otimes i_G(r(p))$, with $p\in P$ and $x,y\in X_p$, is a total set in $D$. Given $\varepsilon>0$ and $b\in D$, choose a finite sum $\sum_i i_X(x_i)i_X(y_i)^*\otimes i_G(g_i)\in \Ff\odot C^*(G)$, with $p_i\in P$ and $x_i,y_i\in X_{p_i}$, within $\varepsilon$ of $b$. Choose $a\in \Tc(X)$ with $b=\delta(a)$. Finally, choose a finite sum $c:=\sum_j i_X(z_j)i_X(w_j)^*$, with $r_j, q_j\in P$, $r(r_j)=r(q_j)$, and $z_j\in X_{r_j},w_j\in X_{q_j}$,  within $\varepsilon$ of $a$. Since $\delta$ is norm decreasing, $b=\delta(a)$ is within $\varepsilon$ of $\delta(c)=\sum_j i_X(z_j)i_X(w_j)^*\otimes i_G(r_jq^{-1}_j)$. Thus the following finite sum is of norm at most $2\varepsilon$,
	$$d:= \sum_i i_X(x_i)i_X(y_i)^*\otimes i_G(g_i)-\sum_j i_X(z_j)i_X(w_j)^*\otimes i_G(r_jq^{-1}_j).$$   
	For the finite set $F\subseteq G$ including  elements $g_i$ and $r_jq^{-1}_j$, for each $i$ and $j$, let $1_F$ be the bounded linear functional of norm at most $N$ constructed in the proof of Lemma \ref{at last}. Then, $$\langle\id\otimes 1_F,b \rangle=\sum_i i_X(x_i)i_X(y_i)-\sum_j  i_X(z_j)i_X(w_j)^*$$
	is of norm at most $2N\varepsilon$. Applying $\delta$ to the right hand side, the following finite sum is of norm at most $2N\varepsilon$,
	$$ \sum_i i_X(x_i)i_X(y_i)^*\otimes i_G(r(p_i))-\sum_j i_X(z_j)i_X(w_j)^*\otimes i_G(r_jq^{-1}_j).$$ 
	Therefore, $b$ is within $(2N+1)\varepsilon$ of  $\sum_i i_X(x_i)i_X(y_i)^*\otimes i_G(r(p_i))$, proving the claim. The proof for $\NO{X}$ is similar, with $i_X$ replaced by $j_X$ in the above argument.
\end{proof}

\section{gauge-invariant uniqueness theorem}\label{sec:couniversal}

In this section we give a gauge-invariant uniqueness theorem for product systems over quasi-lattice ordered groupoids. A Toeplitz representation
$\psi \colon X \to D$ is gauge-compatible if there is a coaction
$\beta$ of $G$ on $D$ such that
$\beta(\psi(x)) = \psi(x) \otimes i_G(d(x))$, for $x \in X$.
For gauge-compatible Toeplitz representations $\psi_i:X\to D_i$, with 
coaction $\beta_i$ of $G$ on $D_i$ satisfying
$\beta_i(\psi_i(x))=\psi_i(x)\otimes i_G(d(x))$, for  $x\in
X$ and $i=1,2$, an $*$-homomorphism  $\phi :D_1\to D_2$ is equivariant if $\phi\circ \psi_1=\psi_2$ and 
$(\phi\otimes \id)\circ \beta_1=\beta_2\circ \phi$.

The main result of this section is an extension of \cite[Theorem 4.1]{clsv}.

\begin{theorem}\label{thm:projective property}
Let $G$ be discrete and $X$ be compactly aligned. Suppose that either the left action on each
fibre is injective, or $P$ is directed and $X$
is $\tilde\phi$-injective. Then there exists a triple
$(\NO{X}^\reduced, j_X^\reduced, \delta^n)$ which is
couniversal for gauge-compatible injective Nica covariant
representations of $X$ in the following sense:
\begin{enumerate}
\item $\NO{X}^\reduced$ is a $C^*$-algebra, $j^\reduced_X$ is an injective
    Nica covariant representation of $X$ whose image
    generates $\NO{X}^\reduced$, and $\delta^n$ is a normal coaction of $G$ on $\NO{X}^\reduced$
    with $\delta^n(j^\reduced_X(x))=j^\reduced_X(x)\otimes i_G(d(x))$, for  $x\in X$.
\item\label{it:c-u property} If $\psi \colon X \to D$ is an
    injective Nica covariant gauge-compatible representation whose image
    generates $D$ then there is a
    surjective $*$-homomorphism $\phi \colon D \to
    \NO{X}^\reduced$ such that $\phi(\psi(x)) =
    j^\reduced_X(x)$, for $x \in X$.
\end{enumerate}
Moreover, the representation $j^\reduced_X$ is CNP-covariant, the coaction $\delta^n$ is normal, and
$(\NO{X}^\reduced, j^\reduced_X, \redgaug)$ is the unique
triple satisfying \textnormal{(1)} and \textnormal{(2)}, in the sense that, if $(C, \rho, \gamma)$ satisfies the same
two conditions, then there is an isomorphism $\phi \colon C\to
\NO{X}^\reduced$ such that $j^\reduced_X = \phi\circ \rho$ and $\phi$ is equivariant for $\gamma$ and $\redgaug$.
\end{theorem}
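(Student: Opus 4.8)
The plan is to construct $\NO{X}^\reduced$ as the normalization of the gauge coaction on $\NO{X}$ and to deduce couniversality from the analysis of the core carried out in Section~\ref{sec:core}. Write $\lambda\colon C^*(G)\to C^*_r(G)$ for the regular representation; I will use that $\lambda$ is faithful on the commutative subalgebra $C_0(\Gz)=\clsp\{\,i_G(u):u\in\Gz\,\}$ of $C^*(G)$ and that $C^*_r(G)$ carries a faithful conditional expectation $\Phi\colon C^*_r(G)\to C_0(\Gz)$ with $\Phi(\lambda(i_G(g)))$ equal to $i_G(g)$ if $g\in\Gz$ and to $0$ otherwise; recall also that a coaction $\beta$ is \emph{normal} precisely when $(\id\otimes\lambda)\circ\beta$ is injective. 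Starting from the gauge coaction $\delta$ of $G$ on $\NO{X}$ from Lemma~\ref{ker2}, I set $\Lambda:=(\id\otimes\lambda)\circ\delta$, define $\NO{X}^\reduced:=\NO{X}/\ker\Lambda$ with quotient map $q^\reduced$, put $j^\reduced_X:=q^\reduced\circ j_X$, and take $\delta^\normal$ to be the coaction induced by $\delta$ on the quotient; this is the groupoid version of Quigg's normalization. Verifying item~(1) is then routine: $\delta^\normal$ is normal by construction and satisfies $\delta^\normal(j^\reduced_X(x))=j^\reduced_X(x)\otimes i_G(d(x))$ since $q^\reduced$ is equivariant, $j^\reduced_X$ is Nica covariant and CNP-covariant because these properties pass to quotients from $j_X$, and $j^\reduced_X(X)$ generates $\NO{X}^\reduced$. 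For injectivity of $j^\reduced_X$ I would argue as in Proposition~\ref{faithful2}, reducing to injectivity on each fibre $A_u=X_u$; since $j_X(A_u)\subseteq q_{\CNP}(\Ff)$ this amounts to $\ker\Lambda\cap q_{\CNP}(\Ff)=\{0\}$, which holds because $\delta$ is injective on $q_{\CNP}(\Ff)$ by Lemma~\ref{ker2}, because $\delta(q_{\CNP}(\Ff))\subseteq q_{\CNP}(\Ff)\otimes C_0(\Gz)$, and because $\id\otimes\lambda$ is injective there by faithfulness of $\lambda$ on $C_0(\Gz)$; then $\|x\|^2=\|j^\reduced_X(\langle x,x\rangle_{d(x)})\|=\|j^\reduced_X(x)\|^2$ upgrades fibrewise injectivity to isometry on every $X_p$.

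For item~(2), given an injective Nica covariant gauge-compatible $\psi\colon X\to D=C^*(\psi(X))$ with coaction $\beta$, I would take $\psi_*\colon\Tc(X)\to D$ from the universal property of $\Tc(X)$, note it is equivariant, i.e.\ $(\psi_*\otimes\id)\circ\delta=\beta\circ\psi_*$ (checked on generators, where $\delta$ now denotes the gauge coaction on $\Tc(X)$ of Lemma~\ref{prop:existence_coaction}), and set $Q:=q^\reduced\circ q_{\CNP}\colon\Tc(X)\to\NO{X}^\reduced$, so that $Q\circ i_X=j^\reduced_X$. The whole point is to establish $\ker\psi_*\subseteq\ker Q$: then $Q$ factors as $\phi\circ\psi_*$ with $\phi\colon D\to\NO{X}^\reduced$ the desired surjection and $\phi\circ\psi=j^\reduced_X$. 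For $a\in\ker\psi_*$, equivariance gives $(\psi_*\otimes\id)(\delta(a))=0$, hence $(\psi_*\otimes\lambda)(\delta(a))=0$. Since $\Tc(X)=\clsp\{\,i_X(x)i_X(y)^*\,\}$ and $\delta(i_X(x)i_X(y)^*)=i_X(x)i_X(y)^*\otimes i_G(d(x)d(y)^{-1})$, the element $\delta(a)$ lies in the closed span of the spaces $\Tc(X)_g\otimes i_G(g)$ with $\Tc(X)_g:=\clsp\{\,i_X(x)i_X(y)^*:d(x)d(y)^{-1}=g\,\}$, and I recover its $g$-th spectral component $b_g\in\overline{\Tc(X)_g}$ by compressing $(\id\otimes\lambda)(\delta(a))$ on the left by $1\otimes\lambda(i_G(g))^*$ and then applying $\id\otimes\Phi$. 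Applying this same operation to $(\psi_*\otimes\lambda)(\delta(a))=0$, and using $\lambda(i_G(s(g)))\neq 0$, yields $\psi_*(b_g)=0$ for every $g\in G$. Since $\delta(b_g)=b_g\otimes i_G(g)$ we get $\delta(b_g^*b_g)=b_g^*b_g\otimes i_G(s(g))$, so $b_g^*b_g\in\Tc(X)_{s(g)}\subseteq\Ff$; thus $b_g^*b_g\in\ker\psi_*\cap\Ff$, and Lemma~\ref{lemma:inj} (whose hypotheses are included in those of the theorem) gives $q_{\CNP}(b_g^*b_g)=0$, hence $q_{\CNP}(b_g)=0$. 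As $q_{\CNP}$ is equivariant it carries $\Tc(X)_g$ into the $g$-th spectral subspace of $\NO{X}$, so $q_{\CNP}(b_g)$ is exactly the $g$-th component of $q_{\CNP}(a)$; all of these vanish, which by faithfulness of $\Phi$ forces $\Lambda(q_{\CNP}(a))=0$, i.e.\ $q_{\CNP}(a)\in\ker\Lambda$ and $a\in\ker Q$. This simultaneously records that $j^\reduced_X$ is CNP-covariant and that $\delta^\normal$ is normal, as asserted.

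For uniqueness I would run the standard back-and-forth argument: apply the couniversal property~(2) of $(C,\rho,\gamma)$ to the representation $j^\reduced_X$ (which is injective, Nica covariant, and gauge-compatible with generating image by~(1)) to obtain a surjection $\eta\colon\NO{X}^\reduced\to C$ with $\eta\circ j^\reduced_X=\rho$, and apply~(2) of $(\NO{X}^\reduced,j^\reduced_X,\delta^\normal)$ to $\rho$ to obtain a surjection $\phi\colon C\to\NO{X}^\reduced$ with $\phi\circ\rho=j^\reduced_X$. Then $\phi\circ\eta$ and $\eta\circ\phi$ fix the generating sets $j^\reduced_X(X)$ and $\rho(X)$, hence are identity maps, so $\phi$ is an isomorphism with inverse $\eta$; and $(\phi\otimes\id)\circ\gamma$ and $\delta^\normal\circ\phi$ agree on $\rho(X)$ (both send $\rho(x)$ to $j^\reduced_X(x)\otimes i_G(d(x))$), hence everywhere, so $\phi$ is equivariant for $\gamma$ and $\delta^\normal$.

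The step I expect to be the main obstacle is the inclusion $\ker\psi_*\subseteq\ker Q$: one must transfer vanishing from $\psi_*$ to $q_{\CNP}$ using only the coaction together with the core statement Lemma~\ref{lemma:inj}, and in the groupoid setting the ``Fourier component'' bookkeeping is complicated by the partial multiplication --- the products $i_G(g)^*i_G(h)=i_G(g^{-1}h)$, the unit projections $i_G(s(g))$ being nonzero but not the identity, and the need for the faithful conditional expectation $\Phi$ on $C^*_r(G)$ to isolate homogeneous components. A secondary technicality is verifying that the normalization of coactions --- in particular that the ``nondegeneracy $\bmod\ \Gz$'' condition is inherited by $\delta^\normal$ --- behaves as expected in the groupoid framework, for which there seems to be no ready reference.
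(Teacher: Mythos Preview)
Your outline is essentially correct and reaches the same conclusion as the paper, but the route you take for item~(2) is genuinely different from the paper's argument, and there are two places where your write-up needs more care.

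\textbf{Comparison of strategies.} The paper does \emph{not} try to show $\ker\psi_*\subseteq\ker Q$ directly. Instead it works on the $D$ side: it forms the induced ideal $I\lhd D$ generated by $\bigcup_{u}\bigl(\psi_*(\ker q_{\CNP})\cap D^\beta_u\bigr)$, passes to $D/I$, and then proves that for every $g\in G$
\[
\ker(\pi\circ\psi_*)\cap\Tc(X)^\delta_g=\ker(q_{\CNP})\cap\Tc(X)^\delta_g,
\]
so that the Fell bundles of $D/I$ and of $\NO{X}$ are fibrewise isometric. The desired surjection $\phi$ is then obtained as $\tilde\phi^{-1}\circ\lambda^I_r\circ\pi$, where $\tilde\phi\colon\NO{X}^\reduced\to(D/I)^\reduced$ comes from the couniversal property of reduced cross-sectional algebras. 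Your approach avoids the ideal $I$ and the Fell-bundle isomorphism entirely: you push $a\in\ker\psi_*$ through the spectral projections $\delta_g$, use Lemma~\ref{lemma:inj} on each $b_g^*b_g\in\Ff$, and conclude that every Fourier component of $q_{\CNP}(a)$ vanishes. This is more direct and requires less machinery; the price is that you must justify the reconstruction step carefully, which leads to the first issue below.

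\textbf{The final inference in item~(2).} You write ``by faithfulness of $\Phi$ forces $\Lambda(q_{\CNP}(a))=0$''. Faithfulness of $\Phi$ on $C^*_r(G)$ alone does not give this: what you actually need is that the family of bounded functionals $\{\tilde 1_g:g\in G\}$ on $C^*_r(G)$, defined by $\tilde 1_g(\lambda(f))=f(g)$, separates points. This holds because $\tilde 1_g(w)=\langle\pi_{s(g)}(w)1_{s(g)},1_g\rangle$ is a matrix coefficient of the faithful regular representation $\pi=\bigoplus_u\pi_u$ on $\bigoplus_u\ell^2(G_u)$, and these coefficients determine all matrix entries. Since $1_g=\tilde 1_g\circ\lambda$, your statement $\delta_g(q_{\CNP}(a))=0$ is exactly $(\id\otimes\tilde 1_g)(\Lambda(q_{\CNP}(a)))=0$; slicing by states of $\NO{X}$ then gives $\Lambda(q_{\CNP}(a))=0$. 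This fixes the step, but the phrase ``faithfulness of $\Phi$'' does not.

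\textbf{The coaction on the quotient.} You correctly flag as a technicality that $\delta^\normal$ needs to be a coaction on $\NO{X}/\ker\Lambda$. Note, however, that $\ker\Lambda$ is \emph{not} an induced ideal in the sense of Definition~\ref{ind}: for $a\in(\NO{X})^\delta_u$ one has $\Lambda(a)=a\otimes\lambda(i_G(u))$ with $\lambda(i_G(u))\neq 0$, so $\ker\Lambda\cap(\NO{X})^\delta_u=\{0\}$. Thus Lemma~\ref{prp:deltaI existence}(iii) does not apply. The paper sidesteps this by defining $\NO{X}^\reduced$ as the reduced cross-sectional algebra $C^*_r(E_{\mathcal O})$ of the Fell bundle $E_{\mathcal O}=\bigl((\NO{X})^\delta_g\times\{g\}\bigr)_{g\in G}$ and building $\delta^\normal$ directly on $C^*_r(E_{\mathcal O})$ as in the paragraph after Lemma~\ref{nondeg2}; the identification $C^*_r(E_{\mathcal O})\cong\NO{X}/\ker\Lambda$ then comes a posteriori from $\ker(\lambda_r)=\ker\bigl((\id\otimes\lambda)\circ\delta\bigr)$. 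If you want to keep your quotient presentation, you should either import that construction or argue separately that $\delta$ maps $\ker\Lambda$ into $\ker\Lambda\otimes C^*(G)$ and that the induced map satisfies the coaction identity and nondegeneracy~(mod~$\Gz$).

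Your uniqueness argument matches the paper's.
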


To prove Theorem~\ref{thm:projective property}, we essentially adapt  argument of the proof of \cite[Theorem 4.1]{clsv} to the groupoid case. We first need
to recall some facts about Fell bundles over groupoids and their cross-sectional 
$C^*$-algebras \cite{kum}. Then we need to slightly extend the known results on the relationship
between topologically graded $C^*$-algebras and $C^*$-algebras
associated to Fell bundles as in \cite{Exel}, to the case where both gradings and bundles are over groupoids. Finally we need to extend the
connection between discrete coactions and Fell bundles, as 
explored in \cite{qui:discrete coactions}, to this more general setting. 

In \cite[Definition
3.5]{qui:discrete coactions}, Quigg introduced the notion of the reduced
$C^*$-algebra of a Fell bundle over a discrete group and its coaction. Note that the constructions in \cite{Exel}
and \cite{qui:discrete coactions} are compatible \cite[page 749]{EQ}. We need to extend some of these results to our setting. 

Let us recall the notion of a Fell bundle over an \'{e}tale groupoid $G$ \cite{kum}, \cite{yam}. Let $p: E\to G$ be a Banach bundle, and set 
$$E^2 = \{(e_1, e_2) \in E \times E \mid  (p(e_1), p(e_2))\in\Gt \}.$$
A multiplication on $E$ is a continuous map: $E^2\to E;  (e_1, e_2)\mapsto
e_1e_2$, satisfying:

$(i)$  $p(e_1e_2) =p(e_1)p(e_2)$, 

$(ii)$ the induced map: $E_g \times E_h\to E_{gh}$ is bilinear, 

$(iii)$ $(e_1e_2)e_3 = e_1(e_2e_3)$, 

$(iv)$ $\|e_1e_2\|\leq \|e_1\|\|e_2\|,$

\noindent for $e_1,e_2,e_3\in E$ and $g,h\in G$, whenever the multiplications in $E$ or $G$ are defined.
 
\noindent An involution on $E$ is a continuous map: $E\to E; e\mapsto e^*$,  satisfying:

$(v)$ $p(e^*) = p(e)^{-1}$,

$(vi)$  the induced map: $ E_g\to E_{g^{-1}}$ is conjugate linear,

$(vii)$ $e^{**} = e,$

\noindent for $e\in E, g\in G$. 
The bundle $E$ together with these structure maps is called a Fell bundle over $G$ if in addition the following conditions hold:

$(viii)$ $(e_1e_2)* = e^*_1e^*_2$, 

$(ix)$ $\|e^*e\|= \|e\|^2$,

$(x)$ $e^*e \geq 0$, 

\noindent for  $e,e_1,e_2\in E$, whenever the multiplications in the first equality are defined. We say that $E$ is nondegenerate if all fibres $E_g$ are nonzero, and saturated if $E_gE_h$ is total in $E_{gh}$, for $(g,h)\in \Gt$. Let $E^0$ be the restriction of the bundle $E$ to $\Gz$.  

Next let us define the full and reduced cross-sectional $C^*$-algebra of $E$ \cite{yam}, \cite{kum}. Regarding $E$ as a Banach bundle over $G$, let $C_c(E)$ and $C_0(E)$ be respectively the normed and Banach spaces consisting of continuous sections which are respectively compactly supported and vanishing at infinity.   
 Given $h,h_1,h_2\in C_c(E)$, define the convolution
and involution by,
$$h_1*h_2(g) = \sum_{g=g_1g_2} h_1(g_1)h_2(g_2), \ \ h^*(g) = h(g^{-1})^*,\ \ (g\in G),$$
where the sum runs over pairs for which the multiplication is defined. Then $C_c(E)$ is a $*$-algebra. Note that each fibre $E_g$ is a
Hilbert $E_{r(g)}$-$E_{s(g)}$-bimodule with inner products $\langle e_1,e_2\rangle_{E_{s(g)}}:= e^*_1e_2$ and ${}_{E_{r(g)}}\langle e_1,e_2\rangle:= e_1e^*_2$. Let $P: C_c(E)\to C_c(E^0)$ be
the restriction map, and consider the  $C_c(E^0)$-valued inner product on $C_c(E)$ defined by 
$$\langle h_1, h_2\rangle:=P(h_1*h_2),\ \ (h_1,h_2\in C_c(E^0)),$$
turning it to a pre-Hilbert $C_0(E^0)$-module \cite[Proposition 3.2]{kum}.

Let $L^2(E)$ be the  completion of $C_c(E)$ in
the norm coming from the above $C_c(E^0)$-valued inner product. Then $L^2(E)$ is the canonical Hilbert module associated to the $\Gz$-bundle of Hilbert
modules \cite[1.7]{kum} with fibres $V_u:=\bigoplus_{s(g)=u} E_g$. When moreover $E$ is saturated, $V_{r(g)}\otimes_{E_{r(g)}}E_g\simeq V_{s(g)}$, for $g\in G$ \cite[3.3]{kum}. The left multiplication by elements of $C_c(E)$ is adjointable and induces a $*$-monomorphism
$\lambda_E: C_c(E)\hookrightarrow \mathscr{L}_{C_0(E^0)}(L^2(E))$. The reduced cross-sectional $C^*$-algebra $C^*_r(E)$ of $E$ is the closure of the copy of $C_c(E)$ inside $\mathscr{L}_{C_0(E^0)}(L^2(E))$ under the above monomorphism. In particular, $C^*_r(E^0)=C_0(E^0)$. For 
$u\in\Gz$, we have a representation $\pi_u: C^*_r(E)\to \mathscr{L}_{E_u}(V_u)=:\mathscr{L}_u$, and the reduced norm is given by $\|a\|_r= \sup_{u\in\Gz}\|\pi_u(a)\|_{\mathscr{L}_u}$. Each $h\in C_b(\Gz)$ is identified with an element $M_g$ of the multiplier algebra $M(C^*_r(E))$, satisfying $M_hf(g)=h(r(g))f(g)$, for $g\in G, f\in C_c(E)$. The restriction map $P: C_c(E)\to C_c(E^0)$, extends to a faithful conditional expectation $\Phi_r: C^*_r(E)\to C_0(E^0)$ \cite[Propositions 3.6, 3.10]{kum}. 
For the full cross-sectional $C^*$-algebra $C^*(E)$, we simply complete $C_c(E)$ in the maximal $C^*$-norm coming from bounded representations of this $*$-algebra \cite{yam}. The above monomorphism extends to a $*$-epimorphism $\lambda_E: C^*(E)\twoheadrightarrow C^*_r(E)$, called the regular representation. 

Following \cite{qui:discrete coactions}, let us next define the notion of a representation for Fell bundles over groupoids. Let $E$ and $F$ be  Fell bundles over \'{e}tale groupoids $G$ and $H$. Let $M(F)$ be the multiplier bundle of $F$  \cite[VIII.2.14]{Fell} with fibres $M_h(F)$, $h\in H$. We identify $M_v(F)$ with $M(F_v)$, for $v\in F^{(0)}$. A map $\phi: E \to M(F)$ is called a morphism if there is a (not necessarily unique) continuous groupoid homomorphism
$\eta: G\to H$ such that,

$(i)$ $\phi|_{E_g}: E_g\to M_{\eta(g)}(F)$ is linear,

$(ii)$ $\phi(ab)=\phi(a)\phi(b), \phi(a^*) =\phi(a)^*$, 

$(iii)$  $\phi|_{E_u}: E_u\to M(F_{\eta(u)})$ is nondegenerate,

\noindent for $g\in G$, $u\in \Gz$, and $a,b\in E,$ with $(a,b)\in E^2$.
When $E=F$ and $\eta={\id}_G$, this is just a $G$-bundle morphism. For a $C^*$-algebra $D$, a
	representation of $E$ in $M(D)$ is a morphism $\phi: E \to M(D)$, with $D$ regarded as a trivial $G$-bundle. A representation $\phi: E\to  M(D)$ lifts uniquely to a nondegenerate $*$-homomorphism $\phi: C^*(E) \to M(D)$. When $\phi(E)\subseteq D$, we call $\phi$ a representation of $E$ in $D$. 

\begin{lemma} \label{co} Let $G$ be a discrete groupoid.

$(i)$ If $G$ coact by $\delta$  on
a  $C^*$-algebra $A$, and 
$A^\delta_g:=\{a\in A \mid \delta(a)=a\otimes i_G(g) \}$ is the
spectral subspace of $A$ at $g\in G$. The Banach bundle with fibre $A^\delta_g\times\{g\}$ at $g$ is a Fell bundle over $G$.

$(ii)$ If $E$ is a Fell bundle over $G$, there is
a canonical coaction $\delta_E$ on the full
cross-sectional algebra $C^*(E)$ such that
$\delta_{E}(a)=a\otimes i_G(g)$, for $g\in G$, $a\in E_g$.
\end{lemma}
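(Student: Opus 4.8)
The plan is to verify the two constructions directly against the Fell-bundle and coaction axioms; both are the groupoid analogues of Quigg's equivalence between discrete coactions and Fell bundles \cite{qui:discrete coactions}, and since $G$ is discrete no continuity arguments are needed. For $(i)$, since $G$ is discrete we have $i_G(g)\in C_c(G)\subseteq C^*(G)$ for every $g$, so each $A^\delta_g=\{a\in A\mid \delta(a)-a\otimes i_G(g)=0\}$ is the kernel of a bounded linear map, hence a closed subspace of $A$, and $\bigsqcup_{g\in G}A^\delta_g\times\{g\}$ over the discrete base is a Banach bundle with no further condition. I would then inherit the algebra from $A$: if $(g,h)\in\Gt$, $a\in A^\delta_g$, $b\in A^\delta_h$, then $\delta(ab)=\delta(a)\delta(b)=(a\otimes i_G(g))(b\otimes i_G(h))=ab\otimes i_G(gh)$ since $i_G(g)i_G(h)=i_G(gh)$, so $ab\in A^\delta_{gh}$; and $\delta(a^*)=\delta(a)^*=a^*\otimes i_G(g^{-1})$, so $a^*\in A^\delta_{g^{-1}}$. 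Bilinearity, associativity, $(ab)^*=b^*a^*$, $\|ab\|\le\|a\|\,\|b\|$ and $\|a^*a\|=\|a\|^2$ are all inherited. For positivity $a^*a\ge 0$, note $a^*a\in A^\delta_{g^{-1}g}=A^\delta_{s(g)}$, and $A^\delta_u$ is a $C^*$-subalgebra of $A$ (it is closed, and $(A^\delta_u)^*=A^\delta_{u^{-1}}=A^\delta_u$, $A^\delta_uA^\delta_u\subseteq A^\delta_{u^2}=A^\delta_u$), so positivity in $A$ coincides with positivity in the fibre. Thus the bundle is a Fell bundle over $G$ with unit fibres the $C^*$-algebras $A^\delta_u$.

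For $(ii)$, the naive map $j\colon E\to C^*(E)\otimes C^*(G)$, $a\mapsto a\otimes i_G(g)$ for $a\in E_g$, is multiplicative and $*$-preserving but fails the nondegeneracy clause of a Fell-bundle representation once $\Gz$ is not a point, so instead I would define $\delta_E$ directly on the dense $*$-subalgebra $C_c(E)$. Writing $f=\sum_{g\in\supp f}f_g$ for $f\in C_c(E)$, where $f_g\in E_g\subseteq C_c(E)$ is the section supported at $g$ with value $f(g)$ (a finite sum, $G$ being discrete), put
\[
\delta_E(f):=\sum_{g\in\supp f}f_g\otimes i_G(g)\ \in\ C^*(E)\otimes C^*(G).
\]
Using bilinearity of convolution together with $i_G(g)i_G(h)=i_G(gh)$ for $(g,h)\in\Gt$ and $=0$ otherwise (mirroring $f_gf_h\in E_{gh}$, resp.\ $=0$), one checks $\delta_E(f_1\ast f_2)=\delta_E(f_1)\delta_E(f_2)$, and $\delta_E(f^*)=\delta_E(f)^*$ from $(f_g)^*=(f^*)_{g^{-1}}$ and $i_G(g)^*=i_G(g^{-1})$. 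Being a $*$-homomorphism from $C_c(E)$ into a $C^*$-algebra, $\delta_E$ is bounded for the universal norm and extends to $\delta_E\colon C^*(E)\to C^*(E)\otimes C^*(G)$. The coaction identity $(\delta_E\otimes\id)\circ\delta_E=(\id\otimes\delta_G)\circ\delta_E$ need only be checked on the generating elements $a\in E_g$, where both sides equal $a\otimes i_G(g)\otimes i_G(g)$ by $\delta_G(i_G(g))=i_G(g)\otimes i_G(g)$; hence it holds everywhere.

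Finally I would check that $\delta_E$ is nondegenerate (mod $\Gz$), taking $D_0$ to be the selfadjoint total set of basic tensors $a\otimes i_G(g)$. When $\Gz$ is a single point $G$ is a group, $i_G(u)=1$, and since $E^0=E_u$ acts nondegenerately on $C^*(E)$ we get $\clsp\,\delta_E(C^*(E))(C^*(E)\otimes C^*(G))\supseteq\clsp\,(E_u\otimes 1)(C^*(E)\otimes C^*(G))=C^*(E)\otimes C^*(G)$, i.e.\ $\delta_E$ is classically nondegenerate. In general, for $x=a\otimes i_G(g)$ with $|\Gz|\ge 2$ I would take $v\in\Gz\setminus\{r(g)\}$ and the nonzero projection $p:=1_{M(C^*(E))}\otimes 1_v\in\mathfrak{Z}$, so that $px=a\otimes(1_v\ast 1_g)=0$ lies in the span. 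The point to flag — and the reason the weaker notion ``nondegenerate (mod $\Gz$)'' was introduced, as already seen for the gauge coaction in Lemma~\ref{prop:existence_coaction} — is that $\delta_E$ genuinely fails classical nondegeneracy when $\Gz$ is not a point: $\clsp\,\delta_E(C^*(E))(C^*(E)\otimes C^*(G))$ meets the $i_G(g)$-homogeneous summand only inside the ``range-$r(g)$ corner'' $M_{1_{r(g)}}C^*(E)\otimes i_G(g)$, which is a proper subspace of $C^*(E)\otimes C^*(G)$; so the main work is organizing $D_0$ and the spectral projections in $C_0(\Gz)$ so that the mod-$\Gz$ condition is met.
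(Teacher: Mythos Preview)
Your treatment of part $(i)$ is correct and matches the paper's proof: both simply note that the Fell-bundle axioms are inherited from the $C^*$-algebra structure of $A$, and you supply more detail than the paper does.

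For part $(ii)$, your construction of the $*$-homomorphism differs from the paper's. The paper invokes its morphism/representation framework and asserts that the morphism $\delta_E:E\to C^*(E)\otimes C^*(G)$ lifts to a representation of $C^*(E)$; you instead define $\delta_E$ directly on $C_c(E)$ as a $*$-homomorphism and extend by universality. Your route is cleaner and sidesteps the nondegeneracy issue in condition $(iii)$ of the paper's morphism definition that you correctly flag. Both yield the same map, and the coaction identity is checked identically.

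The genuine divergence is in the nondegeneracy (mod $\Gz$) verification. The paper, for a basic tensor $a\otimes i_G(h)$ with $a\in E_g$, takes the projection $p=1_{r(g)}$, so that $(1\otimes p)(a\otimes i_G(h))=a\otimes i_G(h)$ when $r(h)=r(g)$, and then uses an approximate identity $(e_i)$ for $C^*(E)$ to exhibit
\[
\delta_E(a)\bigl(e_i\otimes i_G(g^{-1}h)\bigr)=ae_i\otimes i_G(h)\to a\otimes i_G(h).
\]
This is the substantive content: it shows that the ``correct'' corner of each basic tensor genuinely lies in $\overline{\delta_E(C^*(E))(C^*(E)\otimes C^*(G))}$. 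Your argument for $|\Gz|\ge 2$ instead picks $v\ne r(g)$ and $p=1\otimes 1_v$, so that $px=0$ is trivially in the span. This is formally compliant with the paper's literal definition, but it trivializes it: by your argument \emph{every} $*$-homomorphism into $A\otimes C^*(G)$ would be nondegenerate (mod $\Gz$) once $|\Gz|\ge 2$, which cannot be the intent behind introducing the notion. The paper's approximate-identity computation is what actually earns the conclusion, and is the argument you should reproduce; your shortcut, while not wrong against the letter of the definition, reveals a weakness in that definition rather than proving the property the authors are after.
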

\begin{proof}
$(i)$ Let $E_A$ be the disjoint union of the fibres $A^\delta_g\times\{g\}$ (identified with $A^\delta_g$) and $p: E_A\to G$ be the projection on the second leg. Then $E^2_A$ consists of all pairs $(a,b)$ with $a\in A^\delta_g$ and $b\in A^\delta_h$, for some $g,h\in G$ with $r(h)=s(g)$. All the required conditions listed above now follow from the fact that $A$ is a $C^*$-algebra.

$(ii)$ The morphism $\delta_E: E\to C^*(E)\otimes C^*(G)$ lifts to a  representation $\delta_E: C^*(E)\to C^*(E)\otimes C^*(G)$. The coaction
identity follows as it is trivial on elements of $E$. Let us observe that $\delta_E$ is nondegenerate (mod $\Gz$). For $g,h\in G$ and a basic tensor $a\otimes i_G(h)$ with $a\in E_g$, consider the projection $p:=1_{r(g)}$ for finite subset $F\subseteq \Gz$, then $(1_{M(C^*(E))}\otimes p)(a\otimes i_G(h))$ is nonzero when $r(h)=r(g)$, in which case, for an approximate identity $(e_i)$ of $C^*(G)$, 
\begin{align*}
\delta_E(a)(e_i\otimes i_G(g^{-1}h))&=(a\otimes i_G(g))(e_i\otimes i_G(g^{-1}h))=ae_i\otimes i_G(r(g)h)\\&=ae_i\otimes i_G(r(h)h)=ae_i\otimes i_G(h)\to a\otimes i_G(h),
\end{align*}	
as $i\to\infty$, as required.
\end{proof} 

If $E$ is a Fell bundle over $G$, a
cross-sectional algebra of $E$ is the $C^*$-completion $C^*_\gamma(E)$ of the algebra $C_c(E)$ with respect to a $C^*$-norm $\gamma$. The bundle $E$ is
topologically graded if there exists a contractive conditional
expectation from $E$ to $E_u$ vanishing on each $E_g$, for 
$g \in G\backslash\Gz$ (compare to  \cite[Definition 3.4]{Exel}).
For topologically graded bundles, the reduced cross-sectional algebra $C^*_r(E)$ is minimal among all 
cross-sectional algebras $C^*_\gamma(E)$. Indeed, by an argument as in 
\cite[Theorem 3.3]{Exel}, there exists a surjective
homomorphism $\lambda_\gamma:C^*_\gamma(E)\to C^*_r(E)$ such that
$\lambda_{\gamma}\circ\iota_\gamma=\iota_{\rm min}$,
where $\iota_\gamma$ and $\iota_r$ are the
embeddings of $C_c(E)$ into $C^*_\gamma(E)$ and $C^*_r(E)$, respectively. On the other hand, since each representation of $C_c(E)$ extends to a representation of $C^*(E)$, the inclusion $\iota_\gamma$ extends to a $*$-epimorphism
$
\rho_{\gamma} \colon C^*(E)\to C^*_\gamma(E)
$
such that $\rho_\gamma\circ \iota_{\rm max}=\iota_\gamma$
where $\gamma_\mathcal{A}$ is the embedding of the algebra of finitely supported
sections on $\mathcal{A}$ into $C^*(\mathcal{A})$.
 In particular, $\lambda_{\rm max}=\rho_{\rm min}=\lambda_E$ is the regular representation. 

Let $\delta$ be a coaction of an \'{etale} groupoid $G$ on a $C^*$-algebra $A$, and $B$ be a unital $C^*$-algebra. Each nondegenerate $*$-homomorphism $\sigma:  C_0(G) \to B$, with $\sigma|_{C_0(\Gz)}\equiv
 1_B$, induces a coaction $\delta^\sigma$ of $G$ on $B$, defined by 
$$\delta^\sigma: B\to B\otimes C^*(G);\ \ b\mapsto {\rm Ad}_{(\sigma\otimes{\rm id})(i_G)}(b\otimes 1),$$
where $i_G$ is the element in $M(C_0(G)\otimes C^*(G))$ associated to the function $g\mapsto i_G(g)$ in $C_b(G, C^*(G))$. The element $u:=(\sigma\otimes{\rm id})(i_G)$ is a unitary element in $M(B\otimes C^*(G))$, since $i_G(g)i^*_G(g)=i_G(s(g))$ and $i^*_G(g)i_G(g)=i_G(r(g))$, that is, $i_Gi^*_G, i^*_Gi_G\in C_b(G, C_0(\Gz))$, and so $(\sigma\otimes{\rm id})(i_Gi^*_G)=(\sigma\otimes{\rm id})(i^*_Gi_G)=1$ in $M(B\otimes C^*(G))$. In particular, $\delta^\sigma$ is a homomorphism. The coaction identity is immediate, and nondegeneracy follows by an argument verbatim to that of \cite[Lemma 1.11]{qui:full reduced}.
         
For a $C^*$-algebra $D$, a covariant representation
of $(A, G, \delta)$ in $M(B)$ is a pair $(\pi, \sigma)$ of nondegenerate representations $\pi: A\to M(D)$ and $\sigma:  C_0(G) \to M(D)$, with $\sigma|_{C_0(\Gz)}\equiv
1_{M(D)}$, such that $(\pi\otimes{\rm id})\circ\delta = \delta^\sigma\circ\pi$, that is $\pi: A \to M(D)$ 
is equivariant for the coactions $\delta$  and $\delta^\sigma$. 

\begin{lemma} \label{tg}
Let  $\delta$ be a coaction of a discrete groupoid $G$ on $A$ and $E_A$
be the corresponding Fell bundle, then,

$(i)$ $E_A$ is  topologically graded,

$(ii)$ the full cross-sectional $C^*$-algebra of
$E_A$ is isomorphic to $A$.
\end{lemma}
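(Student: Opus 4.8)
The plan is to treat $(i)$ and $(ii)$ separately, following the discrete‐group correspondence between coactions and Fell bundles of Quigg \cite{qui:discrete coactions} but replacing the group $C^*$-machinery by the cross-sectional $C^*$-algebra machinery for \'etale groupoids of \cite{kum}. For $(i)$, the witnessing conditional expectation is the canonical restriction-to-$\Gz$ map: by \cite[Propositions~3.6 and~3.10]{kum} the map $C_c(E_A)\to C_c(E_A^0)$, $h\mapsto h|_{\Gz}$, extends to a contractive conditional expectation from $C^*_r(E_A)$ onto $C_0(E_A^0)=C^*(E_A^0)$, which is faithful there; composing it with the regular representation $\lambda_{E_A}\colon C^*(E_A)\twoheadrightarrow C^*_r(E_A)$ and with the inclusion $C^*(E_A^0)\hookrightarrow C^*(E_A)$ yields a contractive conditional expectation on $C^*(E_A)$ whose range is $C^*(E_A^0)$ and which, by construction, annihilates each fibre $E_g$ with $g\in G\setminus\Gz$. (Equivalently, one can realise the same map as $(\id\otimes\Phi_G)\circ\delta_{E_A}$, where $\delta_{E_A}$ is the canonical coaction of Lemma~\ref{co}$(ii)$ and $\Phi_G\colon C^*(G)\to C_0(\Gz)$ is the canonical conditional expectation, checking idempotency from the coaction identity.) This is exactly the assertion that $E_A$ is topologically graded.

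\textbf{The surjection for $(ii)$.} For each $g\in G$ the functional $\epsilon_g\colon C^*(G)\to\mathbb{C}$ with $\epsilon_g(i_G(h))=1$ if $h=g$ and $0$ otherwise is a matrix coefficient of the regular representation, hence bounded of norm $\le 1$, and the Fourier‑coefficient map $\Phi_g:=(\id_A\otimes\epsilon_g)\circ\delta$ is a contraction of $A$ into $A^\delta_g$ (membership in $A^\delta_g$ follows from the coaction identity, as in the group case). Using the nondegeneracy (mod $\Gz$) of $\delta$ — which says that each basic tensor $a\otimes i_G(g)$ lies in $\clsp\bigl(\delta(A)(A\otimes C^*(G))\bigr)$ — and applying the maps $\id_A\otimes\epsilon_g$ to the last leg and summing, one derives that $\bigoplus_{g\in G}A^\delta_g$ is dense in $A$. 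Granting this, the inclusions $A^\delta_g\hookrightarrow A$ assemble into a representation $\pi_0\colon E_A\to A$ of the Fell bundle: multiplicativity and $*$-compatibility are inherited from $A$, and nondegeneracy on the unit fibres follows from density together with the observation that each $a\in A^\delta_g$ equals $\lim_n a\,(a^*a)^{1/n}$ with $(a^*a)^{1/n}\in A^\delta_{s(g)}$. By the universal property of $C^*(E_A)$, $\pi_0$ integrates to a nondegenerate $*$-homomorphism $\pi\colon C^*(E_A)\to M(A)$; since $\pi(C_c(E_A))\subseteq\sum_g A^\delta_g\subseteq A$ and $A$ is closed in $M(A)$, this is a surjection $\pi\colon C^*(E_A)\twoheadrightarrow A$.

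\textbf{Injectivity and the main obstacle.} It remains to see that $\pi$ is injective, which I would do via the ideal structure of $C^*(E_A)$. One checks on fibres that $\pi$ is equivariant, $(\pi\otimes\id)\circ\delta_{E_A}=\delta\circ\pi$, so $\ker\pi$ is a $\delta_{E_A}$-invariant ideal of $C^*(E_A)$. Since the spectral subspace of $\delta_{E_A}$ at $g$ is exactly the image of the fibre $E_g$ in $C^*(E_A)$ (a consequence of the fibrewise isometry of $C_c(E_A)\hookrightarrow C^*(E_A)$) and $\pi$ restricts to the identity on each $E_g=A^\delta_g$, we get $\ker\pi\cap E_g=0$ for every $g$; because an invariant ideal of the full cross-sectional algebra of a Fell bundle is the closed span of its intersections with the fibres (the discrete‑groupoid version of the correspondence between invariant ideals and bundle ideals, as in \cite{Exel}), this forces $\ker\pi=0$. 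Hence $\pi\colon C^*(E_A)\xrightarrow{\ \sim\ }A$, which proves $(ii)$. I expect the main obstacle to be the density of $\sum_g A^\delta_g$ in $A$ under the weak ``mod $\Gz$'' notion of nondegeneracy: the group-case argument leans on genuine nondegeneracy of the coaction, so the subtlety is to extract density from only the localized condition (one has to sum over the units cut out by the projections of $C_0(\Gz)$ carefully and control the remaining tails). The invariant-ideal structure of $C^*(E_A)$ over an \'etale groupoid is a secondary technical point; both are adaptations of facts that are routine over groups.
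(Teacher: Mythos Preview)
For $(i)$ your route via Kumjian's restriction expectation on $C^*(E_A)$ is valid; the paper instead works directly on $A$, building for each unit $u\in\Gz$ the slice map $\delta_u:=(\id\otimes 1_u)\circ\delta$ and checking via the coaction identity that $\delta_u(A)=A^\delta_u$, so that $\delta_u$ itself is the required contractive expectation onto $(E_A)_u$.

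There is, however, a genuine gap in your injectivity argument for $(ii)$: the assertion that ``an invariant ideal of the full cross-sectional algebra of a Fell bundle is the closed span of its intersections with the fibres'' is false, already over groups. Take the trivial line bundle over a non-amenable discrete group $G$ (say $G=\mathbb{F}_2$), so that $C^*(E)=C^*(G)$; the kernel of the regular representation $\lambda_G\colon C^*(G)\to C^*_r(G)$ arises exactly as your $\ker\pi$ for the coaction $(C^*_r(G),\delta^r)$ with $\delta^r(\lambda_G(g))=\lambda_G(g)\otimes i_G(g)$, it meets every fibre $E_g=\mathbb{C}\,i_G(g)$ trivially, yet is nonzero. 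So the step you flag as ``routine over groups'' is precisely where the argument fails: slice maps determine elements of $C^*(E_A)$ only modulo $\ker\lambda_{E_A}$, and from $\ker\pi\cap E_g=0$ one can conclude at most $\ker\pi\subseteq\ker\lambda_{E_A}$. The same example (which satisfies the paper's coaction axioms, since for a group nondegeneracy mod $\Gz$ is ordinary nondegeneracy) shows that statement $(ii)$ as written does not hold in general---one always obtains a surjection $C^*(E_A)\twoheadrightarrow A$ exhibiting $A$ as \emph{some} cross-sectional algebra of $E_A$, but isomorphism with the \emph{full} one requires the coaction to be maximal. The paper's own argument for $(ii)$ only verifies $\delta_g(A)=A^\delta_g$ and then appeals to ``universality of cross-sectional $C^*$-algebras'', which does not supply the missing injectivity either.
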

\begin{proof}
$(i)$ Given $u\in\Gz$, we need to build a contractive conditional expectation onto $(E_A)_u$ vanishing on fibres over $G\backslash\Gz$. Let $1_u$ be the characteristic function of singleton $\{u\}$, and extend  $1_u: C_c(G)\to \mathbb C$  to a bounded linear functional on $C^*(G)$ as in Lemma \ref{rep2}. Let $B(G)$ consist of coefficient functions of representations of $G$ on Hilbert bundles, and for $f\in B(G)$, put $\delta_f:=({\rm id}\otimes f)\circ\delta$, then we claim that $\delta_u:=\delta_{1_u}: A\to A$ is the desired conditional expectation. Indeed, we only need to check that $\delta_u(A)=A^\delta_u$. Let $a\in A$ and put $b:=\delta_u(a)$, then,
\begin{align*}
({\rm id}\otimes f)[\delta(b)-b\otimes i_G(u)]&=({\rm id}\otimes f)\delta\big(({\rm id}\otimes 1_u)\delta(a)\big)-f(u)b\\&=({\rm id}\otimes{\rm id}\otimes f)({\rm id}\otimes{\rm id}\otimes 1_u)(\delta\otimes {\rm id})\delta(a)-f(u)b\\&=({\rm id}\otimes{\rm id}\otimes f1_u)({\rm id}\otimes\delta_G)\delta(a)-f(u)b\\&=f(u)({\rm id}\otimes 1_u)\delta(a)-f(u)b=0,
\end{align*} 
and since the coefficient functions of the representations separate the points of $G$, we conclude that $\delta(b)=b\otimes i_G(u)$, i.e., $\delta_u(A)\subseteq A^\delta_u$. The reverse inclusion is immediate. 

$(ii)$	Let $g\in G$, then $1_{g}*1_{s(g)}=1_{g}$, thus $1_{g}\in B(G)$ by \cite[Proposition 10]{pat}. An argument as in $(i)$, shows that $\delta_g(A):=\delta_{1_g}(A)=A^\delta_g$. Now the result  follows from universality of cross sectional $C^*$-algebras of the corresponding Fell bundles $(A^\delta_g\times\{g\})_{g\in G}$ and $(\delta_g(A)\times\{g\})_{g\in G}$.   
\end{proof}

\begin{dfn}\label{ind}
Let $\delta$ be a coaction of a discrete groupoid $G$ on a $C^*$-algebra $A$. Let $I$ be a closed ideal of $A$. 
We say that $I$ is an induced ideal (with respect to $\delta$) if $\bigcup_{u\in \Gz}A(I \cap A^\delta_u)A$ is a total set in $I$.
If moreover, the restricted map $$\delta: I\to (I\otimes C^*(G))\cap \delta(A)$$ is nondegenerate as a $*$-homomorphism between $C^*$-algebras, we say that $I$ is a strongly induced ideal (with respect to $\delta$).
\end{dfn}

Two comments on the above definition is in order: First, if $I$ is an induced ideal, then $\delta(I)\subseteq (I\otimes C^*(G))\cap \delta(A)$, therefore $\delta$ restricts to a coaction of $G$ on $I$ and the restricted homomorphism in the second part of the above definition is well-defined. Second, since the group coactions are assumed to be nondegenerate, the induced ideals are automatically strongly induced in the group case.    
 
\begin{lemma}\label{prp:deltaI existence}
For a $C^*$-algebra $A$ with  a coaction   $\delta$ of a discrete groupoid $G$,
		
	$(i)$ the set $\sum_{g\in G} A^\delta_g$ consisting of finite sums of elements in fibres is dense in $A$ and elements in distinct fibres are linearly independent,   
	
	$(ii)$ if $I$ is an induced ideal of $A$ then $A/I$ is topologically graded,
	
	$(iii)$ if $I$ is an induced ideal of $A$ then there is a coaction
	$\delta^I$ of $G$ on $A/I$ such that
	$
	\delta^I \circ q = (q \otimes \id) \circ \delta$, where $q:A\to A/I$ is the quotient map.

\end{lemma}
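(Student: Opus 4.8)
I would establish the three parts in the order (i), (iii), (ii): part (iii) produces the coaction $\delta^I$ that makes (ii) nearly formal, and part (i) supplies the density statements needed in the other two. The one genuinely delicate point — returned to at the end — is that a coaction here is only required to be nondegenerate (mod $\Gz$) and may fail to be injective (compare Lemma~\ref{prop:existence_coaction}), so the density in (i) cannot be obtained by the usual spectral‑slicing estimate. For part (i), linear independence is immediate: if $\sum_{i=1}^n a_i=0$ with $a_i\in A^\delta_{g_i}$ and the $g_i\in G$ pairwise distinct, apply the $*$‑homomorphism $\delta$ to get $\sum_i a_i\otimes i_G(g_i)=0$ in $A\otimes C^*(G)$, then apply the slice map $\id\otimes 1_{g_j}$, where $1_{g_j}$ is the bounded linear functional on $C^*(G)$ produced as in Lemma~\ref{at last} and the proof of Lemma~\ref{tg}, with $1_{g_j}(i_G(h))$ equal to $1$ if $h=g_j$ and $0$ otherwise; this gives $a_j=0$. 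For density I would invoke Lemma~\ref{tg}(ii): the inclusions $A^\delta_g\hookrightarrow A$ assemble into a representation of the Fell bundle $E_A$, which lifts to an isomorphism $\Psi\colon C^*(E_A)\to A$ that on the dense $*$‑subalgebra $C_c(E_A)$ of finitely supported sections is $\xi\mapsto\sum_g\xi(g)$; hence $\sum_g A^\delta_g=\Psi(C_c(E_A))$ is dense in $A$. The naive route — approximating $\delta(a)$ by $\sum_i c_i\otimes i_G(g_i)$ and replacing it by $\sum_i\delta_{g_i}(a)\in\sum_g A^\delta_g$, where $\delta_g:=(\id\otimes 1_g)\circ\delta$ — only controls $\delta\bigl(a-\sum_i\delta_{g_i}(a)\bigr)$, which is insufficient without injectivity of $\delta$.

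For part (iii), note first that since $I$ is an induced ideal one has $\delta(I)\subseteq(I\otimes C^*(G))\cap\delta(A)$: for $a\in I\cap A^\delta_u$ and $x,y\in A$, the element $\delta(xay)=\delta(x)(a\otimes i_G(u))\delta(y)$ lies in the ideal $I\otimes C^*(G)$ of $A\otimes C^*(G)$, and $\bigcup_u A(I\cap A^\delta_u)A$ is total in $I$ while $\delta$ is continuous and $I\otimes C^*(G)$ closed. As $I\otimes C^*(G)\subseteq\ker(q\otimes\id)$, the composite $(q\otimes\id)\circ\delta$ annihilates $I$ and so descends to a $*$‑homomorphism $\delta^I\colon A/I\to(A/I)\otimes C^*(G)$ with $\delta^I\circ q=(q\otimes\id)\circ\delta$. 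The coaction identity for $\delta^I$ follows from that of $\delta$ by precomposing with the surjection $q$. For nondegeneracy (mod $\Gz$): given $\varepsilon>0$, $g\in G$, and $\bar a=q(a)\in A/I$, choose by nondegeneracy (mod $\Gz$) of $\delta$ elements $x_k,y_k\in A$, $z_k\in C^*(G)$ with $\bigl\|a\otimes i_G(g)-\sum_k\delta(x_k)(y_k\otimes z_k)\bigr\|<\varepsilon$; applying the contractive $*$‑homomorphism $q\otimes\id$ puts $\bar a\otimes i_G(g)$ within $\varepsilon$ of $\sum_k\delta^I(q(x_k))(q(y_k)\otimes z_k)$, which lies in the span of $\delta^I(A/I)\bigl((A/I)\otimes C^*(G)\bigr)$. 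Hence $\delta^I$ is a coaction of $G$ on $A/I$.

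For part (ii), I would identify the spectral subspaces of $(A/I,\delta^I)$. Each $q(A^\delta_g)$ is closed, being the range of the bounded idempotent $\bar\delta_g:=(\id\otimes 1_g)\circ\delta^I$ on $A/I$, which is the descent of $\delta_g=(\id\otimes 1_g)\circ\delta$ (legitimate since $\delta_g(I)=(\id\otimes 1_g)\delta(I)\subseteq I$) and has range $q(\delta_g(A))=q(A^\delta_g)$ by Lemma~\ref{tg}. A one‑line computation with $\id\otimes 1_g$ then gives $(A/I)^{\delta^I}_g=q(A^\delta_g)$ for every $g$, while part (i) gives $\overline{\sum_g q(A^\delta_g)}=A/I$. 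Thus the Fell bundle attached to $(A/I,\delta^I)$ by Lemma~\ref{co}(i) has fibres $q(A^\delta_g)$; it is topologically graded by Lemma~\ref{tg}(i), and its full cross‑sectional $C^*$‑algebra is $A/I$ by Lemma~\ref{tg}(ii), so $A/I$ is topologically graded. Concretely, the expectation realizing this is $\sum_{u\in\Gz}\bar\delta_u$, which has range $\overline{\sum_u q(A^\delta_u)}$ and annihilates $q(A^\delta_g)$ for $g\notin\Gz$.

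The main obstacle is the density half of (i): because $\delta$ need not be injective, one cannot recover $a$ from $\delta(a)$ by slicing and must instead route density through the identification $A\cong C^*(E_A)$ of Lemma~\ref{tg}(ii); after that, parts (iii) and (ii) are bookkeeping, the only point requiring care being that quotienting by an induced ideal is compatible with the $G$‑grading and with the slice projections $\delta_g$ — guaranteed by $\delta(I)\subseteq I\otimes C^*(G)$ — which is exactly what keeps the $q(A^\delta_g)$ closed and lets $\delta^I$ and its conditional expectation exist.
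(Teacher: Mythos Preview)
Your proposal is correct, and the overall architecture matches the paper's, but two of the sub-arguments follow genuinely different routes.

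For the density half of (i), the paper does \emph{not} go through Lemma~\ref{tg}(ii). Instead it argues directly: since $B(G)$ separates points of $G$ and $C_c(G)\cap B(G)$ is dense in $B(G)$, the set $\{\delta_f(a):a\in A,\ f\in C_c(G)\cap B(G)\}$ is total in $A$; then any $f\in C_c(G)$ is a finite sum $\sum_g f(g)1_g$, so $\delta_f(a)=\sum_g f(g)\delta_{1_g}(a)\in\sum_g A^\delta_g$. Your route via $A\cong C^*(E_A)$ is cleaner once Lemma~\ref{tg}(ii) is in hand, and it sidesteps the question of why the $\delta_f$-slices are total. The paper's linear-independence argument also differs: rather than slicing $\delta(\sum a_i)=0$ against $1_{g_j}$, it computes $\delta_u(aa^*)=\sum_{g\in F_u}a_ga_g^*$ for $u\in r(F)$ and reads off $a_g=0$ from positivity.

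For (ii), the paper proves it \emph{before} constructing $\delta^I$, not after. It first shows each $\delta_g$ maps $I$ to $I$ (a direct computation on generators $bac$ with $a\in I\cap A^\delta_u$), then proves $q(A^\delta_g)$ is closed via the isometry $\|b+I\|=\|b+(I\cap A^\delta_g)\|$ for $b\in A^\delta_g$ (obtained from $\|a+b\|\ge\|\delta_g(a+b)\|=\|\delta_g(a)+b\|$ with $\delta_g(a)\in I$), and finally exhibits the conditional expectations $\delta^I_u$ by descent. Your argument---closedness as the range of the bounded idempotent $\bar\delta_g$, then invoking Lemma~\ref{tg}(i) for $(A/I,\delta^I)$---is equivalent but more conceptual, at the cost of needing (iii) first.

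One small imprecision: in your nondegeneracy argument for (iii) you approximate $a\otimes i_G(g)$ by $\sum_k\delta(x_k)(y_k\otimes z_k)$, but nondegeneracy (mod $\Gz$) only gives this for $a\otimes(1_u*1_g)$ with \emph{some} $u\in\Gz$ depending on $a,g$. The paper tracks this $u$ explicitly; carrying it through your argument is harmless but should be made explicit.
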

\begin{proof}
	$(i)$ In the notations of the proof of Lemma \ref{tg}, since $B(G)$ separates the points of $G$ and $C_c(G)\cap B(G)$ is dense in $B(G)$, the set $\{\delta_f(a): a\in A, f\in C_c(G)\cap B(G)\}$ is a total set in $A$, and so is the set $\bigcup_{g \in G} A^\delta_g=\{\delta_{1_g}(a): g\in G, a\in A\}$. Also, if $a=\sum_{g\in F} a_g$ is a finite sum, then for $u\in r(F)$, put $F_u:=\{g\in F: r(g)=u\}$ and observe that,
	$\delta_u(aa^*)=\sum_{g\in F_u} a_ga^*_g$. If $a=0$, then for each $u\in r(F)$, $a_g^*a_g=0$ in the $C^*$-subalgebra $\delta_u(A)=A^\delta_u$, hence $a_g=0$, for each $g\in F$, establishing the linear independence.

	$(ii)$ Let $\delta_g:=\delta_{1_g}$ be defined as in the proof of lemma \ref{tg}. We claim that $I$ is fixed by each $\delta_g$. Given $u\in\Gz$ and a finite sum $a=\sum_{i} b_{g_i}a_ic_{h_i}$ in $A(I \cap A^\delta_u)A$, with $b_{g_i}\in A^\delta_{g_i}, a_i\in A^\delta_u, c_{h_i}\in I\cap A^\delta_{h_i}$, with $r(g_i)=s(h_i)=u$, for a given $g\in G$ we have,
	\begin{align*}
	\delta_g(a)&=({\rm id}\otimes 1_g)\delta(a)=({\rm id}\otimes 1_g)\sum_i\delta(b_{g_i})\delta(a_i)\delta(c_{h_i})\\&=({\rm id}\otimes 1_g)\sum_i b_{g_i}a_ic_{h_i}\otimes i_G(g_ih_i)=\sum_{g_ih_i=g}b_{g_i}a_ic_{h_i},
	\end{align*}
	and the last term is in $I$, as $I$ is an ideal. This proves the claim, since the set of all such finite sums are dense in $I$ by the assumption of $I$ being a reduced. Let $q: A\to A/I$ be the quotient map. We show that $q(A^\delta_g)$ is closed in $A/I$. For $a\in I$ and $b\in A^\delta_g$,
	$$\|a+b\|\geq\|\delta_g(a+b)\|=\|\delta_g(a)+b\|,$$
	with $\delta_g(a)\in I\cap A^\delta_g$, thus $\|b+I\|=\|b+(I\cap A^\delta_g)\|,$ that is, the canonical map: $A^\delta_g/(I\cap A^\delta_g)\to A/I$ is isometric, and so $q(A^\delta_g)=A^\delta_g/(I\cap A^\delta_g)$ is a Banach space, as required. Finally, $\delta_g$ lifts to a projection $\delta^I_g: A/I\to A^\delta_g/(I\cap A^\delta_g)$, and for each $u\in \Gz$, the projection $\delta^I_u: A/I\to A^\delta_u/(I\cap A^\delta_u)$ is a contractive conditional expectation, vanishing on each $A^\delta_g/(I\cap A^\delta_g)$, for $g\in G\backslash\Gz$.      
	
	$(iii)$ Since $q$ is surjective, the above equality defines a linear map $\delta^I$. For $u\in \Gz$ and $a \in
	A^\delta_u$, 
	$$(q \otimes \id) \circ \delta(a) =
	q(a) \otimes i_G(u)=0,$$ if and only
	if $a \in I$, thus $\ker((q \otimes \id) \circ
	\delta) \cap A^\delta_u = I\cap A^\delta_u$. It follows from the assumption on  $I$ that $\ker\big((q
	\otimes \id) \circ \delta\big)$ contains $I$, and so $\delta^I$ is well-defined, and clearly a $*$-homomorphism.

	To show nondegeneracy (mod $\Gz$), given $n\geq 1$, let  $y_1,\cdots,y_n\in A/I,$  $g_1,\cdots g_n\in G$,  and consider the basic tensors $y_i\otimes 1_{g_i}\in A/I\otimes C^*(G)$. Choose $x_i\in A$ with $q(x_i)=y_i$, for $1\leq i\leq n$, and choose units $u_i\in \Gz$ such that $\sum_ix_i\otimes (1_{u_i}*1_{g_i})$ is in the closed span of $\delta(A)(A\otimes C^*(G))$, then since $q$ is norm decreasing, $\sum_i y_i\otimes (1_{u_i}*1_{g_i})=(q\otimes \id)\big(\sum_ix_i\otimes (1_{u_i}*1_{g_i})\big)$ is in the closed span of $$(q\otimes\id)\delta(A)(A\otimes C^*(G))=\delta^I(A)(A/I\otimes C^*(G)),$$
	thus $\delta^I$ is
	nondegenerate (mod $\Gz$).

	Finally, for $g \in G$
	and $a \in A^\delta_g$, 
	\[
	(\id \otimes \delta_G)\circ\delta^I(q(a))
	= q(a) \otimes i_G(g) \otimes i_G(g)
	= (\delta^I \otimes \id)\circ \delta^I(q(a)), 
	\]
	and again by a density argument, we get the coaction identity for $\delta^I$.
\end{proof}

\begin{dfn}
Let $\delta$ be the coaction of a discrete groupoid $G$ on a $C^*$-algebra $A$. 
Let  $\lambda_G: C^*(G)\to C^*_r(G)$ be the regular representation. 
The coaction $\delta$ is called normal if $\id\otimes \lambda_G$ is injective on $\delta(A)$.
\end{dfn}

When $\delta$ is injective (something which is always assumed to be the case for group coactions), this is equivalent to the condition that $(\id\otimes \lambda_G)\circ \delta$ is injective on $A$. If $\delta$ is injective on a $C^*$-subalgebra $B$ of $A$, then normality of $\delta$ implies that $(\id\otimes \lambda_G)\circ \delta$ is also injective on $B$.   

For the sake of brevity, let us write $\tilde\delta:=(\id\otimes \lambda_G)\circ \delta$. For $u\in\Gz$, we put,
$$A^\delta_u:=\{a\in A: \delta(a)=a\otimes i_G(u)\},\ A^{\tilde\delta}_u:=\{a\in A: \tilde\delta(a)=a\otimes \lambda_G(u)\}.$$
Next, first note that in the notations of \cite[page 18]{w},  for $u\in \Gz$ and $f\in C_c(G)$,
\begin{align*}
	|\langle 1_u,f\rangle|:=|f(u)|&\leq \sum_{s(g)=u}|f(g)|=\|{\rm Ind}\delta_u(f)1_u\|_2^2\leq\|{\rm Ind}\delta_u(f)\|\leq\|f\|_r,
\end{align*}
that is, $1_u$ could  be regarded as a continuous linear functional on $C^*_r(G)$. 
We denote this functional by $\tilde 1_u$ to distinguish it from the functional $1_u$ on $C^*(G)$ constructed in the proof of Lemma \ref{inj} (indeed, $1_u=\tilde 1_u\circ\lambda_G$). We also consider the bounded linear maps on $A$, defined by $\delta_u:=\delta_{1_u}=(\id\otimes 1_u)\circ \delta$. Recall that $\delta_u(A)=A^\delta_u$, and we have a  conditional expectation,
$$\Phi_0:=\oplus \delta_u: A\to \bigoplus_{u\in \Gz} A^\delta_u.$$
We also have a conditional expectation $\Phi: C^*(G)\to C_0(\Gz)$, mapping $f\in C_c(G)$ to $\sum_{u\in \Gz}f(u)$, or alternatively, if we identify $C_0(\Gz)$ with the $c_0$-direct sum $\bigoplus_{u\in \Gz} \mathbb C$, $\Phi$ is the map sending $f\in C_c(G)$ to $(f(u))_{u\in \Gz}$. Under this identification, $\Phi=\oplus 1_u$, where $1_u: f\mapsto f(u)$ is the above state on $C^*(G)$. This means that $\Phi_0=(\id\otimes \Phi)\circ \delta$. In the group case, where coactions  are assumed to be injective, $\Phi_0$ is faithful on positive elements iff $\Phi$ is so, but in our setting faithfulness of $\Phi$ on positive elements is in general weaker than that of $\Phi_0$. Indeed, we have the following result.   

\begin{lemma}\label{sepfam}
Let $G$ be a discrete groupoid, then the set $\{\tilde{1}_u: u\in \Gz\}$ is a faithful family of states on $C^*_r(G)$. 
\end{lemma}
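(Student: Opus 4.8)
The plan is to recognize each $\tilde 1_u$ as the composite $\mathrm{ev}_u\circ\Phi_r$ of the faithful conditional expectation $\Phi_r\colon C^*_r(G)\to C_0(\Gz)$ with evaluation at the point $u$, and then to exploit that, since $\Gz$ is discrete, the point evaluations already form a faithful family of states on $C_0(\Gz)$.

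First I would set up the identification. Take $E$ to be the trivial line bundle $G\times\mathbb C$ over $G$, so that $C^*_r(E)=C^*_r(G)$ and $C_0(E^0)=C_0(\Gz)$, and let $\Phi_r$ be the faithful conditional expectation of \cite[Propositions 3.6, 3.10]{kum}, which on $C_c(G)$ is the restriction map $f\mapsto f|_{\Gz}$. For $u\in\Gz$ write $\mathrm{ev}_u\colon C_0(\Gz)\to\mathbb C$ for evaluation at $u$. The displayed inequality $|f(u)|\le\|f\|_r$ established above shows that $\tilde 1_u$ is a well-defined bounded functional on $C^*_r(G)$; since $\mathrm{ev}_u\circ\Phi_r$ is also bounded on $C^*_r(G)$ and on the dense $*$-subalgebra $C_c(G)$ one has $\mathrm{ev}_u(\Phi_r(f))=f|_{\Gz}(u)=f(u)=\tilde 1_u(f)$, the two functionals agree on all of $C^*_r(G)$. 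Being a composite of positive contractions, $\tilde 1_u$ is then positive of norm at most $1$; and since $i_G(u)$ is a nonzero projection in $C^*_r(G)$ with $\tilde 1_u(i_G(u))=1$, it follows that $\tilde 1_u$ is in fact a state.

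It remains to prove faithfulness of the family. Suppose $a\in C^*_r(G)$ is positive and $\tilde 1_u(a)=0$ for every $u\in\Gz$. Then $\mathrm{ev}_u(\Phi_r(a))=0$ for all $u$, and $\Phi_r(a)$ is a positive element of $C_0(\Gz)\cong c_0(\Gz)$ that vanishes at every point of the discrete set $\Gz$, whence $\Phi_r(a)=0$; faithfulness of $\Phi_r$ now forces $a=0$. Thus $\{\tilde 1_u:u\in\Gz\}$ is a faithful family of states on $C^*_r(G)$.

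The argument is mostly assembly, and the one place that warrants care is the correct specialization of Kumjian's conditional expectation to the trivial bundle — namely, that it remains faithful and acts as restriction to $\Gz$ on $C_c(G)$ in this generality. This is precisely what supplies both the positivity of each $\tilde 1_u$ (the real content, since a priori $\tilde 1_u$ is only known to be a bounded functional) and, together with the discreteness of $\Gz$, the reduction of faithfulness to the trivial fact that point evaluations are faithful on $C_0(\Gz)$.
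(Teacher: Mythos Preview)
Your proof is correct and takes a genuinely different route from the paper's own argument. The paper proceeds by a direct $\varepsilon$-approximation: given $b\ge 0$ with $\tilde 1_u(b)=0$ for all $u$, it writes $b=c^*c$, approximates $c$ by some $f\in C_c(G)$, and computes $f^**f(u)=\sum_{s(\gamma)=u}|f(\gamma)|^2$ to show this quantity is small, forcing $\|f\|_\infty$ small and hence $b=0$. Your argument instead invokes the faithful conditional expectation $\Phi_r\colon C^*_r(G)\to C_0(\Gz)$ (already recalled in the paper's Fell-bundle preliminaries via \cite[Propositions 3.6, 3.10]{kum}), identifies $\tilde 1_u=\mathrm{ev}_u\circ\Phi_r$, and reduces faithfulness of the family to faithfulness of $\Phi_r$ together with the trivial fact that point evaluations separate $C_0(\Gz)$. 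Your route is cleaner and more conceptual, and it makes the state property of $\tilde 1_u$ transparent; the paper's approach is more self-contained and avoids appealing to the Fell-bundle machinery for this particular lemma, at the cost of a slightly more delicate estimate.
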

\begin{proof}. First note that $$\langle \tilde 1_u,f\rangle=f(u)=\langle {\rm Ind}(\delta_u)(f)1_u,1_u\rangle, \ \ (u\in\Gz, f\in C_c(G)),$$
where $1_u$ is regarded as an element in $\ell^2(G_u)$. This in particular shows that $\tilde 1_u$ extends to a state on $C^*_r(G)$. 	

We just need to show that if $\langle\tilde 1_u,b\rangle=0$, for $b\in C^*_r(G)_{+}$ and each $u\in \Gz$, then $b=0$. Without loss of generality, we may assume that $b$ is of norm at most one. Choose element $c$ with $b=c^*c$, then $c$ is of norm at most one. Next, for a given $\varepsilon>0$, choose $f\in C_c(G)$ within $\varepsilon$ of $c$, and observe that $f^**f$ is within $3\varepsilon$ of $b$. Now,  $$\sum_{s(\gamma)=u} |f(\gamma)|^2=\sum_{r(\gamma)=u}\bar f(\gamma^{-1})f(\gamma^{-1}u)=f^**f(u)=\langle \tilde 1_u, f^**f-b\rangle\leq 3\varepsilon,$$   
for each $u\in \Gz$, which in turn implies that $\|f\|_\infty\leq \sqrt{3\varepsilon}$, and so $f=0$, as $\varepsilon$ was arbitrary, thus $b=0$. 
\end{proof}

\begin{lemma}\label{lem:when deltaI normal}
Let a discrete groupoid $G$ coact on a $C^*$-algebra $A$ with $\delta$. 
	\begin{enumerate}
		\item[$(i)$]\label{it:deltaI normal} The coaction $\delta$ is normal iff the conditional expectation
		$$\id\otimes \Phi:A\otimes C^*(G)\to A\otimes C_0(G)$$ is faithful on positive cone of $\delta(A)$. When $\delta$ is injective, this is also equivalent to faithfulness of the conditional expectation 
		$$\Phi_0:=\oplus \delta_u: A\to \bigoplus_{u\in \Gz} A^\delta_u,$$
		on positive cone of $A$.
		
		\item[$(ii)$]\label{it:cond Q} Let $I$ be an induced ideal of $A$ with quotient map $q: A\to A/I$. The coaction $\delta^I$ is normal iff $q^{-1}\big(\ker(\delta^I)\big)=\ker((q \otimes
		\lambda_G)\circ\delta)$. When $\delta^I$ is injective, this is also equivalent to $I=\ker((q \otimes
		\lambda_G)\circ\delta)$. 
	\end{enumerate}
\end{lemma}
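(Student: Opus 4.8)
The plan is to reduce both assertions to a single slicing fact about the minimal tensor product, and then to finish by routine bookkeeping. Recall from the paragraph preceding the lemma that $\Phi=\Phi_r\circ\lambda_G$ and $1_u=\widetilde 1_u\circ\lambda_G$ for every $u\in\Gz$, so that $\id\otimes\Phi=\bigoplus_{u\in\Gz}(\id\otimes\widetilde 1_u)\circ(\id\otimes\lambda_G)$, and that $\Phi_0=(\id\otimes\Phi)\circ\delta$; write $\widetilde\delta:=(\id\otimes\lambda_G)\circ\delta$, so that, by definition, $\delta$ is normal exactly when $\id\otimes\lambda_G$ is injective on $\delta(A)$.

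The technical heart is the following: the family $\{\id_A\otimes\widetilde 1_u:u\in\Gz\}$ is faithful on the positive cone of $A\otimes C^*_r(G)$; equivalently, $\id_A\otimes\Phi_r$ is faithful on positive elements. To prove this I would realise $C^*_r(G)$ inside $\prod_{u\in\Gz}\mathscr L(\ell^2(G_u))$ via the regular representations $\pi_u$, so that $\widetilde 1_u=\langle\pi_u(\cdot)\delta_u,\delta_u\rangle$ and $\pi_u(i_G(g))\delta_u=\delta_g$ for $g\in G_u$, and record the covariance identity $\widetilde 1_u\circ\Ad_{i_G(g)^*}=\widetilde 1_{r(g)}$, valid whenever $s(g)=u$ — witnessed by the unitary $\ell^2(G_u)\to\ell^2(G_{r(g)})$, $\delta_h\mapsto\delta_{hg^{-1}}$, which intertwines $\pi_u$ with $\pi_{r(g)}$ and carries $\delta_g$ to $\delta_{r(g)}$. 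Now fix a faithful representation $A\subseteq\mathscr L(H)$ and let $X\ge0$ in $A\otimes C^*_r(G)$ satisfy $(\id\otimes\widetilde 1_u)(X)=0$ for all $u$. A vector-state computation shows $\langle(\id\otimes\pi_u)(X)(\eta\otimes\delta_u),\eta\otimes\delta_u\rangle=\langle(\id\otimes\widetilde 1_u)(X)\eta,\eta\rangle=0$, whence $(\id\otimes\pi_u)(X)(\eta\otimes\delta_u)=0$ for all $\eta\in H$. Applying this to $Y:=(1\otimes i_G(g))^*X(1\otimes i_G(g))\ge0$, for which the covariance identity gives $(\id\otimes\widetilde 1_u)(Y)=(\id\otimes\widetilde 1_{r(g)})(X)=0$, and then using that $i_G(g)$ is a partial isometry whose final projection contains $\delta_g$, I would upgrade this to $(\id\otimes\pi_u)(X)(\eta\otimes\delta_g)=0$ for every $g\in G_u$; since $\{\delta_g:g\in G_u\}$ spans $\ell^2(G_u)$, $(\id\otimes\pi_u)(X)=0$ for each $u$, and since $\bigoplus_u\pi_u$ is faithful on $C^*_r(G)$ and a minimal tensor product of faithful representations is faithful, $X=0$. (The case $A=\mathbb C$ is Lemma~\ref{sepfam}.) This is the only real difficulty: a cyclic vector does not by itself detect vanishing of $X^{1/2}$, so from $(\id\otimes\widetilde 1_u)(X)=0$ one can at first only kill $\eta\otimes\delta_u$; it is the conjugation by the partial isometries $i_G(g)$, powered by the covariance $\widetilde 1_u\circ\Ad_{i_G(g)^*}=\widetilde 1_{r(g)}$, that promotes this to full faithfulness.

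Granting this, part (i) is immediate. For $b\in\delta(A)_+$, the factorisation $\id\otimes\Phi=\bigoplus_u(\id\otimes\widetilde 1_u)\circ(\id\otimes\lambda_G)$ together with the slicing fact gives $(\id\otimes\Phi)(b)=0\iff\widetilde\delta(b)=0$. Hence, if $\delta$ is normal then $(\id\otimes\Phi)(b)=0$ forces $\widetilde\delta(b)=0$, hence $b=0$; conversely, if $\id\otimes\Phi$ is faithful on $\delta(A)_+$ and $\widetilde\delta(b)=0$ for some $b\in\delta(A)$, then $b^*b\in\delta(A)_+$ with $\widetilde\delta(b^*b)=0$, so $(\id\otimes\Phi)(b^*b)=\bigoplus_u(\id\otimes\widetilde 1_u)(0)=0$, so $b^*b=0$ and $b=0$; thus $\widetilde\delta=\id\otimes\lambda_G$ is injective on $\delta(A)$, i.e. $\delta$ is normal. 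Finally, when $\delta$ is injective it is a $*$-isomorphism onto $\delta(A)$; since $\Phi_0=(\id\otimes\Phi)\circ\delta$, for $a\in A_+$ we have $a=0\iff\delta(a)=0$, so $\Phi_0$ is faithful on $A_+$ iff $\id\otimes\Phi$ is faithful on $\delta(A)_+$, i.e. iff $\delta$ is normal.

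For part (ii), Lemma~\ref{prp:deltaI existence}(iii) gives $\delta^I\circ q=(q\otimes\id)\circ\delta$, so $(q\otimes\lambda_G)\circ\delta=(\id\otimes\lambda_G)\circ\delta^I\circ q$, and therefore $\ker\bigl((q\otimes\lambda_G)\circ\delta\bigr)=q^{-1}\bigl(\ker((\id\otimes\lambda_G)\circ\delta^I)\bigr)$. Always $\ker((\id\otimes\lambda_G)\circ\delta^I)\supseteq\ker\delta^I$, with equality precisely when $\id\otimes\lambda_G$ is injective on $\delta^I(A/I)$, i.e. precisely when $\delta^I$ is normal. As $q$ is surjective, $q^{-1}$ is an order isomorphism from the ideals of $A/I$ onto the ideals of $A$ containing $I$, and both kernels above contain $I$; hence $\delta^I$ is normal iff $\ker\bigl((q\otimes\lambda_G)\circ\delta\bigr)=q^{-1}(\ker\delta^I)$. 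If moreover $\delta^I$ is injective, then $\ker\delta^I=0$ and $q^{-1}(0)=\ker q=I$, giving the last equivalence $\ker\bigl((q\otimes\lambda_G)\circ\delta\bigr)=I$. Steps (i) and (ii) are thus formal consequences of $\Phi=\Phi_r\circ\lambda_G$ and basic ideal theory, the sole substantial input being the slicing fact of the second paragraph.
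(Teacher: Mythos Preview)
Your proof is correct. Part (ii) is essentially identical to the paper's argument, just phrased through the order-isomorphism property of $q^{-1}$ rather than element-chasing.

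The genuine difference lies in part (i), specifically in establishing that faithfulness of $\id\otimes\Phi_r$ on $(A\otimes C^*_r(G))_+$ follows from the scalar case. The paper does this by slicing on the \emph{first} tensor factor: if $X\in(A\otimes C^*_r(G))_+$ is nonzero, choose a state $\omega$ of $A$ with $(\omega\otimes\id)(X)\neq 0$ in $C^*_r(G)_+$, then invoke Lemma~\ref{sepfam} to find $u$ with $\tilde 1_u((\omega\otimes\id)(X))=\omega((\id\otimes\tilde 1_u)(X))\neq 0$. You instead work on the \emph{second} factor directly: you represent $C^*_r(G)$ on $\bigoplus_u\ell^2(G_u)$, observe that the vector states $\tilde 1_u$ are cyclic only at $\delta_u$, and then use the covariance $\tilde 1_u\circ\Ad_{i_G(g)^*}=\tilde 1_{r(g)}$ (witnessed by the right-translation unitaries) to propagate the vanishing $(\id\otimes\pi_u)(X)(\eta\otimes\delta_u)=0$ to all of $\eta\otimes\delta_g$. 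Your route is longer but more explicit about the groupoid structure, and it gives a self-contained proof of the $A$-valued version of Lemma~\ref{sepfam}; the paper's route is shorter because it externalises the work to a single state choice and the scalar lemma. One notational wrinkle: when you write ``$\tilde\delta(b)=0$'' for $b\in\delta(A)$, you mean $(\id\otimes\lambda_G)(b)=0$, since $\tilde\delta$ is defined on $A$; the intent is clear, but it is worth tightening.
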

\begin{proof}
$(i)$ If $\id\otimes \Phi$ is faithful on $\delta(A_{+})$, 	 given  $a\in A$, if $(\id\otimes \lambda_G)
	\circ \delta(a)=0$, then, $$(\id\otimes \lambda_G)
	\circ \delta(a^*a)=[(\id\otimes \lambda_G)
	\circ \delta(a^*)][(\id\otimes \lambda_G)
	\circ \delta(a)]=0,$$
and	for $u\in \Gz$,
$$(\id\otimes 1_u)
\circ \delta(a^*a)=(\id\otimes \tilde 1_u\circ\lambda_G)
\circ \delta(a^*a)=(\id\otimes \tilde 1_u)
\circ(\id\otimes \lambda_G)
\circ \delta(a^*a)=0,$$
thus, $(\id\otimes\Phi)\big(\delta(a^*a)\big)=0$, thus $\delta(a^*a)=0$, and so $\delta(a)=0$, i.e., $\id\otimes \lambda_G$ is injective on $\delta(A)$, that is, $\delta$ is normal.  
Conversely, arguing by contradiction, if $(\id\otimes \Phi)\big(\delta(a^*a)\big) = 0$,  but $(\id\otimes \lambda_G)
\circ \delta(a^*a)$ is a nonzero positive element of $A\otimes C^*_r(G)$. Choose a state $\omega$ of $A$
such that $b:=(\omega\otimes \lambda_G)
\circ \delta(a^*a)$ is a nonzero positive element of $C^*_r(G)$. For each $u\in\Gz$,  
$$\tilde 1_u(b)=\tilde 1_u\circ(\omega\otimes \lambda_G)
\circ \delta(a^*a)=\omega[(\id\otimes (\tilde 1_u\circ\lambda_G))(\delta(a^*a))]=\omega[(\id\otimes 1_u)(\delta(a^*a))]=
0,$$
which is a contradiction, by Lemma \ref{sepfam}. When $\delta$ is injective, by equality $\Phi_0=(\id\otimes \Phi)\circ \delta$, faithfulness of $\Phi_0=(\id\otimes \Phi)$ on positive elements of $\delta(A)$ is equivalent to faithfulness of $\Phi_0$ on positive elements of $A$. 
	
$(ii)$ First observe that,
	\[
	(q \otimes \lambda_G)\circ\delta
	= (\id \otimes \lambda_G) \circ (q \otimes \id) \circ \delta
	= (\id \otimes \lambda_G) \circ \delta^I \circ q.
	\]
If $\delta^I$ is normal, by the above equality $x\in \ker((q \otimes
\lambda_G)\circ\delta)$ iff $(\id \otimes \lambda_G)(\delta^I(q(x))=0$ iff $\delta^I(q(x))=0$, that is, $x\in q^{-1}\big(\ker(\delta^I)\big),$ since $\id\otimes\lambda_G$ is injective on range of $\delta^I$.  Conversely if $q^{-1}\big(\ker(\delta^I)\big)=\ker((q \otimes
\lambda_G)\circ\delta)$ and   $(\id \otimes \lambda_G)(\delta^I(q(x))=0$, then 	$(q \otimes \lambda_G)\circ\delta(x)=0,$ thus $q(x)\in \ker(\delta^I)$, that is, $\delta^I(q(x))=0$, which means that $\id\otimes \lambda_G$ is injective on the range of $\delta^I$, that is, $\delta^I$ is normal. When $\delta^I$ is injective, $q^{-1}\big(\ker(\delta^I)\big)=q^{-1}(0)=I$. 
\end{proof}

Recall that a discrete group $G$ is called exact if its
reduced $C^*$-algebra $C^*_\reduced(G)$ is exact.

\begin{lemma} \label{prop:exact}
	Let $G$ be a discrete groupoid. Consider  the following statements:
	\begin{enumerate}
		\item[($i$)] For every normal injective coaction $\delta$ of $G$ on a $C^*$-algebra
		$A$, and every induced ideal $I$, the induced coaction $\delta^I$ of $G$ on
		$A/I$ is normal and injective.
		
				\item[($ii$)] $G$ is exact. 
	\item[($iii$)] For every normal  coaction $\delta$ of $G$ on a $C^*$-algebra
$A$, and every strongly induced ideal $I$, the induced coaction $\delta^I$ of $G$ on
$A/I$ is normal.	
\end{enumerate}

Then $(i)\Rightarrow(ii)\Rightarrow(iii).$
\end{lemma}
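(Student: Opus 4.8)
The plan is to reduce both implications to the characterization of exactness of $C^*_r(G)$ in terms of short exact sequences, using Lemma~\ref{lem:when deltaI normal} together with the coaction/Fell-bundle machinery set up above. For $(i)\Rightarrow(ii)$, I would begin with an arbitrary short exact sequence $0\to J\to B\xrightarrow{q}B/J\to 0$ of $C^*$-algebras and aim to show that $0\to J\otimes C^*_r(G)\to B\otimes C^*_r(G)\to(B/J)\otimes C^*_r(G)\to 0$ is exact; since $B,J$ are arbitrary this gives exactness of $C^*_r(G)$. Put $A:=B\otimes C^*_r(G)$ and $\delta:=\id_B\otimes\varepsilon_r$, where $\varepsilon_r$ is the canonical normal coaction of $G$ on $C^*_r(G)$, determined by $\varepsilon_r(\lambda_G(i_G(g)))=\lambda_G(i_G(g))\otimes i_G(g)$ (nondegeneracy mod $\Gz$ is checked as in Lemma~\ref{co}$(ii)$). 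Then $\delta$ is injective, and it is normal because $(\id\otimes\lambda_G)\circ\varepsilon_r$ is the reduced comultiplication $\lambda_G(i_G(g))\mapsto\lambda_G(i_G(g))\otimes\lambda_G(i_G(g))$, which is injective since it preserves $\|a\,\delta_u\|$ on each $\delta_u\otimes\delta_u$ ($u\in\Gz$) and $\{\delta_u:u\in\Gz\}$ is cyclic for the right regular representation. The spectral subspaces are $A^\delta_u=B\otimes\mathbb{C}1_u$, so $I:=J\otimes C^*_r(G)$ satisfies $I\cap A^\delta_u=J\otimes\mathbb{C}1_u$; as each $\lambda_G(i_G(g))$ equals $\lambda_G(i_G(g))\,1_{s(g)}\,1_{s(g)}$, the closed span of $\{x\,1_u\,y:u\in\Gz,\ x,y\in C^*_r(G)\}$ is all of $C^*_r(G)$, whence $\bigcup_{u\in\Gz}A(I\cap A^\delta_u)A$ is total in $I$; that is, $I$ is an induced ideal.

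Applying $(i)$ then gives that $\delta^I$ is an injective normal coaction of $G$ on $A/I$, so by Lemma~\ref{lem:when deltaI normal}$(i)$ the conditional expectation $\Phi_0\colon A/I\to\bigoplus_u(A/I)^{\delta^I}_u$ is faithful on positive elements. Let $\pi\colon A/I\to(B/J)\otimes C^*_r(G)$ be the canonical surjection. Using that $(\id\otimes 1_u)\circ\varepsilon_r=\tilde 1_u$ on $C^*_r(G)$ one checks $\pi\circ\Phi_0=(\id_{B/J}\otimes\Phi_r)\circ\pi$, where $\Phi_r\colon C^*_r(G)\to C_0(\Gz)$ is the canonical faithful conditional expectation, so $\id_{B/J}\otimes\Phi_r$ is faithful; and $\pi$ is injective on $\bigoplus_u(A/I)^{\delta^I}_u$, being $q$ tensored with $1_u$ on the $u$-th summand. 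If $\pi$ were not injective, choosing $0\ne z\in(\ker\pi)_+$ gives $(\id_{B/J}\otimes\Phi_r)(\pi(z))=0$, hence $\pi(\Phi_0(z))=0$, hence $\Phi_0(z)=0$, hence $z=0$ by faithfulness of $\Phi_0$, a contradiction. So $\pi$ is an isomorphism and the sequence is exact.

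For $(ii)\Rightarrow(iii)$, let $\delta$ be a normal coaction of $G$ on $A$ and $I$ a strongly induced ideal, with $q\colon A\to A/I$. Form the Fell bundle $E:=E_A$ of $\delta$ (Lemma~\ref{co}$(i)$); by Lemma~\ref{tg}$(ii)$, $A\cong C^*(E)$, and — since the normalization of a coaction of a discrete groupoid on a cross-sectional algebra is its reduced cross-sectional algebra — normality of $\delta$ is precisely injectivity of the regular representation $\lambda_E\colon A=C^*(E)\to C^*_r(E)$. Since $I$ is induced it is graded (it is the closed span of $\bigcup_{g\in G}(I\cap A^\delta_g)$), hence the full cross-sectional algebra of the ideal sub-bundle $E_I$ with $(E_I)_g=I\cap A^\delta_g$ (Lemma~\ref{tg}$(ii)$ for $(I,\delta|_I)$, a coaction by the remark after Definition~\ref{ind}), while $A/I\cong C^*(E_{A/I})$ for the quotient bundle $E_{A/I}$, which by Lemma~\ref{prp:deltaI existence}$(ii)$–$(iii)$ is the Fell bundle of $\delta^I$; thus $\delta^I$ is normal iff $\lambda_{E_{A/I}}\colon A/I\to C^*_r(E_{A/I})$ is injective. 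Now $\delta|_I$ is normal (its range sits inside that of $\delta$), so $\lambda_{E_I}\colon I\to C^*_r(E_I)$, being surjective as a regular representation and injective, is an isomorphism, and therefore $\lambda_E(I)=C^*_r(E_I)$. Because $I$ is \emph{strongly} induced, $E_I$ is a nondegenerate ideal sub-bundle of $E$, so $C^*_r(E_I)$ embeds in $C^*_r(E)$ and, using exactness of $G$ (this is where $(ii)$ enters), $0\to C^*_r(E_I)\to C^*_r(E)\to C^*_r(E_{A/I})\to 0$ is exact; in particular $\ker q_r=C^*_r(E_I)$ for the canonical surjection $q_r\colon C^*_r(E)\to C^*_r(E_{A/I})$, which satisfies $q_r\circ\lambda_E=\lambda_{E_{A/I}}\circ q$. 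A diagram chase then finishes: if $\lambda_{E_{A/I}}(q(a))=0$, then $\lambda_E(a)\in\ker q_r=C^*_r(E_I)=\lambda_E(I)$, so $\lambda_E(a)=\lambda_E(b)$ with $b\in I$, and injectivity of $\lambda_E$ gives $a=b\in I$, i.e. $q(a)=0$; hence $\lambda_{E_{A/I}}$ is injective and $\delta^I$ is normal.

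The main obstacle is the Fell-bundle input for $(ii)\Rightarrow(iii)$: one must establish carefully, for Fell bundles over discrete (or \'{e}tale) groupoids, (a) that normality of a coaction $\delta$ on $A=C^*(E_A)$ is equivalent to injectivity of $\lambda_{E_A}$, which requires pinning down the relation between a coaction, its normalization, and the reduced cross-sectional algebra of its Fell bundle, extending Lemmas~\ref{co} and~\ref{tg}; and (b) the implication ``$G$ exact $\Rightarrow$ the ideal sub-bundle sequence $0\to C^*_r(E_I)\to C^*_r(E)\to C^*_r(E_{A/I})\to 0$ is exact'', the groupoid counterpart of the group-case fact linking exactness of $C^*_r(G)$ with exactness of reduced crossed products / reduced cross-sectional algebras, where the hypothesis that $I$ be \emph{strongly} (rather than merely) induced is exactly what guarantees that $E_I$ is a nondegenerate sub-bundle so that $C^*_r(E_I)$ really is the kernel. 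By comparison, $(i)\Rightarrow(ii)$ is routine once the construction and the conditional-expectation identities are in hand.
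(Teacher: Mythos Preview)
Your argument for $(i)\Rightarrow(ii)$ is the direct version of the paper's contrapositive: the paper assumes $G$ is not exact, picks $B,J$ with $J\otimes C^*_r(G)\subsetneq\ker(q\otimes\id)$, builds exactly the same coaction $\delta=\id_B\otimes\delta_G$ on $A=B\otimes C^*_r(G)$ and the same induced ideal $I=J\otimes C^*_r(G)$, and then invokes Lemma~\ref{lem:when deltaI normal}$(ii)$ to see that $\delta^I$ fails to be normal and injective. Your direct route via faithfulness of $\Phi_0$ and the intertwining $\pi\circ\Phi_0=(\id_{B/J}\otimes\Phi_r)\circ\pi$ is fine and equivalent.

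For $(ii)\Rightarrow(iii)$ your approach is genuinely different from the paper's, and there is a real gap. First a minor point: you write ``normality of $\delta$ is precisely injectivity of $\lambda_E$'' and later ``injectivity of $\lambda_E$ gives $a=b\in I$''. What the paper establishes (paragraph after Definition~\ref{ind} and before Lemma~\ref{equi}) is $\ker(\lambda_E)=\ker(\delta^n)$, so normality only gives $\ker(\lambda_E)=\ker(\delta)$; you cannot conclude $a=b$, only $\delta(a)=\delta(b)$. This is patchable: from $\delta(a)=\delta(b)\in I\otimes C^*(G)$ you still get $\delta^I(q(a))=0$, which is exactly what Lemma~\ref{lem:when deltaI normal}$(ii)$ needs. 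The serious issue is your input (b): that exactness of $C^*_r(G)$ forces $0\to C^*_r(E_I)\to C^*_r(E)\to C^*_r(E_{A/I})\to 0$ to be exact for Fell bundles over the \emph{groupoid} $G$. This is a substantial theorem in its own right, not established anywhere in the paper, and your identification of ``strongly induced'' with ``$E_I$ nondegenerate sub-bundle'' is also only asserted. You flag these honestly as obstacles, but they are not just technicalities --- they are the entire content of the implication in your framework.

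The paper bypasses all of this. Its proof of $(ii)\Rightarrow(iii)$ uses exactness of $C^*_r(G)$ only in its raw form (min-tensoring preserves short exact sequences): from $x\in\ker((q\otimes\lambda_G)\circ\delta)$ one gets $(\id\otimes\lambda_G)\delta(x)\in\ker(q\otimes\id)=I\otimes C^*_r(G)$ directly. The strongly-induced hypothesis is then used literally as stated --- $\delta\colon I\to(I\otimes C^*(G))\cap\delta(A)$ is nondegenerate --- so by Lemma~\ref{ndeg} an approximate identity $(e_i)$ of $I$ is carried to an approximate identity of $C:=(I\otimes C^*_r(G))\cap(\id\otimes\lambda_G)\delta(A)$. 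Since $(\id\otimes\lambda_G)\delta(x)\in C$, one has $(\id\otimes\lambda_G)\delta(e_ix)\to(\id\otimes\lambda_G)\delta(x)$; normality makes $(\id\otimes\lambda_G)$ isometric on $\delta(A)$, so $\delta(e_ix)$ converges in the closed set $\delta(I)$, giving $\delta(x)=\delta(z)$ for some $z\in I$, whence $\delta^I(q(x))=(q\otimes\id)\delta(x)=0$. This is elementary and self-contained; your Fell-bundle route, even if completed, would be a considerably heavier proof of the same implication.
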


\begin{proof}
	 $(i)\Rightarrow(ii).$ Assume that $G$ is not exact. Then there exists a $C^*$-algebra $B$
	with an ideal $J$ such that, min-tensoring from right with $C^*_r(G)$ does not preserve exactness of the short  exact sequence
	$$0\to J\to B\to B/J\to 0,$$
	that is, $J\otimes C^*_\reduced(G)\subsetneq
	\ker (q\otimes \id)$, where $q \colon B\to B/J$ is the
	quotient map. Consider the coaction $\delta:=\id_B\otimes\delta_G$ of $G$ on $A:=B\otimes C^*_\reduced(G)$, where $\delta_G$ is the canonical coaction of $G$ on $C^*_r(G)$ defined by $\delta_G(\lambda_G(g)):=\lambda_G(g)\otimes i_G(g)$, for $g\in G$. Let
	$$\Delta \colon C^*_\reduced(G)\to C^*_\reduced(G)\otimes C^*_\reduced(G); \ x\mapsto x\otimes x,\ \ (x\in C^*_\reduced(G))$$  be the co-multiplication map, and observe that, 
	$(\id\otimes\lambda_G)\circ\delta=\id\otimes\Delta$. In particular, $(\id\otimes\lambda_G)\circ\delta$ is
	injective, that is, $\delta$ is  normal and injective. We claim that, the  ideal $I:=J\otimes C^*_\reduced(G)$ of
	$A=B\otimes C^*_\reduced(G)$ is induced. Let $u\in\Gz$, then the projection $1_u$ in $C^*_r(G)$ generates a one dimensional subspace $\mathbb C 1_u$ and $(B\otimes C^*_\reduced(G))^\delta_u=B\otimes
	\mathbb  C 1_{u}$. By the identity  $1_g*1_{s(g)}=1_g$, for $g\in G$, the union of intersections of $I$ with fibres $(B\otimes C^*_\reduced(G))^\delta_u$ contains the algebraic tensor product $J\odot C_c(G)$, which is dense in $I$, establishing the claim.  Finally, for the quotient map $\tilde q: A\to A/I$, we have $(\tilde q\otimes\lambda_G)\circ\delta=q\otimes\Delta$, 
	
	\[I=J\otimes C^*_\reduced(G)\subsetneq
	\ker (q\otimes \id)\subseteq
	\ker(q\otimes\Delta) = \ker((\tilde q\otimes\lambda_G)\circ\delta),\]
	hence by Lemma \ref{lem:when deltaI normal}$(ii)$, 
	$\delta^I$ is either not normal or not injective. 
	
	 $(ii)\Rightarrow(iii).$ Assume now that $G$ is exact and  let $\delta$ be a normal  coaction of $G$
	on a $C^*$-algebra $A$, and  $I$ be a strongly  induced ideal of $A$. We show that $\delta^I$ is also normal. If $I$ is zero, there is nothing to prove, so let us assume that $I$ is nonzero. Let  $q: A\to A/I$ be the quotient map. Let $x\in\ker((q \otimes
	\lambda_G)\circ\delta)$, then $(\id_A\otimes \lambda_G)\circ \delta(x)\in\ker(q\otimes \id )=I\otimes C^*_r(G)$, where the equality follow from exactness of $G$. Since $I$ is a strongly induced ideal, $\delta$ restricts to a nondegenerate $*$-homomorphism
	$$\delta: I\to (I\otimes C^*(G))\cap \delta(A)=:D.$$ 
	Since $\id\otimes\lambda_G: A\otimes C^*(G)\to A\otimes C^*_r(G)$ is surjective, it is nondegenerate \cite[Proposition 2.2.16]{weg}, and so the $*$-homomorphism 
$$(\id\otimes\lambda_G)\circ\delta: I\to (I\otimes C^*_r(G))\cap (\id\otimes\lambda_G)\delta(A)=:C$$ 	
is nondegenerate. By Lemma \ref{ndeg}, $(\id\otimes\lambda_G)\circ\delta$ maps an approximate  identity $(e_i)$ of $I$ to an approximate identity of $C$. Now, since $(\id\otimes\lambda_G)\circ \delta(x)\in C$,   
	$$(\id\otimes \lambda_G)\circ\delta(e_ix)= (\id\otimes \lambda_G)\circ\delta(e_i)(\id\otimes \lambda_G)\circ\delta(x)\to (\id\otimes \lambda_G)\circ\delta(x),$$ 
	as $i\to \infty$. By normality of $\delta$, $\id\otimes \lambda_G$ is injective and so isometric on the range of $\delta$, hence it follows that the net $\delta(e_ix)$ is convergent in $I\otimes C^*(G)$, say to $y$. Since $\delta(I)$ is closed, there is $z\in I$ with $y=\delta(z)$, which in turn gives, $(\id\otimes \lambda_G)\circ\delta(x)=(\id\otimes \lambda_G)\circ\delta(z)$, and so $\delta(x)=\delta(z)$, again by normality of $\delta$. Thus, $\delta(x)\in \delta(I)\subseteq I\otimes C^*(G)$. Therefore, 
	$$\delta^I(q(x))=(q\otimes\id)(\delta(x))=0,$$
	that is, $x\in q^{-1}(\ker(\delta^I))$, as required.  
\end{proof}

Since the ideal $I:=J\otimes C^*_r(G)$ constructed in the proof of the implication $(i)\Rightarrow(ii)$ of the above lemma is not only induced, but also strongly induced, we immediately get the following corollary. 

\begin{cor} \label{prop:exact2}
	Let $G$ be a discrete groupoid. Then   the following statements are equivalent:
	\begin{enumerate}
		\item[($i$)] For every normal injective coaction $\delta$ of $G$ on a $C^*$-algebra
		$A$, and every strongly induced ideal $I$, the induced coaction $\delta^I$ of $G$ on
		$A/I$ is normal and injective.
		
		\item[($ii$)] $G$ is exact. 
		\item[($iii$)] For every normal  coaction $\delta$ of $G$ on a $C^*$-algebra
		$A$, and every strongly induced ideal $I$, the induced coaction $\delta^I$ of $G$ on
		$A/I$ is normal.	
	\end{enumerate}
\end{cor}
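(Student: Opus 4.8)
The plan is to deduce the corollary from Lemma~\ref{prop:exact} by upgrading the word ``induced'' to ``strongly induced'' wherever it matters, using the remark made just before the statement. Schematically, $(i)\Rightarrow(ii)$ and $(ii)\Rightarrow(iii)$ will come essentially for free from the lemma, and the two reverse implications $(ii)\Rightarrow(i)$ and $(iii)\Rightarrow(ii)$ will be added to close the equivalence; the only genuinely delicate step is the last one.

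For $(i)\Rightarrow(ii)$ I would reread the proof of $(i)\Rightarrow(ii)$ in Lemma~\ref{prop:exact}: it is contrapositive, and from a failure of exactness it builds a short exact sequence $0\to J\to B\to B/J\to 0$ not preserved by $-\otimes C^*_r(G)$, the normal injective coaction $\delta=\id_B\otimes\delta_G$ on $A=B\otimes C^*_r(G)$, and the ideal $I=J\otimes C^*_r(G)$; as observed before the statement (using $1_g*1_{s(g)}=1_g$ and nondegeneracy of $\delta|_I$ onto $(I\otimes C^*(G))\cap\delta(A)$), this $I$ is strongly induced, and the strict inclusion $I\subsetneq\ker((\tilde q\otimes\lambda_G)\circ\delta)$ together with Lemma~\ref{lem:when deltaI normal}$(ii)$ forces $\delta^I$ not to be simultaneously normal and injective, contradicting $(i)$ of the corollary already for this strongly induced $I$. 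The implication $(ii)\Rightarrow(iii)$ is literally the implication $(ii)\Rightarrow(iii)$ of Lemma~\ref{prop:exact}, whose $(iii)$ is already phrased for strongly induced ideals.

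To close the cycle I would prove $(ii)\Rightarrow(i)$ and $(iii)\Rightarrow(ii)$. For $(ii)\Rightarrow(i)$: given $G$ exact, a normal injective coaction $\delta$ on $A$, and a strongly induced ideal $I$ with quotient $q\colon A\to A/I$, normality of $\delta^I$ is the implication $(ii)\Rightarrow(iii)$ just recorded; for injectivity, write $\tilde\delta:=(\id\otimes\lambda_G)\circ\delta$, note that $\tilde\delta$ is injective, hence isometric, and run the approximate-identity argument from the end of the proof of $(ii)\Rightarrow(iii)$ in Lemma~\ref{prop:exact} with $\tilde\delta$ in place of $\delta$ to obtain $\tilde\delta^{-1}(I\otimes C^*_r(G))=I$; combining this with exactness of $C^*_r(G)$, which gives $\ker(q\otimes\id_{C^*_r(G)})=I\otimes C^*_r(G)$, and with the identity $(q\otimes\lambda_G)\circ\delta=(q\otimes\id)\circ\tilde\delta$ and Lemma~\ref{lem:when deltaI normal}$(ii)$, one gets $q^{-1}(\ker\delta^I)=\ker((q\otimes\lambda_G)\circ\delta)=I$, i.e.\ $\ker\delta^I=0$. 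For $(iii)\Rightarrow(ii)$ I would argue by contraposition with the same witness triple $(A,\delta,I)$: if $\delta^I$ were normal, Lemma~\ref{lem:when deltaI normal}$(ii)$ gives $\tilde q^{-1}(\ker\delta^I)=\ker((\tilde q\otimes\lambda_G)\circ\delta)$, which equals $(\id_B\otimes\Delta)^{-1}(\ker(\tilde q\otimes\id_{C^*_r(G)}))$ via $(\tilde q\otimes\lambda_G)\circ\delta=(\tilde q\otimes\id)\circ(\id_B\otimes\Delta)$ and strictly contains $I$.

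The main obstacle is exactly this last implication $(iii)\Rightarrow(ii)$: since $(iii)$, unlike $(i)$, does not demand injectivity of $\delta^I$, the strict inclusion $I\subsetneq\ker((\tilde q\otimes\lambda_G)\circ\delta)$ only shows $\delta^I$ fails to be injective, not that it fails to be normal, and non-injective normal coactions are permitted in this setting. To turn this into a genuine contradiction one must either compute $\ker\delta^I=\tilde q\big((\id_B\otimes\delta_G)^{-1}(\ker(q\otimes\id_{C^*(G)}))\big)$ precisely and verify it cannot coincide with $\tilde q^{-1}$ of the normality kernel, or replace the witness by a suitable quotient of $A$ on which the induced coaction is forced to be injective, so that normality pins down $\ker\delta^I$ and the failure of exactness becomes visible. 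Everything else is bookkeeping with Lemmas~\ref{prop:exact}, \ref{lem:when deltaI normal}, \ref{ndeg} and Definition~\ref{ind}.
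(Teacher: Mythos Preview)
The paper's entire proof is the one-sentence remark placed \emph{before} the statement: the witness ideal $I=J\otimes C^*_r(G)$ built in the proof of $(i)\Rightarrow(ii)$ in Lemma~\ref{prop:exact} is not only induced but strongly induced, and ``we immediately get the following corollary.'' No further argument is given. In particular, the paper does not spell out the implications $(ii)\Rightarrow(i)$ or $(iii)\Rightarrow(ii)$ separately; it treats the full equivalence as following at once from that single observation together with Lemma~\ref{prop:exact}.

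Your proposal is considerably more careful than the paper's. Your handling of $(i)\Rightarrow(ii)$ matches the paper's remark exactly, and $(ii)\Rightarrow(iii)$ is, as you say, literally the lemma. Your argument for $(ii)\Rightarrow(i)$ is correct and is indeed latent in the lemma's proof of $(ii)\Rightarrow(iii)$: that proof produces $z\in I$ with $\delta(x)=\delta(z)$, so under the extra hypothesis that $\delta$ is injective one gets $x=z\in I$, hence $\ker((q\otimes\lambda_G)\circ\delta)=I$, and Lemma~\ref{lem:when deltaI normal}$(ii)$ then gives injectivity of $\delta^I$. The paper does not write this out, but it is the natural reading of ``immediately.''

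Where you diverge from the paper is in flagging $(iii)\Rightarrow(ii)$ as the genuine obstacle. You are right that the strict inclusion $I\subsetneq\ker((\tilde q\otimes\lambda_G)\circ\delta)$ in the witness example only forces $\delta^I$ to fail to be simultaneously normal and injective; since $(iii)$ does not demand injectivity, this by itself does not contradict $(iii)$. The paper's one-line proof does not address this point either---it offers no mechanism for showing that $\delta^I$ in the witness example fails to be \emph{normal} specifically. So your caution here is well-placed: you have not found a gap in your own reasoning so much as identified that the paper's ``immediate'' passage from Lemma~\ref{prop:exact} to the full three-way equivalence leaves exactly this implication unexplained. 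Your suggested fixes (computing $\ker\delta^I$ explicitly in the witness, or passing to a quotient to force injectivity) are reasonable directions, but neither is carried out in the paper.
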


This last result urges finding examples of strongly induced ideals of $\Tc(X)$ and $\NO{X}$, for the canonical coactions of $G$. Using Lemma \ref{nondeg}, this is not hard when $G$ has uniformely bounded fibres. 

\begin{lemma} \label{nondeg2}
	If $G$ has uniformly bounded fibres, the closed ideal $\langle \Ff\rangle$ of $\Tc(X)$ generated by the core $\Ff$ is a strongly induced ideal with respect to the gauge action. Similarly, closed ideal $\langle q_{{\rm CNP}}(\Ff)\rangle$ of $\NO{X}$, generated by the image of the core $\Ff$ under the quotient map $q_{\rm CNP}: \Tc(X)\to \NO{X}$, is a strongly induced ideal with respect to the induced gauge coaction. 
\end{lemma}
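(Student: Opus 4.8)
The plan is to verify the two conditions of Definition~\ref{ind} for the ideal $\langle\Ff\rangle$ of $\Tc(X)$ with respect to the gauge coaction $\delta$, and then to transport the argument across $q_{\rm CNP}$. Uniform boundedness of the fibres enters only to guarantee that $G$ is discrete, so that Lemmas~\ref{core}, \ref{prop:existence_coaction}, \ref{ndeg} and~\ref{nondeg} apply and $\delta$ is an honest coaction.

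First I would show that $\langle\Ff\rangle$ is an induced ideal. By Lemma~\ref{core}$(i)$ the core is the direct sum $\Ff=\bigoplus_{u\in\Gz}\Tc(X)_u^\delta$ of its spectral subspaces; since each $\Tc(X)_u^\delta\subseteq\Ff\subseteq\langle\Ff\rangle$, we get $\langle\Ff\rangle\cap\Tc(X)_u^\delta=\Tc(X)_u^\delta$. Hence $\bigcup_{u\in\Gz}\Tc(X)\,(\langle\Ff\rangle\cap\Tc(X)_u^\delta)\,\Tc(X)$ contains $\bigcup_{u\in\Gz}\Tc(X)\,\Tc(X)_u^\delta\,\Tc(X)$, whose closed linear span is $\clsp(\Tc(X)\,\Ff\,\Tc(X))=\langle\Ff\rangle$ (using $\Ff=\clsp\bigcup_u\Tc(X)_u^\delta$ and that $\Ff$, having an approximate identity, generates $\langle\Ff\rangle$ as $\clsp(\Tc(X)\Ff\Tc(X))$). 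Thus this union is total in $\langle\Ff\rangle$, so $\langle\Ff\rangle$ is induced; by the remark following Definition~\ref{ind}, $\delta$ then restricts to a $*$-homomorphism $\delta\colon\langle\Ff\rangle\to D$, where $D:=(\langle\Ff\rangle\otimes C^*(G))\cap\delta(\Tc(X))$ is a closed ideal of the $C^*$-subalgebra $\delta(\Tc(X))$, being the intersection of the ideal $\langle\Ff\rangle\otimes C^*(G)$ of $\Tc(X)\otimes C^*(G)$ with $\delta(\Tc(X))$.

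Next I would prove that this restricted map $\delta\colon\langle\Ff\rangle\to D$ is nondegenerate, via the approximate-identity characterisation of Lemma~\ref{ndeg}. Pick an approximate identity $(\theta_\lambda)$ for $\Ff$; as recorded in the proof of Lemma~\ref{prop:existence_coaction} it is also an approximate identity for $\Tc(X)$, hence for its ideal $\langle\Ff\rangle$ (note $\theta_\lambda\in\Ff\subseteq\langle\Ff\rangle$). Since $\delta$ is a $*$-homomorphism and $(\theta_\lambda)$ is an approximate identity for $\Tc(X)$, the net $(\delta(\theta_\lambda))$ is an approximate identity for $\delta(\Tc(X))$; and $\delta(\theta_\lambda)\in\delta(\Ff)\subseteq(\Ff\otimes C^*(G))\cap\delta(\Tc(X))\subseteq D$, so $(\delta(\theta_\lambda))$ is an approximate identity of the ideal $D$. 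Therefore $\delta$ sends an approximate identity of $\langle\Ff\rangle$ to one of $D$, and Lemma~\ref{ndeg} yields that $\delta\colon\langle\Ff\rangle\to D$ is nondegenerate; together with the first step this shows $\langle\Ff\rangle$ is strongly induced.

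For $\NO{X}$ the same two steps apply with $\Tc(X)$, $i_X$, $\delta$ replaced by $\NO{X}$, $j_X$ and the induced gauge coaction: one uses that $q_{\rm CNP}$ carries $(\theta_\lambda)$ to an approximate identity $(q_{\rm CNP}(\theta_\lambda))$ of $\NO{X}$ lying in $q_{\rm CNP}(\Ff)\subseteq\langle q_{\rm CNP}(\Ff)\rangle$, together with the $\NO{X}$-analogue of Lemma~\ref{core}$(i)$, namely that $q_{\rm CNP}(\Ff)$ is the direct sum of its $\delta$-spectral subspaces. This last point is where a little care is needed, and it is exactly what Lemma~\ref{ker2} supplies: the induced coaction is injective on $q_{\rm CNP}(\Ff)$, so the $\Gz$-grading of the core descends to the quotient. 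Once this analogue is in hand the remaining bookkeeping — identifying the codomain as an ideal of the image and checking that $\delta(q_{\rm CNP}(\theta_\lambda))$ lands in it — is routine, so I expect the descent of the core grading to be the only real obstacle.
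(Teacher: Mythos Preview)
Your argument is correct and in fact more direct than the paper's. The paper establishes the nondegeneracy of $\delta\colon\langle\Ff\rangle\to D'$ by first invoking Lemma~\ref{nondeg} (surjectivity of $\delta\colon\Ff\to(\Ff\otimes C^*(G))\cap\delta(\Tc(X))$, which genuinely uses uniformly bounded fibres) and then a multiplier-algebra comparison $M(\Ff)=M(\langle\Ff\rangle)=M(\Tc(X))$ to transfer nondegeneracy from $D$ to $D'$. You bypass all of this: the observation that $D'$ is an ideal of $\delta(\Tc(X))$ and that $(\delta(\theta_\lambda))\subseteq D'$ is an approximate identity of $\delta(\Tc(X))$ already gives an approximate identity of $D'$, so Lemma~\ref{ndeg} applies directly. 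A pleasant by-product is that your route never uses uniform boundedness---only that $G$ is discrete (which is a standing hypothesis here, not a consequence of uniform boundedness; your opening sentence conflates the two).

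Your worry about the $\NO{X}$ case is unnecessary, and invoking Lemma~\ref{ker2} would import hypotheses that are absent from the statement. For the ``induced'' step you do not need $q_{\rm CNP}(\Ff)=\bigoplus_u(\NO{X})_u^\delta$; you only need that each $q_{\rm CNP}(\Tc(X)_u^\delta)$ lies in $\langle q_{\rm CNP}(\Ff)\rangle\cap(\NO{X})_u^\delta$, which is automatic from $\delta^I\circ q_{\rm CNP}=(q_{\rm CNP}\otimes\id)\circ\delta$, together with the fact that $\bigcup_u q_{\rm CNP}(\Tc(X)_u^\delta)$ is total in $q_{\rm CNP}(\Ff)$ (the image under $q_{\rm CNP}$ of a total set). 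The nondegeneracy step then proceeds verbatim with $(q_{\rm CNP}(\theta_\lambda))$, and no appeal to injectivity of the induced coaction on $q_{\rm CNP}(\Ff)$ is required.
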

\begin{proof}
	Since $\Ff$ is generated by $\bigcup_{u\in \Gz}\Tc(X)^\delta_u$,  $\langle \Ff\rangle$ is an induced ideal. Since $\Ff$ contains an approximate identity of $\Tc(X)$, we have $M(\Ff)=M(\langle \Ff\rangle)=M(\Tc(X))$. Therefore, $D:=(\Ff\otimes C^*(G))\cap\delta(\Tc(X))$ and $D^{'}:=(\langle \Ff\rangle\otimes C^*(G))\cap\delta(\Tc(X))$ have the same multiplier algebra. Now the first statement follows from Lemma \ref{nondeg}. The second statement is proved similarly. 
\end{proof}
In the notations of  Lemma \ref{tg}, let   $A^\reduced:=C^*_r(E_A)$, and let $\lambda_r: A\to A^r$ be the corresponding regular representation. Consider the  canonical regular representation $\lambda_G: C^*(G)\to C^*_r(G)$. If  $\delta$ is normal,  $\delta^n$ is a reduced coaction of $G$ on $A$. If not, then $G$ coacts  normally on $A^n:=A/{\ker(\delta^n)}$.    
As in Lemma \ref{co}$(ii)$, there is  a normal coaction $\delta^\normal$ of $G$ on $A^\reduced$ with 
$
    \delta^\normal\bigl(\lambda_{r}(a)\bigr)=
    \lambda_{r}(a)\otimes \lambda_G(g)$,  for $g\in G$ and $a\in A^\delta_g,$ and $\ker(\delta^n)=\ker(\lambda_r)$, that is, 
$A^{\reduced}$ is isomorphic to $A^{\normal}$. 

Let $(G,P)$ be a quasi-lattice ordered groupoid, and  $X$ be a
compactly aligned product system of Hilbert bimodules over $P$.
By Lemma~\ref{prop:existence_coaction}, $\Tc(X)$ admits a
coaction $\delta$, giving rise to a Fell bundle
$E_{X}:=(\Tc(X)^\delta_g\times\{g\})_{g\in G}$ over $G$, and by Lemma \ref{core}$(i)$, $C^*(E_X|_{\Gz})\simeq\Ff$. We  identify $\Tc(X)^\delta_g$,  with
$\Tc(X)^\delta_g\times\{g\}$. 

Let $G$ be discrete and consider the map  $\iota_X:\bigcup_{g\in
	G}\Tc(X)^\delta_g\hookrightarrow C_c(E_X)$, defined as the canonical embedding on each fibre. This combined with the canonical inclusion $\eta_X: C_c(E_X)\hookrightarrow \Tc(X)$ gives and embedding of $\bigcup_{g\in
	G}\Tc(X)^\delta_g$ in $\Tt_X$.  

\begin{lemma}\label{equi}
Let $\iota_{\rm max}: C_c(E_X)\hookrightarrow C^*(E_X)$ be the canonical embedding, then 

$(i)$ the map
$\iota_{\rm max}\circ\iota_X\circ i_X:X\to C^*(E_X)$
is a Nica covariant Toeplitz representation.

$(ii)$ $C^*(E_X)$ is equivariantly isomorphic to  $\Tc(X)$. 
\end{lemma}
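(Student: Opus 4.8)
The plan is to write $k_X:=\iota_{\rm max}\circ\iota_X\circ i_X$ and show first that it is a Nica covariant Toeplitz representation, then that the universal properties of $\Tc(X)$ and of $C^*(E_X)$ produce mutually inverse, equivariant $*$-homomorphisms between the two algebras. The guiding observation is that $E_X$, by construction, carries no new data: its fibre $(E_X)_g$ is the spectral subspace $\Tc(X)^\delta_g$, and its bundle multiplication and involution are just the restrictions of those of $\Tc(X)$, so that convolution of point-supported sections in $C_c(E_X)$ is the product in $\Tc(X)$ whenever the indices are composable and is $0$ otherwise.

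For $(i)$: since $\delta(i_X(x))=i_X(x)\otimes i_G(d(x))$ by Lemma~\ref{prop:existence_coaction}, each $i_X(x)$ lies in $(E_X)_{d(x)}=\Tc(X)^\delta_{d(x)}$, so $k_X$ is defined, and on $A_u$ it is a $*$-homomorphism because convolution of sections supported at $u$ is the product in the fibre $(E_X)_u$. I would then transport the Toeplitz axioms and Nica covariance of $i_X$ through the fibrewise embedding: for $x,y\in X_p$ the element $k_X(x)^*k_X(y)$ is the image in $C^*(E_X)$ of the Fell-bundle product of $i_X(x)^*\in(E_X)_{p^{-1}}$ with $i_X(y)\in(E_X)_p$ (composable, since $r(p)=r(p)$), which equals $i_X(x)^*i_X(y)=i_X(\langle x,y\rangle_p)$, hence $k_X(x)^*k_X(y)=k_X(\langle x,y\rangle_p)$; likewise $k_X(x)k_X(z)$ is the image of $i_X(xz)$ when $s(d(x))=r(d(z))$ and is $0$ otherwise because the convolution of the two point-supported sections is then empty. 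With axioms $(1)$--$(3)$ in hand, $k_X^{(p)}(S)=\iota_{\rm max}(\iota_X(i_X^{(p)}(S)))$ with $i_X^{(p)}(S)\in(E_X)_{r(p)}$, and $k_X^{(p)}(S)k_X^{(q)}(T)$ is the image of the Fell-bundle product of $i_X^{(p)}(S)$ and $i_X^{(q)}(T)$, which vanishes unless $r(p)=r(q)$ (a necessary condition for $p\vee q<\infty$) and otherwise equals $i_X^{(p)}(S)i_X^{(q)}(T)$ in $\Tc(X)$; so the Nica covariance of $i_X$ (paragraph after Lemma~\ref{univ}) carries over.

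For $(ii)$: by $(i)$ and the universal property of $\Tc(X)$ there is a $*$-homomorphism $(k_X)_*\colon\Tc(X)\to C^*(E_X)$ with $(k_X)_*\circ i_X=k_X$. In the other direction the section-sum map $\eta_X\colon C_c(E_X)\to\Tc(X)$, $\xi\mapsto\sum_g\xi(g)$, is a $*$-homomorphism: it respects the involution trivially, and it respects convolution because for $a\in\Tc(X)^\delta_{g_1}$, $b\in\Tc(X)^\delta_{g_2}$ with $s(g_1)\neq r(g_2)$ the product $ab$ vanishes in $\Tc(X)$ --- approximate $a,b$ by finite sums of $i_X(x)i_X(y)^*$ resp. $i_X(z)i_X(w)^*$ with $d(x)d(y)^{-1}=g_1$, $d(z)d(w)^{-1}=g_2$, and use $i_X(y)^*i_X(z)=0$ from Lemma~\ref{zero} since $r(d(y))=s(g_1)\neq r(g_2)=r(d(z))$. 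As a $*$-homomorphism into a $C^*$-algebra, $\eta_X$ is bounded for the full cross-sectional norm, hence extends to $\eta_X\colon C^*(E_X)\to\Tc(X)$ (this also re-proves the case $A=\Tc(X)$ of Lemma~\ref{tg}$(ii)$). Now $\eta_X\circ(k_X)_*$ fixes each generator $i_X(x)$, so it is $\id_{\Tc(X)}$; and $(k_X)_*\circ\eta_X$ fixes each generator $\iota_{\rm max}(a)$, $a\in(E_X)_g=\Tc(X)^\delta_g$, because by the description $\Tc(X)^\delta_g=\clsp\{\,i_X(x)i_X(y)^*:d(x)d(y)^{-1}=g\,\}$ coming from the spectral projection $\delta_g=(\id\otimes 1_g)\circ\delta$ (Lemma~\ref{tg}) $a$ is a limit of finite sums $\sum_i i_X(x_i)i_X(y_i)^*$ lying in the single fibre $(E_X)_g$, and $(k_X)_*\!\big(\sum_i i_X(x_i)i_X(y_i)^*\big)=\sum_i k_X(x_i)k_X(y_i)^*=\iota_{\rm max}\!\big(\sum_i i_X(x_i)i_X(y_i)^*\big)$. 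Thus $(k_X)_*$ and $\eta_X$ are mutually inverse. Finally $((k_X)_*\otimes\id)\circ\delta$ and $\delta_{E_X}\circ(k_X)_*$ both send $i_X(x)$ to $k_X(x)\otimes i_G(d(x))$ --- the first by the defining identity for $\delta$, the second by Lemma~\ref{co}$(ii)$ since $k_X(x)\in(E_X)_{d(x)}$ --- so they agree and $(k_X)_*$ is the desired equivariant isomorphism, carrying $i_X$ to $k_X$.

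I expect the main friction to be bookkeeping rather than a genuine obstacle: every time a product of fibre elements is formed inside $C^*(E_X)$ one must remember it equals the image of the Fell-bundle product only when the indices are composable, so each vanishing ``in $\Tc(X)$'' has to be matched with ``empty convolution in $C_c(E_X)$'' and conversely --- this is precisely where Lemma~\ref{zero} and the range constraint $r(p)=r(q)$ forced by $p\vee q<\infty$ do the real work. A smaller point is the boundedness of $\eta_X$ for the full norm, which is immediate once $\eta_X$, composed with a faithful representation of $\Tc(X)$, is regarded as one of the bounded $*$-representations of $C_c(E_X)$ defining that norm.
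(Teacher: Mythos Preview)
Your proof is correct and follows the same route as the paper: both obtain the map $\Tc(X)\to C^*(E_X)$ from the universal property applied to $k_X=\iota_{\rm max}\circ\iota_X\circ i_X$, obtain the inverse by extending the section-sum map $\eta_X\colon C_c(E_X)\to\Tc(X)$ to $C^*(E_X)$, and check they are mutual inverses on generators. Your version is considerably more detailed than the paper's terse argument --- in particular you justify explicitly that $\eta_X$ is multiplicative via Lemma~\ref{zero}, and you verify the composite $(k_X)_*\circ\eta_X=\id$ on all fibre elements rather than just on the $i_X(x)$ as the paper does --- but the underlying structure is the same.
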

\begin{proof}
$(i)$ Just note that $i_X$ is a Nica covariant representation and
$\iota_{\rm max}\circ\iota_X$ restricts to a linear map on
each fiber $\Tc(X)^\delta_g$ and to a $*$-homomorphism on the
fiber $\Tc(X)^\delta_{s(g)}$. 

$(ii)$ By the universal property of
$\Tc(X)$, there is a $*$-homomorphism $\psi_X:\Tc(X)\to
C^*(\mathcal{B})$ such that $\psi_X\circ i_X=
\iota_{\rm max}\circ\iota_X\circ i_X$. Choose an
epimorphism $\phi_{X}: C^*(E_X)\to
\Tc(X)$ such that $\phi_X
\circ\iota_{\rm max}=\eta_X$. We  have
\begin{equation*}
  \phi_X\circ\psi_X\circ i_X=\phi_X\circ\iota_{\rm max}\circ\iota_X\circ i_X 
  = \eta_X\circ\iota_X\circ i_X= i_X,
\end{equation*}
and
\begin{equation*}
  \psi_X\circ\phi_X\circ\iota_{\rm max}\circ\iota_X\circ i_X
  = \psi_X\circ\eta_X\circ\iota_X\circ i_X=\psi_X\circ i_X=\iota_{\rm max}\circ\iota_X\circ i_X,
\end{equation*}
that is, $\psi_X$ is the inverse of $\phi_X$, and  $\phi_X$ is an equivariant isomorphism of $C^*(E_X)$ and $\Tc(X)$.
\end{proof}

Next, suppose that $G$ is discrete and $X$ satisfies conditions of Theorem \ref{thm:projective property}. By Lemmas \ref{core} and \ref{thm:inj on core}, the kernel of the canonical
homomorphism $q_{\CNP} \colon \Tc(X)\to \NO{X}$ is generated by its
intersection with $\Ff$. Therefore Lemma~\ref{prp:deltaI
existence} applied to the coaction $\delta$ on $\Tc(X)$, yields
a coaction on $\NO{X}$, called the gauge coaction, and again denoted by $\delta$. The spectral
subspaces
\[(\NO{X})^{\delta}_g:= \{\, c\in\NO{X} \mid \delta(c)=c\otimes
i_G(g) \,\}
\]
give rise to a Fell bundle $E_\mathcal{O}$ over $G$, and by an argument as in that of Lemma \ref{equi}$(ii)$, $C^*(E_\mathcal{O})$ is equivariantly isomorphic to $\NO{X}$
for coactions $\delta_{E_{\mathcal O}}$
and  $\delta$.

\begin{proof}[Proof of Theorem~\ref{thm:projective property}]
Let $\NO{X}^\reduced$ be the reduced cross-sectional algebra of
the Fell bundle $E_\mathcal{O}$, and $\lambda_r
\colon \NO{X} \to\NO{X}^\reduced$ be the corresponding regular representation. Put $j^\reduced_X :=\lambda_{r} \circ j_X$, and  let $\delta^n$ to be the induced normal coaction
on $\NO{X}^\reduced$.

To prove ~(1), let us note that $\NO{X}$ is generated by the
CNP-covariant representation $j_X$, which is injective by Proposition \ref{faithful2}, thus  $j_X^r$
is CNP-covariant. Also, since $\lambda_{r}$ is surjective,
$\NO{X}^\reduced$ is generated by $j_X^r$. Now, for $u\in\Gz$, 
$\lambda_{r}$ restricts to a bijection from
$(\NO{X})^{\delta^n}_u$ to $(\NO{X}^\reduced)^{\delta}_u$,
and since $j_X(A_u)\subset (\NO{X})^{\delta}_u$, the
representation $j_X^\reduced$ is also injective, and we have
$\delta^n(j^\reduced_X(x))=j^\reduced_X(x)\otimes i_G(d(x))$, 
for $x\in X$.

Next to prove (2), take $\psi \colon X \to D$ as in~(2), and
let $\beta$ be a coaction on $D$ with the prescribed gauge compatibility. For $g\in G$,  let
$D^\beta_g=\{\, b\in D \mid \beta(b)=b\otimes i_G(g) \,\}$.
Then $\{D^\beta_g\}_{g\in G}$ is
a topological grading of $D$, by Lemma \ref{tg}. By the universal property of
$\Tc(X)$, there is a $*$-homomorphism $\psi_* \colon \Tc(X) \to D$
with $\psi = \psi_* \circ i_X$. Consider the ideal 
$I_u:=\psi_*(\ker(q_{\CNP}))\cap
D^\beta_u$   of
$D_u^\beta$, for $u\in \Gz$, and let $I$ be the ideal of $D$ generated by $\bigcup_{u\in\Gz}I_u$. Then by Lemma \ref{lemma:inj} and the fact that the image of $\psi$
generates $D$, $I$ is the ideal of $D$ generated by $\psi_*(\ker(q_{\CNP})\cap \Ff)$. 
By construction, $I$ is an induced ideal. Let $\pi \colon D\to D/I$ be the
quotient map, then by Lemma \ref{prp:deltaI existence}$(ii)$, $\{\pi(D^\beta_g)\}_{g\in G}$ is a topological grading of
$D/I$.
Since the image of $\psi$ generates $D$, we also have
$\pi(\psi_*(\Tc(X)^\delta_g))=\pi(D^\beta_g)$, for each $g\in G$.
Next, let us show that, for every $g\in G$,
\begin{equation*}\label{same_kernel}
\ker
(\pi\circ\psi_*)\cap \Tc(X)^\delta_g=\ker (q_{\CNP})\cap \Tc(X)^\delta_g.
\end{equation*}

Let us first handle the case $g=:u\in\Gz$. 
For the first case, by Lemma \ref{core}$(i)$, we only need to show that  $\ker
(\pi\circ\psi_*)\cap \Ff=\ker (q_{\CNP})\cap \Ff$.
If $c\in \ker (q_{\CNP})\cap \Ff$, then
$\psi_*(c)\in I$, thus $\pi(\psi_*(c))=0$, and so $c\in \ker (\pi\circ\psi_*)\cap \Ff$. Conversely, since $\pi\circ \psi_*=(\pi\circ \psi)_*$, by Lemma \ref{lemma:inj} it
suffices to show that the Toeplitz representation $\pi\circ\psi$ is injective.
If  $a\in A_u$ and $\pi(\psi_u(a))=0$, we have, $$\psi_u(a)\in I\cap
D_u^\beta\subseteq
\psi_*(\ker(q_{\CNP}))\cap D_u^\beta\subseteq \psi_*\big(\ker(q_{\CNP})\cap \Ff\big),$$ thus 
there exists  $b\in\ker(q_{\CNP})\cap \Ff$ such that
$\psi_*(b)=\psi(a)$.
Hence, $$b-i_X(a)\in\ker(\psi_*)\cap\Ff\subseteq \ker (q_{\CNP}),$$ where the inclusion follows from Lemma \ref{lemma:inj}. 
Therefore, $i_X(a)\in\ker (q_{\CNP})\cap i_X(A_u)=0$, hence
$a=0$, as $i_X$ is an injective Toeplitz representation, i.e.,  $\pi\circ\psi$ is injective,  as required. The claimed equality now follows for each $g\in G$, since  $
	c\in\ker (\pi\circ\psi_*)\cap \Tc(X)^\delta_g$ is equivalent to $ c^*c\in \ker
	(\pi\circ\psi_*)\cap \Tc(X)^\delta_{s(g)}=\ker (q_{\CNP})\cap \Tc(X)^\delta_{s(g)}$, which is in turn equivalent to 
	$c\in\ker (q_{\CNP})\cap \Tc(X)^\delta_g$.

The above equality now implies that for  $g\in G$, the map,
$$\tilde\phi_g: (\NO{X})^{\delta}_g\to \pi(D^\beta_g); \ 
q_{CNP}(a)\mapsto \pi\circ\psi_*(a), \ \ (a\in \Tc(X)^\delta_g),$$
is well defined. This gives a covariant isomorphism of Fell bundles with fibres $\pi(D^\beta_g)\times\{g\}$ and $(\NO{X})^{\delta}_g\times\{g\}$. By 
the couniversal properties
of $\NO{X}^\reduced$ and $C_r^*(E_{\pi(D)})=:\pi(D)^r$, there is an isomorphism
$\tilde{\phi} \colon \NO{X}^\reduced\to \pi(D)^r$
such that $\lambda^I_{r}\circ \tilde\phi_g=\tilde\phi \circ \lambda_{r}$  on each
$(\NO{X})^{\CNPgaug}_g$, where $\lambda_r: \NO{X}\to \NO{X}^\reduced$ and $\lambda^I_r: (D/I)\to (D/I)^r$ are the corresponding regular representations.
Put $\phi:=\tilde\phi^{-1}\circ
\lambda^I_{r}\circ \pi$. Then $\phi:D\to\NO{X}^\reduced$ is a $*$-homomorphism, and for each $x\in X$,
\begin{align*}
\phi(\psi(x))
&=\tilde\phi^{-1}(\lambda^I_{r}(\pi(\psi_*(i_X(x)))))=\tilde\phi^{-1}
(\lambda^I_{r}(\phi_{d(x)}(q_{\CNP}(i_X(x)))))\\
&=\lambda_{r}(q_{\CNP}(i_X(x)))=\lambda_{r}(j_X(x))=j_X^r(x),
 \end{align*}
as claimed in (2).

Finally, to see the uniqueness, let us suppose that $(C, \rho,\gamma)$
also satisfies (1) and (2). By the property~(2) for $\NO{X}^\reduced$ and $C$, there are
equivariant homomorphisms $\phi: C\to \NO{X}^\reduced$ and 
$\phi^{'}:\NO{X}^\reduced\to C$, which are then clearly 
inverse of each other.  
\end{proof}

\begin{dfn}\label{dfn:giup}
	For a quasi-lattice ordered groupoid $(G,P)$ and a
	$\tilde\phi$-injective compactly aligned product system $X$
	over $P$. We say that $\NO{X}$ has the gauge-invariant
		uniqueness property provided that a surjective $*$-homomorphism $\phi:\NO{X}\to B$ is injective if
	and only if the following hold:

(GI1) there is a coaction
		$\beta$ of $G$ on $D$ such that $\beta\circ\phi=(\phi\otimes\id)\circ \delta$, 
		
		(GI2) the homomorphism $\phi\vert_{j_X(A)}$ is injective.
\end{dfn}

\begin{cor} \label{inj}
Let $G$ be discrete and $X$ be compactly aligned. Suppose that either the left action on each
fibre is injective, or  $P$ is directed and $X$ is
$\tilde\phi$-injective.
\begin{enumerate}
\item If $\phi \colon \NO{X}^\reduced\to D$ is a surjective
    $*$-homomorphism, then $\phi$ is injective if and only
    if $\phi{|j_X^\reduced(A)}$ is injective and there is a
    coaction
    $\beta$ of $G$ on $D$ such that
    $\beta\circ\phi=(\phi\otimes
    \id)\circ\delta^n$.
\item Suppose that $(C,\rho)$ is a couniversal pair for
    injective gauge-compatible (not necessarily Nica covariant)
    Toeplitz representations of $X$. Then
    there is a $*$-isomorphism $\phi \colon \NO{X}^\reduced\to C$
    such that $\phi(j_X^\reduced(x))=\rho(x)$ for all $x\in
    X$.
    
\item Assume that both coactions $\beta$ and $\delta^n$ are injective. If $\psi \colon X \to D$ is an
injective Nica covariant gauge-compatible
representation whose image generates $D$, then the surjective
$*$-homomorphism $\phi \colon D \to \NO{X}^\reduced$ of
Theorem~\ref{thm:projective property}(\ref{it:c-u property}) is
injective if and only if $\psi$ is Cuntz-Pimsner covariant and
$\beta$ is normal.

\end{enumerate}
\end{cor}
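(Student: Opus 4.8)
The plan is to establish the three parts in turn, each time bootstrapping from the couniversal property of Theorem~\ref{thm:projective property}(\ref{it:c-u property}) together with the facts about the gauge coaction, its normalization, and the reduced cross-sectional algebra set up above. For part~(1), the forward implication is routine: a bijective $\phi$ lets us transport $\delta^n$ to a coaction $\beta$ on $D$ with $\beta\circ\phi=(\phi\otimes\id)\circ\delta^n$, and $\phi$ is then trivially injective on $j_X^\reduced(A)$. For the converse, given such a $\beta$ I would put $\psi:=\phi\circ j_X^\reduced\colon X\to D$. Then $\psi$ is Nica covariant, because a $*$-homomorphism preserves the Nica relations; injective on each fibre $X_u=A_u$, because $j_X^\reduced$ is (Theorem~\ref{thm:projective property}(1)) and $\phi$ is injective on $j_X^\reduced(A)$; generating for $D$, because $j_X^\reduced$ generates $\NO{X}^\reduced$ and $\phi$ is onto; and gauge-compatible via $\beta$, since $\beta(\psi(x))=(\phi\otimes\id)\delta^n(j_X^\reduced(x))=\psi(x)\otimes i_G(d(x))$. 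Theorem~\ref{thm:projective property}(\ref{it:c-u property}) applied to $\psi$ then yields a surjection $\phi'\colon D\to\NO{X}^\reduced$ with $\phi'\circ\psi=j_X^\reduced$, so $\phi'\circ\phi$ fixes the generating set $j_X^\reduced(X)$ and hence equals $\id_{\NO{X}^\reduced}$; thus $\phi$ is injective.

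For part~(2), note first that $j_X^\reduced$ is itself an injective gauge-compatible Toeplitz representation generating $\NO{X}^\reduced$, so couniversality of $(C,\rho)$ provides a surjection $\phi\colon\NO{X}^\reduced\to C$ with $\phi\circ j_X^\reduced=\rho$. The key point for the reverse arrow is that $\rho=\phi\circ j_X^\reduced$ is then automatically Nica covariant; being also injective, gauge-compatible, and generating $C$, Theorem~\ref{thm:projective property}(\ref{it:c-u property}) applies to $\rho$ and gives a surjection $\phi'\colon C\to\NO{X}^\reduced$ with $\phi'\circ\rho=j_X^\reduced$. As in part~(1), $\phi'\circ\phi$ and $\phi\circ\phi'$ fix the respective generating sets, so $\phi$ is the asserted $*$-isomorphism.

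For part~(3), the forward implication runs as in part~(1): an injective $\phi$ is an isomorphism, $\psi=\phi^{-1}\circ j_X^\reduced$ inherits CNP-covariance from $j_X^\reduced$, and since $\phi$ intertwines $\beta$ with $\delta^n$ (forced on the generators $\psi(X)$, hence everywhere) while $\delta^n$ is normal, $\beta$ is normal too. For the converse, assume $\psi$ is CNP-covariant and $\beta$ is normal. Then $\psi=\Pi\psi\circ j_X$ for a surjective $*$-homomorphism $\Pi\psi\colon\NO{X}\to D$ that is equivariant for the gauge coaction $\delta$ on $\NO{X}$ and $\beta$ on $D$. Since $\phi\circ\Pi\psi$ agrees with the regular representation $\lambda_r\colon\NO{X}\to\NO{X}^\reduced$ on the generating set $j_X(X)$, we get $\phi\circ\Pi\psi=\lambda_r$, whence $\ker(\Pi\psi)\subseteq\ker(\lambda_r)$. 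For the reverse inclusion, recall from the discussion following Lemma~\ref{nondeg2} that $\ker(\lambda_r)=\ker\bigl((\id\otimes\lambda_G)\circ\delta\bigr)$; if $a$ lies in this kernel, equivariance of $\Pi\psi$ gives $(\id\otimes\lambda_G)\bigl(\beta(\Pi\psi(a))\bigr)=(\Pi\psi\otimes\id)\bigl((\id\otimes\lambda_G)\circ\delta(a)\bigr)=0$, so $\beta(\Pi\psi(a))=0$ by normality of $\beta$, and then $\Pi\psi(a)=0$ by injectivity of $\beta$. Hence $\ker(\Pi\psi)=\ker(\lambda_r)$, so $\Pi\psi$ descends through $\lambda_r$ to an injective $*$-homomorphism $\theta\colon\NO{X}^\reduced\to D$ with $\theta\circ\lambda_r=\Pi\psi$; then $\phi\circ\theta=\id_{\NO{X}^\reduced}$ (as $\lambda_r$ is onto) and $\theta\circ\phi=\id_D$ (checked on the generators $\psi(X)$, which generate $D$), so $\phi$ is an isomorphism, in particular injective.

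The step I expect to cause the most trouble is the converse of part~(3). The supporting machinery — the description of groupoid coactions via topologically graded Fell bundles, the identification of $\NO{X}$ with $C^*(E_\mathcal{O})$ and of $\NO{X}^\reduced$ with $C^*_r(E_\mathcal{O})$, and the relation $\ker(\lambda_r)=\ker((\id\otimes\lambda_G)\circ\delta)$ — is already available, so the work lies in assembling it correctly; the genuinely delicate point is the reverse inclusion $\ker(\lambda_r)\subseteq\ker(\Pi\psi)$, for which one must use simultaneously that $\beta$ is normal (so $\id\otimes\lambda_G$ is injective on $\beta(D)$) and that $\beta$ is injective. Everything else — Nica covariance and gauge-compatibility of the auxiliary representations $\phi\circ j_X^\reduced$, $\rho$, and $\Pi\psi\circ j_X$, and the ``it fixes the generators, hence is the identity'' arguments — is soft.
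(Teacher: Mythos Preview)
Your argument is correct. Parts~(1) and~(2) match the paper's proof essentially verbatim; the only cosmetic difference is that for~(2) the paper invokes part~(1) directly rather than rerunning the couniversal argument, but the content is identical.

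For part~(3), your approach genuinely differs from the paper's. The paper argues the converse via conditional expectations: from $\lambda_r=\phi\circ\Pi\psi$ it deduces that $\phi$ restricts to an isomorphism $D^\beta_u\to(\NO{X}^\reduced)^{\delta^n}_u$ on each fibre, then uses Lemma~\ref{lem:when deltaI normal}(i) (normality plus injectivity of $\beta$) to get a faithful conditional expectation $\Phi^\beta\colon D\to\bigoplus_u D^\beta_u$, observes that $\phi$ intertwines $\Phi^\beta$ and $\Phi^{\delta^n}$, and concludes injectivity by the standard ``faithful expectation plus injective on the range'' argument. Your route instead compares kernels directly: you use the identification $\ker(\lambda_r)=\ker\bigl((\id\otimes\lambda_G)\circ\delta\bigr)$ from the discussion after Lemma~\ref{nondeg2}, push through equivariance of $\Pi\psi$, and use normality and injectivity of $\beta$ to force $\ker(\lambda_r)\subseteq\ker(\Pi\psi)$. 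Both arguments are sound and use the same hypotheses (normality and injectivity of $\beta$) at the critical step; yours is arguably cleaner since it avoids unpacking the fibre-by-fibre structure, while the paper's makes the role of the graded/Fell-bundle picture more explicit.
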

\begin{proof}
(1) The necessity is trivial. For the sufficiency, note that $\phi\circ j_X^\reduced$ is an injective
gauge-compatible Nica covariant representation of $X$ in $D$, whose image generates $D$.  Theorem~\ref{thm:projective property} guarantees the existence of an epimorphism which is a left inverse for $\phi$.

(2) Since $\NO{X}^\reduced$ is generated by an injective
gauge-compatible Toeplitz representation of $X$, by the
couniversal property of $(C,\rho)$, there is an epimomorphism $\phi \colon \NO{X}^\reduced\to C$ with $\phi(j_X^\reduced(x))=\rho(x)$, for  $x\in X$. By part~(1), $\phi$ is injective and so an isomorphism.

(3) If $\phi$ is injective, then $\psi$ is CNP-covariant
and $\beta$ is normal as $j_X^\reduced$ is CNP-covariant and $\delta^n$ is normal.
Conversely, if $\psi$ is CNP-covariant and $\beta$
is normal, by the universal property of $\NO{X}$, there is  a
homomorphism $\Pi\psi  \colon \NO{X} \to D$ satisfying 
$\lambda_{r}
= \phi \circ \Pi\psi$, for the regular representation
$\lambda_{r}
\colon \NO{X} \to\NO{X}^\reduced$. For each $u\in\Gz$, the representation
$\lambda_{r}$ restricts to an isomorphism of
$(\NO{X})^{\delta}_u$  onto $(\NO{X}^\reduced)^{\delta^n}_u$,
hence $\phi$ restricts to an isomorphism of $D^\beta_u$ onto
$(\NO{X}^\reduced)^{\delta^n}_u$. Since $\beta$ is normal, by Lemma \ref{lem:when deltaI normal} there is a faithful conditional expectation $\Phi^\beta:=\oplus_u\beta_u \colon D
\to \bigoplus_u D^\beta_u$, and $\phi$ intertwines $\Phi^\beta$ and the
conditional expectation $\Phi^{\delta^n}$ from $\NO{X}^\reduced$ onto
$\bigoplus_{u}(\NO{X}^\reduced)^{\delta^n}_u$, and injectivity of $\phi$ follows by an standard argument. 
\end {proof}

For the next result, we first need to define and relate the notions of amenability and approximation property for Fell bundles over groupoids. The second notion is defined in the case of bundles over discrete groups by Ruy Exel \cite[Definition 4.4]{Exel}.

\begin{dfn} \label{ap} Let $G$ be a discrete groupoid and $E$ is a Fell bundle over $G$. We say that 
	
	$(i)$ $E$ is amenable if the regular representation $\lambda_r: C^*(E)\to C^*_r(E)$ is injective,
	
	$(ii)$ $E$ has approximation property if for each $u\in\Gz$ there exists a net $(a^u_i)$ of functions $a^u_i : G \to E_u$ such that,

$$\sup_i\big\|\sum_{r(h)=u}	a^u_i(h)^*a^u_i(h)\big\|<\infty, \ \ (u\in \Gz)$$
and,
$$\lim_i\big\|\sum_{r(h)=u}	a^u_i(gh)^*ba^u_i(h)-b\big\|=0, \ \ (u\in \Gz, g\in G_u, b\in E_g).$$
\end{dfn} 

Let us quickly observe that if $E$ has approximation property, then the functions $a^u_i$ could be chesen to be of finite support. Indeed, given $u\in \Gz$, there exists a constant $ M > 0$ such
that for each $\varepsilon>0$, each finite subset $F\subseteq G$, and elements $b_k\in E_k$, for $k\in F$, there exists a function $a^u: G\to E_u$ such that $$\big\|\sum_{r(h)=u}	a^u(h)^*a^u(h)\big\|<M, \ \ \big\|\sum_{r(h)=u}	a^u(gh)^*b_ka^u(h)-b_k\big\|<\varepsilon, \ \ (k\in F, g\in G_u).$$
Choose a finite subset $F_\varepsilon\supseteq F$ with 
$$\sup_{k\in F}\big\|\sum_{h\in G^u\backslash F_\varepsilon}	a^u(gh)^*b_ka^u(h)\|<\varepsilon,$$
for each $g\in G_u$. 
Let $a^u_{\varepsilon, F}$ be the restriction of $a^u$ to $F_\varepsilon$, and observe that,
$$\big\|\sum_{r(h)=u}	a^u_{\varepsilon, F}(h)^*a^u_{\varepsilon, F}(h)\big\|<M, \ \ \big\|\sum_{r(h)=u}	a^u_{\varepsilon, F}(gh)^*b_ka^u_{\varepsilon, F}(h)-b_k\big\|<2\varepsilon, \ \ (k\in F, g\in G_u),$$
which establishes the claim. 

Let $G$ be an \'{e}tale groupoid and $E$ be a Fell bundle over $G$. A representation of $E$ on a Hilbert space $H$ in the sense of Fell and Doran \cite[VIII. 9.1]{Fell} is nothing but a morphism $\pi: E\to \mathbb B(H)$ in the sense of the paragraph before Lemma \ref{co}, where $\mathbb B(H)$ is regarded as a trivial $G$-bundle.  

\begin{lemma} \label{map}
Let $G$ be a discrete groupoid and $E$ be a Fell bundle over $G$. Let $u\in\Gz$ and $a: G\to E_u$ be a map such that the sum $a_u:=\sum_{s(h)=u} a(h)^*a(h)$ converges unconditionally in $E_u$. Then
there exists a bounded c.p. map $\psi_a: C^*_r(E)\to C^*(E)$
such that $\|\psi_a\|\leq \|a_u\|$ and $\psi_a\circ\lambda_E(b)=\sum_{r(h)=u}	a(gh)^*ba(h)$, for $g\in G_u$ and $b\in E_g$.
\end{lemma}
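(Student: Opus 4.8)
The plan is to transcribe Exel's construction of the completely positive ``approximation maps'' for Fell bundles over discrete groups \cite[\S 4]{Exel} to the groupoid setting, using Kumjian's realization \cite{kum} of $C^*_r(E)$ inside $\mathscr{L}_{C_0(E^0)}(L^2(E))$ together with the fibre representations $\pi_u\colon C^*_r(E)\to\mathscr{L}_u=\mathscr{L}_{E_u}(V_u)$, $V_u=\bigoplus_{s(k)=u}E_k$, and the formula $\|x\|_r=\sup_u\|\pi_u(x)\|$. First I would make sense of the right-hand side: using the unconditional convergence of $a_u=\sum_h a(h)^{*}a(h)$ in $E_u$ and the Cauchy--Schwarz inequality in a Hilbert $C^*(E)$-module $\ell^2(S,C^*(E))$, with $S$ the relevant set of composable $h$, applied to the columns $(a(gh))_h$ and $(b\,a(h))_h$, one checks that the finite partial sums of $\sum_h a(gh)^{*}b\,a(h)$ form a Cauchy net in $E_g$, hence converge in $C^*(E)$, with limit of norm at most $\|a_u\|\,\|b\|$. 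Extending linearly over $C_c(E)$ then produces a well-defined linear map $\psi_a^{0}$ on the dense $*$-subalgebra $\lambda_E(C_c(E))\subseteq C^*_r(E)$, valued in $C_c(E)\subseteq C^*(E)$, with $\psi_a^{0}(\lambda_E(b))=\sum_h a(gh)^{*}b\,a(h)$ for $b\in E_g$; this formula together with density of $\lambda_E(C_c(E))$ already forces uniqueness of any extension.

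The substance is to prove $\psi_a^{0}$ completely positive with $\|\psi_a^{0}\|\le\|a_u\|$, and I would do this by exhibiting it as a compression. From $a$ one builds a ``column operator'' $R_a$ of norm $\|a_u\|^{1/2}$ from a Hilbert module $\mathcal N$ on which $C^*(E)$ acts faithfully into the module underlying a suitable amplification $\Theta$ of the fibre representation $\pi_u$, arranged so that $R_a^{*}\,\Theta(\lambda_E(b))\,R_a$ reproduces $\sum_h a(gh)^{*}b\,a(h)$ and, crucially, lies inside the copy of $C^*(E)$ in $\mathscr{L}(\mathcal N)$. Then $\psi_a:=R_a^{*}\,\Theta(\cdot)\,R_a$ is completely positive by construction, $\|\psi_a\|\le\|R_a\|^{2}=\|a_u\|$, it agrees with $\psi_a^{0}$ on $\lambda_E(C_c(E))$, and hence extends to all of $C^*_r(E)$ with the stated properties. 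A more hands-on alternative, perhaps cleaner to write out, is to check complete positivity directly on the dense subalgebra: for a finite family $x_i=\lambda_E(c_i)$, $c_i\in C_c(E)$, expand $\psi_a^{0}(c_i^{*}*c_j)$ using the defining formula and recognise the matrix $[\psi_a^{0}(c_i^{*}*c_j)]_{i,j}$ as $\langle\zeta,\zeta\rangle$ for an explicit element $\zeta$ of a Hilbert $C^*(E)$-module indexed over the composable $h$ (a positive-kernel computation over the groupoid), which yields both positivity and the bound $\|a_u\|$.

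The step I expect to be the main obstacle is exactly the requirement that the image lie in the \emph{full} algebra $C^*(E)$, not in a generic $\mathscr{L}(\mathcal H)$ or in $C^*_r(E)$: the dilating representation $\Theta$ and the operator $R_a$ must be organized so that every $R_a^{*}\Theta(\lambda_E(b))R_a$ is visibly a finitely supported section, i.e.\ lies in $C_c(E)$, and this is where the universal property of $C^*(E)$ and Kumjian's explicit descriptions of $L^2(E)$, of $V_u$, and of the multiplier identifications must be invoked carefully. A secondary, bookkeeping difficulty is groupoid composability: all sums run only over those $h$ for which $a(h)^{*}a(h)$ and $a(gh)^{*}b\,a(h)$ are defined and land in the expected fibres (matching ranges and sources of $g$, $h$ and $u$), and one must track which column families remain square-summable after a left translation by $g$ — something automatic in the group case but needing the standing hypothesis on $a_u$ to be applied to the right index set in the groupoid case.
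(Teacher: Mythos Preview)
Your plan is essentially the paper's own argument. The paper represents $C^*(E)$ faithfully on a Hilbert space $H$, defines the amplified representation $\pi(b_g)=b_g\otimes\operatorname{Ind}\delta_{s(g)}(1_g)$ on $H\otimes\ell^2(G_u)$, shows (via a faithful conditional expectation coming from the maps $j_g:\xi\mapsto\xi\otimes 1_g$) that the $C^*$-algebra $B$ generated by this amplification is canonically isomorphic to $C^*_r(E)$, builds the column operator $V_u:H\to H\otimes\ell^2(G_u)$, $\xi\mapsto\sum_{r(h)=u}a(h)\xi\otimes 1_h$ with $\|V_u\|^2\le\|a_u\|$, and then sets $\psi_a=\operatorname{ad}_{V_u^*}\circ\Phi$; the explicit computation of $V_u^*\pi(b_g)V_u$ shows the output lies in $E_g\subseteq C^*(E)$, which handles exactly the obstacle you flagged.

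The only real difference is packaging: the paper works with a concrete Hilbert-space realization of $C^*(E)$ and proves the identification $B\cong C^*_r(E)$ as an intermediate step, whereas you propose to work directly with Kumjian's Hilbert-module fibre representations $\pi_u$ and build $R_a$ there. Either route works; the Hilbert-space version has the advantage that the fact ``the compression lands in $C^*(E)$'' is immediate once $V_u^*\pi(b_g)V_u$ is seen to equal $\sum_h a(gh)^*b_g a(h)\in E_g$, while in your module picture you would need to be a bit more careful that the compressed operator really is multiplication by that fibre element rather than just an adjointable operator agreeing with it. Your direct positive-kernel alternative is also valid but is not what the paper does.
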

\begin{proof}
Think of $C^*(E)$ as a subalgebra of $\mathbb B(H)$, for some Hilbert space $H$ and consider the corresponding representation $\pi$ of $E$ on $H$. For $u\in \Gz$, let ${\rm Ind}\delta_u$ be the regular representation of $G$ on $\ell^2(G_u)$ \cite[page 18]{w}, where $\delta_u$ is the Dirac measure at $u$ on $\Gz$. Consider the  representation $\pi$ of $E$  on the Hilbert bundle $(H\otimes \ell^2(G_u))_{u\in\Gz}$, defined  by $$\pi(b_g) = b_g\otimes {\rm Ind}\delta_{s(g)}(1_g),\ \ (g\in G, b_g\in E_g).$$ Let $B$ be the $C^*$-algebra
 generated by the range of $\pi$. 
Consider the isometry $$j_g: H\to H\otimes\ell^2(G_{s(g)});\ \ \xi\mapsto \xi\otimes 1_{g}, \ \ (g\in G).$$
For $b = \sum_k b_k$  be an element in $E$ with finitely many nonzero terms $b_k\in E_k$, $k\in G$, and observe that for $\xi\in H$,  $g\in G$ and $h\in G_{s(g)}$, 
\begin{align*} 
	j^*_g\pi(b)j_h(\xi)&=j^*_g\pi(b)(\xi\otimes 1_{h})\\&=\sum_k j^*_g \big(b_k\xi\otimes {\rm Ind}\delta_{s(k)}(1_k)1_{h}\big)\\&= \sum_{s(k)=r(h)} j^*_g \big(b_k\xi\otimes 1_{kh}\big)=b_{gh^{-1}}\xi.
\end{align*}	
In particular, if $b= 0$ then $b_k=0$, for each $k$. Since the canonical regular representation $\lambda_G: C^*(G)\to C^*_r(G)$ is the direct sum of representations ${\rm Ind}\delta_u$, for $u\in \Gz$, the above observation means that we have a graded Fell bundle $F$ with fibres $F_g:=\pi(E_g)\otimes C^*_r(G)^{\delta_G^n}_g$, for $g\in G$. Let us observe that there is a  conditional expectation $\mathbb E_u: B\to B_u$, vanishing on each $B_g$, for $g\in G\backslash\Gz$, that is,  we have a topological grading.  Indeed, if we define, $\mathbb E_u(\pi(b)):=j_u^*\pi(b)j_u\otimes {\rm Ind}\delta_u(1_u)$, for $b\in B$, then, for $g\in G\backslash\Gz$, $j_u^*\pi(b_g)j_u=0$, by the above calculation, as required. Next, let us observe that $\mathbb E:=\oplus \mathbb E_u$ is faithful.  
If $\mathbb E_u(\pi(b^*b))=0$, for each $u\in\Gz$, then $\pi(b)j_u=0$, for each $u\in \Gz$. Consider the representation of $G$ given by,
$$\rho_t: H\otimes\ell^2(G_{s(t)})\to H\otimes\ell^2(G_{r(t)});\ \ \xi\otimes 1_s\mapsto \xi\otimes 1_{st^{-1}},\ \ (t\in G),$$
then, since $\pi$ acts on the second leg by a left multiplication, where as $\rho$ acts by a right multiplication, we have, 
$\pi(b)\rho_t=\sum_k b_k\xi\otimes 1_{kst^{-1}}=\rho_t \pi(b),$ for each $b=\sum_k b_k$ and $t\in G$. Also, $\rho_tj_s=j_{st^{-1}},$ whenever $s(s)=s(t)$. These together imply that, 
$$\pi(b)j_{t^{-1}}=\pi(b)\rho_tj_{s(t)}=\rho_t\pi(b)j_{s(t)}=0,$$
thus, $b_t=j_{r(t)}\pi(b)j_{t^{-1}}=0$, for each $t$, that is, $b=0$, as claimed. We conclude, by Lemma \ref{tg}, that $B$ is isomorphic with $C^*_r(E)$, with a canonical isomorphism $\Phi: C^*_r(E)\to B$, mapping $\lambda_E(b_g)$ to $\pi(b_g)$, for each $g\in G$ and $b_g\in E_g$.  
  
Next, let $u\in \Gz$ and $a: G\to E_u$ be the map satisfying our unconditional convergence assumption. Consider the operator $V_u: H\to H\otimes \ell^2(G_u); \ \xi\mapsto \sum_{r(h)=u} a(h)\xi\otimes 1_h$. The sum on the right hand side is convergent since,
$$\sum_{r(h)=u}\|a(h)\xi\|^2\leq \sum_{r(h)=u} \langle a(h)^*a(h)\xi,\xi\rangle\leq \big\|\sum_{r(h)=u} a(h)^*a(h)\big\|\|\xi\|^2,$$
which also shows that $\|V_u\|^2\leq \|a_u\|.$ Put $\psi_u={\rm ad}_{V^*_u}: \mathbb B(H\otimes\ell^2(G_u))\to \mathbb B(H)$. Then,
\begin{align*}
\psi_u\circ\pi(b_g)\xi&=V^*_u\big(b_g\otimes{\rm Ind}\delta_u(1_g)\big)V_u\xi\\&=V^*_u\big(b_g\otimes{\rm Ind}\delta_u(1_g)\big)\big(\sum_{r(h)=u} a(h)\xi\otimes 1_h\big)\\&=V^*_u\big(\sum_{r(h)=u} b_ga(h)\xi\otimes 1_{gh}\big)\\&=\sum_{r(h)=u} a(gh)^*b_ga(h)\xi,
\end{align*}
for each $g\in G_u$. In particular, $\psi_u$ maps $B$ into $C^*(E)$. Put $\psi_a:=\psi_u\circ \Phi$. This is a c.p. map, and for $b_g\in E_g$, 
\begin{align*}
\psi_a(\lambda_E(b_g))&=\psi_u(\pi(b_g))=\sum_{r(h)=u} a(gh)^*b_ga(h).	
\end{align*}  
Finally, $\|\psi_a\|\leq \|\psi_u\|\leq \|V_u\|^2\leq \|a_u\|,$ as required. 
\end{proof}

Recall that a discrete groupoid $G$ is amenable if there is a bounded net $(f_i)$ in $C_c(G)$ with $$\sup_{u\in\Gz}\sum_{r(h)=u}|f_i(h)|^2<\infty, \ \ \big|1-\sum_{r(h)=u}f_i(h)\bar f_i(gh)\big|\to 0, \ \ (u\in\Gz, g\in G_u),$$
as $i\to \infty$ \cite[Definition 9.3]{w}.

\begin{lemma} \label{ap-am}
For a discrete groupoid $G$,

$(i)$ each Fell bundle $E$ over $G$ with approximation property is amenable,

$(ii)$ if $G$ is amenable, each Fell bundle $E$ over $G$ has the approximation property.
\end{lemma}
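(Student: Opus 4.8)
The plan is to imitate Exel's treatment of Fell bundles over discrete groups \cite{Exel}, with Lemma~\ref{map} supplying the key reconstruction maps.

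For $(i)$: assuming $E$ has the approximation property, I would first invoke the observation following Definition~\ref{ap} to replace each $a^u_i$ by a finitely supported map, and arrange (passing to a common bound over the units, which is what the amenability-type estimate delivers anyway) a constant $M := \sup_{u,i}\|\sum_{r(h)=u}a^u_i(h)^*a^u_i(h)\|<\infty$. Applying Lemma~\ref{map} to each $a^u_i$ produces a completely positive map $\psi^u_i\colon C^*_r(E)\to C^*(E)$ with $\|\psi^u_i\|\le M$ and $\psi^u_i(\lambda_E(b))=\sum_{r(h)=u}a^u_i(gh)^*\,b\,a^u_i(h)$ for $b\in E_g$. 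Since the Hilbert bundles $\ell^2(G_u)$ used in Lemma~\ref{map} are pairwise orthogonal, for each fixed $i$ the family $(\psi^u_i)_{u\in\Gz}$ assembles into a single completely positive $\Psi_i\colon C^*_r(E)\to C^*(E)$ of norm $\le M$, and the second clause of the approximation property gives $\Psi_i(\lambda_E(b))\to b$ in $C^*(E)$ for every $b$ in a fibre, hence $\Psi_i(\lambda_E(f))\to f$ for all $f\in C_c(E)$. Then $\|f\|_{C^*(E)}=\lim_i\|\Psi_i(\lambda_E(f))\|\le M\,\|\lambda_E(f)\|_{C^*_r(E)}$, and since $C_c(E)$ is dense in $C^*(E)$ this forces $\ker\lambda_E=0$: if $\lambda_E(a)=0$, choosing $f_n\to a$ in $C^*(E)$ gives $\|f_n\|\le M\|\lambda_E(f_n)\|_r=M\|\lambda_E(f_n-a)\|_r\le M\|f_n-a\|\to 0$, so $a=0$; thus $E$ is amenable.

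For $(ii)$: given an amenability net $(f_i)\subseteq C_c(G)$ with $\sup_u\sum_{r(h)=u}|f_i(h)|^2\le M<\infty$ and $\sum_{r(h)=u}f_i(h)\overline{f_i(gh)}\to 1$ for all $u\in\Gz,\ g\in G_u$, I would fix $u\in\Gz$, take an approximate identity $(e^u_\lambda)_{\lambda\in\Lambda}$ of the $C^*$-algebra $E_u$, and set $a^u_{(i,\lambda)}(h):=f_i(h)\,e^u_\lambda$, using the fibre bimodule structure to read off the products. Then $\sum_{r(h)=u}a^u_{(i,\lambda)}(h)^*a^u_{(i,\lambda)}(h)=\big(\sum_{r(h)=u}|f_i(h)|^2\big)(e^u_\lambda)^2$ has norm $\le M$ uniformly, and for $g\in G_u$, $b\in E_g$,
\[
\sum_{r(h)=u}a^u_{(i,\lambda)}(gh)^*\,b\,a^u_{(i,\lambda)}(h)=\Big(\sum_{r(h)=u}\overline{f_i(gh)}f_i(h)\Big)\,e^u_\lambda\,b\,e^u_\lambda\longrightarrow b
\]
along the product directed set $I\times\Lambda$, the scalar factor tending to $1$ by amenability of $G$ and $e^u_\lambda b e^u_\lambda\to b$ by nondegeneracy of the fibre actions. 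Passing to the diagonal net over $I\times\Lambda$ gives, for each $u$, finitely supported functions meeting both requirements of Definition~\ref{ap}, so $E$ has the approximation property.

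The hard part, in $(i)$, is the bookkeeping that assembles the per-unit maps $\psi^u_i$ into one uniformly bounded completely positive $\Psi_i$ and verifies $\Psi_i\circ\lambda_E\to\mathrm{id}$ on all of $C_c(E)$ --- i.e.\ that ranging over $u\in\Gz$ really detects every fibre; once that is settled the injectivity of $\lambda_E$ is formal. In $(ii)$ the only subtlety is matching source and range fibres so that each $a^u_{(i,\lambda)}(gh)^*\,b\,a^u_{(i,\lambda)}(h)$ lies in $E_g$ and $e^u_\lambda b e^u_\lambda\to b$, which rests on the standard fact that in a Fell bundle every fibre $E_g$ is nondegenerate as an $E_{r(g)}$--$E_{s(g)}$-bimodule.
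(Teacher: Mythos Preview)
Your proposal is correct and follows the same route as the paper: part~(i) uses Lemma~\ref{map} to build completely positive maps $C^*_r(E)\to C^*(E)$ whose composition with $\lambda_E$ approximates the identity, and part~(ii) multiplies the amenability net $(f_i)$ by an approximate identity of each unit fibre $E_u$. The paper's treatment of~(i) is terser---it writes a single $\psi_i:=\psi_{a^u_i}$ without explaining the $u$-dependence---whereas you explicitly assemble the per-unit maps into one $\Psi_i$; your extra care here is appropriate, since this is exactly where the groupoid case diverges from Exel's group argument.

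Two small points on your assembly step. First, Definition~\ref{ap} only gives $\sup_i\|\sum_{r(h)=u}a^u_i(h)^*a^u_i(h)\|<\infty$ for each $u$ separately, not a uniform $M=\sup_{u,i}(\cdots)$; your parenthetical ``which is what the amenability-type estimate delivers anyway'' does not follow from the definition as stated. This is harmless, since the per-$u$ maps $\phi^u_i:=\psi^u_i\circ\lambda_E$ already show that $\lambda_E(x)=0$ forces the ``$s(g)=u$''-part of $x$ to vanish for every $u$, hence $x=0$, without ever forming $\Psi_i$. Second, if you do assemble, the reason $\|\Psi_i\|\le M$ is not merely that the spaces $\ell^2(G_u)$ are orthogonal, but that the unit fibres satisfy $E_uE_v=0$ for $u\ne v$, so the positive elements $\sum_{r(h)=u}a^u_i(h)^*a^u_i(h)\in E_u$ act on mutually orthogonal subspaces in any representation of $E$, whence their sum over $u$ has norm equal to the supremum.
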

\begin{proof}
$(i)$ Let $(a^u_i)$ be the net of maps as in Definition \ref{ap}, which could be assumed to be of finite support by the calculations in  paragraph after that definition. Let $\psi_i:=\psi_{a^u_i}: C^*_r(E)\to C^*(E)$ be the map constructed in Lemma \ref{map}. Put $\phi_i:=\psi_i\circ\lambda_E$. Then for $g\in G$ and $b_g\in E_g$, then identifying $b_g$ with $\pi(b_g)$, we have, 
$$\phi_i(b_g)=\sum_{r(h)=u} a^u_i(gh)^*b_ga^u_i(h)\to b_g,$$   
as $i\to \infty$. Since the net $(\phi_i)$ is uniformly bounded by Lemma \ref{map}, it follows that $\phi_i(x)\to x$, as $i\to \infty$, for each $x\in C^*(E)$. We conclude that $\lambda_E$ is injective, since if $\lambda_E(x)=0$, then $\psi_i(\lambda_E(x))=\phi_i(x)\to x$, as $i\to \infty$, thus $x=0$.

$(ii)$ Let $G$ be amenable and choose a net $(f_i)$ in $C_c(G)$ as above. Let $u\in \Gz$ and choose an approximate identity $(e^u_j)$ in $E_u$ consisting of positive elements of norm at most one, and consider the map, $$a^u_{i,j}: G\to E_u; \ \ a^u_{i,j}(g):=f_i(g)e_j,\ (g\in G).$$ Then
$$\big\|\sum_{r(h)=u}	a^u_{i,j}(h)^*a^u_{i,j}(h)\big\|\leq \sup_{u\in\Gz}\sum_{r(h)=u}|f_i(h)|^2<\infty,$$
and,
\begin{align*}
\big\|\sum_{r(h)=u}	a^u_{i,j}(gh)^*ba^u_{i,j}(h)-b\big\|&= 
\big\|\sum_{r(h)=u}	f_i(h)\bar f_i(gh)e_{j}be_{j}-b\big\|\\&\leq \big|1-\sum_{r(h)=u}f_i(h)\bar f_i(gh)\big|\|b\|+\|e_jbe_j-b\|\to 0,
\end{align*}
as $(i,j)\to\infty$, for each $g\in G_u, b\in E_g.$
 
\end{proof}

\begin{cor}[The gauge-invariant uniqueness theorem]\label{cor:guit}
Let $G$ be discrete and $X$ be 
compactly aligned.  Suppose that either  the left action on
each fibre is injective, or  $P$ is directed and $X$ is
$\tilde\phi$-injective. The following statements are equivalent:
\begin{enumerate}
\item The coaction $\delta$ is normal and injective. \label{item:6}
\item The Fell bundle
  $\bigl((\NO{X})^{\delta}_g\times\{g\}\bigr)_{g\in G}$ is
  amenable. \label{item:7}
\item The $*$-homomorphism
   $\lambda_{r}
   \colon \NO{X}\to\NO{X}^\reduced$
  is an isomorphism. \label{item:8}
\item $\NO{X}$ has the gauge-invariant uniqueness property. \label{item:9}
\item If $\psi_1:X\to D_1$ and $\psi_2:X\to D_2$ are two injective gauge-compatible
CNP-covariant representations of $X$ whose images generate $D_1$ and $D_2$
respectively, then there exists a covariant $*$-isomorphism
$\phi:D_1\to D_2$ such that $\phi\circ\psi_1=\psi_2$. \label{item:10}
\end{enumerate}
\end{cor}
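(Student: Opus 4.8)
I plan to take condition (3) --- that $\lambda_{r}\colon\NO{X}\to\NO{X}^{\reduced}$ is an isomorphism --- as a hub and establish $(2)\Leftrightarrow(3)$, $(1)\Leftrightarrow(3)$, $(3)\Leftrightarrow(4)$ and $(3)\Leftrightarrow(5)$. The equivalence $(2)\Leftrightarrow(3)$ is essentially a restatement of definitions: by the discussion preceding the corollary, $C^{*}(E_{\mathcal{O}})$ is equivariantly isomorphic to $\NO{X}$ and $\NO{X}^{\reduced}=C^{*}_{r}(E_{\mathcal{O}})$, and under these identifications $\lambda_{r}$ is precisely the regular representation $\lambda_{E_{\mathcal{O}}}\colon C^{*}(E_{\mathcal{O}})\to C^{*}_{r}(E_{\mathcal{O}})$; so $\lambda_{r}$ is an isomorphism iff it is injective, which by Definition~\ref{ap} is amenability of $E_{\mathcal{O}}$.

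For $(1)\Leftrightarrow(3)$ I would compare the faithful conditional expectation $\Phi_{r}\colon C^{*}_{r}(E_{\mathcal{O}})\to C_{0}(E_{\mathcal{O}}^{0})$ provided by Kumjian's theory with the conditional expectation $\Phi_{0}:=\bigoplus_{u\in\Gz}\delta_{u}$ of $\NO{X}$ onto $\bigoplus_{u\in\Gz}(\NO{X})^{\delta}_{u}=q_{\CNP}(\Ff)$, which (using Lemma~\ref{core} and the equivariant identification) is exactly the expectation of $C^{*}(E_{\mathcal{O}})$ onto its unit-fibre subalgebra $C_{0}(E_{\mathcal{O}}^{0})$. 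Since $\Phi_{r}\circ\lambda_{r}=\Phi_{0}$ under this identification and $\Phi_{r}$ is faithful, one gets $\ker(\lambda_{r})=\{a\in\NO{X}:\Phi_{0}(a^{*}a)=0\}$, so $\lambda_{r}$ is injective iff $\Phi_{0}$ is faithful on the positive cone of $\NO{X}$. As $\Phi_{0}=(\id\otimes\Phi)\circ\delta$, faithfulness of $\Phi_{0}$ already forces $\delta$ to be injective, and then Lemma~\ref{lem:when deltaI normal}$(i)$ identifies faithfulness of $\Phi_{0}$ with normality of $\delta$; hence $(3)$ holds iff $\delta$ is normal and injective, i.e.\ iff $(1)$ holds.

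For $(3)\Rightarrow(4)$: if $\phi\colon\NO{X}\to B$ is surjective and satisfies (GI1) and (GI2), then $\psi:=\phi\circ j_{X}$ is a Toeplitz representation which is CNP-covariant (since $j_{X}$ is), hence Nica covariant, is gauge-compatible via the coaction $\beta$ of (GI1), has image generating $B$, and is injective by (GI2) together with Proposition~\ref{faithful2}; Theorem~\ref{thm:projective property}(2) then gives a surjection $\phi'\colon B\to\NO{X}^{\reduced}$ with $\phi'\circ\psi=j^{\reduced}_{X}$, so $\phi'\circ\phi$ and $\lambda_{r}$ agree on the generating set $j_{X}(X)$, whence $\phi'\circ\phi=\lambda_{r}$ and injectivity of $\lambda_{r}$ gives injectivity of $\phi$. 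The converse direction within (GI1)--(GI2) is immediate (if $\phi$ is an isomorphism, transport $\delta$ along it). Conversely $(4)\Rightarrow(3)$: apply the gauge-invariant uniqueness property of Definition~\ref{dfn:giup} to $\phi=\lambda_{r}$, which is surjective, satisfies (GI1) with $\beta=\delta^{n}$, and satisfies (GI2) because $\lambda_{r}$ restricts to a bijection on each $(\NO{X})^{\delta}_{u}\supseteq j_{X}(A_{u})$. For $(3)\Rightarrow(5)$: given any injective gauge-compatible CNP-covariant $\psi\colon X\to D$ with generating image, CNP-covariance gives $\Pi\psi\colon\NO{X}\to D$ with $\Pi\psi\circ j_{X}=\psi$, and Theorem~\ref{thm:projective property}(2) gives $\phi\colon D\to\NO{X}^{\reduced}$ with $\phi\circ\psi=j^{\reduced}_{X}$; then $\phi\circ\Pi\psi=\lambda_{r}$ on generators, so $\Pi\psi$ is injective (as $\lambda_{r}$ is) while $\Pi\psi$ is surjective (its image contains $\psi(X)$), whence $\Pi\psi$ is an isomorphism and so is $\phi=\lambda_{r}\circ(\Pi\psi)^{-1}$, which one checks is equivariant for the gauge coactions; applying this to $\psi_{1}$ and $\psi_{2}$ and composing yields the required covariant isomorphism $D_{1}\to D_{2}$. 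For $(5)\Rightarrow(3)$, apply $(5)$ with $\psi_{1}=j_{X}\colon X\to\NO{X}$ (gauge-compatible via the coaction $\delta$ of Lemma~\ref{ker2}) and $\psi_{2}=j^{\reduced}_{X}\colon X\to\NO{X}^{\reduced}$ (gauge-compatible via $\delta^{n}$): the resulting covariant isomorphism $\phi\colon\NO{X}\to\NO{X}^{\reduced}$ satisfies $\phi\circ j_{X}=j^{\reduced}_{X}=\lambda_{r}\circ j_{X}$, hence $\phi=\lambda_{r}$ and $\lambda_{r}$ is an isomorphism.

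The step I expect to be most delicate is $(1)\Leftrightarrow(3)$: one must carefully match the regular representation $\lambda_{r}$, the unit-fibre conditional expectation on $C^{*}(E_{\mathcal{O}})$, and the gauge coaction under the equivariant identification $\NO{X}\cong C^{*}(E_{\mathcal{O}})$, and then invoke the injective-versus-non-injective dichotomy of Lemma~\ref{lem:when deltaI normal}$(i)$ in the right way. The remaining implications are a matter of chasing the universal and couniversal properties already established.
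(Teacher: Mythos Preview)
Your proposal is correct and follows essentially the same approach as the paper: the identifications $\NO{X}\cong C^*(E_{\mathcal O})$ and $\NO{X}^{\reduced}=C^*_r(E_{\mathcal O})$, Theorem~\ref{thm:projective property}, and the conditional-expectation machinery of Lemma~\ref{lem:when deltaI normal} are exactly the tools the paper uses. The only cosmetic differences are that the paper argues $(1)\Leftrightarrow(3)$ by directly citing the identity $\ker(\delta^n)=\ker(\lambda_r)$ from the paragraph after Lemma~\ref{nondeg2} (which your conditional-expectation argument effectively reproves), and it links $(3),(4),(5)$ via the cycle $(3)\Rightarrow(4)\Rightarrow(5)\Rightarrow(3)$ rather than your hub-and-spoke arrangement.
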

\begin{proof}
(\ref{item:6})$\Leftrightarrow$(\ref{item:8}). This follows
from the fact that $\ker(\delta^n)=\ker(\lambda_r)$, as seen in  the paragraph after Lemma \ref{nondeg2}.

(\ref{item:7})$\Leftrightarrow$(\ref{item:8}). This follows from the definition
of amenability for Fell bundles over groupoids and the fact
that $\NO{X}$ is isomorphic to $C^*(E_\mathcal{O})$, as observed in the paragraph after Lemma \ref{equi}. 

(\ref{item:8})$\Rightarrow$(\ref{item:9}) Suppose first that (\ref{item:8}) holds. Let $\phi:\NO{X}\to D$
be a $*$-epimorphism.  We must show that  $\phi$ is
injective if and only if conditions (GI1)~and~(GI2) hold. If $\phi$ is
injective, then one could define $\beta$ by $\beta := (\phi
\otimes \id) \circ \delta \circ \phi^{-1}$, and observe that 
condition~(GI1) is satisfied. Now by Theorem \ref{thm:projective property}, $(\phi\circ j_X)|_A$ is
injective and condition~(GI2) is also satisfied.
Conversely, suppose that conditions (GI1)~and~(GI2) hold. Then $\phi\circ j_X$ is a  gauge-compatible Nica
covariant representation whose image generates $D$, and again by
Theorem~\ref{thm:projective property}, there is a
$*$-homomorphism $\tilde\phi:D\to \NO{X}^\reduced$ satisfying
$\tilde\phi\circ(\phi\circ j_X)=j_X^\reduced
=\lambda_{r}\circ j_X$.  Since $\NO{X}$ is generated
by the elements $\{\, j_X(x) \mid x \in X \,\}$ and
$\lambda_{r}$ is an isomorphism by assumption, 
$\lambda_r^{-1}\circ \tilde\phi$ is an inverse
for $\phi$, and  $\phi$ is injective, as required.

(\ref{item:9})$\Rightarrow$(\ref{item:10}). If $\psi:X\to D$ is an
injective gauge-compatible CNP-covariant representation of $X$
whose image generates $D$, then $\Pi\psi:\NO{X}\to D$ is an epimorphism such that $\Pi\psi\circ j_X=\psi$
and (GI1)~and~(GI2) are
satisfied, hence $\Pi\psi$ is an isomorphism.

(\ref{item:10})$\Rightarrow$(\ref{item:8}). The canonical maps
$j_X^\reduced:X\to\NO{X}^\reduced$ are two injective
gauge-compatible CNP-covariant representations whose images
generate $\NO{X}$ and $\NO{X}^\reduced$, respectively. Thus
there exists a  $*$-isomorphism $\phi:\NO{X}^\reduced\to\NO{X}$
such that $\phi\circ j_X^\reduced=j_X$, therefore,  $\phi$ is a right inverse of  
$\lambda_r$. Since $\phi$ is  surjective, $\lambda_r$ is injective, as required. 
\end{proof}

We give sufficient conditions under which 
$\NO{X}$ has the gauge-invariant uniqueness property. In the following result, the coactions of $G$ on $\Tt_{X}$ and $\NO{X}$ are different, but with a slight abuse of notation, we have denoted both by $\delta$.

\begin{cor} \label{cor:amenable}
Let $G$ be discrete and $X$ be 
compactly aligned.  Suppose that either  the left action on
each fibre is injective, or  $P$ is directed and $X$ is
$\tilde\phi$-injective. Then $\NO{X}$ has the gauge-invariant
uniqueness property in the following cases:
\begin{enumerate}
\item the groupoid $G$ is exact and the coaction $\delta$ of $G$ on
  $\Tc(X)$ is normal, and $\ker(q_{\rm CNP})$ is strongly induced with respect to $\delta$,\label{item:1}
\item the Fell bundle $(\Tc(X)^\delta_g\times\{g\})_{g\in G}$ has the
approximation property, \label{item:2}
\item the Fell bundle
  $\bigl((\NO{X})^{\delta}_g\times\{g\}\bigr)_{g\in G}$ has the
  approximation property, \label{item:5}
\item the groupoid $G$ is  amenable. \label{item:3}
\end{enumerate}
\end{cor}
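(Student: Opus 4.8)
The plan is to route every case through the equivalent conditions of Corollary~\ref{cor:guit}: it suffices to show, in each case, that the gauge coaction $\delta$ on $\NO{X}$ is normal and injective, or equivalently (condition~(\ref{item:7}) there) that the Fell bundle $E_{\mathcal{O}}=\bigl((\NO{X})^{\delta}_g\times\{g\}\bigr)_{g\in G}$ is amenable. Two observations will be used. First, since $\NO{X}=\Tc(X)/\ker(q_{\rm CNP})$ with $\ker(q_{\rm CNP})$ an induced ideal of $\Tc(X)$ for the gauge coaction (as in the proof of Theorem~\ref{thm:projective property}), the bundle $E_{\mathcal{O}}$ is the fibrewise quotient of the Toeplitz Fell bundle $E_{X}=\bigl(\Tc(X)^{\delta}_g\times\{g\}\bigr)_{g\in G}$ under the morphism induced by $q_{\rm CNP}$. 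Second, $q_{\rm CNP}(\Ff)$ coincides with $\bigoplus_{u\in\Gz}(\NO{X})^{\delta}_u$, i.e.\ with the whole fixed-point bundle, and by Lemma~\ref{ker2} the coaction $\delta$ on $\NO{X}$ is faithful on it.

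For case~(\ref{item:1}), I apply Proposition~\ref{prop:exact}, implication $(ii)\Rightarrow(iii)$, to the exact groupoid $G$, the normal coaction $\delta$ on $A=\Tc(X)$, and the strongly induced ideal $I=\ker(q_{\rm CNP})$: the induced coaction on $A/I=\NO{X}$ is normal. That it is moreover injective is then extracted from the second observation and Lemma~\ref{lem:when deltaI normal}: normality identifies $q_{\rm CNP}^{-1}(\ker\delta)$ with $\ker\bigl((q_{\rm CNP}\otimes\lambda_G)\circ\delta\bigr)$, and, using exactness of $G$ to keep the extension $0\to\ker(q_{\rm CNP})\to\Tc(X)\to\NO{X}\to 0$ exact after tensoring with $C^*_r(G)$, this last kernel agrees with $\ker(q_{\rm CNP})$ on the fixed-point bundle, which $\ker(q_{\rm CNP})$ (being induced) generates; hence $\ker\delta=0$ on $\NO{X}$. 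With $\delta$ normal and injective, Corollary~\ref{cor:guit}, $(\ref{item:6})\Leftrightarrow(\ref{item:9})$, delivers the gauge-invariant uniqueness property.

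For cases~(\ref{item:2}), (\ref{item:5}) and (\ref{item:3}) I argue through amenability of $E_{\mathcal{O}}$ and Corollary~\ref{cor:guit}, $(\ref{item:7})\Leftrightarrow(\ref{item:9})$. In case~(\ref{item:5}) the bundle $E_{\mathcal{O}}$ has the approximation property by hypothesis, hence is amenable by Lemma~\ref{ap-am}(i). In case~(\ref{item:2}) the bundle $E_X$ has the approximation property, and this property passes to the quotient bundle $E_{\mathcal{O}}$: if maps $a^u_i\colon G\to\Tc(X)^{\delta}_u$ witness it for $E_X$, then $h\mapsto\bar q_u(a^u_i(h))$, with $\bar q_u\colon\Tc(X)^{\delta}_u\twoheadrightarrow(\NO{X})^{\delta}_u$ the fibre quotient, witness it for $E_{\mathcal{O}}$ --- the uniform bound on $\bigl\|\sum_{r(h)=u}a^u_i(h)^*a^u_i(h)\bigr\|$ is not increased by the contractive $*$-homomorphism $\bar q_u$, and the approximating identity is preserved because $\bar q_u$ is multiplicative and norm decreasing --- so $E_{\mathcal{O}}$ is amenable by Lemma~\ref{ap-am}(i). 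Finally, in case~(\ref{item:3}) amenability of $G$ forces every Fell bundle over $G$, in particular $E_{\mathcal{O}}$, to have the approximation property by Lemma~\ref{ap-am}(ii), and we are back in case~(\ref{item:5}).

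The main obstacle is the injectivity of the induced gauge coaction on $\NO{X}$ in case~(\ref{item:1}). In contrast to the group case, $\delta$ need not be injective already on $\Tc(X)$, so injectivity on the quotient is not inherited for free; moreover, as the non-injectivity of $\delta$ on $\Tc(X)$ shows, the kernel of a discrete-groupoid coaction need not be an induced ideal, so one cannot simply argue that $\ker\delta$ meets each spectral subspace trivially. The hypotheses of case~(\ref{item:1}) are tailored precisely to control this kernel: exactness of $G$ is what makes the tensor product of the defining extension of $\NO{X}$ with $C^*_r(G)$ exact, and the requirement that $\ker(q_{\rm CNP})$ be \emph{strongly} (not merely) induced is exactly what Proposition~\ref{prop:exact}$(iii)$ consumes --- Lemma~\ref{nondeg2} supplies such ideals, and indeed Lemma~\ref{inj2} gives outright injectivity of $\delta$ on $\NO{X}$, whenever $G$ has uniformly bounded fibres. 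The remaining ingredients --- Lemma~\ref{ap-am} and the stability of the approximation property under Fell-bundle quotients --- are routine, so the real weight of the argument rests on Proposition~\ref{prop:exact} and Corollary~\ref{cor:guit}.
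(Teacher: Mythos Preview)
Your treatments of cases~(\ref{item:5}) and~(\ref{item:3}) coincide with the paper's. For case~(\ref{item:2}) you take a genuinely different route: you reduce to case~(\ref{item:5}) by showing the approximation property descends from $E_X$ to the quotient bundle $E_{\mathcal{O}}$ under the fibrewise surjection induced by $q_{\rm CNP}$. This is correct (the needed surjectivity $q_{\rm CNP}(\Tc(X)^\delta_g)=(\NO{X})^{\delta}_g$ follows from $\delta^I_g\circ q_{\rm CNP}=q_{\rm CNP}\circ\delta_g$) and more conceptual than the paper's direct argument, which instead shows $\ker((j_X^r)_*)=\ker(q_{\rm CNP})$ by hand: using the faithful conditional expectation onto $C_0(E_{\mathcal{O}}^0)$, an element $b$ with $\Psi(b^*b)\in\ker(q_{\rm CNP})$ is shown to have $\Psi_g(b)\in\ker(q_{\rm CNP})$ for every $g$, and then the approximating maps $\phi_i$ from the proof of Lemma~\ref{ap-am}(i) (which preserve the ideal) force $b=\lim_i\phi_i(b)\in\ker(q_{\rm CNP})$. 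Your approach avoids this computation at the cost of checking one permanence property.

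For case~(\ref{item:1}) you follow the paper in invoking Lemma~\ref{prop:exact} and Corollary~\ref{cor:guit}, but your added argument for \emph{injectivity} of the induced coaction on $\NO{X}$ has a gap. From $\ker\bigl((q_{\rm CNP}\otimes\lambda_G)\circ\delta\bigr)\cap\Ff=\ker(q_{\rm CNP})\cap\Ff$ together with the fact that $\ker(q_{\rm CNP})$ is induced, you conclude the two ideals coincide; but this would require $\ker\bigl((q_{\rm CNP}\otimes\lambda_G)\circ\delta\bigr)$ itself to be generated by its intersection with $\Ff$, which you have not shown (equivalently, you would need $\ker\delta^I\subseteq\NO{X}$ to be an induced ideal, and Lemma~\ref{ker2} only gives $\ker\delta^I\cap q_{\rm CNP}(\Ff)=0$). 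The paper's one-line proof of this case cites the same two results and does not address injectivity explicitly either, so you are not missing an argument the paper supplies---the difficulty you flag in your last paragraph is real.
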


\begin{proof}
Case~(\ref{item:1}) follows from Corollary~\ref{cor:guit}
and Lemma \ref{prop:exact}.

For case~(\ref{item:2}), put $J=\ker (q_{\CNP})$ and for $u\in \Gz$ let
$\Psi_u \colon \Tc(X)\to\Tc(X)^\delta_u$ and $\Phi_u \colon \NO{X}\to (\NO{X}^{\delta})_u$ be the induced
conditional expectations, and put $\Psi:=\oplus_u \Psi_u$ and $\Phi:=\oplus_u\Phi_u$. Let $\lambda: \NO{X}\to\NO{X}^r$ be the corresponding regular representation. Let $\Phi_r: \NO{X}^r\to \bigoplus_{u\in\Gz} (\NO{X}^{\delta})_u$ be the canonical faithful conditional expectation  \cite[Propositions 3.6, 3.10]{kum}, then $\oplus \Phi_u=\Phi_r\circ \lambda$, thus
$\ker(\lambda)=\{\, b\in \NO{X} \mid
\Phi(b^*b)=0 \,\}$,  and likewise, for $(j_{X}^\reduced)_*:=j_{X}^\reduced\circ i_X$,  
$\ker((j_{X}^\reduced)_*)=q_{\CNP}^{-1}(\ker(\lambda))=
\{\, c\in\Tc(X) \mid \Psi(c^*c)\in J \,\}$. In particular, $\ker((j_{X}^\reduced)_*)\supseteq \ker(q_{\CNP})$. Let us show that, if
the Fell bundle $(\Tc(X)^\delta_g\times\{g\})_{g\in
G}$ has the approximation property, then 
$\ker((j_{X}^\reduced)_*)\subseteq\ker(q_{\CNP})$. Let $b\in \ker(j_X^\reduced)$, then $\Psi_g(b)\in J\cap \Tc(X)^\delta_g$, for each $g\in G$. Thus, $\Psi_g(b)^*\Psi_g(b)\in J\cap \Tc(X)^\delta_{r(g)}$, and so
$$\Psi_g(b)=\lim_n\Psi_g(b)(\Psi_g(b)^*\Psi_g(b))^{1/n}\in J.$$ 
For the net $(\phi_i)$ as in the proof of Lemma \ref{ap-am}$(i)$, we have,
$$\phi_i(\Phi_g(b))=\sum_{r(h)=u} a^u_i(gh)^*\Phi_g(b)a^u_i(h)\in J,$$   
therefore, $$b=\lim_i\phi_i(b)=\lim_i\phi_i\big(\sum_{g\in G}\phi_i(\Phi_g(b))\big)\in J,$$
as claimed. We have shown that, $\ker((j_{X}^\reduced)_*)=\ker(q_{\CNP})$, which in turn implies that $\lambda$ is an isomorphism, and
 ~(\ref{item:2}) follows from
Corollary~\ref{cor:guit}. 

In case (\ref{item:5}), if the Fell bundle $\bigl((\NO{X})^{\delta}_g\times\{g\}\bigr)_{g\in G}$ has
the approximation property, then it is amenable by
Lemma \ref{ap-am}$(i)$, hence the result follows from
Corollary~\ref{cor:guit}. 

Finally, if $G$ is amenable, then by Lemma \ref{ap-am}$(ii)$, the bundle $\bigl((\NO{X})^{\delta}_g\times\{g\}\bigr)_{g\in G}$
has the approximation property, so case (\ref{item:3}) follows
from case (\ref{item:5}).
\end{proof}

\section{examples} \label{example}

We have constructed the full and reduced $C^*$-algebras $\NO{X}$ and $\NO{X}^r$ of a product system over a quasi-lattice ordered groupoid $(G,P)$. In this section, we examine this construction for certain particular systems over concrete groupoids. In particular,  for a natural extension of the Katsura construction of $C^*$-algebras of a $C^*$-correspondence, we illustrate how the guage invariant uniqueness theorem is applied. We also show how to use our construction to associate a Cuntz-Nica-Pimsner $C^*$-algebra to  a multi-layered topological $k$-graph.  

Our strategy to build effective examples of quasi-lattice ordered (discrete) groupoids is to start with a quasi-lattice ordered group $(H,Q)$ and use a ``divider'' $d: Q\to \mathbb Z_2$ to divide the elements of $Q$ into two distinct classes, and for $P=\mathbb Z_2\times Q$, define the partially defined multiplication map using $d$. We then get the groupoid $G$ and its unit space $\Gz$ formally as $G=P\cup P^{-1}$ and $\Gz=P\cap P^{-1}$. The idea is simple. Let $d: Q\to \mathbb Z_2$ be a semigroup homomorphism. For $P=\mathbb Z_2\times Q$, put 
$$P^{(2)} :=\{\big((i,t), (i+d(t),s)\big)\in P\times P: i\in \mathbb Z_2, s,t\in Q\},$$
where the sum $i+d(t)$ is  (mod $2$), and define the multiplication by,
$$(i,t)(i+d(t),s)=(i,st),\ \ (i\in \mathbb Z_2, s,t\in Q).$$
Now we assume that $Q\cup Q^{-1}=H$ and $Q\cap Q^{-1}=\{e\}$, where $e$ is the identity element of $H$ and define $(i,t)^{-1}=(i+d(t), t^{-1})$, and put $G:=P\cup P^{-1}$. Then $r(i,t)=(i, e)$ and $s(i,t)=(i+d(t),e)$. Thus, $P\cap P^{-1}=\Gz=\mathbb Z_2\times \{e\}$. We extend $d$ to a group homomorphism $d: H\to \mathbb Z_2$, by putting $d(t^{-1}):=d(t)$, then $G=\mathbb Z_2\times H$ is a groupoid with $G^{(2)}$ and the multiplication rule defined as above. We write $G=\mathbb Z_2\rtimes_d H$ and $P=\mathbb Z_2\rtimes_d Q$. Let us list the properties of $G$ in the next lemma. Recall that a groupoid $G$ is called transitive if for each $u,v\in \Gz$, there is $g\in G$ with $s(g)=u$ and $r(g)=v$.

\begin{lemma} \label{ex}
	Assume that $(H,Q)$ is a quasi-lattice ordered group with $Q\cup Q^{-1}=H$. Then,

$(i)$ $(G,P):=(\mathbb Z_2\rtimes_d H, \mathbb Z_2\rtimes_d Q)$ is a quasi-lattice ordered groupoid with $P\cup P^{-1}=G$,

$(ii)$ if $Q$ is directed (resp., totally ordered), so is $P$,

$(iii)$ $G:=\mathbb Z_2\rtimes_d H$ is transitive if $d$ is a epimorphism, and a group bundle otherwise,

$(iv)$ if $H$ is countable, then $G:=\mathbb Z_2\rtimes_d H$ is amenable iff $H$  is amenable,

$(v)$ if $H$ is countable, then $G:=\mathbb Z_2\rtimes_d H$ is exact iff $H$ is exact. 
\end{lemma}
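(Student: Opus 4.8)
The plan is to recognize $G=\mathbb Z_2\rtimes_d H$ as a discrete transformation groupoid and then read off each item from standard structure theory. Concretely, $G$ is the transformation groupoid of the action of $H$ on the two‑point unit space $\Gz=\mathbb Z_2$ by $i\cdot t=i+d(t)$, with multiplication written $(i,t)(i+d(t),s)=(i,st)$. From this picture the isotropy group at either unit is $\ker d$ when $d$ is onto $\mathbb Z_2$ and is all of $H$ when $d$ is trivial; $G$ is transitive exactly when some $t\in H$ has $d(t)=1$, i.e.\ exactly when $d$ is an epimorphism, and otherwise $d\equiv 0$, so $s(g)=r(g)$ for every arrow $g$ and $G$ is the disjoint union of two copies of $H$, a group bundle. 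This already establishes $(iii)$, and it also gives $C^*(G)\cong C(\mathbb Z_2)\rtimes H$ and $C^*_r(G)\cong C(\mathbb Z_2)\rtimes_r H$ for the induced action of $H$ on $C(\mathbb Z_2)=\mathbb C^2$.

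For $(i)$ and $(ii)$ I would argue by direct computation. Since $P^{-1}=\mathbb Z_2\times Q^{-1}$, we get $P\cap P^{-1}=\mathbb Z_2\times(Q\cap Q^{-1})=\Gz$, and because $Q\cup Q^{-1}=H$ also $P\cup P^{-1}=\mathbb Z_2\times H=G$; that $P$ is a subsemigroupoid is immediate from $Q\cdot Q\subseteq Q$. A short computation with the multiplication rule shows that for $g=(i,t)$ and $h=(i,u)$ with the common range $(i,e)$ one has $g^{-1}h=(i+d(t),ut^{-1})$, so $g\le h$ in $G$ if and only if $ut^{-1}\in Q$, i.e.\ if and only if $t\preceq u$, where $a\preceq b\iff ba^{-1}\in Q$. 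The hypothesis $Q\cup Q^{-1}=H$ makes $\preceq$ a \emph{total} order on $H$, and a totally ordered group is automatically quasi‑lattice ordered: any pair having some upper bound in $Q$ has the $\preceq$‑maximum of that pair together with $e$ as its least common upper bound in $Q$. Transporting this fibrewise gives $p\vee q=(i,\max_\preceq\{t,u,e\})\in P$, proving $(i)$; the same description shows that under the standing hypothesis $P$ is in fact always directed and totally ordered, which yields $(ii)$ (the hypotheses in $(ii)$ being automatic here).

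For $(iv)$ I would use the Følner characterization of amenability recalled just before Lemma~\ref{ap-am}. If $H$ is amenable, pick a right Følner net $(F_i)$ for $H$ and put $f_i(j,s):=|F_i|^{-1/2}\,1_{F_i}(s)\in C_c(G)$; then $\sum_{r(h)=u}|f_i(h)|^2=1$ for every unit $u$, while $\sum_{r(h)=u}f_i(h)\overline{f_i(gh)}=|F_i\cap F_it^{-1}|/|F_i|\to 1$ for the element $t\in H$ labelling $g$, so $G$ is amenable. Conversely, restricting a witnessing net for $G$ to a single unit produces a net of unit vectors in $\ell^2(H)$ that is approximately invariant under right translation by $\ker d$; since that representation of $\ker d$ on $\ell^2(H)$ is at most two copies of the regular representation of $\ker d$, it follows that $\ker d$ is amenable, and hence so is $H$ because $[H:\ker d]\le 2$. (Equivalently, one may invoke that amenability of a discrete groupoid passes to and is detected by its isotropy groups, which here are copies of $\ker d$.)

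Finally, for $(v)$, a discrete groupoid is exact when its reduced $C^*$‑algebra is exact, so it suffices to show that $C^*_r(G)\cong C(\mathbb Z_2)\rtimes_r H$ is exact if and only if $C^*_r(H)$ is. If $H$ is exact, $C(\mathbb Z_2)\rtimes_r H$ is exact because $C(\mathbb Z_2)$ is nuclear and reduced crossed products of exact algebras by exact groups are exact; conversely the $H$‑equivariant unital inclusion $\mathbb C\hookrightarrow C(\mathbb Z_2)$ of the constant functions induces an inclusion $C^*_r(H)\hookrightarrow C(\mathbb Z_2)\rtimes_r H$, and $C^*$‑subalgebras of exact algebras are exact. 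The routine work is the bookkeeping with the twisted product in $(i)$–$(iii)$; the one step that demands care is the ``only if'' direction of $(iv)$, namely passing from a Følner‑type net on the groupoid $G$ to amenability of the \emph{group} $H$ through the finite‑index subgroup $\ker d$.
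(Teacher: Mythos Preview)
Your argument is correct, and it takes a genuinely different route from the paper's own proof.

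For $(i)$--$(ii)$ the paper transfers the quasi-lattice structure of $(H,Q)$ fibrewise to $(G,P)$, writing the least upper bound as $(i+d(s),s\vee t)$. You instead exploit the standing hypothesis $Q\cup Q^{-1}=H$ to observe that the induced order $a\preceq b\iff ba^{-1}\in Q$ is \emph{total}, so that $p\vee q$ is simply the $\preceq$-maximum of $\{s,t,e\}$ in the second coordinate. This has the advantage of working directly with the order that actually arises from the groupoid computation $g^{-1}h=(i+d(t),ut^{-1})$, and it makes the conclusion of $(ii)$ automatic under the hypotheses of the lemma, as you note.

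For $(iv)$ the paper quotes the characterisation of amenability of a discrete groupoid via its isotropy groups (which here are copies of $H_0=\ker d$) and then passes from $H_0$ to $H$ via the index-$\le 2$ extension. Your direct F{\o}lner construction in the forward direction, and the restriction argument in the converse direction identifying $\ell^2(G^u)\cong\ell^2(H)$ with at most two copies of the regular representation of $H_0$, achieve the same conclusion without invoking that structural result; your parenthetical remark recovers the paper's approach exactly.

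For $(v)$ the difference is most pronounced. The paper computes $C^*_r(G)\cong M_2\bigl(C^*_r(H_0)\bigr)$ and then argues via a short exact sequence $0\to C^*_r(H_0)\to C^*_r(H)\to C^*_r(\mathbb Z_2)\to 0$ together with Choi--Effros lifting that exactness of $H_0$ and $H$ coincide. Your identification of $G$ with the transformation groupoid $\mathbb Z_2\rtimes H$ gives $C^*_r(G)\cong C(\mathbb Z_2)\rtimes_r H$ directly, after which the forward direction is the standard permanence of exactness under reduced crossed products by exact groups, and the converse follows from the $H$-equivariant embedding $\mathbb C\hookrightarrow C(\mathbb Z_2)$ (with its averaging conditional expectation) giving $C^*_r(H)\hookrightarrow C(\mathbb Z_2)\rtimes_r H$. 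This crossed-product route avoids the passage through $H_0$ altogether and uses only off-the-shelf permanence facts.
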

\begin{proof}
$(i)$ Recall that for $s,t\in Q$, $s\leq t$ iff $s^{-1}t\in Q$. We  define, $(i,s)\leq (j,t)$ iff $i=j$ and $(i,s)^{-1}(j,t)\in P$, which simply means that $i=j$ and $s^{-1}t\in Q$. By assumption, a set $\{s,t\}$ with an upper bound in $Q$ has a least upper bound $s\vee t$ in $Q$. Now if $\{(i,s), (j,t)\}$ has an upper bound $(k,u)$ in $P$, then $u$ is an upper bound for $\{s,t\}$ and likewise, $(i+d(s), s\vee t)$ is a least upper bound for  $\{(i,s), (j,t)\}$ in $P$. The converse implication is proved similarly. 

$(ii)$ This follows as in the proof of $(i)$.

$(iii)$ Note that $\Gz$ consists of two points $(0,e)$ and $(1,e)$. If $d$ is surjective, one could choose $t\in Q$ with $d(t)=1$, and observe that $s(0,t)=(0,e)$ and $r(0,t)=(0+d(t),e)=(1,e)$, and $G$ is transitive. If not, $d(t)=0$, for each $t\in H$, and $s(i,t)=r(i,t)=(i,e)$, for each $(i,t)\in G$, and $G$ is a group bundle. 

$(iv)$ By $(iii)$ and \cite[Corollary 10.54]{w}, $G$ is amenable iff its isotropy groups are all amenable. The result follows, as in this case, there are just two isotropy groups, namely $\{i\}\times H_0$, for $i=0,1$, for $H_0:=\ker(d)$. Since $H_0\unlhd H$ with $H/H_0\simeq\mathbb Z_2$, $H_0$ is amenable iff $H$ is amenable. 

$(v)$ By the proof of $(iv)$ and \cite[Theorem 2.78]{w}, and the fact that $\Gz$ consists of two points, we have $$C^*(G)\simeq C^*(H_0)\otimes \mathbb K(\ell^2(G^{0}))\simeq \mathbb M_2(C^*(H_0)).$$
Similarly, it follows from \cite[Lemma 5.32]{w} that,
$$C^*_r(G)\simeq C^*_r(H_0)\otimes \mathbb K(\ell^2(G^{0}))\simeq \mathbb M_2(C^*_r(H_0)).$$
Now if $H_0$ is exact, so is $G$ by \cite[Proposition 10.2.7]{bo}. The converse is also true by \cite[Proposition 10.2.3]{bo}.  On the other hand, let us observe that the exact sequence
$$0\to C^*_r(H_0)\to C^*_r(H)\to C^*_r(\mathbb Z_2)\to 0$$
is locally split, in the sense of Kirchberg \cite[Definition 2.1.5]{bo}. Since $C^*_r(\mathbb Z_2)$ is nuclear, the quotient map $C^*_r(H)\to C^*_r(\mathbb Z_2)$ is nuclear by \cite[Exercise 2.1.4]{bo}. It follows now from Choi-Effros lifting Theorem \cite[Theorem C.3]{bo} that this quotient map is liftable, as claimed. Now, since the above exact sequence is locally split, by \cite[Theorem 10.2.4]{bo}, $C_r(H)$ is exact iff $C^*_r(H_0)$ is exact. Summing up, $G$ is eact iff $H$ is so.          
\end{proof}

As our main concrete examples, let us see how this works for $(H,Q):=(\ZZ, \ZZ^+)$ and $(H,Q):=(\mathbb F_2, \mathbb F_2^{+})$.

\begin{example} \label{Z}
For $\ZZ^+:=\NN\cup\{0\}$ and $(H,Q):=(\ZZ, \ZZ^+)$, let us consider the divider 
$$d: \ZZ\to \ZZ_2; \ \ d(n)=[n]_2:=n{\rm (mod \ 2)}.$$
Then $(G,P):=(\ZZ_2\rtimes_d\ZZ, \ZZ_2\rtimes_d\ZZ^+)$ is a quasi-lattice ordered groupoid with $\Gz$ consisting of two points $(0,0)$ and $(1,0)$. Take two $C^*$-algebras $A$ and $B$, and $A$-$B$-correspondence ${}_AX_B^1$ and $B$-$A$-correspondence ${}_BX_A^2$. For $k\in\ZZ^+$ and $p:=(0,2k)\in P$, let us put, $$X_p:=X^1\otimes_BX^2\otimes_A\cdots\otimes_BX^2,$$ with $2k$-terms, with $X^1$ and $X^2$ sitting respectively in the odd and even positions, regarded as an $A$-$A$-correspondence, with the convention that for $k=0$, $X_{(0,0)}:=A$. For $p:=(0, 2k+1),$ $X_p$ is defined by a tensor product as above with $(2k+1)$-terms, starting and ending with $X^1$, regarded as an $A$-$B$-correspondence. The correspondences $X_p$, for $p:=(1,2k)$ or $(1, 2k+1)$ are defined likewise, with the first term being $X^2$ instead of $X^1$, with the convention that for $k=0$, $X_{(1,0)}:=B$. For $A_{(0,0)}:=A$ and $A_{(1,0)}:=B$, we then have,
$$X_{(i,n)}\otimes_{A_{([i+n]_2,0)}}X_{([i+n]_2,m)}=X_{(i,n+m)}.$$ 
By Lemma \ref{ex}$(ii)$ and observation of the last paragraph of Subsection \ref{subsec_prod_syst}, this system is compactly aligned. In particular, we could construct the corresponding full and reduced Cuntz-Nica-Pimsner $C^*$-algebras $\NO{X}$ and $\NO{X}^r$. Indeed, by Lemma \ref{ex}$(iv)$, Lemma \ref{ap-am}, and Corollary \ref{cor:guit}, they are the same in this case. It is illustrative to see that $\NO{X}$ could also be constructed directly by considering covariant representations of $X^1$ and $X^2$ in the sense of Katsura. Part of this is performed in Appndix \ref{A.1}, where a universal $C^*$-algebra $\NO{{X^1, X^2}}$ is constructed using Katsura method.  Now we just need to observe that there is a one-one correspondence between covariant representations $(\pi_A, \pi_B, t_{X^1}, t_{X^2})$ in the sense of Appendix \ref{A.1} and Nica-covariant representations of the above product system $X$ over $(\ZZ_2\rtimes_d\ZZ, \ZZ_2\rtimes_d\ZZ^+)$ in the sense of Section \ref{subsection:reps_prod_syst}.                 
\end{example}

It is natural to ask if a given product system $X$ over a  quasi-lattice ordered groupoid $(G,P)$ admits a couniversal triple $(C,\rho,\gamma)$ for injective (resp., Nica covariant) gauge-compatible
Toeplitz representations in the sense of Theorem \ref{thm:projective property}. The answer is negative in general, when we drop assumptions of the theorem, or keep the assumptions and ask for couniversal objects for injective non Nica covariant gauge-compatible
Toeplitz representations. The next lemma addresses the problem for product systems over group bundles. 

\begin{lemma} \label{no}
Let $X$ be a product system $X$ over a  quasi-lattice ordered groupoid $(G,P)$, where $G=\bigcup_{u\in\Gz} G^u_u$ is a group bundle. Let $X(u)$ be the product subsystem over  the quasi-lattice ordered group $(G^u_u,P^u_u)$. Then $X$ admits a couniversal triple $(C,\rho,\gamma)$ for injective (resp., Nica covariant) gauge-compatible
Toeplitz representations iff all subsystems are so. 
\end{lemma}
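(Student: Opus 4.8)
The plan is to exploit the fact that a group bundle $G=\bigcup_{u\in\Gz}G^u_u$ has unit space $\Gz$ meeting each orbit, so that the whole structure decomposes fibrewise. First I would observe that since $G$ is a group bundle, the groupoid $C^*$-algebra decomposes as a $c_0$-direct sum (or $C_0(\Gz)$-section algebra) $C^*(G)\cong\bigoplus_{u\in\Gz}^{c_0}C^*(G^u_u)$, and correspondingly the product system $X$ splits as $X=\bigsqcup_{u\in\Gz}X(u)$, since for $p\in P$ one has $r(p)=s(p)=:u$ and $X_p$ is a correspondence over $A_u$ alone; tensors $X_p\otimes X_q$ are defined only when $p,q$ lie in the same fibre $P^u_u$. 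Consequently $\Tc(X)\cong\bigoplus^{c_0}_{u}\Tc(X(u))$ and, using Lemma~\ref{core}, $\Ff\cong\bigoplus^{c_0}_u\Ff(u)$, and the ideal $\ker(q_{\CNP})$ respects this decomposition, giving $\NO{X}\cong\bigoplus^{c_0}_u\NO{X(u)}$. The gauge coaction $\delta$ of $G$ likewise restricts to the gauge coaction $\delta^u$ of $G^u_u$ on each summand, and normality of $\delta$ is equivalent to normality of every $\delta^u$, since $\id\otimes\lambda_G$ acts diagonally. This reduces the global statement to a fibrewise one.

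Next I would handle the two directions. For the ``only if'' direction: suppose $X$ admits a couniversal triple $(C,\rho,\gamma)$ for injective (resp.\ injective Nica covariant) gauge-compatible Toeplitz representations. For a fixed $u_0\in\Gz$ I want to produce such a triple for $X(u_0)$. The key point is that the central projection $z_{u_0}\in M(C^*(G))$ corresponding to the function $1_{u_0}\in C_0(\Gz)$ is $\gamma$-invariant in the appropriate sense and cuts down $C$ to a summand $C_{u_0}:=z_{u_0}C$; one checks directly that $(C_{u_0},\rho|_{X(u_0)},\gamma|_{C_{u_0}})$ is again couniversal for the corresponding class of representations of $X(u_0)$, because any injective gauge-compatible representation of $X(u_0)$ in a $C^*$-algebra $D$ extends (by zero on the other fibres) to one of $X$ over $G$ with a coaction of $G$, then the couniversal property of $C$ yields the required surjection, which automatically factors through $C_{u_0}$. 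For the ``if'' direction: if each $X(u)$ has a couniversal triple $(C_u,\rho_u,\gamma_u)$, take $C:=\bigoplus^{c_0}_u C_u$, $\rho:=\bigsqcup_u\rho_u$, and $\gamma:=\bigoplus_u\gamma_u$, which assembles to a coaction of $G$ since $C^*(G)$ has the matching $c_0$-direct-sum form; given any injective gauge-compatible (resp.\ Nica covariant) representation $\psi:X\to D$ with $G$-coaction $\beta$ and $\overline{\lsp}\,\psi(X)=D$, the coaction forces $D=\bigoplus^{c_0}_u D_u$ with $\psi$ restricting to $\psi_u:X(u)\to D_u$ injective, gauge-compatible for $G^u_u$, with image generating $D_u$, so the couniversal property of each $C_u$ gives surjections $\phi_u:D_u\to C_u$ intertwining the representations, and $\phi:=\bigoplus_u\phi_u:D\to C$ is the desired surjective $*$-homomorphism with $\phi\circ\psi=\rho$.

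The main obstacle I anticipate is making the decomposition of $D$ in the ``if'' direction precise and rigorous when $\Gz$ is infinite, i.e.\ justifying that a gauge-compatible representation $\psi:X\to D$ over the group bundle $G$ with $\overline{\lsp}\,\psi(X)=D$ forces $D$ itself to split as a $c_0$-direct sum indexed by $\Gz$ with $\psi$ decomposing accordingly. This requires showing that the coaction $\beta$ of $G$ on $D$, composed with the conditional expectation onto $C_0(\Gz)\subseteq M(C^*(G))$, produces central (multiplier) projections $z_u\in M(D)$ that are orthogonal, sum strictly to $1$, and satisfy $z_u\psi(X_p)=\psi(X_p)$ exactly when $p\in P^u_u$; the delicate part is the orthogonality and the strict summation, which should follow from the explicit form of $\delta_G$ on $C^*(G)$ for a group bundle together with the fact that $\psi(X)$ spans $D$, but the bookkeeping with multiplier algebras and non-unital summands (and, in the Nica covariant case, checking that the pieces $\psi_u$ remain Nica covariant, which is immediate since Nica covariance only couples $p,q$ with common upper bound, hence automatically $p,q\in P^u_u$) needs care. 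A secondary, lighter obstacle is verifying that couniversality is preserved under passing to a direct summand in the ``only if'' direction, which amounts to the standard observation that couniversal objects are functorial with respect to cutting by a coaction-invariant central projection.
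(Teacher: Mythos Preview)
Your overall strategy is sound and close in spirit to the paper's, but there is a genuine gap in your ``only if'' direction. You propose to take an injective gauge-compatible representation $\psi_{u_0}:X(u_0)\to D$ and extend it \emph{by zero} on the other fibres to obtain a representation of $X$ to which the couniversal property of $(C,\rho,\gamma)$ can be applied. This does not work: the extended representation sends $A_v$ to $0$ for every $v\neq u_0$, so it fails to be injective as a Toeplitz representation of $X$, and the couniversal property of $C$---which is only asserted for \emph{injective} gauge-compatible representations---does not apply. The fix is simple but essential: for $v\neq u_0$ you must extend by some genuinely injective representation, and the natural choice is $\rho_v:=\rho|_{X(v)}$ itself. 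This is exactly what the paper does (somewhat elliptically): it takes injective $\psi_u$ for \emph{all} $u$ at once, embeds the target algebras $D_u$ in a common Hilbert space, and lets $D$ be the $C^*$-algebra they generate; then the assembled $\psi:X\to D$ is injective and gauge-compatible, the couniversal surjection $\phi:D\to C$ exists, and its restriction $\phi|_{D_{u_0}}$ lands in and surjects onto $C_{u_0}$ because $\phi\circ\psi_{u_0}=\rho_{u_0}$.

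On the ``if'' direction, your anticipated obstacle is lighter than you fear. The decomposition $D\cong\bigoplus_u^{c_0}D_u$ does not require extracting central projections from the coaction: once $\psi:X\to D$ is a Toeplitz representation with dense image, condition~(3) of the definition of Toeplitz representation together with Lemma~\ref{zero} already give $\psi(X(u))\psi(X(v))=\psi(X(u))\psi(X(v))^*=0$ for $u\neq v$, so the $C^*$-subalgebras $D_u$ generated by $\psi(X(u))$ are pairwise orthogonal ideals whose sum is dense in $D$, forcing $D\cong\bigoplus_u^{c_0}D_u$ directly. The gauge coaction then restricts fibrewise because $\beta(\psi(x))=\psi(x)\otimes i_G(d(x))$ with $d(x)\in G^u_u$ whenever $x\in X(u)$. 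The paper simply asserts the converse ``is proved similarly''; your explicit $c_0$-direct-sum construction is a perfectly good way to fill that in.
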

\begin{proof}
Let a triple
$(C, \rho, \gamma)$ be
couniversal for gauge-compatible injective (resp., Nica covariant)
representations of $X$, that is,

(1) $\rho: X\to C$ is an injective
	(resp., Nica covariant) representation of $X$ whose image
	generates $C$, and $\gamma$ is a normal coaction of $G$ on $C$
	with $\gamma(\rho(x))=\rho(x)\otimes i_G(d(x))$, for  $x\in X$,

(2) if $\psi \colon X \to D$ is an
	injective (resp., Nica covariant) gauge-compatible representation whose image
	generates $D$, then there is a
	surjective $*$-homomorphism $\phi \colon D \to
	C$ such that $\phi(\psi(x)) =
	\rho(x)$, for $x \in X$.
	
For $u\in\Gz$, let $\rho(u)$ be the restriction of $\rho$ to $X(u)$ and $C_u$ be the $C^*$-subalgebra of $C$ generated by the range of $\rho_u$. Let $\gamma_u$ be the restriction of $\gamma$ to $C_u$, then since for $x\in X(u)$,
$$\gamma_u(\rho_u(x))=\gamma(\rho(x))=\rho(x)\otimes i_G(d(x))=\rho_u(x)\otimes i_{G^u_u}(d(x))\in C_u\otimes C^*(G_u^u), $$
$\gamma_u$ is a coaction of $G^u_u$ on $C_u$, whose injectivity and nondegeneracy follow from that of $\gamma$. Finally, given 
injective (resp., Nica covariant) gauge-compatible representations $\psi_u \colon X(u) \to D_u$ whose image
generates $D_u$, we faithfully represent all $D_n$'s in the same Hilbert space $H$ and let $D$ be the $C^*$-subalgebra of $\mathscr{L}(H)$ generated by these subalgebras, and let  $\psi:\colon X \to D$ be the map whose restriction to $X(u)$ is $\psi_u$. This is well-defined as $X(u)$'s are disjoint, and an injective (resp., Nica covariant) gauge-compatible representation, as each $\psi_u$ is so. Finally, the  image of $psi$ generates $D$. For the surjective $*$-homomorphism $\phi \colon D \to
C$ such that $\phi(\psi(x)) =
\rho(x)$, for $x \in X$, let $\phi_u: D_u\to C$ be the restriction of $\phi$ to $D_u$, then $\phi_u\circ\psi_u=\rho_u$, and so the range of $\phi_u$ is $C_u$. Summing up, the triple
$(C, \rho, \gamma)$ is
couniversal for gauge-compatible injective (resp., Nica covariant)
representations of $X(u)$. The converse implication is proved similarly.    
\end{proof}

We construct an example of a product system $X$ in which the left
actions are not injective, and $P$ is not directed, and the
conclusion of Lemmas~\ref{lemma:inj} and
~\ref{thm:inj on core} both fail in this example. 
We also use the above lemma to construct a product system
which admits no couniversal $C^*$-algebra for injective (not necessarily Nica covariant) gauge-compatible
Toeplitz representations.

\begin{example} 
	Consider the free group $\FF_2$ on two generators $a,b$ and identity element $e$. Let $(H,Q):=(\FF_2,\FF_2^+)$, where $\FF_2^+$ is the free semigroup generated by $a$ and $b$. Consider the canonical length function $\ell: \FF_2\to \ZZ^+$, assigning its length to each word. Define the divider $d: \FF_2\to \ZZ_2$ by $d(x):=[\ell(x)]_2$, for $x\in \FF_2$, which maps the words of odd length to 1 and the rest  to $0$. The kernel of $d$ is the normal subgroup $\FF_2^{\rm even}$ consisting of words of even length. 
	
	For the quasi-lattice ordered group  $(G,P):=(\ZZ_2\rtimes_d\FF_2,\ZZ_2\rtimes_d\FF_2^+)$, define a
	product system over $P=\ZZ_2\rtimes_d\FF_2^+$ by $X_{(i,a^n)}=\CC$, for $i\in\ZZ_2$ and $n\in \NN$
	and $X_p=0$, for all other elements of $P$. This system is clearly
	compactly aligned,
	but the left actions are not all
	injective, since for instance  $(0,a)\vee (0, b)=\infty$. Consider the injective Nica covariant Toeplitz representation $\psi \colon X\to \CC$, defined by $\psi_p(x)=x$ for $p\in P$ and $x\in X_p$. 
	Let $1_p$ be the identity in $\mathscr L_{A_{r(p)}}(X_p)$, for $p\in P$, and let us observe
	that $1_p\in \mathscr K_{A_{r(p)}}(X_p)$, for $p=(i,e)$ or $(i,a)$, for each $i\in\ZZ_2$, and 
	$$\psi^{(i,e)}(1_{(i,e)})-\psi^{(i,a)}(1_{(i,a)})=1-1=0, $$ while,
	$$\iotale^{(i,e)}_{(i,t)}(1_{(i,e)})-\iotale^{(i,a)}_{(i,t)}(1_{(i,a)})\neq 0,$$ for
	large $t$.  Indeed, for for $q\in P\backslash\Gz$,
	$
	I_q=0$  if $q=(i,a^n)$, for some $i\in\ZZ_2$ and $n\in \NN$, and $I_q=\CC$, otherwise. If $q\geq b$, then
	$X_{(i,e)}\cdot I_q=X_{(i,e)}$, for each $i\in\ZZ_2$, and, $$\iota_{(i,e)}^{(i,e)}(1_{(i,e)})-\iota_{(i,a)}^{(i,e)}(1_{(i,a)})=
	\iota_{(i,e)}^{(i,e)}(1_{(i,e)})\not= 0,
	$$
	 that is, $$\iotale^{(i,e)}_{(i,t)}(1_{(i,e)})-\iotale^{(i,a)}_{(i,t)}(1_{(i,a)})\not= 0,$$ for
	 $t \ge b$.
	
	Alternatively, let $(G,P):=(\ZZ_2\rtimes_{d_0}\FF_2, \ZZ_2\rtimes_{d_0}\FF_2^+)$ for non-surjective divider $d_0: \FF_2\to \ZZ_2$, which sends all elements to 0.  Let $X_p=\CC$, for every $p\in
	P$. Then again $X$ is compactly aligned,
	but this time  all the left actions are injective. Denote the generators
	of $\FF_2$ by $a$ and $b$. We claim that $X$ admits no couniversal $C^*$-algebra for injective (not necessarily Nica covariant) gauge-compatible
	Toeplitz representations. By Lemmas \ref{ex}$(iii)$ and \ref{no}, we need to check this for the restriction $Y$ of $X$ to the isotropy group $(H,Q):=\big(\FF_2^{\rm even}, (\FF_2^{\rm even})^+\big).$ Suppose that $(C,\rho,\gamma)$ is a
	couniversal triple for injective gauge-compatible Toeplitz representations of $Y$.
	Consider the
	injective Toeplitz representation $\psi$ of $Y$ on $C^*_\reduced(\FF_2^{\rm even})$
	determined by $\psi(1_q) := \lambda_{\FF_2^{\rm even}}(q)$, for $q\in Q$. Similar to \cite[Example 1.15]{qui:full reduced}, one could observe that there is a (full) coaction
	$\delta_{\FF_2^{\rm even}}$ of $\FF_2^{\rm even}$ on $C^*_\reduced(\FF_2^{\rm even})$ satisfying,
	$$\delta_{\FF_2^{\rm even}}(\lambda_{\FF_2^{\rm even}}(h)) = \lambda_{\FF_2^{\rm even}}(h) \otimes i_{\FF_2^{\rm even}}(h),\ \ (h \in H:=\FF_2^{\rm even}).$$ In particular,  $\psi$ is 
	gauge-compatible. By the couniversal property, there is a surjective homomorphism $\phi \colon
	C^*_\reduced (\FF_2^{\rm even})\to C$, satisfying $\phi(\psi(1_q)) =
	\rho(1_q)$, for  $q \in Q$. Since
	$\lambda_{\FF_2}(a)$ and $\lambda_{\FF_2}(b)$ are unitaries,
	by surjectivity of $\phi$,
	\begin{align*}\label{eq:prod=1}
		\rho(1_{aa})\rho(1_{aa})^* \rho(1_{bb})\rho(1_{bb})^*
		&= \phi\big(\lambda_{\FF_2^{\rm even}}(aa)\lambda_{\FF_2^{\rm even}}(aa)^*\lambda_{\FF_2^{\rm even}}(bb)\lambda_{\FF_2^{\rm even}}(bb)^*\big)
		\\&= \phi(1_{C^*_\reduced(\FF_2^{\rm even})})
		= 1_C.
	\end{align*}
	Now, since $j_Y^\reduced$ is  an injective gauge-compatible
	Toeplitz representation of $Y$, again by the couniversal property, there is a surjective homomorphism $\psi_r \colon
	\NO{Y}^\reduced \to C$ such that $\eta(j_Y^\reduced(1_h)) =
	\rho(1_h)$, for $h \in H$. But $j_Y^\reduced$ is Nica
	covariant, and  $aa \vee bb = \infty$ in $\FF_2^{\rm even}$, therefore,
	\begin{equation}\label{eq:prod=0}
		\rho(1_{aa})\rho(1_{aa})^* \rho(1_{bb})\rho(1_{bb})^*
		= \psi_r\big(j_X^\reduced(1_{aa})j_X^\reduced(1_{aa})^*j_X^\reduced(1_{bb})j_X^\reduced(1_{bb})^*\big)
		= 0,
	\end{equation}
	which in turn implies that 
	$1_C = 0$, which contradicts the assumption
	that $\rho$ is injective, proving the claim.
Note that although the product system $X$ in this example does not satisfy the
assumptions of Theorem~\ref{thm:inj on core}, nevertheless it
admits a couniversal algebra which is a proper quotient of  $\NO{X}^\reduced$. Also,
it is not difficult to see that every Toeplitz representation of the system $X$ above is automatically Nica covariant, and 
there is a bijective correspondence between (resp., 
gauge-compatible injective) Toeplitz representations
of $X$ and (resp., 
gauge-compatible injective) Toeplitz representations of the subsystem generated by $X_{(0,a)}$ and $X_{(1,a)}$. 
\end{example}

Next, we give examples of groupoids with  uniformly bounded fibres in the sense of Oty \cite{o}, as promised. It is worth mentioning the motivation behind considering this class of groupoids: for a locally compact groupoid $G$, elements of the Fourier-Stieltjes algebra $B(G)$ are coefficient functions of representations of $G$, and thereby could be regarded as bounded linear functional on $C^*(G)$. They also decompose as combinations of four continuous positive definite functions, regarded as positive linear functionals on $C^*(G)$. If $G$ is $r$-discrete with uniformly bounded fibres, then each bounded continuous function on $G$ is a linear combination of  four continuous positive definite functions, and so is in the Fourier-Stieltjes algebra \cite[Theorem 4.10]{o}. 

\begin{example}
Let $G$ be a locally compact, second countable, Hausdorff groupoid. A Borel map $\mathcal O: G\to\ZZ^+$ is  called an ordering map if it is
 on every fiber $G^u$ of $G$, for $u\in \Gz$  \cite[Definition 2.1]{o}. Now if the fibres of $G$ have constant cardinality, the existence of  a continuous ordering map $\mathcal O$ on $G$ forces $G$ to be \'{e}tale \cite[Proposition 2.2]{o}. Examples of $r$-discrete groupoids having fibres with constant cardinality (and so with  uniformly bounded fibres) include  transformation
 groups of countable discrete groups acting on compact spaces, and 
 principal transitive groupoids on finitely many elements

\end{example} 
\section*{Appendix}

This appendix has three subsections. In the first part, we directly construct the Cuntz-Pimsner $C^*$-algebra $\mathcal O_{{X^1,X^2}}$ for a pair of $C^*$-correspondences ${}_AX^1_B$ and ${}_BX^2_A$ by extending the Katsura construction  \cite{ka2}. This is the same as the  Cuntz-Nica-Pimsner $C^*$-algebra $\NO{X}$, for the product system $X$ generated by $\{X^1, X^2\}$ over the quasi-lattice ordered groupoid  $(\ZZ_2\rtimes_d\ZZ, \ZZ_2\rtimes_d\ZZ^+)$, as observed in Example \ref{Z}. In the second part, we show how to realize groupoid crossed products as Cuntz-Nica-Pimsner $C^*$-algebras over the acting groupoid. The last part is devoted to the construction of the $C^*$-algebra of a multilayered topological $k$-graph, and showing that it could be realized as the Cuntz-Nica-Pimsner $C^*$-algebra $\NO{X}$ of a product system on some quasi-lattice ordered groupoid.

\subsection*{{\rm A.1.} $C^*$-algebra of a pair of correspondences}\label{A.1}

Let $A$ and $B$ be C$^*$ algebras. An $A$-$B$-correspondence is a right Hilbert $B$-module $_{A}X_{B}$ together with a $*$-homomorphism $\phi_X : A \rightarrow \mathscr{L}_A(X)$. We say that $X$ is full if $\langle X,X\rangle_B$ is a total set in $B$.  
We usually drop the index $A$ and use $\mathscr{L}(X)$, keeping in mind that adjointable operators in this subsection are always considered as module maps for the left module action. We also freely use standard notations of Hilbert modules, such as $\theta_{\xi,\eta}\in\mathscr{K}(X)$, for the rank-one operator associated to $\xi,\eta\in X$.

In this subsection, we construct the Cuntz-Pimsner $C^*$-algebra $\mathcal O_{{X^1,X^2}}$ for a pair of $C^*$-correspondences ${}_AX^1_B$ and ${}_BX^2_A$, and show that it coincides with the Cuntz-Nica-Pimsner $C^*$-algebra $\NO{{X}}$, for the product system $X$ generated by $X^1$ and $X^2$. For simplicity of notations, we put $X:=X^1$ and $Y:=X^2$. 
For the correspondences  $_{A}X_{B}$ and  $_{B}Y_{A}$,
the tensor product $X \odot Y $ of $X$ and $Y$ (amalgamated over $B$) is an $A$-$A$-correspondence. Similarly, $Y \odot X $ (amalgamated over $A$) is a $B$-$B$-correspondence.
For $n \in \mathbb{N}$ we define the correspondences $\XtY{n}$ and $\YtX{n}$ inductively by $\XtY{0} := B$, $\YtX{0} := A $, $\XtY{1} := X$, $\YtX{1}:= Y$, $\XtY{n+1} := (\XtY{n}) \otimes X,\  \YtX{n+1} := (\YtX{n}) \otimes Y,$
for even $n \geq 0$, and likewise, $\XtY{n+1} := (\XtY{n}) \otimes Y,  \ \YtX{n+1} := (\YtX{n}) \otimes X,$ for odd $n \geq 1$. We write $\phi_{(X,n)}$ for $\phi_{\XtY{n}}$, and likewise  for $\phi_{(Y,n)}$.
For $m, n \in \mathbb{N}$ and  $S \in \LL(\XtY{n})$,  $S \otimes{\rm id}_m$ in $\LL(\XtY{n+m})$ is defined canonically. For each $m$  we define $\iota_{(Y,m)}^{(X,n)} \in \LL(\YtX{m},\XtY{n+m})$ by
$\iota{(Y,m)}^{(X,n)}(\xi)(\eta) := \xi \otimes \eta,$
for $\xi \in \XtY{n}, \eta \in \YtX{m}$ and odd $n$, and likewise for even $n$. Then, $\iota_{(Y,m)}^{(X,0)}  = \phi_{(Y,m)}$ and
$
\myTau{X}{n}{Y}{m}{\xi}^* \,(\xi' \otimes \eta) = \phi_{(X,n)} (\la\xi,\xi'\ra)\eta$ 
for $\eta \in \YtX{m}$, $ \xi' \in \XtY{n}$, and odd $n$. 	For each $m$ and odd $n, n_1, n_2, m_1$ we have $\phi_{(X,n+m)}(a) \ordTau{m}{n}(\xi) = \ordTau{m}{n}(\phi_{n}(a) \xi)$,
$$\ordTau{m}{n}(\xi) \ordTau{m}{n}(\eta)^* = \theta_{\xi,\eta} \otimes {\rm id}_{m}, \, \ordTau{m}{n}(\xi)^* \ordTau{m}{n}(\eta) = \phi_{(Y,m)}(\la\xi,\eta\ra_{\XtY{n}}), 
\, \ordTau{m}{n}(\xi) \phi_{(Y,m)}(v) = \ordTau{m}{n}(\xi v),$$
and $\ordTau{n_2+m}{n_1}(\xi_1) \, \ordTau{m}{n_2}(\xi_2) = \ordTau{m}{n_1+n_2}(\xi_1 \otimes \xi_2),$ for $\xi , \eta \in \XtY{n}$,  $\xi_1 \in \XtY{n_1}$, $\xi_2 \in \XtY{n_2}$, and $a\in A, b \in B$. 

\vspace{.2cm}
\indent {\bf Definition A.1.}
	A representation of $(X,Y)$ on a C$^*$-algebra $D$ is a 4-tuple consisting of two *-homomorphisms $\pi_A : A \rightarrow D$ , $\pi_B : B \rightarrow D$ and two linear maps $t_X : X \rightarrow D$ and $t_Y : Y \rightarrow D$ satisfying,
	
	$(i) \, t_X(\xi)^*t_X(\xi^{'}) = \pi_B(\la\xi,\xi^{'}\ra)$, 

	$(ii) \, t_Y(\eta)^*t_Y(\eta^{'}) = \pi_A(\la\eta,\eta^{'}\ra)$, 

	$(iii) \, \pi_A(a)t_X(\xi) = t_X(\phi_X(a)\xi)$, 

	$(iv) \, \pi_B(b)t_Y(\eta) = t_Y(\phi_Y(b)\eta)$, 
	
	$(v)$ $t_X(\xi)t_X(\xi^{'})=t_Y(\eta)t_Y(\eta^{'})=t_X(\xi)t_Y(\eta)^*=0$,
	
	$(vi)$ 	 $t_X(\xi)\pi_A(a)=t_Y(\eta)\pi_B(b)=\pi_B(b)t_X(\xi)=\pi_A(a)t_Y(\eta)=0$, 

\noindent for $\xi, \xi^{'}\in X,  \eta, \eta^{'} \in Y$, $a\in A, b\in B$. When $X$ and $Y$ are full, $(vi)$ follows from $(v)$. Here we do not impose the assumption of $X$ and $Y$ being full, which is not essential for the rest of this subsection.  

\vspace{.2cm}
 The $C^*$-algebra generated by images of $\pi_A$ , $\pi_B$ , $t_X$ and $t_Y$ is denoted by $ C^*\ordRep$. It follows that $t_X(\xi) \pi_B(b) = t_X(\xi b)$, for $\xi \in X$ and $b \in B$.
The representation is called injective if $\pi_A$ and $\pi_B$ are injective. In this case, $t_X$ and $t_Y$ are isometries. 
For a representation $\ordRep$ as above, we define a canonical $*$-homomorphism $\psi_{t_X} : \mathscr{K}(X) \rightarrow D$ by $\psi_{t_X}(\theta_{\xi,\eta}) = t(\xi)t(\eta)^* \in D $ for $\xi , \eta \in X$. 
Set $t^{(X,0)} = \pi_B$, $t^{(Y,0)} = \pi_A$, $t^{(X,1)} = t_X$, $t^{(Y,1)} = t_Y$. Define $t^{(X,n)}:  \XtY{n} \to C^*\ordRep$ by $t^{(X,n)}(\xi\otimes\eta) = t_X(\xi)t^{(Y,n-1)}(\eta)$, for $n\geq  2$, $\xi \in X$ and $\eta \in \YtX{n-1}$. Also  define $\psi_{t^{(X,n)}}:\mathscr{K}(\XtY{n}) \rightarrow C^*\ordRep$ by
$\psi_{t^{(X,n)}}(\theta_{\xi,\eta}) = t^{(X,n)}(\xi)t^{(X,n)}(\eta)^*$, for
$\xi , \eta \in \XtY{n}$.
	With the above notations,  
	$\pi_A(a)\psi_{t_X}(k) = \psi_{t_X}(\phi_X(a)k)$ and $\psi_{t_X}(k) t_X(\xi) = t_X(k\xi)$, for $a \in A \, , \xi \in X \, , k \in \mathscr{K}(X)$. Also, for $m \leq n$ and $m$  odd,  $t^{(X,m)}(\eta)^*t^{(X,n)}(\xi) = t^{(Y,n-m)}(\lambda)$, for $\xi \in \XtY{n}$ and $\eta \in \XtY{m}$, where $\lambda = \ordTau{n-m}{m}(\eta)^*(\xi) \in \YtX{n-m}$.
	
	The C$^*$-algebra $C^*\ordRep$ is the norm closure of the $*$-algebra generated by all the elements of the form $t^{(Q,n)}(\xi) t^{(Q,m)}(\eta)^*$ with $ Q,L \in \{ X,Y \}, Q \neq L , m,n \in \N , \xi \in Q \overset{n}{\otimes} L,$ and  $\eta \in Q \overset{m}{\otimes} L$. We define the closed ideal $J_X$ of $A$ by
$
	J_X := \phi_{X}^{-1}(\mathscr{K}(X)) \, \cap \, Ker(\phi_X)^{\perp}$.
The closed ideal $J_{Y}$ of $B$ is defined similarly. A representation $\ordRep$ is  covariant if $\pi_A(a) = \psi_{t_X}(\phi_X(a))$ and $\pi_B(b) = \psi_{t_Y}(\phi_Y(b))$, for all $a \in J_X$ and $b \in J_Y$.

	For an injective representation $\ordRep$, given $a \in A$ with $\pi_A(a) \in \psi_{t_X}(\mathscr{K}(X))$,  we have $a \in J_X$ and $\pi_A(a)=\psi_{t_X}(\phi_X(a))$. Indeed, if $\pi_A(a) \in \psi_{t_X}(\mathscr{K}(X))$, then there exists $k \in \mathscr{K}(X)$ with $\pi_A(a) = \psi_{t_X}(k)$ and
	$
	t_X(\phi_X(a)\xi) = \pi_A(a) t_X(\xi) = \psi_{t_X}(k) t(\xi) = t_X(k\xi),
	$
	for $\xi \in X$. Since $t_X$ is injective,
	$
	\phi_X(a)\xi = k \xi$,  for all $\xi \in X$. Thus $  \phi_X(a) = k.
	$
	Now if $b \in Ker\, \phi_X$,
	\[
	\pi_A(ab) =  \psi_{t_X}(k) \, \pi_A(b) = \psi_{t_X}(\phi_X(a)) \, \pi_A(b) = \psi_{t_X}(\phi_X(a)\,\phi_X(b)) = 0,
	\]
	thus $ab = 0$, since $\pi_A$ is injective and $a \in J_X$.

Next, we construct the universal $C^*$-algebras associated to pairs  of correspondences. 
	Let $\uniRepT$ be the universal representation of $(X,Y)$ and define the corresponding Toeplitz $C^*$-algebra by $\mathscr{T}_{X,Y} = C^*\uniRepT$. Then by  universality, for every representation $\ordRep$,  there is a surjection $\rho : \mathscr{T}_{X,Y} \rightarrow C^*\ordRep$ with  $\pi_X = \rho\, \circ \, \overline{\pi}_X, \, \pi_Y = \rho\, \circ \, \overline{\pi}_Y, \, t_X = \rho\, \circ \, \overline{t}_X$, and $t_X = \rho\, \circ \, \overline{t}_X$.

\vspace{.2cm}
\indent {\bf Definition A.2.}
The $C^*$ algebra $\mathcal{O}_{X,Y}$ of the correspondences $X$ and $Y$ is defined to be $\mathcal{O}_{X,Y} := C^*\uniRepO$, where $\uniRepO$ is a covariant representation.

\vspace{.2cm}
For each representation $\ordRep$ of $(X,Y)$ there exists a natural surjection $\rho : \mathcal{O}_{X,Y} \rightarrow C^*\ordRep$ such that $\pi_A = \rho \circ \hat{\pi}_A$ , $\pi_B = \rho \circ \hat{\pi}_B$ , $t_X = \rho \circ \hat{t}_X$ and $t_Y = \rho \circ \hat{t}_Y$. 
 For C$^*$-algebras $A$ and $B$ and $A$-$B$ and $B$-$A$-correspondences $X$ and $Y$, the Hilbert $A$-module $ \mathscr{F}_A(X,Y)$ is defined as the direct sum of  Hilbert $A$-modules $\XtY{2n}$ and $\YtX{2n+1}$, for $n \geq 0$, and is called the $A$-{Fock space} of $(X,Y)$. The $B$-{Fock space}  $ \mathscr{F}_B(X,Y)$ is defined similarly as the direct sum of  Hilbert $B$-modules $\XtY{2n+1}$ and $\YtX{2n}$, for  $n \geq 0$.
We define the canonical $*$-homomorphism $$\phi_{A}^{\infty} : A \rightarrow \mathscr{L}(\mathscr{F}_A(X,Y)) \oplus \mathscr{L}(\mathscr{F}_B(X,Y));\ \phi_{A}^{\infty}(a) := \sum_{i=0}^{\infty} \phi_{(X,i)}(a),\ \ (a\in A),$$ and linear map $$
\tau_{X}^{\infty} : X \rightarrow \mathscr{L}(\mathscr{F}_A(X,Y)) \oplus \mathscr{L}(\mathscr{F}_B(X,Y));\ \tau_{X}^{\infty}(\xi) := \sum_{m=0}^{\infty} \ordTau{(Y,m)}{(X,1)}(\xi),\ \ (\xi\in X).$$ The $*$-homomorphism $\phi_{A}^{\infty}$ and linear map $\tau_{Y}^{\infty}$ are defined similarly. We have,
\begin{align*}
	\tau_{X}^{\infty}(\xi)^*\,\tau_{X}^{\infty}(\eta) &= \sum_{m,n = 0}^{\infty} \ordTau{(Y,m)}{(X,1)}(\xi)^*\ordTau{(Y,n)}{(X,1)}(\eta) = \sum_{m = 0}^{\infty} \ordTau{(Y,m)}{(X,1)}(\xi)^* \,  \ordTau{(Y,m)}{(X,1)}(\eta) &\\&= \sum_{m = 0}^{\infty} \phi_{(Y,m)}(\la\xi,\eta\ra)= \phi_{B}^{\infty}(\la\xi,\eta\ra),
\end{align*}
 and	
\begin{align*}
	\phi_{A}^{\infty}(a) \, \tau_{X}^{\infty}(\xi) &= \sum_{i,m = 0}^{\infty}\phi_{(X,i)}(a) \ordTau{(Y,m)}{(X,1)}(\xi) = \sum_{i = 0}^{\infty}\phi_{(X,i)}(a) \ordTau{(Y,i)}{(X,1)}(\xi) &\\&= \sum_{i = 0}^{\infty} \ordTau{(Y,i)}{(X,1)}(\phi_{(X,i)}(a)\xi)=  \tau_{X}^{\infty}(\phi_{(X,i)}(a)\xi),
\end{align*}
for  $\xi , \eta \in X$, and $a \in A$. Also, $\phi_{(X,0)}: B \rightarrow \mathscr{L}(\XtY{0})$ is an isomorphism onto $\mathscr{K}(\XtY{0})$, and likewise for $\phi_{(0,Y)}$. Summing up, the Fock representation $\fockRep$ is an injective representation of $(X,Y)$ on $\mathscr{L}(\mathscr{F}_A(X,Y)) \oplus \mathscr{L}(\mathscr{F}_B(X,Y))$.
The corresponding linear map $\tau_{\infty}^{(X,n)} : \XtY{n} \rightarrow \mathscr{L}(\mathscr{F}_A(X,Y)) \oplus \mathscr{L}(\mathscr{F}_B(X,Y))$ satisfies 
$\tau_{\infty}^{(X,n)}(\xi) = \sum_{m = 0}^{\infty} \ordTau{(Y,m)}{(X,n)}(\xi)$, for $ \xi \in \XtY{n}$ and  odd $n$, and likewise for even $n$. Moreover, for  $a \in J_X$,
and $\xi , \eta \in X$, $\psi_{\tau_{X}^\infty}(\theta_{\xi , \eta}) = \sum_{m = 1 }^{\infty} \theta_{\xi , \eta} \otimes {\rm id}_{m-1}$, and for $k \in \mathscr{K}(X)$,  $\psi_{\tau_{X}^\infty}(k) = \sum_{m = 1}^{\infty} k \otimes {\rm id}_{m-1}$. Therefore,
\[
\phi_{A}^{\infty}(a) - \psi_{\tau_{X}^\infty}(\phi_X(a)) = \sum_{i = 0}^{\infty} \phi_{(X,i)}(a) - \sum_{m = 1}^{\infty} \phi_{X}(a) \otimes {\rm id}_{m-1} = \phi_{(X,0)}(a),
\]
since  $\phi_{(X,i)}(a) = \phi_X(a) \otimes {\rm id}_{m-1}$.
In particular, if $\phi_{A}^{\infty}(a) \in \psi_{\tau_{X}^\infty}(\mathscr{K}(X)), $  then $a \in J_X$ and $\phi_A^\infty(a) = \psi_{\tau_{X}^\infty}(\phi_X(a))$, thus,  $\phi_{(X,0)}(a) = 0$, and so $a = 0$, as $\phi_{(X,0)}$ is injective.

For Hilbert $J_X$ and $J_Y$-modules $\mathscr{F}_A(X,Y)J_X$ and $\mathscr{F}_B(X,Y)J_Y$, it follows from \cite[Corollary 1.4]{ka3} that,
\[
\mathscr{K}(\mathscr{F}_A(X,Y)J_X) = \overline{span}\{\theta_{\xi a,\eta} \in \mathscr{K}(\mathscr{F}_A(X,Y))\ | \ \xi ,\eta \in \mathscr{F}_A(X,Y) , a \in J_X \},
\]
which is a closed ideal of $\mathscr{L}(\mathscr{F}_X(X,Y)).$ If $k \in \mathscr{K}(\mathscr{F}_A(X,Y))$ and $\la\xi , k\eta\ra \in J_X$, for each $\xi , \eta \in \mathscr{F}_A(X,Y)$, then $k \in \mathscr{K}(\mathscr{F}_A(X,Y)J_X)$  \cite[Lemma 1.6]{ka3}. Moreover, for $n$ even and $m$ odd,  $\xi \in \XtY{n}$ $\eta \in \YtX{m}$, and $a \in J_X$,
$
	\theta_{\xi a , \eta} = \tau_\infty^{(X,n)}(\xi) \phi_{(X,0)}(a) \tau_\infty^{(Y,m)}(\eta)^* = \tau_\infty^{(X,n)}(\xi)(\phi_A^\infty(a) - \psi_{\tau_{X}^\infty}(\phi_X(a))) \tau_\infty^{(Y,m)}(\eta)^*$ is in $C^*\fockRep$, and likewise for other cases. In particular, there is a canonical continuous embedding  $$\mathscr{K}(\mathscr{F}_A(X,Y)J_X) \subseteq C^*\fockRep.$$

In covariant representations, we usually have canonical choices for the C$^*$-algebra $D$. One of these choices is
\[
D := \LL(\F_A(X,Y)) / \K(\F_A(X,Y)J_X) \oplus \LL(\F_A(X,Y)) / \K(\F_A(X,Y)J_X).
\]
Let
$
\sigma : \LL(\F_A(X,Y)) \oplus \LL(\F_B(X,Y)) \rightarrow D,$ be
defined as the direct sum of the corresponding quotient maps. For  $\phi_A = \sigma \circ \phi_A^\infty $,  $\phi_B = \sigma \circ \phi_B^\infty $, $\tau_X = \sigma \circ \tau_X^\infty$, and  $\tau_Y = \sigma \circ \tau_Y^\infty$, and $\fockcovRep$ is a covariant representation in $D$.

In order to show that there is an injective covariant representation, let us consider the $*$-homomorphism $f_X: \LL(\XtY{n})\to \XtY{n+1}; \ T\mapsto T \otimes {\rm id}$, and likewise for $f_Y$, and observe that the restriction of $f_X$  to $\K ((\XtY{n}) J_X)$ is  injective, for $n$ even, and the same  for the restriction  of $f_Y$ to $\K ((\XtY{n}) J_Y)$, for $n$  odd: Take $k \in \K((\XtY{n})J_X)$ with $k \otimes {\rm id} = 0$, then  $
	 \la\eta , \phi_X(\la\xi,k \xi'\ra)\eta'\ra= \la\xi \otimes \eta , (k \otimes {\rm id})(\xi' \otimes \eta')\ra=0,$
	for  $\xi , \xi' \in \XtY{n}$ and $\eta, \eta' \in X$, that is,  $\phi_X(\la\xi,k \xi'\ra) = 0$. Since $k \in \K(\XtY{n} J_X)$,  $\la\xi , k\xi'\ra \in J_X$, thus $\la\xi , k\xi'\ra = 0$, as  $\phi_X$ is injective on $J_X$. Therefore, $k = 0$. The other case is proved similarly.

\vspace{.2cm}
\indent {\bf Theorem A.3.}
\emph{The covariant representation $\fockcovRep$ is injective}.
\begin{proof} Let  $P_{(X,n)} \in \LL(\F_A(X,Y)) \oplus \LL(\F_B(X,Y))$ be the orthogonal  projection onto the direct summand $\XtY{n}$, and take $a \in A$ with $\phi_A(a) = 0$. Then,
$
		\phi_{(X,n)}(a) = P_{(X,n)} \phi_A^\infty(a)  P_{(X,n)}$ is in $P_{(X,n)} \K (\F_A(X,Y) J_X) P_{(X,n)}= \K ((\XtY{n}) J_X),$
	for each $n$. For $n = 0$, this yields $a \in J_X$. Since $\phi_{(X,1)} = \phi_X$ is injective on $J_X$,  $\| a \| = \| \phi_X (a) \|$. Moreover,   $$\| \phi_{(X,n)} (a) \| = \| \phi_{(X,n)}(a) \otimes {\rm id} \| = \| \phi_{(X,n+1)} (a) \|,$$ for each $n\geq 1$, thus $\| \phi_{(X,n)} (a) \| = \|a\| $ . We claim that $\lim_{n \rightarrow \infty} \| P_n k P_n \| = 0$, for each $k$  in $\K(\F_A(X,Y))$ or $ \K(\F_B(X,Y))$. If this is the case, we immediately conclude that  $a = 0$. Similarly, for each $b \in B $,  $\phi_B(b) = 0$ yields $ b = 0$. To prove the claim, we only need to check the case that $k = \theta_{\xi , \eta}$, where $\xi , \eta \in \F_A(X,Y)$, with  $\xi \in \XtY{n_1}$ and $\eta \in \XtY{n_2}$, for some $n_1, n_2 \geq 1$. But in this case, by a straightforward calculation,  $\| P_n \theta_{\xi , \eta} P_n \|\to 0, $ as $n\to\infty$.
\end{proof}

\vspace{.2cm}
\indent {\bf Corollary A.4.}
$(i)$ In the universal representation $\uniRepT$ of $(X,Y)$,  $\overline{\pi}_A(a)$ is in $\psi_{\overline{t}_X}(\K(X)) $ only if $a=0$. Similar conclusion holds for $Y$ and $B$.

$(ii)$ The universal representation $\uniRepO$ of $(X,Y)$ in $\mathcal{O}_{X,Y}$ is injective.

\subsection*{{\rm A.2.} groupoid crossed products}  This subsection is devoted to the study of  groupoid crossed products in our context. We show how to interpret these as $C^*$-algebras of certain product systems over the acting groupoid. 

Let $G$ be an \'{e}tale groupoid and $p:A\to \Gz$ be an upper-semicontinuous $C^*$-bundle over $G$  with
enough sections, that is, for each $u\in \Gz$, and $a\in A_u:=p^{-1}(u)$, there is a section $f$ (i.e., a continuous right inverse of $p$) with $f(u) = a$. By Hofmann Dupr\'{e}–Gillete Theorem \cite[Proposition 3.3]{mw} the $C^*$-algebra of sections of $A$, vanishing at infinity, is a $C_0(\Gz)$-algebra with fibres $A_u$. With a slight abuse of notation, we denote this $C^*$-algebra again by $A$. An action $\alpha$ of $G$ on $A$ is a bundle of maps $(\alpha_g)_{g\in G}$ such that,

$(i)$ $\alpha_g: A_{s(g)}\to A_{r(g)}$ is an isomorphism,

$(ii)$ $\alpha_{gh}=\alpha_g\circ \alpha_h$,

$(iii)$ $a\mapsto \alpha_g(a)$ is continuous,

\noindent for each  $g,h\in G$ with $r(h)=s(g)$. Consider the pull-back $r^*A$ and for compactly supported sections $f, f_1,f_2: G\to r^*A$, define,
$$f_1*f_2(g):=\int_{G} f_1(h)\alpha_h(f_2(h^{-1}g))d\lambda^{r(g)}(h),\ \ f^*(g):=\alpha_g\big(f(g^{-1})^*\big).$$
The $I$-norm is  defined on the $*$-algebra of these sections by $$\|f\|= \max\big\{\sup_{u}\int_G\|f(g)d\lambda^{u}(g), \sup_{u}\int_G\|f(g)d\lambda_{u}(g)\big\},$$ and the full $C^*$-norm is then defined by,
$$\|f\| :=
\sup\{\|L(f)\| : \ L\in{\rm Rep}\big(C_c(G; r^*A)\big)\},$$
where ${\rm Rep}\big(C_c(G; r^*A)\big)$ consists if $\|\cdot\|_I$-decreasing $*$-representations of $C_c(G; r^*A)$. 
The crossed product $A\rtimes_\alpha  G$ is the completion of $C_c(G, r^*A)$ in this norm. Fially, if $\pi$ is any faithful representation of $A$, the reduced crossed product $A\rtimes^r_{\alpha}  G:=(A\rtimes_\alpha  G)/\ker{\rm Ind \pi}$, and this is independent of the choice of $\pi$. 
   
If $\Gz*H$ is a Borel Hilbert Bundle over $\Gz$, the corresponding isomorphism
groupoid is 
$${\rm Iso}(\Gz* H ) := \{(u, V, v) : V\in\mathscr{U}(H_v,H_u)\}$$
with the weakest Borel structure such that
$$(u, V, v)\mapsto \langle V f_1(v)|f_2(u)\rangle$$
is Borel for all $f_1, f_2\in B(\Gz* H )$. A  representation of $G$ is a triple $(\mu, \Gz* H, U)$ consisting of a quasi-invariant
measure $\mu$ on $\Gz$, a Borel Hilbert bundle $\Gz * H$  and a Borel
homomorphism $\hat U : G \to {\rm Iso}(\Gz* H )$ such that
$\hat U_g = (r(g), U_g, s(g))$, for $g\in G$. We usually drop the hat and write $U$ for $\hat U$.  A covariant representation $(\pi,\mu, \Gz*H ,U)$ 
consists of a unitary representation $(\mu, \Gz* H ,U)$ and a $C_0(\Gz)$-linear
representation $\pi : A\to  \mathscr{L}(L^2(\Gz*H ,\mu))$ with direct integral decompositions, 
$$L^2(\Gz*H ,\mu):=\int_{\Gz} H_ud\mu(u),\ \ \pi=\int_{\Gz} \pi_ud\mu(u),$$
 such that, $$(\pi(a)\xi)(u)=\pi_u(a(u))\xi_u,\ \ (u\in\Gz, a\in A),$$
where $a\in A$ is identified with the corresponding section in $C_0(\Gz, A)$ with $a(u)\in A_u$,  
such that 
$U_g\pi_{s(g)}(b) = \pi_{r(g)}(\alpha_g(b))U_g,$ for $\nu$-a.a. $g\in G$ and all $ b\in A_{s(g)}$. In this case, we simply say that $(\pi, U)$ is a covariant pair. 
The crossed product $A \rtimes_{\alpha} G$ is the
universal $C^*$-algebra generated by a covariant pair $(i^{\alpha}_A, i^{\alpha}_G)$, with $i^{\alpha}_A$ and  $i^{\alpha}_G$ having range inside the multiplier algebra $M(A\rtimes_{\alpha} G)$, defined by,
$$(i^{\alpha}_A(a)f)(g):=a(r(g))f(g),\  (i^{\alpha}_G(g)f)(h)=\alpha_g\big(f(g^{-1}h)\big),\ \ (h\in G^{r(g)}),$$
for $f\in C_c(G, r^*A), a\in A, g\in G$
 \cite[Lemme 4.6]{r2} (cf., \cite[Theorem 7.12]{mw}). When $G$ is discrete, a typical element of $A \rtimes_{\alpha} G$ is then written as 
 $i^{\alpha}_A(a)i^{\alpha}_G(g)$, for $a\in A$, $g\in G$. 
 
Let $(G,P)$ be a quasi-lattice ordered groupoid, and let $\alpha$ be an action of $G$ on a $C_0(\Gz)$-algebra $A$. 
Consider the product
system $X$ over $P^{\rm op}$ (opposite of $P$) defined as follows:
for $p \in P^{\rm op}$, let $X_p:=A_{r(p)}$, regarded as an 
$A_{s(p)}$-$A_{r(p)}$-correspondence with, 
$$
\langle x,y \rangle_p := x^*y, \quad \phi_p(a)(x) := \alpha_p(a)x,
\quad x \cdot a = xa,
$$
for  $x,y \in X_p$ and $a \in A_{s(p)}$,
and define the isomorphisms $X_p \otimes_{A_{s(q)}} X_q \to X_{qp}$ by extending the map $x
\otimes_{A_{s(q)}} y \mapsto \alpha_q(x)y$.
The left action $\phi_p$ satisfies $\phi_p(A_{s(p)})\subseteq \mathscr{K}_{A_{s(p)}}(X_p)$, 
and   $X_p$ is essential as a left $A_{s(p)}$-module, that is, $\phi_p(A_{s(p)})X_p$ is total set in $X_p$, for  $p\in  P^{\rm op}$. It follows that, for each $p,q\in P^{\rm op}$ with $r(p)=s(q)$ and $p\vee q<\infty$, and each  $S\in \mathscr{K}_{A_{s(p)}}(X_p)$, $T\in\mathscr{K}_{A_{s(q)}}(X_p)$, 
$$\iota_p^{p\vee q}(S)\iota_q^{p\vee q}(T)=(S\otimes{\rm id}_{s(q)})({\rm id}_{s(p)}\otimes T)\in\mathscr{K}_{A_{s(p\vee q)}}(X_{p\vee q}),$$ 
since both terms on the right hand side of the above equality are compact by \cite[Corollary 3.7]{Pimsner1997}. Thus,  $X$ is compactly aligned.

Next, let us consider the coaction $\widehat{\alpha}$ of $G$ on $A
\rtimes_{\alpha} G$ given by
$\widehat{\alpha}(i^{\alpha}_A(a)i^{\alpha}_G(g))
= i^{\alpha}_A(a)i^{\alpha}_G(g) \otimes i_G(g)$
for  $a \in A$ and $g \in G$, and by the universal properties, the crossed product $A
\rtimes_{\alpha} G$ is isomorphic to the full
cross-sectional algebra of the resulting Fell bundle over $G$. The
reduced crossed product $A \rtimes^r_{\alpha} G$ is the
reduced cross-sectional algebra of the same bundle, and is a quotient of $A \times_{\alpha} G$ by $\ker{\rm Ind \pi}$, for any faithful representation $\pi$ of $A$.
We write $(\lambda_A^{\alpha}, \lambda_G^{\alpha})$ for the generating covariant 
representation with values in $A \rtimes^r_{\alpha} G$.
Then the normalisation $\widehat{\alpha}^n$  of $\widehat{\alpha}$
is a normal coaction of $G$ on $A \rtimes^r_{\alpha} G$.

\vspace{.3cm}
\indent {\bf Proposition A.5.}
	Let $G=P\cup P^{-1}$ is discrete, and $P$ is directed. Let $G$ act on a $C_0(\Gz)$-algebra $A$ by $\alpha$ and $X$ be the operator system defined as above. Then,
	
	$(i)$ there
	is an isomorphism $\phi \colon A \rtimes^r_{\alpha} G \to
	\NO{X}^\reduced$, mapping
	$\lambda_G^{\alpha}(p)^* \lambda_A^{\alpha}(x)$ to
	$j^\reduced_{X }(x)$, for $x \in X_p := A_{r(p)}$, satisfying
	$\delta^n\circ\phi=(\phi\otimes \id_{C^*(G)}) \circ
	\widehat{\alpha}^n$,
	
	$(ii)$ 	there is an isomorphism $A \times_{\alpha} G \to \NO{X}$, mapping
	takes $i_G^{\alpha}(p)^* i_A^{\alpha}(x)$ to
	$j_{X }(x)$, for $p\in P$ and $x \in X_p := A_{r(p)}$.
\begin{proof}
	Let us  first observe that $A \rtimes^r_{\alpha} G$ is
	generated by a Nica covariant representation of $X$. 
For $p \in P$, consider the map,
	\[
	\psi_p \colon X_p \to A \rtimes^r_{\alpha} G; \ \ \psi_p(x) := \lambda_G^{\alpha}(p)^* \lambda_A^{\alpha}(x),\quad (x \in A_{r(p)}).
	\]
	For $p,q\in P^{\op}$  and  $x,y \in X_p,$ $z
	\in X_q$, if  $r(p)=s(q)$,
	\begin{align*}
		\psi_p(x)\psi_q(z)
		&= \lambda_G^{\alpha}(p)^* \lambda_A^{\alpha}(x) \lambda_G^{\alpha}(q)^* \lambda_A^{\alpha}(z) \\
		&= \lambda_G^{\alpha}(p)^* \lambda_G^{\alpha}(q)^* \lambda_G^{\alpha}(q)
		\lambda_A^{\alpha}(x) \lambda_G^{\alpha}(q)^* \lambda_A^{\alpha}(z) \\
		&= \lambda_G^{\alpha}(qp)^* \lambda_A^{\alpha}(\alpha_q(x))\lambda_A^{\alpha}(z) = \psi_{qp}(xz),
	\end{align*}
	and since $x\in A_{r(p)}$, if $r(p)\neq s(q)$, $x(s(q))=0$, hence, for $f\in C_c(G, s^*A)$ and $g\in G_{s(q)}$,
	\begin{align*}
		  i_A^{\alpha}(x) i_G^{\alpha}(q)^*(f)(g)&=x(s(q))\alpha_q(f(gq^{-1}))=0,
	\end{align*}
thus, for the regular representation $\lambda: A\rtimes_\alpha G\to A\rtimes_\alpha^r G$ (indeed, for its extension to the multiplier algebras),  
	\begin{align*}
	\lambda_A^{\alpha}(x) \lambda_G^{\alpha}(q)^*&=\lambda( i_A^{\alpha}(x) i_G^{\alpha}(q)^*)=0, 
\end{align*}
therefore, 	$\psi_p(x)\psi_q(z)=0$. On the other hand, 
	$$\psi_p(x)^* \psi_p(y)
		= \lambda_A^{\alpha}(x)^* \lambda_G^{\alpha}(p) \lambda_G^{\alpha}(p)^* \lambda_A^{\alpha}(y)
		= \lambda_A^{\alpha}(x^*y)
		= \psi_{r(p)}(\langle x,y \rangle_{p}).
	$$	
Note that here we work with the quasi-ordered lattice groupoid $(G^{\rm op}, P^{\rm op})$, and so  replaced $C_c(G, r^*A)$ with $C_c(G, s^*A)$.
	
	$(i)$ Each $X_p$ is essential, the left action of $A_{s(p)}$ on each $X_p$
	is injective and by compacts, and $P$ is directed.
	Let us also observe that $\psi^{(p)} \circ \phi_p =
	\lambda_A^{\alpha}$,  for  $p\in P$. Take an approximate identity $(e_k)_{k}$
	of $A_{r(p)}$, then $\phi_p(a)=\lim_{k}
	\alpha_p(a) \otimes e_k^*$, hence,
	\[
	\psi^{(p)}(\phi_p(a))
	= \lim_{k} \psi_p(\alpha_p(a)) \psi_p(e_k)^*
	= \lim_{k } \lambda_G^{\alpha}(p)^* \lambda_A^{\alpha}(\alpha_p(a) e_k^*)
	\lambda_G^{\alpha}(p)
	= \lambda_A^{\alpha}(a),
	\]
	for $p\in P$ and $a\in X_p$, as claimed. By an argument similar to \cite[Proposition 5.1]{SY}, it follows that $\psi$ is CNP-covariant.  
	
	The image of $\psi$ generates $A \rtimes^r_{\alpha} G$, as the latter is spanned by elements of the form
	$\lambda_G^{\alpha}(g) \lambda_A^{\alpha}(a)$, and  $\psi$ is injective as $\lambda_A^{\alpha}$
	is so. Since the left action on each fibre of $X$ is injective,  $X$ is
	$\tilde\phi$-injective. Now, the normalisation $\widehat{\alpha}^n$
	satisfies $\widehat{\alpha}^n (\psi(x)) = \psi(x) \otimes
	i_G(p),$ for  $p \in P$ and $x \in X_p$,
	and Theorem~\ref{thm:projective property} gives an epimorphism
	$\phi \colon A \rtimes^r_{\alpha} G \to \NO{X}^\reduced$
	with $\phi\circ\psi=j_{X}^r$.
	Finally, by Corollary~\ref{inj}(3) $\phi$ is injective.
	
	$(ii)$ 
	Since the isomorphism $\phi \colon A \rtimes_{\alpha}^r G \to
	\NO{X}^\reduced$ 
	intertwines the coactions, the
	corresponding Fell bundles are isometrically isomorphic. Therefore, the full
	cross-sectional algebras
	$A \rtimes_{\alpha} G$ and $\NO{X}$ are canonically isomorphic.
\end{proof}

\subsection*{{\rm A.3.} $C^*$-algebra of multilayered topological $k$-graphs} In this subsection, we 
illustrate our construction by constructing the $C^*$-algebra of multilayered topological $k$-graphs. This is done by slightly extending Yeend's path groupoid.
In what follows, we write $\NN^k$ to denote the set of $k$-tuples of non-negative (possibly zero) integers. 

Following \cite{y}, we define  a \emph{multilayered topological
	$k$-graph} as a triple $( \Lambda, {\rm deg}, d )$ consisting of: (1)~ a
small category $\Lambda$  endowed with a second countable
locally compact Hausdorff topology under which the composition map
is continuous and open, the range map $r$ is continuous and the source
map $s$ is a local homeomorphism; (2)~a divider $d: \mathbb N^k\to \ZZ_2^k$; and (3)~a continuous functor $\dd \colon \Lambda
\to \ZZ_2^{k}\rtimes_d\NN^k$, called the degree map, satisfying the
factorisation property: if $\dd(\lambda) = (i, m+n)$, then there exist
unique $\mu,\nu$ with $\dd(\mu) = (i,m)$, $\dd(\nu) =(i+d(m), n)$ such that $\lambda =
\mu\nu$.

Elements of $\Lambda$ are called paths, and paths of degree $0$
are called vertices. For $i\in \ZZ_2^k$ and $m \in \NN^{k}$, we put $\Lambda_i^{m}
:= \dd^{-1} (i,m)$. If $0 \le m \le n \le p$ in $\NN^{k}$ and
$\lambda \in \Lambda_i^{p}$ then we write $\lambda (i,m,n)$ for the
unique path in $\Lambda_{i+d(m)}^{n-m}$ such that $\lambda = \mu \lambda
(i,m,n) \nu$, where $\mu \in \Lambda_i^{m}$ and $\nu \in
\Lambda_{i+d(n)}^{p-n}$. For $0 \le m \le p$ in $\NN^{k}$ and $\lambda
\in \Lambda_i^{p}$, we write $\lambda (i,m)$ for the source $s (\lambda (i,0,m))
= \lambda (i,m,m)$.   For $i\in\ZZ_2^k$ and $m,p \in \NN^{k}$ with $m \le p$, the
map $$\sigma_i^{m} \colon \Lambda_i^{p} \to \Lambda_{i+d(m)}^{p-m}; \ \sigma_i^{m} (\lambda) := \lambda (i,m,p)$$ is continuous. For $U, V
\subset \Lambda$, we write
\[
UV := \{\, \lambda\mu \mid \lambda\in U,\ \mu\in V,\ s(\lambda)=r(\mu)\,\}.
\]
For $U \subseteq \Lambda_i^{p}$ and $V \subseteq \Lambda_j^{q}$,
\[
U \vee V := U \Lambda_{i+d(p)}^{(p \vee q)-p} \cap V \Lambda_{j+d(q)}^{(p \vee q)-q}
\]
is the set of minimal common extensions of paths from
$U$ and $V$. A multilayered topological $k$-graph $( \Lambda, \dd, d )$ is
called compactly aligned if $U \vee V$ is compact whenever  $U$
and $V$ are so.

Let $( \Lambda, \dd, d)$ be a multilayered topological
$k$-graph. Define $A_i := C_{0} ( \Lambda_i^{0} )$, for $i\in\ZZ^k_2$. For  $n \in
\NN^{k}$ let $X_{(i,n)}$ be the  $A_i$-$A_{i+d(n)}$-correspondence, constructed as the completion of $C_c(\Lambda_i^n)$ with operations,
\[
\langle f, g \rangle_{i}^{n} ( v ) = \textstyle{\sum_{\eta \in \Lambda_i^{n}v}} \bar f ( \eta ) g (\eta),
\quad
\phi_a(f)( \lambda ) = a (r(\lambda))
f(\lambda), \quad f \cdot b ( \lambda ) = 
f(\lambda) b(s(\lambda)),
\]
for $v\in\Lambda_{i+d(n)}^0, a\in A_i, b\in A_{i+d(n)},$ and $f,g\in C_c(\Lambda_i^n)$. As in \cite[Section~1]{ka1}, one could see that  $X_{(i,n)}$ is a subspace of $C_0(\Lambda_i^n)$.

	For $f \in X_{(i,m)}$ and $g \in X_{(i+d(m),n)}$, let us define, $$fg \colon
	\Lambda_i^{m+n} \to \CC;\ \ (fg)(\lambda) := f(\lambda (i,0,m))
	g(\lambda (i,m,m+n)).$$ To show that the family of 
	correspondences $ X_{(i,n)}$
	is a product system over $P:=\ZZ_2^k\rtimes_d\NN^{k}$, let us first observe that for $f_1, f_2 \in X_{(i,m)}$ and $g_1, g_2 \in X_{(i+d(m),n)}$,  
	\begin{align*}
		\langle f_1g_1, f_2g_2 \rangle_{i}^{m+n} ( v )
		&= \textstyle{\sum_{\nu \in \Lambda_{i+d(m)}^{n} v}}  \textstyle{\sum_{\mu \in \Lambda_i^{m} r( \nu )}}
		\bar f_1(\mu) f_2(\mu) \bar g_1(\nu) g_2(\nu)
		\\&= \textstyle{\sum_{\nu \in \Lambda_{i+d(m)}^{n} v}} \langle f_1, f_2 \rangle_{i}^{m} ( r( \nu ) ) \bar g_1(\nu) g_2(\nu)
		\\&= \langle g_1, \langle f_1,f_2 \rangle_{i}^{m} \cdot g_2 \rangle_{i+d(m)}^{n} ( v ),
	\end{align*}
	for $v \in \Lambda_{i+d(m)+d(n)}^{0}$. In particular, for $f_2 = f_1 = f$ and $g_2 = g_1 = g$,  $fg
	\in X_{m+n}$ by definition of $X_{(i, m+n)}$.
	Further, since $$\langle g_1, \langle f_1,f_2 \rangle_{i}^{m}
	\cdot g_2 \rangle_{i+d(m)}^{n} = \langle f_1 \otimes_{A_{i+d(m)}} g_1, f_2
	\otimes_{A_{i+d(m)}} g_2 \rangle_{A_i},$$ the map $f \otimes_{A_{i+d(m)}} g \mapsto
	fg$ extends to an isometric adjointable operator mapping $X_{(i,m)}
	\otimes_{A_{i+d(m)}} X_{(i+d(m),n)}$ into $X_{(i, m+n)}$. This map  has dense range, since the span of the set $\{\, fg \mid f \in C_{c} (\Lambda_i^{m}), g \in C_{c}
	(\Lambda_{i+d(m)}^{n}) \,\}$ in $C_{c} (\Lambda_i^{m+n})$ is a subalgebra
	of $C_{0} (\Lambda_i^{m+n})$, and so  uniformly dense in
	 $X_{(i,m+n)}$ by an argument as in \cite[Lemma 1.26]{ka1}. It follows that the map $X_{(i,m)}
	 \otimes_{A_{i+d(m)}} X_{(i+d(m),n)}\to X_{(i, m+n)}$, constructed above, is surjective. 

	For $i\in\ZZ_2^k$ and $m \in \NN^{k}$, let us  denote by $F_i^{m}$ the set of functions
	$f \in C_{c} (\Lambda_i^{m})$ such that the source map restricts
	to a homeomorphism of $\supp(f)$. By definition, for
	each $f \in F_i^m$ and $v \in \Lambda_i^0$ with $\Lambda_i^{m} v$ non-empty, there is an  element $\lambda_{f,i, v}\in \Lambda_i^m v$ such that $f(\mu) = 0$, for all $\mu \in \Lambda_i^m v \backslash
	\{\lambda_{f,i,v}\}$. A partition of unity argument, using that the source map in
	$\Lambda$ is a local homeomorphism, shows that each element of
	$C_{c} (\Lambda_i^{m})$ is a finite sum of elements of $F_i^m$, that is, $F_i^{m}$ is a total set in $X_{(i,m)}$.

Next, given
	$i\in \ZZ_2^k$ and $m,n \in \NN^{k}$,  for $f \in X_{(i,m)}$, $g \in F_i^{m}$, $c \in X_{(i, m
		\vee n)}$, and $\xi \in \Lambda_i^{m \vee n}$, observe that,
	\[
	\big(\iota_{(i,m)}^{(i, m \vee n)} (f \otimes g^{*}) (c)\big)(\xi)
	=
	f(\xi(i,0,m)) \bar g(\lambda_{g, \xi(i,m)}) c(\lambda_{g, \xi(i,m)}\xi (i+d(m),m,m \vee n)).
	\]
Indeed, by density, we only need to check this for the case $c=c_1c_2$, for $c_1 \in X_{(i,m)}$ and $c_2 \in X_{(i+d(m), m \vee n-m)}$, for which we have,
	\begin{align*}
		\big(\iota_{m}^{m \vee n} (f \otimes g^{*}) (c_1c_2)\big)(\xi)
		&= f(\xi (i,0,m)) \textstyle{\sum_{\mu \in \Lambda_i^{m} \xi (i,m)}} \bar g(\mu) c_1(\mu) c_2(\xi (i+d(m),m,m \vee n)) \\
		&= f(\xi (i,0,m)) \bar g(\lambda_{g, \xi(i,m)}) c_1(\lambda_{g, \xi(i,m)}) c_2(\xi(i+d(m),m,m \vee n)) \\
		&= f(\xi (i,0,m)) \bar g(\lambda_{g, \xi(i,m)}) c_1c_2(\lambda_{g, \xi(i,m)}\xi(i+d(m),m,m \vee n)),
	\end{align*}
as claimed.

\indent {\bf Lemma A.6.}
Let $i\in\ZZ_2^k$ and $m,n\in\NN^k$, 

$(i)$ for $T\in\Ka(X_{(i,m)})$, the following function vanishes at infinity $$\chi_T:\Lambda_i^m\to\RR; \ \ \chi_T(\lambda)
	=\sup_f\abs{T(f)(\lambda)},$$ 
	where the supremum runs over those $f\in F_i^m$ with uniform norm at most one.
	
\noindent If further  $\Lambda$ is compactly aligned,

$(ii)$ for each $f_m \in F_i^{m}$ and $f_n \in
F_{i+d(m)}^{n}$, $C:=\supp (f_m) \vee \supp (f_n) \subseteq
\Lambda_i^{m \vee n}$ is compact,

$(iii)$ for $p\in\{m,n\}$, $f_m,g_m \in F_i^{m}$ and $f_n, g_n \in
F_{i+d(m)}^{n}$, $C:=\supp (f_n) \vee \supp (f_m)$,  a finite relatively compact open cover  $(
V_{k}^{p})_k$  of
$\sigma_i^{p} (C)$ such that  $s$ restricts to a homeomorphism on each
${V_{k}^{p}}$,  a partition of unity $\phi_{k,i}^{p} \colon \sigma_i^{p} (C) \to
[0,1]$, subordinated to the family $(V_{k}^{p} \cap \sigma_i^{p} (C))_k$, and functions $\rho_{k,i}^{p} \colon
\Lambda_{i+d(p)}^{(m \vee n)-p} \to [0,1]$, satisfying
$\rho_{k,i}^p= \sqrt{\phi_{k,i}^{p}}$ on $\sigma_i^{p} (C)$, and $\rho_{k,i}^p=0$ off $V_k^p$, let $a_{k,j,i}, b_{j,i} \in C_{c} (\Lambda_i^{m \vee n})$
be defined by $a_{k,j,i} := g_{m}(\rho_{k,i}^{m} \cdot \langle f_{m}
\rho_{k,i}^{m},f_{n} \rho_{j,i}^{n} \rangle_{i}^{m \vee n})$ and
$b_{j,i} := g_{n} \rho_{j,i}^{n}$, then we have,
\begin{equation*}\label{compactprod}
	\iota_{(i,m)}^{(i,m \vee n)} (g_{m} \otimes f_{m}^{*}) \iota_{(i+d(m),n)}^{(i, m \vee n)} (f_{n} \otimes g_{n}^{*})
	= \textstyle{\sum_{j,k}} a_{k,j,i} \otimes b_{j,i}^{*}.
\end{equation*}

\begin{proof}
	$(i)$ By a standard density argument, it is enough to prove the claim for the rank one case $T=g\otimes h^*$ for $g,h\in X_{(i,m)}$. For  $f\in F_i^m$ of norm at most one, and $\lambda\in\Lambda_i^m$, 
	\begin{equation*}
		\abs{(g\otimes h^*)(f)(\lambda)}=\bigl\lvert g(\lambda)
		\textstyle{\sum_{\eta\in \Lambda_i^ms(\lambda)}}
		\overline{h(\eta)}f(\eta)\bigr\rvert
		=\abs{g(\lambda)\bar h(\lambda_{f,i,s(\lambda)}) f(\lambda_{f,i,s(\lambda)})}
		\le \abs{g(\lambda)}\norm{h}_\infty,
	\end{equation*}
	which vanishes at infinity along with $g$.  vanishes at infinity on $\Lambda^m$, so will $\chi_{g\otimes h^*}$.

$(ii)$ This follows from the assumption that  $\Lambda$ is compactly aligned. 

$(iii)$ Since the
	source map is injective on  $\supp(\rho_{k,i}^m)$, for $\mu \in
	\Lambda_{i+d(m)}^{(m \vee n)-m}$ with $\Lambda_{i+d(m)}^{m} r(\mu)$  non-empty, and each $j$,
		\begin{align*}
		(\textstyle{\sum_{k}}& \rho_{k,i}^{m} \cdot \langle f_{m} \rho_{k,i}^{m},f_{n} \rho_{j,i}^{n} \rangle_{i}^{m \vee n}) (\mu) \\
		&= \textstyle{\sum_{k}} \rho_{k,i}^{m} (\mu) \sum_{\alpha \in \Lambda_i^{m \vee n} s(\mu)}
		\bar f_{m} (\alpha (i,0,m)) \rho_{k,i}^{m} (\alpha (i+d(m),m,m \vee n)) (f_{n} \rho_{j,i}^{n})(\alpha) \\
		&= \textstyle \sum_{k} (\rho_{k,i}^{m} (\mu))^{2} \bar f_{m}(\lambda_{f_m, i, r(\mu)}) (f_{n} \rho_{j,i}^{n})(\lambda_{f_m, i+d(m), r(\mu)}\mu) \\
		&= \bar f_{m}(\lambda_{f_m, i, r(\mu)}) (f_{n} \rho_{j,i}^{n})(\lambda_{f_m, i+d(m), r(\mu)} \mu).
	\end{align*}
	Fix $c \in X_{(i, m \vee n)}$ and $\xi \in \Lambda_i^{m \vee n}$, and
	write $\lambda_{(i,m)} := \lambda_{f_m, i, \xi (m)}$, $\lambda := \xi(i+d(m),m, m \vee n)$, and $\beta := (\lambda_{(i,m)} \lambda)(i+d(m), n,m \vee n)$.
	If $\beta \in \sigma_{i+d(m)}^{n} (C)$ then, since the source map is injective on each $\supp
	(\rho_{j,i}^{n})$,
	\begin{align*}
		\textstyle{\sum_{j}} \rho_{j,i}^{n} (\beta) \langle b_{j,i},c \rangle_{i}^{m \vee n} (s(\xi))
		&= \textstyle{\sum_{j}} (\rho_{j,i}^{n} (\beta))^{2} \bar g_{n}(\lambda_{g_n, i+d(m), r(\beta)}) c(\lambda_{g_n,  i+d(m), r(\beta)} \beta) \\
		&= \bar g_{n}(\lambda_{g_n, i+d(m), r(\beta)}) c(\lambda_{g_n,  i+d(m), r(\beta)} \beta),
	\end{align*}
	hence,
	\begin{align*}
		\big(&\textstyle{\sum_{j,k}} a_{k,j,i} \otimes b_{j,i}^{*}\big) (c)(\xi)=\textstyle{\sum_{j,k}} g_{m}(\xi (i,0,m))
			(\rho_{k,i}^{m} \cdot \langle f_{m} \rho_{k,i}^{m},f_{n} \rho_{j,i}^{n} \rangle_{i}^{m \vee n}) (\lambda)
			\langle b_{j,i},c \rangle_{i}^{m \vee n} (s(\xi)) \\
		&= g_{m}(\xi (i,0,m)) \textstyle{\sum_{j,k}} \rho_{k,i}^{m} \cdot \langle f_{m} \rho_{k,i}^{m},f_{n} \rho_{j,i}^{n} \rangle_{i}^{m \vee n}) (\lambda)
			\langle b_{j,i},c \rangle_{i}^{m \vee n} (s(\xi)) \\
		&= g_{m}(\xi (i,0,m)) \bar f_{m}(\lambda_{(i,m)}) f_{n}((\lambda_{(i,m)}\lambda) (i+d(m),0,n))
		\bar g_{n}(\lambda_{g_n, i+d(m), r(\beta)}) c(\lambda_{g_n, i+d(m), r(\beta)} \beta)\\
		&=\iota_{(i,m)}^{(i,m \vee n)} (g_{m} \otimes
		f_{m}^{*}) \iota_{(i+d(m),n)}^{(i,m \vee n)} (f_{n} \otimes g_{n}^{*}),
	\end{align*}
as required.
\end{proof}

\indent {\bf Corollary A.7.}
	Let $\Lambda$ be a multilayered topological $k$-graph.
	The product system $X$ defined above
	is compactly aligned if and only if $\Lambda$ is
	compactly aligned.
\begin{proof}
	If $\Lambda$ is compactly aligned, by the above lemma, for $i\in\ZZ_2^k$, and  $m,n \in \NN^{k}$,  for $f_1,f_2 \in F_i^{m}$ and $g_1,g_2 \in F_{i+d(m)}^{n}$, $\iota_{(i,m)}^{(i, m
		\vee n)} (f_1 \otimes f_2^{*}) \iota_{(i+d(m), n)}^{m \vee n} (g_1 \otimes g_2^{*}) \in
	\mathscr{K}_{A_i} (X_{(i,m \vee n)})$, which in turn, by a density argument implies that $X$ is compactly aligned. 
	
	If $\Lambda$ is not compactly aligned, then there exist $i\in\ZZ_2^k$, $m,n\in\NN^k$,
	 $U\subseteq\Lambda_i^m$, and $V\subseteq\Lambda_{i+d(m)}^n$ such that $U$ and $V$ are compact, but $U\vee V$ is not compact. For  $C\in\{U,V\}$,  let $(
	V_{k}^C)_k$ be a finite relatively compact open cover of
	$C$ such that  $s$ restricts to a homeomorphism on each
	$V_{k}^C$. Let $\phi_{k,i}^C \colon C \to
	[0,1]$ be a partition of unity 
	subordinated to $(V_{k}^C \cap C)$, and consider functions $\rho_{k,i}^C$ as in the above lemma. Put $T^i_C=\sum_{k}\rho_{k,i}^C\otimes(\rho_{k,i}^C)^*$.
	Then $T^i_U\in\mathscr{K}_{A_i}(X_{(i,m)})$ and $T^i_V\in\mathscr{K}_{A_{i+d(m)}}(X_{(i+d(m),n)})$. We claim that
	$T:=\iota_{(i,m)}^{(i,m\vee n)}(T^i_U) \iota_{(i+d(m),n)}^{(i, m\vee n)}(T^i_V)$ is not compact. For $f\in X_p$, $C=U$ or $V$, and $\lambda\in C$, 
	\begin{equation*} \label{eq:1}
		\begin{split}
			T^i_C(f)(\lambda) &= \sum_{k}\bigl(\rho_{k,i}^C\otimes (\rho_{k,i}^C)^*\bigr)(f)(\lambda) = \sum_{k}\rho_{k,i}^C(\lambda)
			\sum_{\eta\in\Lambda^ps(\lambda)}\bar\rho_i^C(\eta)f(\eta)
			\\&=\sum_{k}\rho_{k,i}^C(\lambda) \bar \rho_{k,i}^C(\lambda)f(\lambda)
			=f(\lambda).
		\end{split}
	\end{equation*}
	Given $\lambda\in U\vee V$, choose $f_\lambda\in F_i^{m\vee n}$ of norm at most one such that $f_\lambda(\lambda)=1$. Then, 
	\begin{equation*}
		T(f_\lambda)(\lambda)=\iota_{(i,m)}^{(i, m\vee n)}(T^i_U) \iota_{(i+d(m),n)}^{(i, m\vee n)}(T^i_V)(f_\lambda)(\lambda)
		=\iota_n^{m\vee n}\iota_{(i+d(m),n)}^{(i, m\vee n)}(T^i_V)(f_\lambda)(\lambda)
		=f_\lambda(\lambda)=1,
	\end{equation*}
	and by part $(i)$ of the above lemma, $T$ is not compact. It follows that $X$ is not compactly aligned. 
\end{proof}

Following Yeend \cite{y}, we associate two groupoids $G_\Lambda$ and
$\mathcal{G}_\Lambda$ to each compactly
aligned multilayered topological higher-rank graph $\Lambda$, and show that for $X=X_\Lambda$ constructed above, $\Tc(X_\Lambda)$ and $\NO{X_\Lambda}$ are isomorphic to $C^*(G_\Lambda)$ and
 $C^*(\mathcal{G}_\Lambda)$, respectively.

The first groupoid 
$G_{\Lambda}$ is the path groupoid of  $\Lambda$
consisting of quadruples
$(x,i,m,y)$ where $x$ and $y$ are (possibly infinite) paths in
$\Lambda$, $i\in \ZZ_2^k$ and $m \in \ZZ^{k}$, such that there exist $j,k\in \ZZ_2^k$ and $p,q \in \NN^{k}$
with $(j,p) \le \dd(x)$, $(k,q) \le \dd(y)$, $j+d(p)=k+d(q)$, $p-q=m$, and $\sigma_j^p(x) = \sigma_k^q(y)$.
As in \cite[Theorem 3.16]{y}, one can show that $G_{\Lambda}$ is a locally compact
$r$-discrete topological groupoid admitting a Haar system
consisting of counting measures. A basis for the topology of $G_{\Lambda}$
could be described  as follows. Define $\Lambda \ast_{s} \Lambda :=
\{\, (\lambda, \mu) \in \Lambda \times \Lambda \mid s(\lambda)=s(\mu) \,\}$,
and for $U,V \subseteq \Lambda$ define $U \ast_{s} V := (U \times V) \cap (\Lambda \ast_{s} \Lambda)$.
For $F \subseteq \Lambda \ast_{s} \Lambda$, $i\in\ZZ_2^k$ and $m \in \ZZ^{k}$, let $Z(F,i, m)$ be the set of all quadryples $(\lambda x,i,m,\mu x) \in G_{\Lambda}$ with $(\lambda,\mu) \in F, r(\dd(\lambda))=r(\dd(\mu)),$ and $\dd(\lambda)\dd(\mu)^{-1}=(i,m)$. 
Then the family of sets of the form $Z(U \ast_{s} V,i, m) \cap Z(F,i, m)^{c}$, with $i\in \ZZ_2^k$, $m \in \ZZ^{k}$,
$U,V \subseteq \Lambda$  open, and $F \subseteq \Lambda \ast_{s} \Lambda$ compact,
is a basis for the topology on $G_{\Lambda}$.

For $V\subset \Lambda^0$, a subset $E\subseteq V\Lambda$ is called exhaustive
(cf., \cite[Definition 4.1]{y}) for $V$ if for all $\lambda\in V\Lambda$ there exists $\mu\in E$ such that $\{\lambda\}\vee\{\mu\}\ne\emptyset$. A (possibly  infinite) path $x$ is called a boundary path (cf., \cite[Definition 4.2]{y}) if for all $i\in \ZZ_2^k$ and $m\in\NN^k$ with $(i,m)\le \dd(x)$, and for each compact set $E\subseteq \Lambda$ such that $r(E)$ is a neighbourhood of $x(i,m)$ and $E$ is exhaustive for $r(E)$, there exists $\lambda\in E$ such that $x(i+d(m),m,m+\dd(\lambda))=\lambda$. We write $\partial \Lambda$ for the set of all boundary paths. The set $\partial\Lambda\times \ZZ_2^k$ is a closed and invariant subset of $G_\Lambda^{(0)}$, where for a path $x$, $(x,i)$ is identified  with the element $(x,i,0,x)\in G_\Lambda^{(0)}$. Also,  $v\partial\Lambda\ne \emptyset$, for all $v\in\Lambda^0$. The boundary-path groupoid $\mathcal{G}_\Lambda$ is then defined (cf., \cite[Definition 4.8]{y}) as the reduction of $G_\Lambda$ to $\partial\Lambda\times\ZZ_2^k$. For a subset $U$ of $\Lambda$, we let $Z_\partial(U)$ be  the of those boundary paths $x\in\partial\Lambda$ such that $x(i,0, n)\in U,$ for some $(i,n)\in \ZZ_2^k\rtimes_d\NN^k$ with $(i,n)\leq  \dd(x)$.
The collection of sets of the form $Z_\partial(U)\backslash Z_\partial(F)$, with $U\subseteq \Lambda$ relatively compact and open, and $F\subseteq \bar U\Lambda$ compact, 
form a basis for a locally compact Hausdorff topology on $\partial\Lambda$.

By an argument as in \cite[Proposition 5.16]{clsv} (cf., \cite[Proposition 4.3]{y}), one can observe that  $\partial\Lambda\times\ZZ_2^k$ is  the smallest closed and invariant subset $Y$ of $G_\Lambda^{(0)}$ such that $vY$ is nonempty for all $v\in\Lambda^0$. 

\vspace{.3cm}
\indent {\bf Lemma A.8.}
	Let $\Lambda$ be a compactly aligned multilayered topological $k$-graph and $X=X_\Lambda$ be the corresponding product system. Then there is a unique gauge-compatible, Nica-covariant, Toeplitz representation $\psi \colon X \to C^{*} ( G_{\Lambda} )$
	such that, for each $i\in\ZZ_2^k$, $n \in \NN^{k}$, and $f \in C_{c} (\Lambda_i^{n})$,  $\psi (f) \in C_{c} (G_{\Lambda})$,
	and,
	\begin{equation*}\label{TR}
		\psi (f) ((x,k,p,y))
		=
		\begin{cases}
			f(x(i,0,n)) &\text{if $k=i, p = n$ and $\sigma_i^{n} (x) = y,$}\\
			0 &\text{otherwise.}
		\end{cases}
	\end{equation*}
	Moreover, $C^{*} ( G_{\Lambda} )$ is generated by the set of  all such functions $\psi (f)$.
\begin{proof}
	The fact that $\psi (f) \in C_{c} (G_{\Lambda})$ and that $\psi$ is linear and 
	multiplicative on $C_{c}
	(\Lambda_i^{0})$ is proved by an argument similar to that of \cite[Proposition 5.17]{clsv}. Thus $\psi$ extends to a linear map from
	$X_{(i,n)}$ to $C^{*} (G_{\Lambda})$, whose restriction to $A_i$ is a homomorphism. For $f \in C_{c}
	(\Lambda_i^{m})$ and $g \in C_{c} (\Lambda_{j}^{n})$, if $j=i+d(m)$, 
	\begin{align*}
		\psi (f) \ast \psi (g) &((x,k,p,y))
		= \textstyle{\sum_{(x,k,r,w) \in G_{\Lambda}} \psi (f) ((x,k,r,w)) \psi (g) ((w,k+d(r),p-r,y))}
		\\&=
		\begin{cases}
			f(x(i,0,m)) g(x(i+d(m),m,m+n)) &\text{if $p=m+n$ and $\sigma_i^{m+n} (x) = y,$}\\
			0 &\text{otherwise}
		\end{cases}
		\\&= \psi (fg) ((x,k, p,y)).
	\end{align*}
and if $j\neq i+d(m)$, all the terms in the sum of the right hand side of the first equality above vanish, thus 
$\psi (f) \ast \psi (g)=0$. 	Finally, for $f,g \in C_{c} (\Lambda_i^{n})$, $\langle f,g \rangle_{i}^{n} \in C_{c}
	(\Lambda_i^{0})$, and we have,
	\begin{align*}
		\psi (f)^{*}& \ast \psi (g) ((x,k,p,y))
		\\&=
		\begin{cases}
			\textstyle{\sum_{\alpha \in \Lambda_i^{n} r(x)}} \overline{\psi (f)} ((\alpha x,i,n,x)) \psi (g) ((\alpha x,i+d(n),n,x)) &\text{if $k=i, p=0,$  $x=y,$}\\
			0 &\text{otherwise}
		\end{cases}
		\\&= \psi (\langle f,g \rangle_{i}^{n}) ((x,k,p,y)).
	\end{align*}
	To see that the set of such functions
	generates $C^{*} (G_{\Lambda})$, we just need to observe  that they generate a dense subalgebra of $C_{0} (G_{\Lambda})$ in the uniform norm, which follows by   Stone-Weierstrass theorem as in the proof of \cite[Proposition 5.17]{clsv}. Finally, uniqueness follows from the facts that $C_{c} (\Lambda_i^{n})$ is dense in $X_{(i,n)}$.
	
	Next, by an argument as in \cite[Lemma 5.18]{clsv}, 
	for $i\in \ZZ_2^k$, $m \in \NN^{k}$, and $f,g \in C_{c} (\Lambda_i^{m})$, 
		\begin{align*}
		\psi^{(m)}& (f \otimes g^{*}) ((x,k,p,y))
		\\&=
		\begin{cases}
			f(x(i,0,m)) \overline{g(y(i,0,m))} &\text{if $k=i, p=0$, $(i,m) \le \dd(x)$, $\sigma_i^{m} (x) = \sigma_i^{m} (y),$}\\
			0 &\text{otherwise}
		\end{cases}
	\end{align*}	
The canonical continuous cocycle $c:(x,i,m,y) \mapsto (i,m)$ on
	$G_\Lambda$ induces a coaction $\beta$ of $\ZZ_2^k\rtimes_d\ZZ^k$ on
	$C^*(G_\Lambda)$, satisfying $\beta(f) = f \otimes
	i_{\ZZ_2^k\rtimes_d\ZZ^k}(i,m)$,  whenever $\supp(f) \subseteq c^{-1}(i,m)$. Now since, $\beta(\psi(f)) = \psi(f) \otimes i_{\ZZ_2^k\rtimes_d\ZZ^k}(m)$, for
	$f \in X_{(i,m)}$, $\psi$ is gauge-compatible.
	To show the Nica covariance,  let $i\in\ZZ_2^k$, $m,n \in \NN^{k}$, $f_{m},g_{m} \in F_i^{m}$ and $f_{n},g_{n}
	\in F_{i+d(m)}^{n}$, and  write, as in the previous lemma,
	\[
	\iota_{(i,m)}^{(i,m \vee n)} (g_{m} \otimes f_{m}^{*}) \iota_{(i+d(m),n)}^{(i, m \vee n)} (f_{n} \otimes g_{n}^{*})
	= \textstyle{\sum_{k,j,i}} a_{j,i} \otimes b_{j}^{*}.
	\]
	It suffices to
	show that,
	\[
	\psi^{(m)} (g_{m} \otimes f_{m}^{*}) \psi^{(n)} (f_{n} \otimes g_{n}^{*})
	= \psi^{(m \vee n)} (\iota_{(i,m)}^{(i,m \vee n)} (g_{m} \otimes f_{m}^{*}) \iota_{(i+d(m),n)}^{(i, m \vee n)} (f_{n} \otimes g_{n}^{*})).
	\]
	Now, both sides  are equal to
	zero unless $p = 0$, so it is left to show that,
		\begin{equation*}\label{NicaCov2}
		\begin{split}
			\textstyle{\sum_{(x,i,0,z) \in G_{\Lambda}}} \psi^{(m)} (g_{m} \otimes f_{m}^{*}) &((x,i,0,z)) \psi^{(n)} (f_{n} \otimes g_{n}^{*}) ((z,i,0,y)) \\
			&= \psi^{(m \vee n)} (\iota_{(i,m)}^{(i,m \vee n)} (g_{m} \otimes f_{m}^{*}) \iota_{(i+d(m),n)}^{(i, m \vee n)} (f_{n} \otimes g_{n}^{*})) ((x,i,0,y)).
	\end{split}\end{equation*}
	We may suppose that $(i,m\vee n) \le \dd(x)$ and
	$\sigma_i^{m \vee n}(x) = \sigma_i^{m \vee n}(y)$, since otherwise both sides are again zero. Let $\lambda_{(i,m)} := \lambda_{f_m,
		i,x(m)}$, and observe  that,
	 if $y(i+d(m), n, m \vee n) \not= [\lambda_{(i,m)} \sigma_i^{m} (x)](i+d(m), n,m \vee n)$, both sides are  zero, and otherwise, 
	\begin{align*}
		\psi^{(m \vee n)}&\big(\iota_{(i,m)}^{(i, m \vee n)}(g_{m} \otimes f_{m}^{*}) \iota_{(i+d(m), n)}^{(i, m \vee n)} (f_{n} \otimes g_{n}^{*})\big) ((x,i,0,y))\\
		&= {\textstyle \sum_{j,k}}  \psi^{(m \vee n)} (a_{k,j,i} \otimes b_{j,i}^{*}) ((x,i,0,y)) \\
		&= g_{m}(x(i,0,m)) \bar f_{m}(\lambda_{(i,m)}) f_{n}([\lambda_{(i,m)} \sigma_i^{m} (x)](i,0,n)) \bar g_{n}(y(i,0,n))
		\\&=\textstyle{\sum_{(x,i,0,z) \in G_{\Lambda}}} \psi^{(m)} (g_{m} \otimes f_{m}^{*}) ((x,i,0,z)) \psi^{(n)} (f_{n} \otimes g_{n}^{*}) ((z,i,0,y)), 
	\end{align*}
	as required.
\end{proof}

For the canonical  cocycle map 
$$c: G_\Lambda \to \ZZ_2\rtimes_d\ZZ^k;\ \ (x,i,m,y) \mapsto (i,m),$$ let $\Gg_\Lambda[c]:=\ker(c)\cap\Gg_\Lambda$. The next result is an adaptation of \cite[Proposition 4.2]{rswy}

\vspace{.3cm}
\indent {\bf Lemma A.9.}
	Let $(\Lambda,\dd,d)$ be a compactly aligned multilayered topological $k$-graph. Then the kernel $\Gg_\Lambda[c]$  is an amenable, principal, \'{e}tale groupoid with the same unit space as $\Gg_\Lambda$.
\begin{proof}
	For $(i,m) \in \ZZ_2^k\rtimes_d\NN^k$, and the sets
\begin{align*}
S_{(i,m)}:&=\{(x,i,0,y) : \dd(x) = \dd(y) \ge (i,m), \sigma_i^m(x) = \sigma_i^m(y)\}\\&=\{(\alpha z,i,0,\beta z) : z \in \partial\Lambda, \alpha, \beta \in \Lambda_i^m r(z)\},
\end{align*}
$
		R_{(i,m)} := \{(x,i,0,x) : x \in \partial\Lambda\} \cup S_{(i,m)}
$ 
is a subgroupoid of $\Gg_\Lambda[c]$, and an $\mathcal F_\sigma$ subset in $\partial\Lambda \times\{i\}\times
	\partial\Lambda$. Identifying the latter set with $\partial\Lambda \times
	\partial\Lambda$, we may regard $R_{(i,m)}$ as an equivalence relation on $\partial\Lambda$.  We claim that each $R_{(i,m)}$ is proper as a
	Borel groupoid \cite[Definition 2.1.2]{dr}, that is, 
	the orbits of points of $\partial\Lambda$ in $R_{(i,m)}$ are locally closed \cite[Examples 2.1.4(2)]{dr},
	\cite[Theorem~2.1]{ram}. For 
	$$S_{(i,m)}(x):=\{(\alpha \sigma_i^m(x), i,0, \beta \sigma_i^m(x)) : \dd(\alpha) = \dd(\beta) = (i,m)\},$$ The orbit $[x]$ of $x$ in $R_{(i,m)}$ is equal to $\{x\}$ when $\dd(x) \not\ge (i,m)$, and, 	\begin{equation*}\label{eq:orbit description}
		[x] =
		R_{(i,m)}^{(0)} \cap Z_\partial(\Lambda_i^m \ast_s \Lambda_i^m)  \cap \overline{ S_{(i,m)}(x)},
	\end{equation*}
otherwise, by an argument verbatim to that in \cite[Proposition 4.2]{rswy}. 
	By an argument as in \cite[Theorem 3.16]{y},  $\Gg_\Lambda$ is \'etale, thus $R_{(i,m)}^{(0)} = \Gg_\Lambda^{(0)}$ is open, and  $[x]$ is the intersection of an open  set and a
	closed set in the second case, which  is locally closed, establishing the claim. In particular, the Borel groupoid $R_{(i,m)}$ is measurewise amenable. It then follows that $\Gg_\Lambda$ is measurewise amenable as a direct limit of $R_{(i,m)}$'s \cite[Proposition~5.3.37]{dr}. Now $\Gg_\Lambda[c]$ is  \'etale and second-countable, and so the orbits are countable in $\Gg_\Lambda[c]$. In particular, $\Gg_\Lambda[c]$ is also
	topologically amenable  \cite[Theorem~3.3.7]{dr}. The fact that $\Gg_\Lambda[c]$ is principal is an immediate consequence of the factorization property of $\Lambda$.
\end{proof}

\indent {\bf Theorem A.10.}	Let $(\Lambda, \dd, d)$ be a compactly aligned multilayered topological $k$-graph with divider $d: \mathbb (\ZZ^k)^+\to \ZZ_2^k$ and degree map $\dd: \Lambda\to \ZZ_2^k\rtimes_d\ZZ^k$.
	Let $G_\Lambda$ and $\Gg_\Lambda$ be the path groupoid and
	boundary-path groupoid of $\Lambda$, and let  $q \colon C^*(G_\Lambda) \to C^*(\Gg_\Lambda)$ be the canonical quotient map.
	Then the homomorphism
 $\psi_* \colon \Tc(X_\Lambda) \to C^*(G_\Lambda)$ 	coming from the universal property of $\Tc(X_\Lambda)$, for the corresponding product system $X_\Lambda$ over $(G,P):=(\ZZ_2^k\rtimes_d\ZZ^k, \ZZ_2^k\rtimes_d\NN^k)$, is an isomorphism. Moreover, both of the regular representations
	$\lambda_\Lambda: C^*(G_\Lambda)\to C^*_\reduced(G_\Lambda)$ and
	$\lambda_\Lambda^{'}: C^*(\Gg_\Lambda)\to C^*_\reduced(\Gg_\Lambda)$ are isomorphisms,
	and there is a unique $*$-isomorphism
	$\phi \colon C^*(\Gg_\Lambda) \to \NO{X}$,
	making the following diagram commutative,
	\[
	\begin{CD}
		\Tc(X)@>{\psi_*}>> C^*(G_\Lambda) \\
		@V{q_{\CNP}}VV @V{q}VV \\
		\NO{X} @<{\phi}<< C^*(\Gg_\Lambda).
	\end{CD}
	\]
\begin{proof}
	Since $\psi(X_\Lambda)$ generates
	$C^*(G_\Lambda)$,  $\psi_*$ is surjective.
	Let $\lambda_\Lambda: C^*(G_\Lambda)\to
	C^*_\reduced(G_\Lambda)$ be the regular representation, and put
	$\tilde{\psi}_*:=\lambda_\Lambda\circ\psi_*$. We claim  that
	$\tilde{\psi}_*$ it injective. 
	The canonical  cocycle map 
	$c: G_\Lambda \to \ZZ_2\rtimes_d\ZZ^k;\ \ (x,i,m,y) \mapsto (i,m)$ induces a coaction
	$\beta$ of $G:=\ZZ_2^k\rtimes_d\ZZ^k$ on
	$C^*_\reduced(G_\Lambda)$, defined at $f\in C_c(G_\Lambda)$, with $\supp(f) \subseteq c^{-1}(i,m)$, by $\beta(f) = f \otimes
	i_{G}(i,m)$, and extended by continuity. Let $\delta$ be the canonical coaction of $G$ on $\Tc(X_\Lambda)$. Then 
	$\tilde{\psi}_*$ is equivariant for coactions $\delta$ and $\beta$.
	The groupoid $G=\ZZ_2^k\rtimes_d\ZZ^k$ is amenable by Lemma \ref{ex}, the coaction $\delta$ is normal by Lemma \ref{ap-am} and Corollary \ref{cor:guit}. Therefore, 
	it is enough to show that
	$\ker\tilde{\psi}_*\cap\Ff=\{0\}$. By the observations in the paragraph after Lemma \ref{core} and \cite[Lemma~1.3]{ALNR},
	it is enough to show that
	$\ker\tilde{\psi}_*\cap B_F=\{0\}$, for each $F \in \fvcl{P}$. Let $F \in \fvcl{P}$ and  given $p \in F$,
	take $T_p \in \mathscr{K}_{A_{r(p)}}(({X_\Lambda})_p)$  such that,
$
		\sum_{p \in F}
		\tilde{\psi}_*(i_{X_\Lambda}^{(p)}(T_p))=0.
$
	As in the proof of \cite[Theorem 5.20]{clsv}, we use induction on  $|F|$ to show
	that $T_p=0$. By the above lemma, the representation $\lambda_\Lambda\circ\psi$ is
	injective, and so is 
	$\tilde{\psi}_*\circ
	i_{X_\Lambda}^{(p)}=(\lambda_\Lambda\circ\psi)^{(p)}$, by   an argument as in the proof of \cite[Lemma 2.4]{ka1}. This proves the claim that $T_p=0$, when  $F=\{p\}$ is a singleton. Next, let  $|F|\geq 2$ and let $p_0$ be a
	minimal element of $F$, then $p\not\le p_0$, for  $p\in
	F\backslash\{p_0\}$. It follows  from the formula of the third math display in the proof of the above lemma that if
	$(x,i,0,y)\in G_\Lambda$ with $\dd(x)=\dd(y)=p_0$, then
	$\tilde{\psi}_* (i_{X_\Lambda}^{(p)}(T_p))((x,i,0,y))=0$, for each $p\in
	F\backslash\{p_0\}$. By assumption, $\sum_{p \in F}
	\tilde{\psi}_*(i_{X_\Lambda}^{(p)}(T_p))=0$, thus $\tilde{\psi}_* (i_{X_\Lambda}^{(p_0)}(T_{p_0}))((x,i,0,y))=0$
	for all $(x,i,0,y)\in G_\Lambda$ with $\dd(x)=\dd(y)=p_0$. Using the above mentioned math display once more, we get $\tilde{\psi}_*
	(i_{X_\Lambda}^{(p_0)}(T_{p_0}))((x,i,p,y))=0$, for each $(x,i,p,y)\in
	G_\Lambda$, and thus that $\tilde{\psi}_*
	(i_{X_\Lambda}^{(p_0)}(T_{p_0}))=0$. Arguing as above, this implies that
	$T_{p_0}=0$, and so, $\sum_{p \in F\backslash\{p_0\}}
	\tilde{\psi}_*(i_{X_\Lambda}^{(p)}(T_p))=0,$ which then by the inductive hypothesis, gives 
	 $T_p=0$, for each $p\in F\backslash\{p_0\}$, establishing the claim. It now follows that $\tilde{\psi}_*$ is
	injective, which in turn implies that both $\lambda_\Lambda$ and $\psi_*$ are isomorphisms.
	
	Let $\rho_\Lambda : X_\Lambda \to C^*(\Gg_\Lambda)$ be the canonical Toeplitz
	representation, and note that $\rho_\Lambda=\lambda_\Lambda\circ q \circ \psi$, where $q \colon C^*(G_\Lambda) \to C^*(\Gg_\Lambda)$ is the canonical quotient map. By the above lemma,  $\rho_\Lambda$ is a
	Nica covariant representation of $X_\Lambda$ whose range  generates
	$C^*_\reduced(\Gg_\Lambda)$. By the paragraph before Lemma A.8,
	implies that $(\rho_\Lambda)_{(i,0)} : C_0(\Lambda_i^0) \to C^*_\reduced(\Gg_\Lambda)$ is
	injective, for each $i\in \ZZ_2^k$, and  so is $\rho_\Lambda$, regarded as a representation of $X_\Lambda$.
	The above cocycle map $c$, restricted to 
	$\Gg_\Lambda$,  induces a coaction $\gamma$ of $G$
	on $C^*_\reduced(\Gg_\Lambda)$, such that
	$\gamma(\rho_\Lambda(f))=\rho_\Lambda(f)\otimes i_{G}(d_\Lambda(f))$, for $f\in X_\Lambda$, where $d_\Lambda: X_\Lambda\to P$ is the map characterized by $(X_\Lambda)_p=d_\Lambda^{-1}(p)$, for $p\in P$. The coaction  $\gamma$ is normal by Lemma \ref{ap-am} and Corollary \ref{cor:guit}. 
	The above equality for  $\rho_\Lambda$, simply says that $\rho_\Lambda$ is gauge-compatible. Since $P=\ZZ_2^k\rtimes_d\ZZ^k$ is is directed by Lemma \ref{ex}, the product system $X_\Lambda$ over $P$ is
	automatically $\tilde\phi$-injective, by an argument as in \cite[Lemma 4.3]{SY}. Now by
	Theorem~\ref{thm:projective property}, there exists a
	surjective $*$-homomorphism $\phi':C^*_\reduced(\Gg_\Lambda)\to
	\NO{X}$ such that $\phi'\circ \lambda_\Lambda \circ q \circ
	\psi_*=q_{\CNP}$. Let  us denote the coaction of $G$ on $\NO{X_\Lambda}$, induced by the coaction $\delta$ of $G$ on $\Tc(X_\Lambda)$, again by $\delta$. Then we have, $\delta\circ\phi^{'}=(\phi^{'}\circ{\rm id}_{C^*(G)})\circ\gamma$.  Put $\phi:=\phi'\circ \lambda_\Lambda$. 
	
	We
	claim that $\phi$ is injective.   
	Put
	$\Gg_\Lambda[c]:=c^{-1}(\ZZ_2^{k}\times\{0\})\cap \Gg_\Lambda$. This is a subgroupoid of $\Gg_\Lambda$ and so there is a conditional expectation $\Phi^\gamma: C^*_r(\Gg_\Lambda)\to C^*_r(\Gg_\Lambda[c])$. Now we just need to observe that $\ker(\phi^{'})\cap C^*_r(\Gg_\Lambda[c])=0$. The unit space $\Gg_\Lambda[c]^{(0)}$ of the \'{e}tale groupoid $\Gg_\Lambda[c]$ consists of elements of the form $u=(x,i,0,x)$ with $i\in \ZZ_2^k$ and $x\in\partial\Lambda$, and each isotropy group $(\Gg_\Lambda[c])_u^u$, for $u$ as above, is a singleton, consisting of $u$ itself, thus it follows from \cite[Lemma 2.15]{T}, that $\ker(\phi^{'})\cap C^*_r(\Gg_\Lambda[c])=0$ follows once we show that $\ker(\phi^{'})\cap C_0(\Gg_\Lambda^{(0)})=\ker(\phi^{'})\cap C_0(\Gg_\Lambda[c]^{(0)})=0$. Let $Y$ be the set of those elements $(x,i,0,x)\in G_\Lambda^{(0)}$ with $f((x,i, 0, x)) = 0$,  for each $f\in \ker(q_{CNP}\circ \tilde\psi_{*}^{-1})\cap C_0((G_\Lambda)^{(0)})$. By an argument as in the proof of \cite[Theorem 5.20]{clsv}, one can observe that $Y$ is $G_\Lambda$-invariant. Next, since $q_{\rm CNP}\circ i_{X_\Lambda}=j_{X_\Lambda}$ is injective as a representation of $X_\Lambda$, thus $q_{\rm CNP}\circ \tilde\psi_{*}^{-1}$ is injective on $\lambda_\Lambda(\psi_{*}\circ i_{X_\Lambda}(C_0(\Lambda^0)))$, that is, $vY$ is non-empty, for $v\in\Lambda^0$, and so by the characteristic property of the boundary, $\partial\Lambda\subseteq Y$. Now, given $f\in \ker(\phi^{'})\cap C_0(\Gg_\Lambda^{(0)})$, there is $\tilde f\in \ker(q_{\rm CNP}\circ\psi_{*}^{-1})\cap C_0(G\Lambda^{(0)})$ with $\lambda_\Lambda\circ q\circ\lambda_\Lambda^{-1}(\tilde f)=f$, and since $\tilde f=0$ on $\partial\Lambda\times\ZZ_2^k$, $q\circ\lambda_\Lambda^{-1}(\tilde f)=0$, thus $f=0$, as claimed.
	
	Finally, let us show that $\lambda_\Lambda$ is injective. Since $\Gg_\Lambda = G_\Lambda|\partial\Lambda\times\ZZ_2^k$, each element in  $\ker(q\circ\lambda_\Lambda^{-1})$ could be approximated by some $f\in C_c(G_\Lambda)$, such that  for each $i\in \ZZ_2^k$ and $m\in \ZZ^k$, $f((x,i, m, y)) = 0$, whenever $x, y\in \partial\Lambda$. 
	We then could use an approximate identity for $\ker(q\circ\lambda_\Lambda^{-1})$ in $I:=C_0\big(G_\Lambda^{(0)}\backslash(\partial\Lambda\times\ZZ_2^k)\big)$ to observe that $I$ is  generated by its intersection with $C_0(G_\Lambda^{(0)})$, and thus is generated by its intersection with
	$C^*_r(G_\Lambda[c])$, for $G_\Lambda[c]:=c^{-1}(\ZZ_2^k\times\{0\})$, that is, $I$ is an induced ideal in the sense of previous section. Thus, by Lemmas \ref{prp:deltaI existence} and \ref{prop:exact}, the coaction $\beta$ induces a normal coaction $\beta^I$ of $G$ on $C^*(G_\Gamma)$ such that $q\circ\lambda_\Lambda$ is equivariant for $\beta$  and $\beta^I$, and $\lambda_\Lambda$ is equivariant
	for $\beta^I$ and $\gamma$. In particular, it is enough to check that $\lambda_\Lambda$ injective on the fixed-point algebra
	$C^*(\Gg_\Lambda[c])$ for the coaction $\beta^I$. Now  the restriction of $\lambda_\Lambda$ to $C^*(\Gg_\Lambda[c])$ is nothing but the regular representation from $C^*(\Gg_\Lambda[c])$ onto $C^*_r (\Gg_\Lambda[c])$, which is an isomorphism, as $\Gg_\Lambda[c]$ is amenable by the previous lemma.
\end{proof}

We immediately get the
following gauge-invariant uniqueness result for $C^*(\Gg_\Lambda)$.

\vspace{.3cm}
\indent {\bf Corollary A.11.}
	Let $\Lambda$ be a compactly aligned multilayered topological $k$-graph.
	Let $\Gg_\Lambda$ be the analog of the Yeend's
	boundary-path groupoid for $\Lambda$. For the unique gauge-compatible, Nica-covariant, Toeplitz representation $\rho_\Lambda \colon X_\Lambda \to C^{*} (\Gg_{\Lambda} )$, as in the previous theorem, a surjective $*$-homomorphism $\phi:C^*(\Gg_\Lambda)\to D$ is injective if and only if,
		
		(1) there is a coaction $\beta$ of $\ZZ_2^k\rtimes_d\ZZ^k$ on $D$ such that,
		$$\beta(\phi(\rho_\Lambda(f)) = \phi(\rho_\Lambda(f))\otimes i_{\ZZ_2^k\rtimes_d\ZZ^k}((i,m)),$$ for each $i\in\ZZ_2^k$, $m\in \ZZ^k$, and $f \in
		C_c(\Lambda_i^m)$, 
		
		(2) $\phi\vert_{\rho_\Lambda(X_{(i,0)})} \colon C_0(\Lambda_i^0) \to D$ is  injective.

\section*{Acknowledgment}

The authors would like to thank Professor Aidan Sims for reading the first draft of this paper posted on arXiv and suggesting some corrections. In particular, we have added condition (3) to the definition of the Toeplitz representation in Section \ref{subsection:reps_prod_syst}, and the comments on the kernel of  canonical coaction of the Toeplitz algebra (see the paragraph before Lemma \ref{core}).


\begin{thebibliography}{00}
\bibitem{ALNR} S. Adji, M. Laca, M. Nilsen, and I. Raeburn,
    \emph{Crossed products by semigroups of endomorphisms and the
    Toeplitz algebras of ordered groups}, Proc.  Amer.  Math.  Soc.
    {\bf 122} (1994), 1133--1141.
    
\bibitem{dr}     C. Delaroche and J. Renault, \emph{Amenable groupoids}, Monographies de L’Enseignement
    Math\'{e}matique {\bf 36}, Geneva, 2000.

\bibitem{Arveson1989} W. Arveson, \emph{Continuous analogues of
    {F}ock space}, Mem. Amer. Math. Soc. \textbf{80} (1989),
    No.~409, iv+66 pp.

\bibitem{Arveson2008} W. Arveson, \emph{The noncommutative {C}hoquet boundary},
  J. Amer. Math. Soc. \textbf{21} (2008), No.~4, 1065--1084.

\bibitem{Black} B. Blackadar, \emph{Operator algebras. Theory
    of
    $C\sp *$-algebras and von Neumann algebras}. Encyclop.
    Math. Sci.  {\bf 122}, Operator Algebras and Non-commutative
    Geometry III, Springer-Verlag, Berlin, 2006.
    
    \bibitem{bo}
    N. P. Brown and N. Ozawa, \emph{C$^*$-algebras and finite-dimensional approximations}, Grad. Stud. Math. {\bf 88}, Amer. Math. Soc., Providence, 2008.
    
    
\bibitem{clsv} T.M. Carlsen, N.S. Larsen, A. Sims and S.T. Vittadello, \emph{Co-universal algebras associated to product systems,
    and gauge-invariant uniquenss theorems}, Proc. London Math. Soc. (3) {\bf 103} (2011), 563–600.


\bibitem{CL2002} J. Crisp and M. Laca, \emph{On the Toeplitz
    algebras of right-angled and finite-type Artin groups},
 J. Austral. Math. Soc. {\bf 72} (2002), 223--245.

\bibitem{CL} J. Crisp and M. Laca, \emph{Boundary quotients and
    ideals of Toeplitz $C\sp*$-algebras of Artin groups}, J.
    Funct. Anal. \textbf{242} (2007), 127--156.

\bibitem{Cuntz1977} J. Cuntz, \emph{Simple {$C\sp*$}-algebras
    generated by isometries}, Comm. Math. Phys. \textbf{57}
    (1977), No.~2, 173--185.

\bibitem{CK1980} J. Cuntz  and W. Krieger, \emph{A class of
    {$C\sp{\ast} $}-algebras and topological {M}arkov chains},
    Invent. Math. \textbf{56} (1980), No.~3, 251--268.
    
\bibitem{dkr}    V. Deaconu, A. Kumjian and B. Ramazan, \emph{Fell bundles
    associated to groupoid morphisms}, Math. Scand. {\bf 102} (2008),
    No 2, 305--319.

\bibitem{Dinh1991} H.T. Dinh, \emph{Discrete product systems
    and their {$C\sp *$}-algebras}, J. Funct. Anal.
    \textbf{102} (1991), No.~1, 1--34.

\bibitem{Dix:C*-algebras} J. Dixmier, \emph{$C\sp*$-algebras,}
     North-Holland
    Math. Library {\bf 15}, North-Holland, 
    Amsterdam, 1977.
    
    

\bibitem{EKQR} S. Echterhoff, S. Kaliszewski, J. Quigg and I.
    Raeburn, \emph{A categorical approach to imprimitivity theorems
    for $C\sp *$-dynamical systems},  Mem. Amer. Math. Soc. {\bf180}
    (2006), viii+169 pp.

\bibitem{EKQ} S. Echterhoff, S. Kaliszewski and J.  Quigg, \emph{Maximal coactions},
Internat. J. Math. {\bf 15} (2004), 47--61.

\bibitem{EQ}
  S. Echterhoff and J. Quigg, \emph{Induced coactions of discrete groups
  on {$C\sp *$}-algebras}, Canad. J. Math. \textbf{51} (1999), no.~4, 745--770.

\bibitem{Exel} R. Exel, \emph{Amenability for {F}ell bundles},
  J. Reine angew. Math. \textbf{492} (1997), 41--73.


\bibitem{ELQ} R. Exel, M. Laca and J. Quigg, \emph{Partial
    dynamical systems and $C^*$-algebras generated by partial
    isometries}, J. Operator Theory {\bf47} (2002), 169--186.

\bibitem{Fell} J.M.G. Fell and R.S. Doran,
    \emph{Representations of {$\sp *$}-algebras, locally
    compact groups, and {B}anach {$\sp *$}-algebraic bundles}, 
    {V}ol. 2, Pure and Applied Mathematics   {\bf 126}, Academic
    Press, Boston, 1988.

\bibitem{Fowler1999} N.J. Fowler, \emph{Compactly-aligned
    discrete product systems, and generalization of {$\mathcal{O}_\infty$}}, Internat. J. Math. \textbf{10} (1999), No.~6,
    721--738.

\bibitem{F99} N.J. Fowler, \emph{Discrete product systems of
    Hilbert bimodules}, Pacific J. Math.  {\bf 204} (2002), 335--375.


\bibitem{FMR} N.J. Fowler, P.S. Muhly and I. Raeburn, \emph{Representations of Cuntz-Pimsner algebras}, Indiana
Univ. Math. J. \textbf{52} (2003), 569--605.

\bibitem{FR} N.J. Fowler and I. Raeburn, \emph{The Toeplitz algebra of a Hilbert bimodule}, Indiana Univ. Math. J. {\bf 48} (1999), No. 1, 155--181.

\bibitem{gre:am78} P. Green, \emph{The local structure of twisted covariance algebras},
	Acta Math. {\bf 140}(1978),
	191--250.

\bibitem{HR1997} A. an Huef and I. Raeburn,
    \emph{The ideal structure of Cuntz-Krieger algebras},
     Ergod. Theo. Dynam. Sys. \textbf{17} (1997), 611--624.
     
     
\bibitem{ion}         M. Ionescu, A. Kumjian, J. N. Renault, A. Sims, D. P. Williams, \emph{$C^*$-algebras of extensions of groupoids by group bundles}, J. Funct. Anal. {\bf 280} (2021), No. 5, 108892.

\bibitem{ka1} T. Katsura, \emph{A class of $C^{*}$-algebras
    generalizing both graph algebras and homeomorphism
    $C^{*}$-algebras I, fundamental results}, Trans. Amer.
    Math. Soc. \textbf{356} (2004), 4287--4322.


\bibitem{ka2} T. Katsura, \emph{On $C^*$-algebras associated with $C^*$-correspondences},
J. Funct. Anal. \textbf{217}, 366-401.

\bibitem{ka3} T. Katsura, \emph{Ideal structure of $C^*$-algebras associated with $C^*$-correspondences},
Pacific J. Math. \textbf{230} (2007), 107--145.

\bibitem{kum} A. Kumjian, \emph{Fell bundles over groupoids}, Proc. Amer. Math. Soc. {\bf 126} (1998), No. 4, 1115-1125. 

\bibitem{L} M. Laca, \emph{Purely infinite simple
    {T}oeplitz algebras}, J. Operator Theory \textbf{41}
    (1999), No.~2, 421--435.


\bibitem{Lan} E.C. Lance, \emph{Hilbert $C^*$-modules: A
    toolkit for operator algebraists}, London Math. Soc. Lecture Note Series {\bf 210}, Cambridge Univ. Press, Cambridge, 1994.
    
    
    
\bibitem{mw}    P. S. Muhly and D. P. Williams, \emph{Renault’s equivalence theorem for
    groupoid crossed products}, NYJM Monographs {\bf 3}, 2008. 

\bibitem{mw2}    P. S. Muhly and D. P. Williams, \emph{Equivalence and disintegration theorems for Fell bundles
and their $C^*$-algebras}, Dissert. Math. {\bf 456} (2008), 1–57.


\bibitem{N} A. Nica, \emph{$C^*$-algebras generated by isometries and
Wiener-Hopf operators}, J. Operator Theory \textbf{27} (1992), 17--52.


\bibitem{o} K.J. Oty, \emph{Fourier-Stieltjes algebras of r-discrete groupoids}, J.  Operator Theory {\bf 41} (1999), 175--197. 


\bibitem{or} K.J. Oty and A. Ramsay, \emph{Actions and coactions of measured groupoids on $W^*$-algebras}, J.  Operator Theory {\bf 561} (2006), No. 1,  199--217. 

\bibitem{pat} A. Paterson, \emph{The Fourier algebra for locally compact groupoids}, Canad. J. Math. {\bf 56} (2004), N. 6, 1259--1289. 

\bibitem{Pimsner1997} M.V. Pimsner, A class of \emph{$C\sp
    *$-algebras generalizing both {C}untz-{K}rieger algebras
    and crossed products by} $\mathbb Z$, Free probability
    theory (Waterloo, ON, 1995), Amer. Math. Soc., Providence, 1997, pp. 189--212.

\bibitem{qui:full duality} J. Quigg, \emph{Full $C^*$-crossed
    product duality}, J. Austral. Math. Soc. \textbf{50}
    (1991), 34--52.

\bibitem{qui:discrete coactions}
    J. Quigg, \emph{Discrete coactions and $C^*$-algebraic
    bundles}, J. Austral. Math. Soc. \textbf{60} (1996),
    204--221.

\bibitem{qui:full reduced} J. Quigg, \emph{Full and reduced
    $C^*$-coactions}, Math. Proc. Camb. Phil. Soc. {\bf 116}
    (1994), 435--450.

\bibitem{QuiggRa} J. Quigg and I. Raeburn, \emph{Characterisations of crossed products by partial actions}, J. Operator
Theory {\bf 37} (1997), 311--340.

\bibitem{RS} I. Raeburn and A. Sims, \emph{Product systems of graphs and the Toeplitz
  algebras of higher-rank graphs}, J. Operator Theory \textbf{53} (2005),
  No.~2, 399--429.

\bibitem{TFB} I. Raeburn and D.P. Williams, \emph{Morita
    equivalence
    and continuous-trace $C^*$-algebras}, Math. Surv.
    Monogr. {\bf 60}, Amer. Math. Soc., Providence, 1998.
    
 \bibitem{ram}   A. Ramsay, The Mackey-Glimm dichotomy for foliateions and other Polish groupoids, J. Funct. Anal.
    {\bf 94} (1990), No. 2, 358--374.

\bibitem{r} J. Renault, \emph{A Groupoid approach to $C^{*}$-algebras}, Lecture Notes in Math. {\bf 793}, Springer-Verlag, Berlin, 1980.

\bibitem{r2} J. Renault, \emph{Repr\'{e}sentations des produits crois\'{e}s d’alg\`{e}bres de groupoides}, J.
Operator Theory {\bf 18} (1987), 67--97.

\bibitem{rswy} J. Renault, A. Sims, D.P. Williams and T. Yeend, \emph{Uniqueness theorems for topological higher-rank graph $C^*$-algebras}, Proc. Amer. Math. Soc.  {\bf 16}(2) (2018), 669--684.

\bibitem{SY} A. Sims and T. Yeend, \emph{Cuntz-Nica-Pimsner
    algebras
    associated to product systems of Hilbert bimodules}, J. Operator
    Theory {\bf 64} (2010), No. 2, 349-376.
    
\bibitem{T}    K. Thomsen, \emph{Semi \'{e}tale groupoids and applications},  arXiv: 0901.2221.

\bibitem{tim} T. Timmermann, \emph{The relative tensor product and a minimal fiber product in the setting of $C^{*}$-algebras}, J. Operator Theory {\bf 68} (2012), No. 2, 365--404.

\bibitem{weg} N. E. Wegge-Olsen, \emph{K-theory and $C^*$-algebras: A friendly approach}, Oxford University Press, Oxford, 1993.

    
\bibitem{w}    D. P. Williams, \emph{A tool kit for groupoid $C^*$-algebras, } Math. Surv. Monographs {\bf 241},    
    Amer. Math. Soc., Providence, 2019. 
    
\bibitem{yam}    S. Yamagami, \emph{On primitive ideal spaces of $C^*$-algebras over certain locally compact
    groupoids}, in: Mappings of Operator Algebras (H. Araki and R. Kadison, eds.), Progress in
    Math. {\bf 84}, Birkha\"user, Boston, 1991, pp. 199-204. 

\bibitem{yama} T. Yamanouchi, \emph{Duality for actions and coactions of measured groupoids on von
Neumann algebras}, Mem. Amer. Math. Soc. {\bf 484}, Amer. Math. Soc., Providence, 1993.
    

\bibitem{y} T. Yeend, \emph{Groupoid models for the $C^{*}$-algebras of topological higher-rank graphs}, J. Operator Theory \textbf{57} (2007), 95--120.

\end{thebibliography}
\end{document}